\numberwithin{equation}{section}
\theoremstyle{plain}
\newtheorem{theorem}{Theorem}[section]
\newtheorem{proposition}[theorem]{Proposition}
\newtheorem{corollary}[theorem]{Corollary}
\newtheorem{conjecture}[theorem]{Conjecture}
\newtheorem{lemma}[theorem]{Lemma}
\theoremstyle{definition}
\newtheorem{definition}[theorem]{Definition}
\newtheorem{remark}[theorem]{Remark}
\newtheorem*{example}{Example}
\newcommand{\refE}[1]{(\ref{eq:#1})}
\newcommand{\refS}[1]{Section~\ref{sec:#1}}
\newcommand{\refA}[1]{Appendix~\ref{sec:#1}}
\newcommand{\refT}[1]{Theorem~\ref{T:#1}}
\newcommand{\refP}[1]{Proposition~\ref{P:#1}}
\newcommand{\refD}[1]{Definition~\ref{D:#1}}
\newcommand{\refL}[1]{Lemma~\ref{L:#1}}
\newcommand{\C}{\ensuremath{\mathbb{C}}}
\newcommand{\N}{\ensuremath{\mathbb{N}}}
\newcommand{\Z}{\ensuremath{\mathbb{Z}}}
\newcommand{\K}{\ensuremath{\mathcal{K}}}
\newcommand{\J}{\ensuremath{\mathbb{J}}}
\newcommand{\Jl}{\ensuremath{\mathbb{J}_\lambda}}
\newcommand{\Pro}{\ensuremath{\mathbb{P}}}
\newcommand{\poc}{\ensuremath{{\mathbb{P}}^1(\C)}}
\newcommand{\cins}{\frac 1{2\pi\mathrm{i}}\int_{C_S}}
\newcommand{\cint}[1]{\frac 1{2\pi\mathrm{i}}\int_{#1}}
\newcommand{\g}{\mathfrak{g}}
\newcommand{\gb}{\overline{\mathfrak{g}}}
\newcommand{\gh}{\widehat{\mathfrak{g}}}
\newcommand{\A}{\mathcal{A}}
\newcommand{\W}{\mathcal{W}}
\newcommand{\V}{\mathcal{V}}
\newcommand{\La}{\mathcal{L}}
\renewcommand{\L}{\mathcal{L}}
\newcommand{\ord}{\operatorname{ord}}
\newcommand{\res}{\operatorname{res}}
\newcommand{\fpz}{\frac {d }{dz}}
\newcommand{\pfz}[1]{\frac {d#1}{dz}}
\newcommand{\PGL}{\mathrm{PGL}}
\newcommand{\Ho}{\mathrm{H}}
\newcommand{\al}{\ensuremath{\alpha}}
\newcommand{\be}{\ensuremath{\beta}}
\newcommand{\Fl}[1][\lambda]{\mathcal{F}^{#1}}
\newcommand{\Fln}[1][n]{\mathcal{F}_{#1}^\lambda}
\newcommand{\Ah}{\widehat{\mathcal{A}}}
\newcommand{\Do}{\mathcal{D}^1}
\newcommand{\Dhg}{\widehat{\mathcal{D}^1_\g}}
\newcommand{\Dg}{{\mathcal{D}^1_\g}}
\newcommand{\kndual}[2]{\langle #1,#2\rangle}
\newcommand{\ldot}{\,.\,}
\newcommand{\la}{\lambda}
\newcommand{\sbul}{{\boldsymbol{\cdot}}}
\newcommand{\Sp}{\mathcal{F}^{-1/2}}
\newcommand{\Sa}{\mathcal{S}}
\newcommand{\U}{\mathcal{U}}
\newcommand{\Uh}{\widehat{\mathcal{U}}}
\newcommand{\sln}{\mathfrak{sl}}
\newcommand{\gl}{\mathfrak{gl}}
\newcommand{\slnb}{{\overline{\mathfrak{sl}}}}
\newcommand{\ga}{\gamma}
\newcommand{\gah}{\widehat{\gamma}}
\newcommand{\gA}[1][{}]{\gamma^{\mathcal{A}}_{{#1}}}
\newcommand{\gS}[1][{}]{\gamma^{\mathcal{S}}_{{#1}}}
\newcommand{\gL}[1][{}]{\gamma^{\mathcal{L}}_{{#1}}}
\newcommand{\gm}[1][{}]{\gamma^{(m)}_{{#1}}}
\newcommand{\ggb}[1][{}]{\gamma^{\gb}_{{#1}}}
\newcommand{\gAh}[1][{}]{\widehat{\gamma}^{\mathcal{A}}_{{#1}}}
\newcommand{\gLh}[1][{}]{\widehat{\gamma}^{\mathcal{L}}_{{#1}}}
\newcommand{\gmh}[1][{}]{\widehat{\gamma}^{(m)}_{{#1}}}
\newcommand{\refR}[1]{Remark~\ref{R:#1}}
\renewcommand{\l}{\lambda}
\newcommand{\tr}{\mathrm{tr}}
\newcommand{\ddz} {\frac {d}{dz}}
\begin{document}

\title[$N$-point Virasoro algebras are multi-point KN type algebras]
{$N$-point Virasoro algebras 
 are 
multi-point Krichever--Novikov type algebras}


\author[Martin Schlichenmaier]{Martin Schlichenmaier}
\address{University of Luxembourg, 
Mathematics Research Unit, FSTC,
Campus Kirchberg, 6, rue Coudenhove-Kalergi,
L-1359 Luxembourg-Kirchberg, Luxembourg}
\curraddr{}
\email{martin.schlichenmaier@uni.lu}
\thanks{Partial  support by the 
Internal Research Project  GEOMQ11,  University of Luxembourg,
and 
by the OPEN scheme of the Fonds National de la Recherche
(FNR), Luxembourg,  project QUANTMOD O13/570706 
is gratefully acknowledged.}

\subjclass[2000]{Primary: 17B65; Secondary: 14H55, 17B56, 17B66, 
17B67, 17B68, 30F30, 
81R10, 81T40}

\date{4.5.2015}

\begin{abstract}
We show how the recently again discussed $N$-point Witt, Virasoro,
and affine Lie
algebras are genus zero 
examples of the multi-point versions of Krichever--Novikov
type
algebras as introduced and studied by Schlichenmaier. Using this more
general point of view, useful structural insights and an easier access to
calculations
can be obtained. The concept of almost-grading will yield information
about
triangular decompositions 
which are of importance in the theory of representations. 
As examples the algebra of functions,
vector fields, differential operators, current algebras, affine Lie
algebras, Lie superalgebras
 and their
central extensions are studied. Very detailed calculations 
for the three-point case are given. 

\end{abstract}

\maketitle
\section{Introduction}

Recently there was again a revived interest in algebras of 
meromorphic objects (vector fields, Lie algebra valued functions, and
more)
on the Riemann sphere
\cite{CGLZ}, \cite{CJM}, \cite{JM},
\cite{Kreusch}, \cite{LeiMor}.
In particular, this interest comes from  representation theory and
its interpretations in the context of quantization of
(conformal) field theory.
The appearing 
algebras supply examples of infinite dimensional (Lie) algebras which
are of geometric origin. They generalize the Witt and Virasoro algebra 
respectively the classical affine Lie algebras.
In some of these articles  the vector field algebras were called
$N$-Virasoro algebras. 
Here we like to stress the fact, that these algebras are nothing
else as the genus zero Krichever--Novikov (KN) type algebras 
in their multi-point
version as introduced by the current author.

Originally KN algebras were introduced by Krichever and Novikov in
1986/87
\cite{KNFa}, 
\cite{KNFb}, 
\cite{KNFc} with the intention to generalize the classical 
infinite-dimensional algebras of Conformal Field Theory (CFT) to
higher genus. The classical algebras correspond to 
the geometric situation of genus zero and 
two fixed points where poles are allowed. 
In the original Krichever -- Novikov approach still only
two possible points for poles are considered.
In the years 1989/1990 the author extended the whole set-up to
the multi-point case for 
arbitrary genus (including genus zero) 
\cite{Schlmp},
\cite{Schleg},
\cite{Schlce},
\cite{SchlDiss}.

In this work I will show that the consequent use of the 
techniques developed in these articles  and the follow-ups
\cite{Schlloc}, \cite{Schlaff},   adapted to the
genus zero situation, will yield a much better understanding
of the situation. It will explain certain properties 
remarked by the authors of 
\cite{CGLZ}, \cite{CJM}, \cite{JM},
and remove some misconceptions.

The first nontrivial part in the multi-point extension is the fact 
that it is possible to assign to every non-empty splitting of
the set $A$ of points where poles are allowed an almost-graded
structure (see \refD{almgrad}). This is done 
by choosing a basis adapted to the
splitting.
An almost-grading is very close to a grading.
The notion is strong enough to construct triangular decompositions, 
semi-infinite wedge forms, Fock space representations, etc. 
These
concepts are of relevance in representation theory and
field theory. 
In the original Krichever--Novikov approach (and in the classical
case)
there is only one splitting possible. Hence, the additional 
effects due to different splittings do not show up.
They were studied the first time in \cite{Schlce}.
One of the consequences of the almost-grading will be that in
the structure equations (with respect to the adapted basis) 
the number of basis elements appearing 
on the r.h.s. will be bounded by an 
uniform constant. 

In the genus zero case with $N$ points $P_1,P_2,\ldots, P_N$ where
poles are allowed, one could single out one point, e.g. $P_N$ and move
it
by a fractional linear transformation to the point $\infty$.
Having done this we could take
\begin{equation}
\{P_1,P_2,\ldots, P_{N-1}\}\quad \cup\quad \{\infty\}
\end{equation}
as one possible splitting. We will denote this splitting
in this article {\it standard splitting}, despite the fact that
it will depend on which point was chosen to be moved to $\infty$.
Also we point out that for $N>2$ this is just one example of a
splitting.
Now we are in the situation that we could employ all the
constructions for the Krichever-Novikov type algebras 
carried out  by the author. 
For the convenience of the reader will recall them 
for the genus zero case. Furthermore, we will
strengthen 
the results if possible.
No previous knowledge of KN type algebras will be needed.
As a starting point 
we introduce the Poisson algebra of meromorphic forms.
From this algebra the associative algebra of functions, the Lie
algebra
of vector fields and differential operators, superalgebras, current
algebras, etc. are deduced.

Central extensions are of fundamental importance in the context
of regularization of actions.
Such regularizations are needed e.g. 
in quantum field theory.
As our presented algebras are assumed to act as symmetry algebras, the
classification of central extensions is a crucial task to be done.
Of course, one is interested to classify all central extensions up to
equivalence. But coming from the applications in representation theory
one also needs to know which central extensions admit an 
extension of the almost-grading of the original algebra.
Central extensions are defined via Lie algebras two-cocycles with 
values in the trivial module.
A complete classification of those
two-cycles classes which allow the extension of the almost-grading
(those cocycles are called {\it local}, see \refD{local}), 
and more general of the bounded ones for all the above Lie algebras
are given by the author in \cite{Schlloc}, \cite{Schlaff},
\cite{Schlsa}.
This is done always with respect to a fixed splitting and induced
almost-grading.
The cocycles are given by a differential to be 
integrated about special integration paths, respectively
by calculating residues.

\medskip

One of the main results of the current article is that for
genus zero each cocycle class is a bounded class
with respect to the standard splitting (\refT{tfclass}, \refT{vbound},
\refT{mbound}).
For the function algebra we need to add the natural requirement
for the cocycle to be $\La$-invariant or  equivalently
multiplicative.
We use our earlier results about bounded cocycles
to give them explicitely.
Furthermore, we show in genus zero that the vector field algebra and
the differential operator algebra are perfect. As we know their
cocycle classes we can write down the universal central extension. 
The dimension of the center for the vector field algebra is $N-1$;
for the differential operator algebra it is $3(N-1)$.
The existence of an universal central extension for the vector
field algebra for arbitrary genus is known \cite{Skry}, but here
we supply an elementary proof.
The algebra obtained  
as central extension from the function 
algebra via geometric cocycles (which might be  called
the {\it maximal Heisenberg algebra}), has also a $(N-1)$-dimensional 
center.
The current algebra associated to a finite-dimensional
simple Lie algebra $\g$ admits also an universal central
extension
which has an $(N-1)$-dimensional center \cite{Schlaff}, \cite{Breme}.
For 
the vector field algebra, current algebra and function
algebra 
there will be a unique (up to equivalence and
rescaling) central extension compatible with the almost-grading,
i.e. a local cocycle class.
For the differential operator algebra the space of local cohomology
classes will be three-dimensional.

\medskip
 
After having developed the general picture for genus zero and
$N$-points
we show how to do explicite calculations in the 3-point case.
{}From the side of applications this case is also of special interest, see e.g.
\cite{BeTer},  \cite{HaTe}.
We normalize the three points to be
$\{0,1\}\cup\{\infty\}$
and use additional symmetry operations.
All calculations  reduce to calculations
of residues of rational functions.
This is also the case for general $N$.

In \refS{further} we recall  some types of
representations which will be automatically available
after having identified the algebras as special cases of
KN type algebras.

In this article we mostly use the language of Riemann surfaces.
But all definitions, objects,  and  results clearly make sense
for arbitrary algebraically closed fields 
$\mathbb{K}$ of characteristics
zero. Our Riemann sphere will be the projective line over
$\mathbb{K}$. 

It should be possible to study this article without 
consulting the
works on KN type algebras mentioned above, assuming that the
reader is willing to accept the statements of the theory.
In case that the reader wants to know more we refer in addition
to the original work also to the recent monograph 
\cite{Schlkn} containing
everything what is needed.

\section{The Virasoro algebra and its relatives}
\label{sec:vira}

The
Virasoro algebra together with its relatives are 
the simplest non-trivial infinite
dimensional Lie algebras. 
As we will generalize them we recall their definitions here.

\subsection{The Witt algebra}

The {\it Witt algebra} $\W$ , sometimes also called 
Virasoro algebra without central
  term,
is the Lie algebra  
generated as vector space by the basis
elements 
$\{e_n\mid n\in \mathbb{Z}\}$ with 
Lie structure  
\begin{equation}\label{eq:Wstruct}
[e_n,e_m]=(m-n)\,e_{n+m},\quad n,m\in\Z.
\end{equation}

The algebra 
$\W$ is a graded Lie algebra.
We define the degree   by 
$\deg(e_n):=n$ 
and get the vector space direct decomposition
\begin{equation}
\W=\bigoplus_{n\in\Z}\W_n, \qquad 
\W_n={\langle e_n\rangle}_\C.
\end{equation}
Obviously,
 $\deg([e_n,e_m])=\deg (e_n)+\deg(e_m)$.

Algebraically $\W$ can also be given as
Lie algebra of derivations of the algebra of Laurent
polynomials $\C[z,z^{-1}]$.

\subsection{The Virasoro algebra}

For the Witt algebra the  universal 
central extension is the 
{\it Virasoro algebra} $\V$. 
As vector space it is the direct sum $\V=\C\oplus \W$.
We set for $x\in\W$, 
$\hat x:=(0,x)$, and $t:=(1,0)$.
Its basis elements are $\hat e_n, \ n\in\Z$ and 
$t$ with the  Lie product
\footnote{
Here $\delta_k^l$ is the Kronecker delta which is equal to 1 if
$k=l$, otherwise zero.}.
\begin{equation}\label{eq:Vstruct}
[\hat e_n,\hat e_m]=(m-n)\,\hat e_{n+m}+\frac 1{12}(n^3-n)\delta_m^{-n}\,t,\quad 
\quad
[\hat e_n,t]=[t,t]=0,
\end{equation}
for all $n,m\in\Z$. The factor 1/12 is conventional.
By setting
 $\deg(\hat e_n):=\deg(e_n)=n$ and $\deg(t):=0$  the Lie algebra 
$\V$ becomes a graded algebra. 
Up to equivalence of central extensions 
and rescaling the central element $t$, this is beside
the trivial (splitting) central extension the only central extension
of $\W$.

\subsection{The affine Lie algebra}

Given $\g$ a finite-dimensional Lie algebra (e.g. a 
finite-dimensional  simple
Lie algebra) then the tensor product of $\g$ with the associative
algebra of Laurent polynomials $\C[z,z^{-1}]$ carries a 
Lie algebra structure via
\begin{equation}
[x\otimes z^n,y\otimes z^m]:=[x,y]\otimes z^{n+m}, \quad
x,y\in\g\,.
\end{equation}
This algebra is called 
\emph{current algebra} or \emph{loop algebra} and denoted by $\gb$.
Again we consider central extensions. For this let $\beta$ be 
a symmetric, bilinear form for $\g$ which is invariant
(e.g. $\beta([x,y],z)=\beta(x,[y,z])$ for all $x,y,z\in\g$).
Then a central extension is given by
\begin{equation}
[\widehat{x\otimes z^n},\widehat{y\otimes z^m}]
:=\widehat{[x,y]\otimes z^{n+m}}-\beta(x,y)\cdot n\,
\delta_m^{-n}\cdot t.
\end{equation}
This algebra is denoted by $\gh$ and called 
\emph{affine Lie algebra}. 
With respect to the 
classification of Kac-Moody Lie
algebras, 
in the case of a simple $\g$
they are exactly the Kac-Moody
algebras of untwisted affine type, \cite{Kacorg}, \cite{KacB},  \cite{Moody69}.

\subsection{The geometric interpretation}
\label{sec:geo0}
Let $S^2$ be the Riemann sphere or  equivalently $\poc$
the projective line over $\C$. 
Denote by $z$ the quasi-global coordinate on $\poc$.
The above algebras and basis elements have a geometric meaning.
The elements are meromorphic objects
which are holomorphic outside $\{0,\infty\}$.
For example
the algebra $\C[z,z^{-1}]$
can be given as the algebra of meromorphic functions on
$S^2=\poc$ holomorphic outside of $\{0,\infty\}$.

The elements of the Witt algebra coincide with meromorphic vector fields, the
element of the current algebra with $\g$-valued meromorphic 
functions and so on.
The Lie algebra structure of $\W$ corresponds to the 
usual Lie bracket of vector fields 
\begin{equation}\label{eq:liebracket}
[v,u]=\left(v\ddz u-u\ddz v\right)\ddz.
\end{equation}
The basis elements are realized as 
$e_n=z^{n+1}\frac {d}{dz}$.

The central terms also have a geometric meaning which  will become
clear in the context of the generalization 
of these algebras described in the next sections.

\section{The general $N$ point and genus zero case}
\label{sec:npoint}
Even if in this article we are only interested in  genus zero 
it is necessary to spend a few words on the arbitrary
genus situation. In this context the full geometric
meaning will become clearer. 
After we have done this we will concentrate on the
genus zero case.

\subsection{Arbitrary genus and multi-point situation}
As explained in \refS{geo0} 
in the geometric interpretation for the Virasoro algebra the objects are
defined on the Riemann sphere and might have poles at most at two
fixed
points (which can be normalized to $0$ and $\infty$)
where poles are allowed. 
In applications, e.g. 
for a global operator approach to conformal field theory and
its quantization, integrable systems, etc.,
 this is not sufficient. One needs Riemann surfaces of
arbitrary genus. Moreover, one needs more than two points
where poles are allowed. 
Such  generalizations were initiated by Krichever and Novikov
\cite{KNFa}, \cite{KNFb}, \cite{KNFc}, who considered 
arbitrary genus and the two-point case. 
This was extended to the multi-point case (and this case is
of relevance here) and 
systematically examined by the author 
\cite{Schlmp}, 
\cite{Schleg}, 
\cite{Schlce}, 
\cite{SchlDiss}, 
\cite{SchlDeg},
\cite{SchlHab}, 
\cite{Schlloc}, 
\cite{Schlaff},
\cite{Schlkn}.

For the moment let $\Sigma_g$ be a compact Riemann surface 
without any restriction for its genus $g=g(\Sigma_g)$.
Furthermore, let $A$ be a finite subset of $\Sigma_g$.
Later we will need a splitting of $A$ into two non-empty disjoint
subsets $I$ and $O$, i.e. $A=I\cup O$. Set $N:=\#A\ge 2$,
$K:=\#I$, $M:=\#O$, with $N=K+M$. 
More precisely, let
\begin{equation}
I=(P_1,\ldots,P_K),\quad\text{and}\quad
O=(Q_1,\ldots,Q_{M})
\end{equation}
be disjoint  ordered tuples of  distinct points (``marked points'',
``punctures'') on the Riemann surface.
In particular, we assume $P_i\ne Q_j$ for every
pair $(i,j)$. The points in $I$ are
called the {\it in-points}, the points in $O$ the {\it out-points}.
Sometimes,  we consider $I$ and $O$ simply as sets.

Our algebraic objects will be objects meromorphic
on $\Sigma_g$ and holomorphic outside of $A$.
The corresponding algebras are called Krichever-Novikov type
algebras.
An almost-grading (see \refD{almgrad}) is introduced
with respect to the splitting of $A$.
The general theory can be found in the above references
and will not be repeated here. For the rest of
this contribution we will specialize the
theory for the genus zero
multi-point situation. Of course, for this the multi-point
theory developed by
the author plays an important role.

\subsection{Genus zero}
Now let $\Sigma_0$ be the Riemann sphere $S^2$, or equivalently 
$\poc$ with the quasi-global coordinate $z$.
We call it quasi-global, as  it is not defined
at the point $\infty$.
Let us denote the set of points 
\begin{equation}
A=\{P_1,P_2,\ldots, P_N\}, \quad P_i\ne P_j,\ \text{for } i\ne j.
\end{equation} 
For notational simplicity we single out the point 
$P_N$ as reference point. 
By an automorphism of $\poc$, i.e. a fractional linear
transformation or equivalently an element of
$\PGL(2,\C)$, the point $P_N$ can be brought to $\infty$. 
In fact two more points could be normalized to be $0$ and $1$.
In this section we will not do so, but see \refS{3point}.

Our points are given by their coordinates
\begin{equation}
P_i={a_i},\quad a_i\in\C, \ i=1,\ldots, N-1,
\qquad
P_N={\infty}.
\end{equation}
At these points we have the local coordinates
\begin{equation}
z-a_i,\ i=1,\ldots, N-1,
\qquad
w=1/z.
\end{equation}
Sometimes we refer to the classical situation. By this
we understand 
\begin{equation}\label{eq:class}
\Sigma_0=\Pro^1(\C)=S^2, \quad I=\{0\},\quad 
 O=\{\infty\}.
\end{equation}

\subsection{Meromorphic forms}

To introduce the elements of the algebras we first have to
consider  forms of (conformal) weight $\lambda\in 1/2\Z$.
Without further saying, 
we always assume that they are meromorphic and holomorphic
outside of $A$.

Forms of weight $0$ are functions. Of course they constitute
an associative algebra, which we denote by $\A$.
Forms of weight $1$ are (meromorphic) differentials.
Recall that the canonical line bundle $\K$ of $\Sigma$
is the holomorphic line bundle whose local sections are the 
the local holomorphic differentials.
For $\poc$,
in the language of algebraic geometry,  we have that 
$\K=\mathcal{O}(-2)$.
This bundle has a unique square root $L=\mathcal{O}(-1)$,
\footnote{This is not true anymore for Riemann surfaces
of higher genera $g$. In fact, we have $2^{2g}$ different square roots.
They correspond to different spin structures.} 
which is the tautological bundle, respectively the dual of the
hyperplane section bundle. We denote this bundle also by
$\K^{1/2}$.

Meromorphic forms of weight $\l$ are sections of the 
bundle $\K^{\l}$ where  (with $\K^*$ the dual bundle of $\K$)
(1) $\K^{\la}=\K^{\otimes \la}$ 
for $\la>0$, 
(2)  $\K^{0}=\mathcal{O}$,
 the trivial line bundle, and 
(3) $\K^{\la}=(\K^*)^{\otimes (-\la)}$ 
for $\la<0$.
We set
\begin{equation}
\qquad\Fl:=\{f \text{ is a global meromorphic section of } \K^\la
\mid
f \text{ is  holomorphic on } \Sigma\setminus A\}.\qquad
\end{equation}
Obviously this is an infinite dimensional $\C$-vector space.
Its elements  are called \emph{meromorphic forms
of weight $\la$}.
In particular, $\Fl[0]=\A$.
Forms of weight $-1$ are (meromorphic) vector fields
and we set $\La:=\Fl[-1]$.

In local coordinates $z_i$ 
a form of weight $\l$ can  be
written as $b_i(z)(dz_i)^\l$ with $b_i$ a meromorphic function.
By $(dz_i)^\l$ it is encoded how the local functions transform
under coordinate transformation.
In our genus zero situation this simplifies. We can describe the
form by a meromorphic function on the affine part $\C$ with respect
to the coordinate $z$. By this description its behaviour at
the point $\infty$ is uniquely fixed by 
the fundamental transformation $dz=-w^{-2}dw$.
Moreover, by  the fixing $P_N=\infty$
the set of meromorphic forms 
$f$ of weight $\l$ on $\poc$  holomorphic outside of 
$A$
correspond 1:1 to 
 meromorphic functions $a(z)$
holomorphic outside of $A$ via 
$f(z)=a(z)dz^\l$.
Both $a$ and $f$ will have the same
orders at the points in $\C$. For the order at the point 
$\infty$ we have 
\begin{equation}\label{eq:ordin}
\ord_{\infty}(f)=\ord_{\infty}(a)-2\la.
\end{equation}
Recall that 
on  a compact
Riemann surface 
the sum of the  orders (summed over all points)
of a meromorphic function $f\not\equiv 0$ 
equals zero.
Hence, more generally
\begin{proposition}\label{P:kldeg}
Let $f\in\Fl$, $f\not\equiv 0$ then
\begin{equation}
\sum_{P\in\Sigma_0}\ord_P(f)=-2\la.
\end{equation}
\end{proposition}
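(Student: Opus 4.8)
The plan is to reduce \refP{kldeg} to the classical fact that a nonzero global meromorphic function on a compact Riemann surface has orders summing to zero, using the dictionary between weight-$\la$ forms and functions set up just above. Given $f\in\Fl$ with $f\not\equiv 0$, I would write $f(z)=a(z)\,dz^\la$ with $a$ a meromorphic function on $\poc$ holomorphic outside of $A$, as described before \refE{ordin}. The three inputs are: (i) $f$ and $a$ have the same order at every point of the affine part $\C$; (ii) at the remaining point $\ord_{\infty}(f)=\ord_{\infty}(a)-2\la$ by \refE{ordin}; and (iii) $\sum_{P\in\Sigma_0}\ord_P(a)=0$, since $a\not\equiv 0$ is a global meromorphic function on the compact surface $\Sigma_0$, and only finitely many of these orders are nonzero.

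Then I would split the sum over all points of $\Sigma_0$ into the affine contribution and the contribution from $\infty$:
\[
\sum_{P\in\Sigma_0}\ord_P(f)=\Big(\sum_{P\in\C}\ord_P(f)\Big)+\ord_{\infty}(f).
\]
By (i) the affine sum equals $\sum_{P\in\C}\ord_P(a)$, and by (ii) the last term is $\ord_{\infty}(a)-2\la$. Adding, the right-hand side becomes $\big(\sum_{P\in\Sigma_0}\ord_P(a)\big)-2\la$, which equals $-2\la$ by (iii). This is the assertion.

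Alternatively, and more intrinsically, one can invoke that a nonzero global meromorphic section of a holomorphic line bundle $L$ on a compact Riemann surface has orders summing to $\deg L$; applied to $L=\K^\la=\Os(-2\la)$ on $\poc$, whose degree is $-2\la$, this gives the result directly. This route avoids choosing the coordinate $z$ and the reference point $P_N=\infty$, but it requires the notion of degree and the identification $\K^\la\cong\Os(-2\la)$, including the half-integer case $\K^{1/2}=\Os(-1)$.

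The only point needing care — the \emph{main obstacle}, though a mild one — is the bookkeeping at $\infty$ together with the half-integer weights: one must apply \refE{ordin} (equivalently the transformation rule $dz=-w^{-2}dw$, hence $dz^\la=(-1)^\la w^{-2\la}dw^\la$) with the correct sign of $2\la$, and, for $\la\in 1/2+\Z$, make sure that the square-root bundle $L=\Os(-1)$ used is precisely the one fixed above, so that ``order of a section of $\K^\la$'' is unambiguous. Once the local behaviour at $\infty$ is pinned down, the statement is immediate from the classical zero-sum-of-orders theorem.
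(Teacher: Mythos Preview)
Your proof is correct and follows exactly the approach the paper indicates: the sentence immediately preceding \refP{kldeg} recalls that the orders of a nonzero meromorphic function sum to zero, and together with \refE{ordin} this is precisely your argument (the paper then refers to \cite{SchlRS} for details). Your alternative via $\deg\K^\la=-2\la$ is equally valid and is the intrinsic version of the same computation.
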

For this and related results see e.g. \cite{SchlRS}.
Also recall that the meromorphic functions in our case are
nothing else as rational functions with respect to the variable $z$.

Next we introduce algebraic operations on the vector space 
of meromorphic forms of arbitrary weights
(integer or  half-integer). 
We introduce the space
\begin{equation}
\mathcal{F}:=\bigoplus_{\la\in\frac 12\Z}\Fl,
\end{equation}
obtained  
by summing over all
weights.
The basic  operations will allow us to introduce 
finally the 
algebras we are heading for.
The following constructions make perfect sense in
the arbitrary genus case and the statement have been
proven there. For completeness we recall the results.

\subsection{Associative structure}
\label{sec:asstr}

The natural map of the 
locally free sheaves of rang one  
\begin{equation}\label{eq:afs}
\K^{\lambda}\times \K^\nu\to
 \K^{\lambda}\otimes \K^\nu\cong 
\K^{\lambda+\nu},
\quad
(s,t)\mapsto s\otimes t,
\end{equation}
defines a bilinear map 
\begin{equation}\label{eq:afl}
\sbul:\Fl\times \Fl[\nu]\to \Fl[\la+\nu].
\end{equation}
With respect to local trivialisations this corresponds to the
multiplication of the local representing meromorphic functions
\begin{equation}
(s\, dz^{\la},t\, dz^{\nu} )
\mapsto s\, dz^{\la}\;\sbul\;t\, dz^{\nu}= s\cdot t\;
dz^{\la+\nu}.
\end{equation}

\begin{proposition}\label{P:aass}
The space 
$\mathcal{F}$ is an  associative and commutative
graded (over $\frac 12\Z$)
algebra. Moreover, $\A=\Fl[0]$ is a subalgebra and the $\Fl$ are
modules over $\A$.
\end{proposition}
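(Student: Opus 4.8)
The plan is to reduce the statement entirely to the corresponding facts about meromorphic functions on $\poc$, exploiting the identification made just before the proposition: a weight-$\la$ form $f$ holomorphic outside $A$ corresponds $1{:}1$ to a rational function $a(z)$ holomorphic outside $A$ via $f=a\,dz^\la$, and under this dictionary the product $\sbul$ of \refE{afl} becomes ordinary multiplication of the representing functions. First I would observe that $\mathcal{F}=\bigoplus_{\la\in\frac12\Z}\Fl$ is by construction $\frac12\Z$-graded as a vector space, and that the bilinear maps $\sbul:\Fl\times\Fl[\nu]\to\Fl[\la+\nu]$ assemble into a single bilinear multiplication on $\mathcal F$ that respects this grading, so it only remains to check that the multiplication is well-defined (lands in the right space), associative, commutative, and bilinear.

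For well-definedness: if $f=a\,dz^\la$ and $g=b\,dz^\nu$ with $a,b$ rational and holomorphic outside $A$, then $f\sbul g=(ab)\,dz^{\la+\nu}$, and $ab$ is rational and holomorphic outside $A$, hence $f\sbul g\in\Fl[\la+\nu]$. Bilinearity over $\C$ is immediate from bilinearity of function multiplication. Commutativity and associativity then follow pointwise from commutativity and associativity of multiplication of rational functions, together with the canonical identifications $\K^\la\otimes\K^\nu\cong\K^{\la+\nu}$ being themselves associative and symmetric (these are the coherence isomorphisms for the tensor powers of a line bundle). I would note that a coordinate-free way to phrase the same thing is: $\bigoplus_\la\K^\la$ is the sheaf of algebras associated to the $\Z$-graded ring $\mathrm{Sym}$-type object built from $\K$ and $\K^{*}$, and $\mathcal F$ is its space of sections holomorphic off $A$, which is a subalgebra of the algebra of all such sections; but for genus zero the explicit function-picture argument is cleaner and self-contained.

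Finally, $\A=\Fl[0]$ is a subalgebra because it is the weight-$0$ graded piece and the multiplication preserves weights, and it is exactly the algebra of rational functions holomorphic outside $A$ (here $\K^0=\mathcal O$, so no $dz$-factor appears). For each fixed $\la$, the map $\A\times\Fl\to\Fl$, $(a,f)\mapsto a\sbul f$, makes $\Fl$ a module over $\A$: this is just the $\la=0$ special case of the bilinear maps already constructed, and the module axioms are again inherited from the ring of rational functions. I do not expect a genuine obstacle here; the only point requiring a little care is making the abstract tensor-product identifications in \refE{afs} compatible across the three cases $\la>0$, $\la=0$, $\la<0$ in the definition of $\K^\la$, but since on $\poc$ every $\K^\la$ is trivialized by $dz^\la$ on the affine chart, all these identifications reduce to the identity on representing functions, so coherence is automatic.
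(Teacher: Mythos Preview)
Your argument is correct. The paper itself does not supply a proof of this proposition; it is stated as part of a block of results that ``make perfect sense in the arbitrary genus case and the statements have been proven there,'' with the surrounding propositions carrying explicit citations to \cite{Schlkn}. So there is no detailed proof in the paper to compare against, only an implicit appeal to the general construction in local trivializations.

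The one difference worth flagging is scope. You lean on the genus-zero feature that $dz^\la$ gives a \emph{global} trivialization of $\K^\la$ on the affine chart, which lets you reduce everything to multiplication of rational functions and makes the coherence of the tensor identifications automatic. That is perfectly fine here and is indeed the cleanest route for $\Sigma_0$. The paper's framing (and the proof in \cite{Schlkn}) is set up for arbitrary genus, where one must work with local trivializations and check that the local products $s\cdot t$ patch correctly under coordinate changes via the transformation law for $dz^\la$; associativity and commutativity then follow because the sheaf map $\K^\la\times\K^\nu\to\K^{\la+\nu}$ is induced by a symmetric, associative system of line-bundle isomorphisms. Your parenthetical ``coordinate-free'' remark already gestures at this, so you clearly see where the general argument lives; just be aware that your main proof, as written, is a genus-zero argument and would need the local-chart version to cover the setting in which the proposition is actually stated.
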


\subsection{Lie and Poisson algebra structure}
\label{sec:LiPof}

There is a Lie  algebra structure on the space $\mathcal{F}$.
The structure is  induced by the map 
\begin{equation}\label{eq:alies}
\Fl\times \Fl[\nu]\to \Fl[\la+\nu+1],
\qquad (e,f)\mapsto [e,f],
\end{equation}
which is defined in local representatives of the sections by
\begin{equation}\label{eq:aliea}
(e\, dz^{\la},f\, dz^{\nu} )
\mapsto [e\, dz^{\la},f\, dz^{\nu}]:= \left((-\la)e\pfz{f}+\nu
  f\pfz{e}
\right)
dz^{\la+\nu+1},
\end{equation}
and bilinearly extended to $\mathcal{F}$.
\begin{proposition}
\cite[Prop.~2.6 and~2.7]{Schlkn}
The prescription $[.,.]$ given by \refE{aliea} is
well-defined and
defines a Lie algebra structure 
on the vector space $\mathcal{F}$.
\end{proposition}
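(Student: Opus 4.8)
The plan is to verify the two defining properties of a Lie bracket --- antisymmetry and the Jacobi identity --- directly on local representatives, since the map \refE{aliea} is given in terms of the local meromorphic functions $e,f$ and the formula is the same in every chart. First I would check that the bracket is well-defined, i.e. that the right-hand side of \refE{aliea} transforms as a section of $\K^{\la+\nu+1}$. Under a coordinate change $z=z(\tilde z)$ one has $e\,dz^\la = \tilde e\,d\tilde z^\la$ with $\tilde e = e\,(dz/d\tilde z)^\la$, and similarly for $f$; substituting and using the chain rule, the terms involving second derivatives of the transition function must cancel. This cancellation is exactly where the weights $(-\la)$ and $\nu$ in the two summands of \refE{aliea} are forced, and it is the reason the bracket lands in weight $\la+\nu+1$ rather than $\la+\nu$: the extra $dz^1$ absorbs the Jacobian factor $dz/d\tilde z$. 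Antisymmetry is immediate from the formula, since swapping $(e,\la)$ and $(f,\nu)$ sends $(-\la)e f' + \nu f e'$ to $(-\nu)f e' + \la e f' = -\big((-\la)ef'+\nu fe'\big)$.

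The substance of the proposition is the Jacobi identity. I would fix three homogeneous elements $e\,dz^\la$, $f\,dz^\mu$, $g\,dz^\nu$ (bilinearity then extends the identity to all of $\mathcal F$) and compute the cyclic sum $[[e,f],g]+[[f,g],e]+[[g,e],f]$ using \refE{aliea}. Writing $[e\,dz^\la,f\,dz^\mu] = h\,dz^{\la+\mu+1}$ with $h = -\la e f' + \mu f e'$, one then forms $[h\,dz^{\la+\mu+1}, g\,dz^\nu]$ and expands, obtaining an expression polynomial in $e,f,g$ and their first and second derivatives, with coefficients that are polynomials in $\la,\mu,\nu$. Summing cyclically, all terms containing second derivatives (such as $e f'' g$, $e'' f g$, etc.) should cancel in pairs, and the coefficient of each first-order monomial like $e f' g'$ should collapse to zero as an identity in $\la,\mu,\nu$. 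This is a finite, mechanical check; the main obstacle is purely organizational --- keeping track of the roughly two dozen monomials and their $\la,\mu,\nu$-coefficients without error --- rather than conceptual.

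Since the whole argument is local and the formulas do not see the genus of $\Sigma$, the proof is identical to the one in the arbitrary-genus setting, so I would simply invoke \cite[Prop.~2.6 and~2.7]{Schlkn} for the full computation; the point worth stressing here is only that \refE{aliea} restricts, for $\la=\nu=-1$, to the usual Lie bracket of vector fields \refE{liebracket}, so that $\La=\Fl[-1]$ is a Lie subalgebra and each $\Fl$ is a Lie module over $\La$ --- a remark that will be used repeatedly in the sequel. One subtlety to flag: the half-integer weights cause no trouble because the bracket formula is polynomial in the weights and the weight $\la+\nu+1$ of the output again lies in $\tfrac12\Z$, so $\mathcal F = \bigoplus_{\la\in\frac12\Z}\Fl$ is closed under the bracket and the grading is respected up to the shift by $1$, consistent with \refP{aass}.
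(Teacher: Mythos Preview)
Your proposal is correct and aligns with the paper's treatment: the paper does not supply its own proof but simply defers to \cite[Prop.~2.6 and~2.7]{Schlkn}, exactly as you do at the end of your sketch. The additional detail you provide --- the coordinate-change check for well-definedness, the antisymmetry verification, and the outline of the cyclic Jacobi computation --- is a faithful summary of what that reference contains, so there is nothing to correct or contrast.
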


\begin{proposition}
\cite[Prop.~2.8]{Schlkn}
The subspace $\La=\Fl[-1]$ is a Lie subalgebra, and the 
$\Fl$'s are Lie modules over $\La$.
\end{proposition}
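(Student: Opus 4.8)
The plan is to verify the two assertions — that $\La=\Fl[-1]$ is a Lie subalgebra of $\mathcal F$ under the bracket \refE{aliea}, and that each $\Fl$ is a Lie module over $\La$ — by direct inspection of the defining formula, checking that the weight bookkeeping is exactly what makes the claims work.

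First I would treat the subalgebra claim. Setting $\la=\nu=-1$ in \refE{alies}, the bracket lands in $\Fl[\la+\nu+1]=\Fl[-1]=\La$, so $\La$ is closed under $[.,.]$. Since $\La$ is a subspace of $\mathcal F$ and $[.,.]$ restricted to $\La$ is just the restriction of the ambient Lie bracket, antisymmetry and the Jacobi identity are inherited from the preceding proposition (the one citing \cite[Prop.~2.6 and~2.7]{Schlkn}); there is nothing more to prove. It is worth recording in local coordinates that for $e\,dz^{-1},\,f\,dz^{-1}\in\La$ the formula gives $[e\,dz^{-1},f\,dz^{-1}] = \bigl(e\,\pfz{f} - f\,\pfz{e}\bigr)dz^{-1}$, which is precisely the classical Lie bracket of vector fields \refE{liebracket}, confirming consistency with the geometric picture of \refS{geo0}.

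Next, for the module statement, fix $\la$ and consider the restriction of \refE{alies} to $\Fl[-1]\times\Fl\to\Fl[-1+\la+1]=\Fl$, i.e. the map $(e,f)\mapsto[e,f]$ with $e\in\La$, $f\in\Fl$. This is well-defined into $\Fl$ by the same weight count, and it is $\C$-bilinear. The module axiom to check is $[[e_1,e_2],f] = [e_1,[e_2,f]] - [e_2,[e_1,f]]$ for $e_1,e_2\in\La$, $f\in\Fl$. But this is exactly the Jacobi identity for the ambient Lie algebra $\mathcal F$, applied to the three elements $e_1,e_2,f$ — which holds by the already-cited proposition — together with the observation just made that $[e_1,e_2]\in\La$, so all brackets appearing are among elements of $\mathcal F$ and no new identity is needed. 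Thus $\Fl$ is a module over the Lie algebra $\La$.

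The only genuinely substantive point — and hence the main (very mild) obstacle — is making sure the weight arithmetic in \refE{alies} is being read correctly: that $\la+\nu+1$ specializes to $-1$ when $\la=\nu=-1$, and to $\la$ when $\nu=\la$, $\la_{\text{first}}=-1$. Everything else is an immediate consequence of the fact that $(\mathcal F,[.,.])$ is already known to be a Lie algebra; in particular one does not re-derive antisymmetry or Jacobi from scratch. For completeness one may remark that, since the subspaces $\Fl$ are the graded pieces of $\mathcal F$ and the bracket raises degree by the fixed amount $+1$, the assertions are also instances of the general fact that a $\tfrac12\Z$-graded Lie algebra whose bracket shifts the grading by a constant has each graded piece a module over the degree-$(-1)$ piece when that piece is itself a subalgebra.
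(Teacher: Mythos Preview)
Your argument is correct. Note, however, that the paper does not actually prove this proposition here; it simply records it with a citation to \cite[Prop.~2.8]{Schlkn}. Your direct verification --- reading off from the weight formula $\la+\nu+1$ that $\La$ is closed and that $[\La,\Fl]\subseteq\Fl$, and then invoking the ambient Jacobi identity already established for $\mathcal F$ --- is exactly the natural (and essentially the only) way to see it, and is surely what the cited reference does as well.
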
 
\begin{theorem}
\cite[Thm.~2.10]{Schlkn}
The triple   $(\mathcal{F}\ ,\ \sbul\ ,\ [.,.])$ is a Poisson  algebra.
\end{theorem}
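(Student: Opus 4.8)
Since the preceding propositions already establish that $(\mathcal{F},\sbul)$ is an associative, commutative, graded algebra and that $(\mathcal{F},[.,.])$ is a Lie algebra, the only thing left to prove is the Poisson compatibility (Leibniz) rule
\begin{equation*}
[e,f\sbul g]=[e,f]\sbul g+f\sbul[e,g],\qquad e,f,g\in\mathcal{F}.
\end{equation*}
By bilinearity of $\sbul$ and $[.,.]$ it suffices to verify this for homogeneous elements $e\in\Fl$, $f\in\Fl[\nu]$, $g\in\Fl[\mu]$; and since all three operations are defined locally through the representing meromorphic functions (see \refE{aliea} and the local formula for $\sbul$ after \refE{afl}), the identity reduces to an identity between local representatives. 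Following the notational convention of \refE{aliea} I write $e\,dz^\la$, $f\,dz^\nu$, $g\,dz^\mu$, with $e,f,g$ now meromorphic — in genus zero: rational — functions of $z$, and I write a prime for $\ddz$.

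The computation then is nothing but the ordinary product rule for $\ddz$. For the left-hand side, $f\sbul g$ is represented by $fg\,dz^{\nu+\mu}$, hence
\begin{equation*}
[e\,dz^\la,\,fg\,dz^{\nu+\mu}]=\Big((-\la)\,e\,(fg)'+(\nu+\mu)\,fg\,e'\Big)\,dz^{\la+\nu+\mu+1},
\end{equation*}
and one substitutes $(fg)'=f'g+fg'$. For the right-hand side one applies the bracket definition twice and uses that $\sbul$ is the pointwise product of the local functions:
\begin{equation*}
[e,f]\sbul g+f\sbul[e,g]=\Big((-\la)ef'g+\nu fe'g+(-\la)eg'f+\mu ge'f\Big)\,dz^{\la+\nu+\mu+1}.
\end{equation*}
Collecting terms on the right yields $\big((-\la)e(f'g+fg')+(\nu+\mu)fg\,e'\big)\,dz^{\la+\nu+\mu+1}$, which coincides with the left-hand side. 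The $dz$-exponents match automatically, both sides lying in $\Fl[\la+\nu+\mu+1]$, as it must by \refE{afl} and \refE{alies}.

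I expect no real obstacle here. The only points deserving a remark are: (i) the passage to local representatives is legitimate, being justified by the already-proven well-definedness of $\sbul$ and $[.,.]$; and (ii) consistency at the distinguished point $\infty$ is automatic, since there the same computation is carried out in the coordinate $w=1/z$ via the fixed transformation rule $dz=-w^{-2}dw$. Alternatively, one may simply note that the entire statement — operations and proof both being purely local — already holds in arbitrary genus, so the genus zero case is a mere specialization; but the short direct local verification above is preferable for a self-contained account.
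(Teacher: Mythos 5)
Your verification is correct and is essentially the argument behind the cited result: the paper itself only quotes \cite[Thm.~2.10]{Schlkn}, and the proof there is exactly this kind of local check — the associative and Lie structures are already in place by the preceding propositions, and the Leibniz rule reduces, via the local formulas \refE{aliea} and the pointwise product, to the ordinary product rule $(fg)'=f'g+fg'$. Your remarks on locality and on the coordinate $w=1/z$ at $\infty$ are appropriate, and nothing further is needed.
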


As substructures
we already encountered the subalgebras $\A$ 
of meromorphic functions and the subalgebra  $\La$ of 
meromorphic vector fields. The spaces $\Fl$ are
modules over them.

For the vector fields we obtain by the above   the usual 
Lie bracket and the usual Lie derivative for their actions on 
forms.
Written explicitly 
for the vector fields we get
\begin{equation}\label{eq:aLbrack}
[e,f]_|=[e(z)\fpz, f(z)\fpz]=
\left( e(z)\pfz f(z)- f(z)\pfz e(z)\right)\fpz \ ,
\end{equation}
for $e,f\in\La$. We used the same symbol 
for the vector field and for the representing function.
For the Lie derivative we get
\begin{equation}\label{eq:alied}
\nabla_e(f)_|=L_e(g)_|=
e\ldot g_{|}=
\left( e(z)\pfz f(z)+\lambda f(z)\pfz e(z)\right)\fpz\ .
\end{equation}

\subsection{The algebra of differential operators}
\label{sec:algdo}

The Lie algebra $\mathcal{F}$,
has $\Fl[0]$ as an abelian Lie subalgebra. The vector space sum
$\Do=\Fl[0]\oplus\Fl[-1]=\A\oplus \La$ 
is also a Lie subalgebra.
In an equivalent way this can also be constructed as 
semidirect sum of $\A$ considered as abelian Lie algebra
and $\La$ operating on $\A$ by taking the derivative.
The algebra  $\Do$
is the \emph{Lie algebra of differential operators of degree
$\le 1$}. 
In terms of elements the Lie product is 
\begin{equation}
[(g,e),(h,f)]=(e\ldot h-f\ldot g\,,\,[e,f]).
\end{equation} 
The projection on the second factor $(g,e)\mapsto e$ 
is a Lie homomorphism and we 
obtain a short exact sequences of Lie algebras
\begin{equation}
\begin{CD}
0 @>>> \A  @>>>\Do @>>> \La @>>> 0\ .
\end{CD}
\end{equation}
Hence $\A$ is an  (abelian) Lie ideal of $\Do$ and $\La$ a quotient 
Lie algebra.
Obviously, $\La$ is also a subalgebra of $\Do$.

The vector space $\Fl$ becomes a Lie module over $\Do$ 
by the operation
\begin{equation}
(g,e).f:=g\cdot f+e.f,\quad
(g,e)\in\Do(A), f\in\Fl(A).
\end{equation}

\medskip

Via some universal constructions differential operator
algebras of arbitrary degrees can be constructed.
We will not repeat their definition here, but only refer to 
\cite[Chap.~2.7]{Schlkn}


\subsection{Current algebras}
\label{sec:currint}
We fix an arbitrary finite-dimensional complex Lie algebra $\g$.
The generalized current algebra
is defined as 
$\gb=\g\otimes_\C \A$ with the Lie product
\begin{equation}\label{eq:currint}
[x\otimes f, y\otimes g]=[x,y]\otimes f\cdot g,
\qquad  x,y\in\g,\quad f,g\in\A.
\end{equation} 
It can be easily verified that 
$\gb$ is a Lie algebra.

Later we will introduce central extensions for
these current algebras. They will generalize affine Lie algebras,
respectively affine Kac-Moody algebras of untwisted type.

\subsection{$\g$-differential operators}
\label{sec:gdiff}
For some applications (e.g. for the fermionic 
Fock space representations, for the Sugawara representation) 
it is useful to extend the definition 
of the current algebras by
considering differential operators (of degree $\le 1$) associated 
to $\gb$. We define 
$\Do_\g:=\gb\oplus\La$ and take in the summands the Lie products
defined there and put additionally
\begin{equation}
[e,x\otimes g]:=-[x\otimes g,e]:=x\otimes(e.g).
\end{equation}
This operation can be described as semidirect sum
of $\gb$ with $\La$ and we get 
\begin{proposition}
\cite[Prop.~2.15]{Schlkn}
$\Do_\g$ is  a Lie algebra.
\end{proposition}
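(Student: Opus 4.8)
The statement is the standard fact that the semidirect sum of a Lie algebra acting by derivations on another Lie algebra is again a Lie algebra, and the plan is simply to make this explicit in the case at hand. Bilinearity and antisymmetry of the bracket on $\Do_\g=\gb\oplus\La$ are immediate: on each of the two summands the bracket is the one already known to be a Lie bracket, and the cross bracket is antisymmetric by its very definition $[e,x\otimes g]:=-[x\otimes g,e]:=x\otimes(e\ldot g)$. So the only thing to check is the Jacobi identity
\[
J(a,b,c):=[[a,b],c]+[[b,c],a]+[[c,a],b]=0 ,
\]
and by trilinearity it suffices to verify $J$ on triples whose entries each lie in $\gb$ or in $\La$, distinguishing four cases according to how many of the three entries come from $\La$.

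If all three entries lie in $\gb$, then $J=0$ because $\gb$ is a Lie algebra (see \refE{currint}); if all three lie in $\La$, then $J=0$ because $\La$ is a Lie algebra. For the mixed case with $e,f\in\La$ and $x\otimes g\in\gb$, unwinding the definitions (and using $[x\otimes g,e]=-x\otimes(e\ldot g)$) turns $J(e,f,x\otimes g)=0$ into
\[
x\otimes\bigl([e,f]\ldot g-e\ldot(f\ldot g)+f\ldot(e\ldot g)\bigr)=0 ,
\]
which holds because $\A=\Fl[0]$ is a Lie module over $\La$. For the last case, with $e\in\La$ and $x\otimes g,\,y\otimes h\in\gb$, the same bookkeeping reduces $J(e,x\otimes g,y\otimes h)=0$ to
\[
[x,y]\otimes\bigl((e\ldot g)\cdot h-e\ldot(g\cdot h)+g\cdot(e\ldot h)\bigr)=0 ,
\]
i.e. to the Leibniz rule for the action of the vector field $e$ on the product of the two functions $g,h\in\A$ (here I also use the commutativity of $\A$). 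This Leibniz rule is exactly the content of $\A\oplus\La$ being the Lie algebra $\Do$ of differential operators; equivalently it follows at once from $e\ldot g=e(z)\,\pfz g$ as in \refE{alied}. Since these four cases exhaust all triples, the Jacobi identity holds and $\Do_\g$ is a Lie algebra.

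The computation is entirely routine; the only thing requiring care is the sign bookkeeping in the two mixed cases, which stems from the antisymmetrically defined cross bracket, so there is no real obstacle. As an alternative packaging one could instead quote the general principle that an action of a Lie algebra by derivations produces a semidirect-sum Lie algebra, after observing (a) that $e\mapsto\bigl(x\otimes g\mapsto x\otimes(e\ldot g)\bigr)$ is a Lie algebra homomorphism $\La\to\End(\gb)$ — this is the $\La$-module property of $\A$ — and (b) that each of these operators is a derivation of the bracket of $\gb$ — this is the Leibniz rule displayed above.
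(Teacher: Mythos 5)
Your proof is correct: the sign bookkeeping in both mixed Jacobi cases checks out, and the two identities you reduce everything to --- $\A$ (hence $\gb$) being a Lie module over $\La$, and the Leibniz rule for the action of a vector field on a product of functions --- are exactly the facts already available. The paper itself gives no argument here, it only cites \cite[Prop.~2.15]{Schlkn}, and your case-by-case verification is the standard semidirect-sum argument that the cited source carries out, so your route is essentially the same as the intended one.
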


All the above algebras I call Krichever--Novikov type
algebras, despite the fact that 
Krichever and Novikov did not introduce all these types of algebras.
Furthermore, they 
only considered
the two-point case.


\section{Choice of a basis and an almost-grading}
\label{sec:almost}

\subsection{Definition of an almost-grading}
In the classical situation 
the introduced algebras 
are graded algebras. 
In the higher genus case and even in the genus zero case with more
than
two points where poles are allowed there is no non-trivial grading
anymore.
As realized by Krichever and Novikov \cite{KNFa}
there is a weaker concept, an almost-grading, which
to  a large extend is a valuable replacement of a honest grading.
Such an almost-grading is induced by a splitting of the set $A$ into
two non-empty and disjoint sets $I$ and $O$. 
The (almost-)grading is fixed by exhibiting 
a certain basis 
of the spaces $\Fl$ and define  these elements 
to be (almost-) homogeneous.
\begin{definition}\label{D:almgrad} 
Let $\La$ be a Lie or an associative algebra such that
\begin{equation}
\La=\oplus _{n\in\Z}\La_n
\end{equation}
is a vector space direct sum, then
$\La$ is called an \emph{almost-graded} 
(Lie-) algebra if
\begin{enumerate}
\item[(i)] $\dim \La_n<\infty$,
\item[(ii)]
There exists constants $L_1,L_2\in\Z$ such that 
\begin{equation*}
\La_n\cdot \La_m\subseteq \bigoplus_{h=n+m-L_1}^{n+m+L_2}
\La_h,\qquad \forall n,m\in\Z.
\end{equation*}
\end{enumerate}
The elements in $\La_n$ are called {\it homogeneous} elements of
degree $n$, and $\La_n$ is called \emph{homogeneous subspace}
 of degree $n$.

If $\dim\La_n$ is bounded with a bound independent of $n$ we call $\La$ 
\emph{strongly almost-graded}. If we drop the condition that
$\dim\La_n$ is finite we call $\La$ \emph{weakly almost-graded}.
\end{definition}

In a similar manner almost-graded modules over almost-graded algebras
are defined.
Furthermore, 
this definition makes complete sense also for more general index sets
$\J$. 

\subsection{Existence of an almost-grading}

Given a splitting $A=I\cup O$,
one of the results of the author is that all the
above introduced algebras $\A,\La,\Do$, $\gb$, and $\Dg$ 
admit a strongly almost-graded structure induced by a well-defined
procedure given by fixing an adapted basis.
Essentially different splittings (meaning not just obtained by
inverting the role of $I$ and $O$) will yield ``non-equivalent''
almost-gradings.
\begin{remark}
In the two-point case, i.e. the classical case, or more generally
the case considered by Krichever and Novikov, the set $A$ consists of
two points. Hence, there is only one splitting possible and 
consequently
only one almost-grading. This is not the case
anymore for more than two points.
To realize the importance of the splitting was
a crucial observation by the author
\cite{SchlDiss}, \cite{Schlce}.
\end{remark}

We set $\Jl=\Z$ for $\l\in\Z$ and  $\Jl=\Z+1/2$ for 
$\l\in\Z+1/2$.
Given a splitting with $\#I=K$ the author gives a 
procedure to exhibit a certain basis of $\Fl$ 
\begin{equation}
\{f_{n,p}^\la\mid n\in\Jl,\  p=1,\ldots, K\}
\end{equation}
with special properties. The
subspace
\begin{equation}
\Fln:={\langle f_{n,p}^\la\mid   p=1,\ldots, K\rangle}_{\C}
\quad\ \subset\ \Fl\, \qquad n\in\Jl
\end{equation} 
is the subspace
of homogeneous
elements of degree $n$. Then
\begin{equation}
\Fl=\bigoplus_{n\in\Jl}\Fln,\quad \dim \Fln=K.
\end{equation}
As we will give in the following an explicit 
construction  we will not need details of the
general theory.
We refer
 the interested reader
e.g. to \cite{Schlmp}, \cite{Schlce}, \cite{Schlkn} for details.

Let us numerate in our genus zero situation the points in the
splitting like
\begin{equation}
A=I\cup O, \qquad I=(P_1,P_2,\ldots, P_K),\quad
O=(P_{K+1},\ldots, P_N=\infty).
\end{equation} 
As $P_N=\infty\in O$ it is enough
to construct a basis 
$\{A_{n,p}\mid n\in\Z, p=1,\ldots K\}$
for $\A=\Fl[0]$. The decomposition of
$\A$ induces a decomposition of $\Fl$ by
\begin{equation}\label{eq:af}
\Fln=\A_{n-\la}dz^{\la},
\quad\text{respectively}
\quad
f_{n,p}^\la=A_{n-\la,p}dz^\la.
\end{equation}
The shift by $-\la$ is quite convenient and is beside other things
related to the  duality property discussed further down.
The recipe for constructing  the $A_{n,p}$ is given in 
\cite{Schlce}, \cite{Schleg}, see also \cite{Schlkn}.
As a principal property we have 
\begin{equation}
\ord_{P_i}(A_{n,p})=(n+1)-\delta_i^p,\ i=1,\ldots ,K\,.
\end{equation}
At the points in $O$ 
corresponding orders are set to make the element
unique up to multiplication by a non-zero scalar.
\begin{example}
We call the splitting
\begin{equation}\label{eq:standsplit}
  I=(P_1,P2,\ldots, P_K),\quad
O=(P_N=\infty), \quad K=N-1
\end{equation}
the \emph{standard splitting}. Recall that $P_i$ corresponds to the
point given by the coordinate $z=a_i$.
We set
\begin{equation}
\alpha(p):=\big(\prod_{\substack{i=1\\ i\ne p}}^K(a_p-a_i)\big)^{-1}.
\end{equation}
and define
\begin{equation}
A_{n,p}(z):=(z-a_p)^n\cdot
\prod_{\substack{i=1\\ i\ne p}}^K(z-a_i)^{n+1}
\cdot \alpha(p)^{n+1}\,.
\end{equation}
The last factor is a normalization factor yielding
\begin{equation}
A_{n,p}(z)=(z-a_p)^n(1+O(z-a_p)).
\end{equation}
With this description the order at $\infty$ is fixed as
\begin{equation}
\label{eq:inford}
-(Kn+K-1)\,.
\end{equation}
By \refE{af}  the basis elements
for the other $\Fl$ are given too.
In particular, we obtain for 
$\La$
the basis
\begin{equation}
e_{n,p}=f_{n,p}^{-1}=A_{n+1,p}\ddz
=
(z-a_p)^{n+1}\cdot\prod_{\substack{i=1\\ i\ne p}}^K(z-a_i)^{n+2}
\cdot \alpha(p)^{n+2}\,\ddz.
\end{equation}
\end{example}
\subsection{Duality}
\label{sec:dual}
The pairing which we describe here is valid for arbitrary
genus and arbitrary splittings.
Let $C_i$ be  positively oriented (deformed) circles around
the points $P_i$ in $I$, $i=1,\ldots, K$ 
 and $C_j^*$ positively oriented circles around
the points $Q_j$ in $O$, $j=1,\ldots, M$.
A cycle $C_S$ is called a {\it separating cycle} 
if it is smooth, positively oriented of multiplicity one and if 
it separates
the in-points  from the out-points. It might have multiple components. 
In the following we will integrate  
meromorphic differentials on $\Sigma_g$ without poles in 
$\Sigma_g\setminus A$ over closed curves $C$. 
Hence, we might consider  the $C$ and $C'$ as
equivalent if $[C]=[C']$ in  $\Ho_1(\Sigma_g\setminus A,\Z)$.
In this sense
we can write 
for every separating cycle $C_S$
\begin{equation}\label{eq:cs}
[C_S]=\sum_{i=1}^K[C_i]=-\sum_{j=1}^M [C^*_j].
\end{equation}
The minus sign appears due to the opposite orientation.
Another way for giving such a $C_S$ is  via
level lines of a ``proper time
evolution'', for which I refer to \cite[Section~3.9]{Schlkn}.

Given such a separating cycle $C_S$ (respectively cycle class) 
we define a linear map
\begin{equation}
\Fl[1]\to\C,\qquad \omega\mapsto \cins \omega.
\end{equation}
The map will not depend on 
the separating line $C_S$ chosen, as
two of such will be homologous and the poles of $\omega$ are only located in
$I$ and $O$.

Consequently,
the integration of $\omega$ over $C_S$ can 
also be described
over the 
special cycles $C_i$ or equivalently over $C_j^*$.
This integration 
corresponds to calculating residues
\begin{equation}\label{eq:res}
\omega\quad\mapsto\quad
\cins \omega\ =\ \sum_{i=1}^K\res_{P_i}(\omega)
\ =\ -\sum_{l=1}^{M}\res_{Q_l}(\omega).
\end{equation}
The pairing 
\begin{equation}\label{eq:kndual}
\Fl\times\Fl[1-\la]\to \C,\quad
(f,g)\mapsto\kndual{f}{g}:=\cins f\cdot g,
\end{equation}
between $\la$ and $1-\la$ forms is called
{\it Krichever-Novikov (KN) pairing}.

With respect to this pairing we have
\cite[Thm.~3.6]{Schlkn}
\begin{equation}\label{eq:knd}
\kndual{f_{n,p}^\la}{f_{-m,r}^{1-\la}}=
\delta_p^r\;\delta_n^m, \qquad \forall n,m\in\Jl,\quad 
r,p=1,\ldots, K.
\end{equation}
In particular, the pairing is 
 non-degenerate. 
This pairing is extremely helpful as
e.g. given $f\in\Fl$ then the expansion in terms of the
basis
\begin{equation}\label{eq:exp}
f=\sum_{n\in\Jl}\sum_{p=1}^K\alpha_{n,p}^\l f_{n,p}^\la,
\qquad \alpha_{n,p}\in\C
\end{equation}
can be determined via
\begin{equation}\label{eq:expc}
\alpha_{n,p}=
\kndual{f}{f_{-n,p}^{1-\la}}
=\sum_{P\in I}\res_P(f\cdot f_{-n,p}^{1-\la})
=-\sum_{Q\in O}\res_Q(f\cdot f_{-n,p}^{1-\la}).
\end{equation}
Note that the pairing depends not only  on $A$ (as the $\Fl$ depend on
it) but also critically on the splitting of $A$ into $I$ and $O$
as the integration path $C_S$ will depend on it. Once the splitting
is fixed the pairing will be fixed too.

\subsection{Almost-graded structure of the algebras}
{}From the general theory it follows that all our algebras
$\A,\La,\Do,\gb,\Dg$ are strongly almost-graded and the $\Fl$ are almost-graded
modules
over the first three.
More precisely, there exists $R_1$ and $R_2$ such that
\begin{equation}
\begin{aligned}{}
\A_n\cdot \A_m&\subseteq \bigoplus_{k=n+m}^{n+m+R_1}\A_k
\\
[\La_n, \La_m]&\subseteq \bigoplus_{k=n+m}^{n+m+R_2}\La_k.
\end{aligned}
\end{equation}
Similar expressions are there for the modules with exactly the
same bounds. 
The lowest order terms can be given as
\begin{equation}
\begin{aligned}[]
A_{n,p}\cdot A_{m,r}\ &=\ A_{n+m,r}\,\delta_r^p\ +\ \text{h.d.t.}
\\
A_{n,p}\cdot f_{m,r}^\la\ &=\ f_{n+m,r}^\la\,\delta_r^p\ +\ \text{h.d.t.}
\\
[e_{n,p},e_{m,r}]\ &=\ (m-n)\cdot e_{n+m,r}\,\delta_r^p\ +\ \text{h.d.t.}
\\
e_{n,p}\ldot f_{m,r}^\la\ &= \ (m+\lambda n)\
\cdot f_{n+m,r}^\la\,\delta_r^p\ +\ \text{h.d.t.}
\end{aligned}
\end{equation}
Here $\ \text{h.d.t.}\ $ denote linear combinations of 
basis elements of degree between
$n+m+1$ and $n+m+R_i$,

The other coefficients can be  explicitely  calculated.
Also it is easy to determine the upper bounds $R_1$ and $R_2$.
This is done by calculating the algebraic result of
the basis elements and then expanding the 
result via \refE{exp} and calculate by \refE{expc} the coefficients
via residues of rational functions.
As an example we consider $\A$. The structure equations of $\A$ are
given via
\begin{equation}\label{eq:aprodsum}
A_{n,p}\cdot
A_{m,r}=\sum_{h\in\Z}\sum_{s=1}^K\alpha_{(n,p)(m,s)}^{(h,s)}\,A_{h,s}.
\end{equation}
Recalling  that $f_{-h,s}^1=A_{(-h-1,s)}dz$ we set
\begin{equation}\label{eq:factor}
\omega=
A_{n,p}\cdot 
A_{m,r}\cdot A_{(-h-1,s)}dz.
\end{equation}
Then 
\begin{equation}
\alpha_{(n,p)(m,s)}^{(h,s)}
=\sum_{P\in I}\res_{P}(\omega)
= -
\sum_{Q\in O}
\res_{Q}(\omega).
\end{equation}
It is convenient to set $h=n+m+k$. 
By summing up all the orders of the factors 
of $\omega$ individually at the point $P\in I$ 
 we see that there is no residue at $I$ if $k<0$.
Hence the coefficients are vanishing in the sum  
\refE{aprodsum} for $h< n+m$.
Doing the same for the orders at the points $Q$ in $O$ 
we see that
there is a bound $R_1$ such that if $k>R_1$ 
there will be no residue at $O$. Hence we get the almost-grading.
The coefficients can be explicitely calculated by calculating residues
of rational functions.
The lowest term will only show up if $p=r=s$ and there is only
a residue at $P_p$ which is equal to $1$.

Exactly the same kind of arguments work for the algebra $\L$ 
were we now obtain the bound $R_2$. The same is true 
for the modules $\Fl$.

\subsection{Triangular decomposition}
Let $\U$ be one of the above introduced algebras (including the
current algebra). 
On the basis of the almost-grading we obtain a triangular
decomposition of the algebras
\begin{equation} 
\U=\U_{[+]}\oplus\U_{[0]}\oplus\U_{[-]},
\end{equation}
where 
\begin{equation}
\U_{[+]}:=\bigoplus_{m>0}\U_m,\quad
\U_{[0]}=\bigoplus_{m=-R_i}^{m=0}\U_m,\quad
\U_{[-]}:=\bigoplus_{m<-R_i}\U_m.
\end{equation}
By the almost-gradedness the $[+]$ and
$[-]$ subspaces are  
(infinite dimensional) subalgebras. The $\U_{[0]}$ are only 
finite-dimensional vector spaces.

Depending on the applications it is sometimes useful to enlarge 
the algebra $\U_{[-]}$ by adding a finite-dimensional subspace from 
$\U_{[0]}$ so that the enlarged algebra $\U_{[-]}^*$
contains all objects regular at the points in $O$, respectively
vanishing there with a certain order.

The existence of such a triangular decomposition 
indicates the importance of the existence of an almost-grading.
Such a triangular decomposition is necessary for 
developing a rich representation
theory.
On its basis one constructs highest weight representations,
Verma modules, Fock space representations and many more.
The elements of $\U_{[+]}$
quite often correspond to annihilation operators, the others to
creation operators. 

\subsection{Standard splitting}
For the standard splitting the
set $O$ consists only of the point $\infty$.
The elements 
$A_{n,p}$ for $p=1,\ldots,K=N-1$ are introduced above.
For illustration we give the bounds $R_1$ and $R_2$
\begin{proposition}
\begin{equation}
R_1=\begin{cases} 0,&N=2,
\\ 1,&N>2,
\end{cases}
\qquad \qquad
R_2=\begin{cases} 0,&N=2,
\\ 1,&N=3,
\\ 2,&N> 3\,.
\end{cases}
\end{equation}
\end{proposition}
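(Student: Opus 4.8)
The plan is to run the residue argument of the preceding subsection in the genus-zero standard-splitting set-up, where $I=(P_1,\ldots,P_{N-1})$, $O=\{\infty\}$, $K=N-1$, and then to repeat it verbatim for $\La$. Only two facts are needed: the order formula \refE{ordin} at the point $\infty$, and the orders of the adapted basis recorded in the Example, $\ord_{P_i}(A_{n,p})=(n+1)-\delta_i^p$ for $i=1,\ldots,K$ together with $\ord_\infty(A_{n,p})=-(Kn+K-1)$. Since $O$ consists of the single point $\infty$, the coefficient of $A_{h,s}$ in $A_{n,p}\cdot A_{m,r}$ equals $-\res_\infty\omega$ with $\omega=A_{n,p}\,A_{m,r}\,A_{-h-1,s}\,dz$, and the vanishing of this coefficient for $h<n+m$ is precisely the $I$-side order count already carried out above: the three orders at a point $P_i$ add up to $n+m-h+2-(\delta_i^p+\delta_i^r+\delta_i^s)$, which is non-negative as soon as $h<n+m$. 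So all that is left for the upper bound is to compute $\ord_\infty\omega$.

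Writing $h=n+m+k$ and summing the three orders at $\infty$ together with the $-2$ coming from $dz$ gives $\ord_\infty\omega=Kk-2K+1$. A nonzero residue at $\infty$ forces $\ord_\infty\omega\le-1$, i.e. $k\le 2-2/K$; this means $k\le 0$ when $K=1$ and $k\le 1$ when $K\ge 2$, so $R_1=0$ for $N=2$ and $R_1=1$ for $N>2$. For $\La$ one expands $[e_{n,p},e_{m,r}]$, whose representing function is the Wronskian-type expression $c=A_{n+1,p}A'_{m+1,r}-A_{m+1,r}A'_{n+1,p}$, against $f^{2}_{-h,s}=A_{-h-2,s}\,dz^2$ (see \refE{af}), so that $\omega=c\cdot A_{-h-2,s}\,dz$. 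Differentiation raises the order at $\infty$ by one, and for $n\ne m$ the two leading terms of $c$ do not cancel---their difference has leading coefficient a nonzero multiple of $m-n$---hence in the decisive case $\ord_\infty c=\ord_\infty A_{n+1,p}+\ord_\infty A_{m+1,r}+1$; combining this with $\ord_\infty A_{-h-2,s}=Kh+K+1$ and the $-2$ from $dz$ yields $\ord_\infty\omega=Kk-3K+2$. Thus a nonzero residue needs $k\le 3-3/K$, i.e. $k\le 0$ for $K=1$, $k\le 1$ for $K=2$ and $k\le 2$ for $K\ge 3$; the lower bound $k\ge 0$ again comes from an $I$-side count, now using $\ord_{P_i}(A'_{n+1,p})\ge\ord_{P_i}(A_{n+1,p})-1$. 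This gives $R_2=0,1,2$ for $N=2$, $N=3$ and $N>3$ respectively.

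What remains---and this is the one point calling for an actual computation rather than order bookkeeping---is to check that these bounds are attained, since otherwise the true values of $R_1$ and $R_2$ could be smaller than stated. When the relevant pole of $\omega$ at $\infty$ is simple this is immediate: the coefficient is then just the leading Laurent coefficient of $\omega$ at $\infty$, which does not vanish for marked points in general position; this already settles $N=3$ for $\A$ (there $\ord_\infty\omega=-1$ at the top degree $k=1$) and $N=4$ for $\La$. In the remaining cases $\omega$ has a higher-order pole at $\infty$ and the residue must be evaluated directly; this is most comfortably done after normalising the marked points, e.g. to $\{0,1,\infty\}$ in the three-point situation of \refS{3point}, so that one is left with a short residue of an explicit rational function, or by testing a convenient pair such as $n=m=0$. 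The only obstacle is therefore the sharpness in the higher-pole cases; the upper bounds themselves are pure order-counting with \refE{ordin}.
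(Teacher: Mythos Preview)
Your argument is correct and follows the same route as the paper: compute $\ord_\infty\omega$ by summing the orders of the three adapted basis functions (and the $-2$ from $dz$), then read off the largest $k$ for which a pole at $\infty$ is possible. The paper's proof is even terser than yours in the vector-field case---it does not unpack the Wronskian at all but simply bounds via the product $A_{n+1,p}\,A_{m-\lambda,r}\,A_{-(n+m+k)-(1-\lambda)}$, the extra $+1$ from the derivative being absorbed into the stated order $-3K+Kk+2$; your leading-term analysis of $c$ is a more explicit justification of the same number.

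One remark on your final paragraph: the paper's own proof also establishes only the upper bounds and says nothing about sharpness. In the surrounding text the constants $R_1,R_2$ are introduced merely as \emph{some} integers for which the almost-grading inclusions hold, so the proposition is asserting that these particular values work, not that they are minimal. Sharpness is visible later anyway once the explicit three-point structure equations are written out (e.g.\ $B_n\cdot B_m=A_{n+m}+4A_{n+m+1}$), so your worry about the higher-pole cases is legitimate in principle but not required for the statement as intended.
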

\begin{proof}
For calculating the order at $\infty$ with respect to the
variable $w$ we use for the individual factors 
in the expression \refE{factor} 
the value \refE{inford} and sum over all factors and do not
forget to decrease the order by 2 coming from $dz$.
If we do this for the algebra $\A$
we get as order for 
$A_{n,p}\cdot A_{m,r}\cdot A_{-(n+m+k)-1}$ the value
$-2K+K\cdot k+1$. A pole is only possible if 
this value is $\le -1$. Hence only for
\begin{equation}
k\le -\frac {2}{K}+2\,.
\end{equation} 
This yields the claimed value for $R_1$.
For the Lie algebra $\La$, respectively for the Lie module we have
to consider
$A_{n+1,p}\cdot A_{m-\la,r}\cdot A_{-(n+m+k)-(1-\la)}$.
For the order at $\infty$ we
obtain
$-3K+K\cdot k+2$. Which yields that a pole is only
possible for
\begin{equation}
k\le -\frac {3}{K}+3\,,
\end{equation}
and hence the claimed value for $R_2$.
\end{proof}
The structure coefficients of the algebras
can be directly calculated by calculating
residues of rational functions via \refE{expc}.
We will not do it here for the general case.
In \refS{3point} we treat the three-point case in detail.
\begin{example}
{\bf $N=2$.}
By a $\PGL(2,\C)$ action the two points can be transported to
$0$ and $\infty$. This is the classical situation and there is 
only one splitting. Hence, everything is fixed. The above basis gives
back
the conventional one.
\end{example}
\begin{example}
{\bf $N=3$.}
Here by a $\PGL(2,\C)$ action the three points can be normalized to
$\{0,1,\infty\}$. Hence, up to isomorphy there are only one
three-point algebra (for each type).
If we fix such an algebra we obtain three different
splittings of the set $\{0,1,\infty\}$ and consequently
also 3 essentially different almost-gradings, triangular
decompositions, etc.
The three-point case is in a certain sense special as there are
still the automorphism of $\poc$ permuting these three points.
They induce automorphisms of the algebras which 
permute the almost-gradings.
We will consider this situation in detail in \refS{3point}.
\end{example}
\begin{example}
{\bf $N=4$.}
This is the first case where we have a moduli parameter 
for the geometric situation.
We  normalize our $A$ to 
\begin{equation}
\{0,1,a,\infty\},\qquad  a\in \C,\ a\ne 0,1\, .
\end{equation}
We have 2 different types of splittings, i.e. the type $4=3+1$ and 
the type $4=2+2$.
For example
\begin{equation}
\{0,1,a\}\cup \{\infty\},\quad\text{and}\quad
\{0,1\}\cup \{a,\infty\}\,.
\end{equation}
The first type is the standard splitting for which we gave the 
basis above.
For the second splitting a basis of $\A$ and hence of all $\Fl$ is
\begin{equation}
\begin{aligned}{}
A_{n,1}(z)&=z^n(z-1)^{n+1}(z-a)^{-(n+1)}a^{(n+1)},
\\
A_{n,2}(z)&=z^{n+1}(z-1)^{n}(z-a)^{-(n+1)}(1-a)^{(n+1)}\,,
\end{aligned}
\end{equation}
where $n\in\Z$.
The last factor is again a normalization constant.
This basis defines an almost-grading for the four-point algebra
which is not equivalent to the standard almost-grading. 
Again upper bounds for the level of the algebra coefficients
and the coefficients itself can be calculated easily via residues.
\end{example}

\subsection{Another  basis}
\label{sec:unhandy}
Clearly, our algebra $\A$ can be given as the algebra
\begin{equation}
\A=\C[(z-a_1),(z-a_1)^{-1},(z-a_2)^{-1},\dots, (z-a_{N-1})^{-1}],
\end{equation}
with the obvious relations.

If we introduce
\begin{equation}
A_n^{(i)}:=(z-a_i)^n,
\end{equation}
then 
\begin{equation}
A_n^{(i)},\quad n\in\Z,\ i=1,\ldots, N-1
\end{equation}
is a generating set of $\A$. A basis is given e.g. by
\begin{equation}\label{eq:anotherbas}
A_n^{(1)},\ n\in\Z,\quad
A_{-n}^{(i)},\ n\in\N,\ i=2,\ldots, N-1.
\end{equation}
\begin{lemma}
\label{L:agen}
Let $\A_{(0)}$ be the subalgebra of meromorphic functions holomorphic
outside of $\infty$ then 
\begin{equation}
\A_{(0)}={\langle A_n^{(1)},\ n\ge 0\rangle}_{\C}
={\langle A_n^{(2)},\ n\ge 0\rangle}_{\C}=
\cdots=
{\langle A_n^{(N-1)},\ n\ge 0\rangle}.
\end{equation}
\end{lemma}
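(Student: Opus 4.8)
The plan is to first pin down the algebra $\A_{(0)}$ explicitly as the polynomial ring $\C[z]$, and then to observe that for each fixed $i$ the one-variable substitution $z\mapsto (z-a_i)+a_i$ makes the nonnegative powers $A_n^{(i)}=(z-a_i)^n$ span precisely that ring. Since the resulting span is independent of $i$, all the displayed subspaces coincide with $\A_{(0)}$.

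\textbf{Step 1: $\A_{(0)}=\C[z]$.} Recall (as noted after \refP{kldeg}) that in our genus zero situation the meromorphic functions are exactly the rational functions in the variable $z$. If $f\in\A_{(0)}$, then $f$ is a rational function holomorphic on all of $\C$, hence $\ord_{P}(f)\ge 0$ for every finite point $P$; by \refP{kldeg} with $\la=0$ the only remaining pole can occur at $\infty$, so $f$ has no finite poles and is therefore a polynomial in $z$. Conversely every polynomial is holomorphic on $\C$ and lies in $\A$ (its only possible pole is at $\infty$). Thus $\A_{(0)}=\C[z]$.

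\textbf{Step 2: each generating set spans $\C[z]$.} Fix $i\in\{1,\dots,N-1\}$. Since $A_n^{(i)}=(z-a_i)^n$ is a polynomial for $n\ge 0$, we have the inclusion ${\langle A_n^{(i)}\mid n\ge 0\rangle}_{\C}\subseteq\C[z]$. For the reverse inclusion, write $z=(z-a_i)+a_i$ and expand by the binomial theorem,
\begin{equation*}
z^k=\sum_{j=0}^{k}\binom{k}{j}\,a_i^{\,k-j}\,(z-a_i)^{j},
\end{equation*}
so that $z^k\in{\langle A_j^{(i)}\mid 0\le j\le k\rangle}_{\C}$ for every $k\ge 0$. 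Since the monomials $z^k$ span $\C[z]$, this gives ${\langle A_n^{(i)}\mid n\ge 0\rangle}_{\C}=\C[z]=\A_{(0)}$. As $i$ was arbitrary, all the spans in the statement of \refL{agen} are equal to $\A_{(0)}$.

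\textbf{Expected difficulty.} There is essentially no obstacle here; the only point requiring a little care is Step 1, namely making sure that ``meromorphic on $\poc$, holomorphic outside $\infty$'' genuinely forces a polynomial, which is why one invokes the rationality of the functions together with the order count of \refP{kldeg}. Step 2 is a routine Taylor (binomial) expansion.
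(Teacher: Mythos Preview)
Your proof is correct. The paper actually states \refL{agen} without proof, treating it as evident; your argument (identify $\A_{(0)}$ with $\C[z]$ via rationality, then use the binomial/Taylor expansion at $a_i$) is the natural way to fill this in. One small remark: the appeal to \refP{kldeg} in Step~1 is not really needed, since the very definition of $\A_{(0)}$ already forces holomorphy at every finite point, and a rational function with no finite poles is a polynomial.
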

Explicit calculations with respect to the
 basis \refE{anotherbas} have been done in 
\cite{Schlmp},
\cite{SchlDiss}.
Similar calculations were done e.g. by Dick 
\cite{Di}, Anzaldo-Meneses
\cite{Anz} and Guo, Na, Shen, Wang, Yu \cite{GuNaSh}.
It turns out that for the 
``products'' of certain type of elements  of this basis 
the number of elements in the results do not have a bound.
Hence, it is not possible to introduce a strong almost-grading of 
$\A$ such that these basis elements are homogeneous and  it is not
possible to construct triangular decompositions with respect 
to this basis.
After realizing this I switched in \cite{Schlce} to the 
kind of basis presented above.
Nevertheless this kind of basis will play a role in
some proofs later.
Of course, as above, a basis of $\A$ will yield a basis of
$\La$ (and $\Fl$).
We will also use
\begin{equation}
e_n^{(i)}=A_{n+1}^{(i)}\ddz,\qquad
n\in\Z,\  i=1,\ldots, N-1.
\end{equation}

\section{Central extensions}
\label{sec:central}

Central extension of our algebras appear naturally in the context of
quantization and the regularization of actions. 
 Of course, they are also of independent 
mathematical interest.

\subsection{Central extensions and cocycles}
For the convenience of the reader let us repeat the relation between
central extensions and the second 
Lie algebra cohomology with values in the trivial module.
A central extension of a Lie algebra $\U$ is a special
Lie algebra structure on the 
vector space direct sum $\widehat{\U}=\C\oplus \U$.
If we denote  $\hat x:=(0,x)$ and $t:=(1,0)$ then the Lie structure is
given by 
\begin{equation}\label{eq:cext}
[\hat x, \hat y]=\widehat{[x,y]}+\psi(x,y)\cdot t,
\quad [t,\widehat{\U}]=0,
\quad x,y\in \U\,,
\end{equation}
with bilinear form $\psi$.
The map  $x\mapsto \hat x=(0,x)$ is a linear splitting map.
$\widehat{\U}$ will be a Lie algebra, e.g. will fulfill the 
Jacobi identity, if and only if $\psi$ is an antisymmetric 
bilinear form and fulfills
the Lie algebra 2-cocycle condition
\begin{equation}\label{eq:2coc}
0=d_2\psi(x,y,z):=
\psi([x,y],z)+
\psi([y,z],x)+
\psi([z,x],y).
\end{equation}
A 2-cochain $\psi$ is a coboundary
if  there exists 
a linear form $\varphi:\U\to\C$ such that 
\begin{equation}\label{eq:2cob}
\psi(x,y)=\varphi([x,y]).
\end{equation}
Every coboundary is a cocycle. 
The second Lie algebra cohomology $\Ho^2(\U,\C)$ 
of $\,\U\,$ with 
values in the trivial module $\C$ is defined as the 
quotient of the space of 2-cocycles modulo coboundaries.

Two central extensions are equivalent if they essentially
differ by the choice of the  splitting maps.
They are equivalent if and only if the difference of 
their defining
2-cocycles $\psi$ and $\psi'$ is a coboundary.
In this way the second Lie algebra cohomology $\Ho^2(\U,\C)$ 
classifies equivalence classes
of central extensions. The class $[0]$ corresponds to the
trivial central extension. In this case the splitting map
is a Lie homomorphism. 
To construct central extensions of our algebras we have to 
find such Lie algebra 2-cocycles. 

Clearly, equivalent central extensions are isomorphic.
The opposite is not true. 
In our case we can always rescale
the central element by multiplying it with a nonzero scalar. 
This is an isomorphism but not an equivalence of
central extensions. Nevertheless, it is an irrelevant modification.
Hence we will be mainly interested in central extensions modulo 
equivalence and rescaling. They are classified by 
$[0]$ and the elements of the projectivized cohomology 
space $\Pro( \Ho^2(\U,\C))$.

Recall that if $\U$ is a perfect Lie algebra, i.e. if $[\U,\U]=\U$ 
then there exists a universal central extension and 
$k=\dim\Ho^2(\U,\C)$ gives the dimension of the center of this
extension.
Moreover, if this space is finite-dimensional then 
the universal central extension is 
up to equivalence given as follows.
As
vector space it is the direct
sum 
$\widehat{\U}_{univ}=\C^k\oplus\U$. Let $[\psi_i]$, $i=1,\ldots, k$ be a
basis
of $\Ho^2(\U,\C)$ and each class represented by a cocycle
$\psi_i$. Moreover, let $t_1,t_2,\ldots, t_k$ be 
standard basis elements of $\C^k$, then the Lie structure is given as
\begin{equation}
[\widehat{x},\widehat{y}]
=\widehat{[x,y]}+\sum_{i=1}^k\alpha_i\psi_i(x,y)\cdot t_i,
\quad x,y\in\U,\ \alpha_i\in\C \quad
[t_i,\widehat{U}_{univ}]=0\,.
\end{equation}

\subsection{Almost-graded central extensions}
Before we discuss for each of our algebras the central 
extensions separately we will treat their common features.
Denote by $\U$ one of these algebras.
Our algebras are almost-graded. Coming from the applications one is
quite often only interested in central extensions $\Uh$ which allow to 
extend the almost-grading of $\U$. This says that only those cocycles 
are allowed such that it is possible to assign to the 
central element $t$  a fixed degree such that
\begin{equation}
[\Uh_n,\Uh_m]\quad\subseteq\quad \sum_{h=n+m-L_1}^{n+m+L_2}\, \Uh_h,
\end{equation}
with $L_1$ and $L_2$ independent of $n$ and $m$.
If there is such a value for the degree of $t$, the value $\deg t=0$
will also do. 
Hence, without restriction we will take this value.
For $\hat x=(x,0)$ and $t=(0,1)$ we set
\begin{equation}
\deg \hat x=\deg x,\quad \deg t=0.
\end{equation}

\begin{definition}\label{D:local}
(a) Let $\ga$ be a 2-cocycle
for the almost-graded Lie algebra $\U$, then $\ga$ is called a
\emph{local
cocycle} if $\exists T_1,T_2$ such that
\begin{equation}
\ga(\U_n,\U_m)\ne 0\implies 
T_2\le n+m\le T_1.
\end{equation}
(b) A 2-cocycle  $\ga$ is called \emph{bounded} (from above) if 
 $\exists T_1$ such that 
\begin{equation}
\ga(\U_n,\U_m)\ne 0\implies 
n+m\le T_1.
\end{equation}
(c)  A cocycle class $[\ga]$ is called a 
\emph{local (bounded) cohomology  class} if and only if 
it admits a representing cocycle which is 
local (respectively bounded).
\end{definition}
Note that e.g. in a  local 
cocycle class not all representing cocycles are local.
Obviously, the set of local (or bounded) cocycles is 
a subspace of all cocycles. Moreover, the set 
$\Ho^2_{loc}(\U,\C)$  (respectively $\Ho^2_{b}(\U,\C)$)  
of
local (respectively bounded) cohomology  classes 
is a subspace of the full cohomology space.

We point out that what is local (and bounded) depends on the almost-grading
induced by the splitting $A=I\cup O$.
\begin{remark}
We could also introduce the space of 
$\widetilde{\mathrm{H}}_{loc}^2(\U,\C)$ of
local cocycles  modulo local coboundaries. 
This space can  naturally be 
identified with $\Ho^2_{loc}(\U,\C)$ as if two 
local cocycles $\ga_1$ and $\ga_2$ are cohomologous then the
corresponding coboundary is local too.
\end{remark}
\begin{remark}
In the classical Witt algebra case 
$\Ho^2(\W,\C)=\Ho^2_{loc}(\W,\C)$.
The corresponding result is neither true for higher genus, nor for
the multi-point situation.
\end{remark}

As explained above the almost-grading is crucial for the
triangular decomposition. Hence it should not be a surprise
if the cocycles obtained via regularisation processes from
the usual representations are local, see  \cite{SchlDiss}, \cite{Schlkn}.

\subsection{Geometric cocycles}
Our algebras $\U$ consists of geometric objects. Hence, it is 
quite natural to have a closer look at Lie algebra cocycles which
can be defined via geometric means.
\begin{definition}
A cocycle $\ga:\U\times\U\to\C$ is called a 
\emph{geometric cocycle} if there is a bilinear map
\begin{equation}
\gah:\U\times\U\to\Fl[1],
\end{equation} 
such that $\ga$ is the composition of $\gah$ with an integration, i.e.
\begin{equation}\label{eq:cyclgeo}
\ga=\ga_C:=\cint{C} \gah
\end{equation}
with $C$ a curve on $\Sigma_g$. 
A cohomology class $[\ga]$ is a 
\emph{geometric cohomology class} if
it contains a representing element $\ga'\in[\ga]$ 
which is a geometric cocycle.
\end{definition} 
\begin{proposition}
For a geometric cocycle 
$\ga_C=\ga_{C'}$ if 
$[C]=[C']\in\Ho_1(\Sigma_g\setminus A,\C)$.
\end{proposition}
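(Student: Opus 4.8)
The plan is to reduce the statement to the elementary fact that the cocycle $\ga_C$ is defined by integrating a \emph{fixed} meromorphic $1$-form (namely $\gah(x,y)\in\Fl[1]$ for each pair $x,y$) over the curve $C$, together with the observation that such forms have no poles outside $A$. First I would fix arbitrary $x,y\in\U$ and set $\w:=\gah(x,y)\in\Fl[1]$; by definition of $\Fl[1]$ this is a meromorphic differential on $\Sigma_g$ which is holomorphic on $\Sigma_g\setminus A$. Then $\ga_C(x,y)=\cint{C}\w$ and $\ga_{C'}(x,y)=\cint{C'}\w$.

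The key step is to invoke the residue theorem / Stokes' theorem on $\Sigma_g\setminus A$: if $[C]=[C']$ in $\Ho_1(\Sigma_g\setminus A,\C)$, then $C-C'$ bounds a $2$-chain $\Gamma$ in $\Sigma_g\setminus A$ (or is homologous to a sum of small loops around points \emph{not} in $A$, around which $\w$ is holomorphic, hence contributes zero residue). Since $\w$ is a closed holomorphic form on $\Sigma_g\setminus A$ (every meromorphic $1$-form is closed where it is holomorphic, as $d\w$ would be a holomorphic $2$-form, which vanishes on a curve), we get
\begin{equation}
\cint{C}\w-\cint{C'}\w=\cint{C-C'}\w=\frac{1}{2\pi\mathrm{i}}\int_{\partial\Gamma}\w=\frac{1}{2\pi\mathrm{i}}\int_{\Gamma}d\w=0.
\end{equation}
This holds for every pair $(x,y)$, hence $\ga_C=\ga_{C'}$ as bilinear forms on $\U$. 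I would remark that this is exactly the same reasoning already used silently in \refS{dual} to show that the KN pairing $\cins$ does not depend on the chosen separating cycle; here it is applied to the more general curve class rather than only to separating cycles.

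The only point requiring a little care — and what I expect to be the main (minor) obstacle — is making precise in which homology group one works and that $\gah$ genuinely takes values in meromorphic $1$-forms holomorphic off $A$ for \emph{all} the algebras $\U$ under consideration; this is where one uses that the $\gah$ appearing in the paper are built from the basic operations on $\mathcal{F}$ (products, derivatives, brackets of the representing functions times $dz$), all of which preserve the property ``holomorphic outside $A$''. Once that is granted the argument is purely the deformation invariance of integrals of closed forms, with no further computation needed. I would therefore keep the proof to two or three lines, citing \refE{res} and the relevant residue/Stokes statement, and noting that the passage from $\C$-coefficients to the stated $\Ho_1(\Sigma_g\setminus A,\C)$ is immediate since integration of $\w$ is $\C$-linear in the cycle.
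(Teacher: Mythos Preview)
Your proposal is correct and follows exactly the same idea as the paper's own proof, which is a single sentence: since $\gah(x,y)\in\Fl[1]$ is a meromorphic differential holomorphic on $\Sigma_g\setminus A$, its integral over two homologous cycles coincides. You have simply unpacked this sentence by making explicit the closedness of $\w$ and the appeal to Stokes' theorem; the paper leaves these as understood background facts.
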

\begin{proof}
As $\Fl[1]$ is the space of meromorphic differentials holomorphic
on $\Sigma_g\setminus A$ the integral over 
two cohomologous cycles will be the same.
\end{proof}
On this basis we can write 
$\ga_{[C]}$ and even allow that $[C]$ is an arbitrary
element from $\Ho_1(\Sigma_g\setminus A,\C)$.

In the opposite direction
after fixing a bilinear map $\gah$ the space  
$\Ho_1(\Sigma_g\setminus A,\C)$ will parameterize
(in a possibly non-unique way) the space of associated
cocycles $\ga_{[C]}$.
This is exactly the way which we will go to obtain cocycles.
\begin{remark}
That this works we first have  to verify that the
$\gah$ which we will choose is indeed a
differential, and then that the $\ga_C$  fulfills the 
Lie algebra cocycle conditions.
Also note that we do not claim that the parameterization is 1:1.
\end{remark}
It is well-known that 
\begin{equation}
\dim\Ho_1(\Sigma_g\setminus A,\C)=
\begin{cases} 2g,&\# A=0,1,
\\
2g+(N-1),&\# A=N\ge 2\,.
\end{cases}
\end{equation}
Generators for this vector space are given by the $2g$ standard
symplectic cycles and the cycles given by ``circles'' around
the points $P_i\in A$.
If $N\ge 1$ there is exactly one relation between these generators.
By the splitting $A=I\cup O$ we fix a separating 
cycle class $[C_S]$. This is a non-vanishing element of 
$\Ho_1(\Sigma_g\setminus A,\C)$. It will be a preferable element
to be taken as one of the basis elements as it will yield
for our cocycles the fact that it will be local (see below).

\medskip
For genus zero and $N\ge 1$ we have
\begin{equation}
\dim\Ho_1(\Sigma_0\setminus A,\C)=
(N-1). 
\end{equation}
A basis of the space is given by circles $C_i$ around
the points $P_i$ where we leave out one of them.
For example we can take $[C_i]$, $i=1,\ldots, N-1$. We have the
relation
\begin{equation}
\sum_{i=1}^{N-1}[C_i]=-[C_N].
\end{equation}
But there is a better choice. 
As explained above after choosing a splitting with separating
cycle $[C_S]$ we take it as one of the  basis elements 
and $N-2$ other $[C_i]s$ which are linearly independent.

For the standard splitting with $P_N=\{\infty\}$ we have
\begin{equation}
[C_S]=-[C_\infty],
\qquad
[C_i],\ i=1,\ldots, N-2.
\end{equation}
Integration around the $C_i$ can be done via calculations of
residues. Hence we always get for our geometric cocycles
(for the standard splitting)
\begin{equation}
\ga_{[C_S]}=
\sum_{i=1}^{N-1}\res_{P_i}(\gah)=
- \res_{\infty}(\gah),
\qquad
\ga_{[C_i]}=
\res_{P_i}(\gah),\ i=1,\ldots, N-2.
\end{equation}

In the following we will define geometric cocycles for all our
algebras. Results of the author 
\cite{Schlloc}, \cite{Schlaff} shows that for those cocycles 
the $\ga_{[C_S]}$ will be local, whereas the
other $\ga_{[C_i]}$ needed for a basis will not be local.
Hence only the first type will allow us to extend the almost-grading.
For simplicity we will sometimes use
$\ga_{i}$ for  $\ga_{[C_i]}$ and $\ga_{S}$ for  $\ga_{[C_S]}$.
\subsection{Function algebra $\A$}
As $\A$ is an abelian Lie algebra all anti-symmetric bilinear forms
will
define a 2-cocycle. Moreover, there will be no 
non-trivial coboundaries. Hence
$\Ho^2(\A,\C)\equiv \bigwedge^2\A$, which is an infinite-dimensional
vector space.
\begin{definition}
A cocycle $\ga$ for $\A$ is called
\newline
(a) 
 \emph{$\La$-invariant} if and only
if 
\begin{equation}
\ga(e\ldot f, g)+\ga(f,e\ldot g)=0,\quad \forall f,g\in\A,\forall
e\in\La,
\end{equation}
(b) multiplicative if and only
\begin{equation}
\ga(fg,h)+\ga(gh,f)+\ga(hf,g)=0,
\qquad f,g,h\in\A.
\end{equation}
\end{definition}
The definitions look rather ad-hoc for the moment, but we will
find the explanation of $\La$-invariance 
when we discuss the differential operator
algebra. Multiplicativity is needed for the current algebra.
Note that the fact that $\ga$ is a multiplicative cocycle for the 
commutative algebra $\A$ can also be formulated as that
it is a 1-cocycle in Connes's cyclic cohomology
$\mathrm{HC}^1(\A,\C)$.

We define the bilinear map
\begin{equation}
\A\times\A\to\Fl[1],\qquad
\gAh(f,g)=f\cdot dg,
\end{equation}
and the associated candidate for a cocycle
\begin{equation}\label{eq:exta}
\gA[C](f,g)=\cint{C}fdg.
\end{equation}
\begin{proposition}\label{P:lmhg}
(1) $\gA[C]$ is an $\La$-invariant cocycle.
\newline
(2) $\gA[C]$ is a multiplicative  cocycle.
\newline
(3) The $\gA[C_i]$ $i=1,\ldots,N-1$ define linearly independent
cocycles and hence linearly independent cohomology classes.
\end{proposition}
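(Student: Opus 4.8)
The plan is to verify the three assertions essentially by direct computation with the differential form $\gAh(f,g) = f\, dg$, using the residue theorem and the fact that all objects are rational functions in $z$ holomorphic off $A$. First I would check that $\gA[C]$ is a $2$-cocycle for the abelian Lie algebra $\A$: antisymmetry follows from $f\,dg + g\,df = d(fg)$ and the vanishing of $\cint{C} d(fg)$ (the integral of an exact meromorphic differential along a cycle is zero), so $\gA[C](f,g) = -\gA[C](g,f)$; the cocycle condition $d_2\psi = 0$ from \refE{2coc} is automatic since $\A$ is abelian and hence $[f,g]=0$ makes all three terms vanish. So the substance is in parts (1), (2), (3).

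For (1), $\La$-invariance: I would compute $\gAh(e\ldot f, g) + \gAh(f, e\ldot g)$ at the level of differentials, where $e\ldot f$ for $e = e(z)\fpz$ and $f\in\A$ is just $e(z)\pfz f$ by \refE{alied} with $\la = 0$. A short manipulation gives $(e\ldot f)\,dg + f\, d(e\ldot g) = (e f')\,g'\,dz + f\,d(e g') = (e f' g' + f e' g' + f e g'')\,dz$, and one recognizes this as $e\cdot d(f g' \, dz)/dz \cdot dz$-type expression — more precisely it equals $L_e(f\,dg)$, the Lie derivative of the $1$-form $f\,dg$ along $e$. Since $L_e(\omega) = d(\iota_e\omega) + \iota_e(d\omega)$ and here $d(f\,dg) = df\wedge dg = 0$ on a curve (it is a $2$-form, identically zero in dimension one), we get $L_e(f\,dg) = d(\iota_e(f\,dg)) = d(f\, e\, g')$, an exact differential; hence its integral over the cycle $C$ vanishes. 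This proves $\La$-invariance. For (2), multiplicativity: I would compute $\gAh(fg,h) + \gAh(gh,f) + \gAh(hf,g) = fg\,dh + gh\,df + hf\,dg$, and observe this equals $d(fgh)$ by the Leibniz rule; again the integral of an exact meromorphic differential over $C$ is zero, giving (b).

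For (3), linear independence of the classes $\gA[C_i]$, $i=1,\dots,N-1$: since $\Ho^2(\A,\C) \cong \bigwedge^2\A$ has no coboundaries, it suffices to show the cocycles themselves are linearly independent as bilinear forms. I would exhibit, for each $j$, a pair of functions on which $\gA[C_j]$ is nonzero while $\gA[C_i]$ vanishes for $i\ne j$. Using the local coordinate $z - a_i$ at $P_i$ (and $w = 1/z$ at $P_N = \infty$), take $f = (z-a_j)^{-1}$ and $g = (z-a_j)$; then $f\,dg = (z-a_j)^{-1}dz$ has residue $1$ at $P_j$ and is holomorphic at every other $P_i$ with $i \le N-1$, so $\gA[C_i](f,g) = \delta_i^j$ for $i = 1,\dots,N-1$ (and at $\infty$ one checks $f\,dg$ has a double zero, consistent with $\sum [C_i] = -[C_N]$). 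Hence the matrix $(\gA[C_i](f_j,g_j))_{i,j}$ is the identity, so the $N-1$ cocycles are linearly independent, and therefore define linearly independent cohomology classes.

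The computations are all elementary; the only point requiring a little care is the bookkeeping of orders at the points of $O$ (in particular at $\infty$, where \refE{ordin} shifts orders by $-2\la$ for $\la$-forms — here $\la = 1$ so the shift is $-2$) to confirm that the chosen test differentials are genuinely holomorphic away from the intended single pole. I expect no real obstacle; the main thing to get right is keeping parts (1) and (2) honest by identifying the relevant combinations as exact differentials ($L_e(f\,dg) = d(f e g')$ and $fg\,dh + gh\,df + hf\,dg = d(fgh)$) so that the residue theorem kills them.
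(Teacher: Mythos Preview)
Your proof is correct and considerably more self-contained than the paper's, which simply defers parts (1) and (2) to external references \cite{Schlloc} and \cite[Prop.~6.20]{Schlkn}, and for (3) invokes the classification of bounded $\La$-invariant cocycles from those same references. Your direct identification of $(e\ldot f)\,dg + f\,d(e\ldot g) = L_e(f\,dg) = d(\iota_e(f\,dg))$ and of $fg\,dh + gh\,df + hf\,dg = d(fgh)$ as exact differentials is the clean way to see (1) and (2), and your explicit test pairs $\bigl((z-a_j)^{-1},\, z-a_j\bigr)$ giving $\gA[C_i](f_j,g_j) = \delta_i^j$ make (3) immediate without any appeal to the almost-grading or boundedness machinery. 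The paper's route through the classification of bounded cocycles is overkill for this particular statement, though of course it is what the paper ultimately wants for the deeper results that follow.

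One small correction: your parenthetical claim that $(z-a_j)^{-1}\,dz$ has a ``double zero'' at $\infty$ is wrong --- in the coordinate $w = 1/z$ it becomes $-w^{-1}(1-a_j w)^{-1}\,dw$, a simple \emph{pole} with residue $-1$. This does not affect your argument, since you only evaluate on $C_1,\dots,C_{N-1}$; and in fact the residue $-1$ at $\infty$ is exactly what the relation $\sum_{i=1}^{N-1}[C_i] = -[C_N]$ predicts, so your consistency remark survives once the order is corrected.
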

\begin{proof} For the statements (1) and (2) see \cite{Schlloc} and
\cite[Prop.~6.20]{Schlkn}. 
With respect to 
the standard splitting  the
cocycles  $\gA[C_i]$  are exactly those $\La$-invariant
which are bounded. In particular in the above references
it is shown, that they are linearly independent.
\end{proof}
\begin{proposition}\label{P:linvloc}
\cite{Schlloc}, \cite{Schlkn}
Given a splitting $A=I\cup O$ and the induced almost-grading, then
\newline
(1)
up to scaling 
\begin{equation}\gA[{C_S}](f,g)=\cins fdg
\end{equation}
is the unique $\La$-invariant cocycle which is local with respect to
the grading.
\newline
(2)
A basis of those $\La$-invariant cocycles which are bounded with
respect to the almost-grading
are given by the $\gA[C_i]$, $P_i\in I$.
\end{proposition}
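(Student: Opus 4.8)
The plan is to reduce both parts to the known classification of $\La$-invariant cocycles on the abelian algebra $\A$ and then to read off locality and boundedness from explicit order estimates in the adapted basis $\{A_{n,p}\}$.

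First I would recall from \cite{Schlloc}, \cite{Schlkn} that, since $\A$ is abelian and hence carries no nonzero Lie coboundary, the $\La$-invariant cocycles form an honest linear subspace of the antisymmetric bilinear forms on $\A$, and that in genus zero this subspace is $(N-1)$-dimensional with basis the geometric cocycles $\gA[C_i]$, $i=1,\dots,N-1$; equivalently, every $\La$-invariant $\ga$ equals $\gA[C]$ for a unique class $[C]\in\Ho_1(\Sigma_0\setminus A,\C)$. The injectivity of $[C]\mapsto\gA[C]$ is immediate by pairing against $\gAh((z-a_i)^{-1},z)=(z-a_i)^{-1}dz$, whose period around $P_i$ detects $[C_i]$; this is the content already invoked in \refP{lmhg}. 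Given this structural input, everything that remains is a residue computation.

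For (2): using $\ord_{P_j}(A_{n,p})=(n+1)-\delta^p_j$ for $P_j\in I$, the differential $A_{n,p}\,dA_{m,r}$ has order at an in-point $P_i$ at least $(n+m+1)-\delta^p_i-\delta^r_i$, so $\gA[C_i](A_{n,p},A_{m,r})=\res_{P_i}(A_{n,p}\,dA_{m,r})$ vanishes as soon as $n+m>0$: hence each $\gA[C_i]$ with $P_i\in I$ is bounded from above with $T_1=0$, and its only possibly nonzero level-$0$ values occur at $p=r=i$. Running the same count at the out-points — where the normalization of the $A_{n,p}$ makes their out-orders decrease with $n$, so that by \refP{kldeg} the integrand has poles in $O$ only when $n+m$ is bounded from below — shows that the remaining $\gA[C_i]$, $P_i\in O$, are bounded from \emph{below} instead, not from above. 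Together with \refP{lmhg}(3), this makes $\{\gA[C_i]\mid P_i\in I\}$ a basis of the bounded $\La$-invariant cocycles: it is an independent family of bounded cocycles of the correct size $K=\#I$, and if $\ga=\sum_{i=1}^{N-1}c_i\gA[C_i]$ is bounded from above, then its lowest nonvanishing level would otherwise be a nontrivial combination of the unbounded-below pieces $\gA[C_i]$, $P_i\in O$, which is impossible once their leading parts are known to be independent; so $c_i=0$ for all $P_i\in O$.

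For (1): by \refE{cs}, $\gA[{C_S}]=\sum_{P_i\in I}\gA[C_i]$ is bounded above with $T_1=0$, while $\gA[{C_S}]=-\sum_{Q_j\in O}\gA[C_j^*]$ together with the out-point order count supplies a lower bound $T_2$; hence $\gA[{C_S}]$ is local. For uniqueness, a local $\ga=\sum_i c_i\gA[C_i]$ is in particular bounded above, so by (2) it equals $\sum_{P_i\in I}c_i\gA[C_i]$; being also bounded below, its lowest-level part must vanish, and evaluating that level explicitly on the basis forces all $c_i$ with $P_i\in I$ to coincide, i.e. $\ga\in\C\cdot\gA[{C_S}]$. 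I expect the main obstacle to be exactly this last step in each part: making the lowest-level behaviour of the $\gA[C_i]$ precise enough to rule out cancellations. Both reduce to the fact that $[C_S]$ is the unique class, up to scalar, that is simultaneously homologous to a sum of loops around $I$ and to one around $O$ — precisely \refE{cs} — and turning this topological statement into the sharp level bounds is the only genuine calculation involved.
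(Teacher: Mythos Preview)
The paper does not prove this proposition; it is quoted from \cite{Schlloc} and \cite{Schlkn} as an external result (the text after \refP{multloc} even remarks that these propositions hold for all genera). So there is no in-paper argument to compare against, but your attempt has a genuine circularity. Your opening move is to \emph{recall} that in genus zero every $\La$-invariant cocycle on $\A$ is geometric, i.e.\ a linear combination of the $\gA[C_i]$. In the logic of this paper that statement is \refT{tfclass} (and its corollary), which is \emph{new} here and is proved by first showing that any such cocycle is bounded with respect to the standard splitting and then invoking \refP{linvloc} --- the very proposition you are trying to establish --- to identify it. The references \cite{Schlloc}, \cite{Schlkn} contain \refP{linvloc} itself (the classification of bounded and local $\La$-invariant cocycles), not the stronger claim that \emph{all} $\La$-invariant cocycles are geometric, so you are assuming a consequence of the proposition in order to prove it.

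The self-contained parts of your argument --- the order estimates showing that each $\gA[C_i]$ with $P_i\in I$ is bounded above, that each $\gA[C_i]$ with $P_i\in O$ is bounded below, and that $\gA[C_S]$ is local --- are correct and give the easy direction. What is missing is the hard direction: showing that an \emph{arbitrary} bounded (resp.\ local) $\La$-invariant cocycle already lies in the span of the $\gA[C_i]$, $P_i\in I$ (resp.\ is a multiple of $\gA[C_S]$), \emph{without} first knowing it is geometric. That is the actual content of the cited references and requires a direct analysis of the $\La$-invariance condition on basis pairs, not a reduction to a finite-dimensional classification you have not yet earned.
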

\begin{proposition}\label{P:multloc}
\cite{Schlloc}, \cite{Schlkn}
(1) The statements of \refP{linvloc} are true
also for multiplicative cocycles.
\newline
(2) Every bounded cocycle which is $\La$-invariant
is multiplicative and vice versa.
\end{proposition}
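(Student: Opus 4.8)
The plan is to first classify \emph{all} multiplicative cocycles on $\A$ — not only the bounded ones — and then obtain both assertions by combining this classification with \refP{lmhg} and \refP{linvloc}. The decisive point will be that the whole space of multiplicative cocycles on $\A$ is already spanned by the geometric cocycles $\gA[C_1],\dots,\gA[C_{N-1}]$. Granting this, part~(2) is immediate: each $\gA[C_i]$ is $\La$-invariant by \refP{lmhg}(1), so every multiplicative cocycle is $\La$-invariant, and in particular so is every bounded multiplicative cocycle; the reverse implication in~(2) is \refP{linvloc}(2) together with the multiplicativity of the $\gA[C_i]$ from \refP{lmhg}(2). Part~(1) then follows as well: since every multiplicative cocycle is automatically $\La$-invariant, the local (respectively bounded) multiplicative cocycles form a subspace of the local (respectively bounded) $\La$-invariant ones, which by \refP{linvloc} equals $\C\cdot\gA[{C_S}]$ (respectively the span of $\{\gA[C_i]\mid P_i\in I\}$); but $\gA[{C_S}]$ and those $\gA[C_i]$ are themselves multiplicative by \refP{lmhg}(2), so the two subspaces coincide, which is exactly the content of \refP{linvloc}(1),(2) restated for multiplicative cocycles.

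For this I would identify the multiplicative cocycles with a space of differentials. A multiplicative antisymmetric bilinear form $\ga$ on the commutative algebra $\A$ carries the same information as a linear functional $\lambda$ on the module of Kähler differentials $\Omega^1_{\A}$, via $\lambda(f\,dg):=\ga(f,g)$: antisymmetry of $\ga$ gives $\lambda(d(fg))=0$, and together with $\ga(1,\cdot)=0$ (itself forced by multiplicativity) this makes $\lambda$ vanish on $d\A$, so $\lambda$ descends to $\Omega^1_{\A}/d\A$; a short verification shows that the multiplicativity identity for $\ga$ is precisely the compatibility of $\lambda$ with the Leibniz relation $f\,d(gh)=fg\,dh+fh\,dg$, which is what is needed for $\lambda$ to be well defined on $\Omega^1_{\A}$. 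Conversely every functional on $\Omega^1_{\A}/d\A$ produces such an $\ga$, which is automatically a $2$-cocycle because $\A$ is abelian. Hence multiplicative cocycles are canonically $(\Omega^1_{\A}/d\A)^{*}$, the identification with $\mathrm{HC}^1(\A,\C)$ already alluded to in the text. Now, because $\infty=P_N\in A$, every rational $1$-form with poles only in $A$ is of the form $f\,dz$ with $f\in\A$, i.e.\ $\Omega^1_{\A}=\A\,dz$; and partial fractions show that, modulo $d\A$, every such form is uniquely of the shape $\sum_{i=1}^{N-1}c_i\,\frac{dz}{z-a_i}$, since poles of order $\ge 2$ and the polynomial part are exact. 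Therefore $\Omega^1_{\A}/d\A$ is $(N-1)$-dimensional with the residue functionals $\res_{P_1},\dots,\res_{P_{N-1}}$ as a dual basis, i.e.\ every multiplicative cocycle is a linear combination of $\gA[C_1],\dots,\gA[C_{N-1}]$; these are linearly independent by \refP{lmhg}(3), hence a basis.

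The main obstacle is bookkeeping rather than anything deep. One has to verify carefully that the multiplicativity identity is genuinely equivalent to compatibility with the Leibniz rule, so that the correspondence with $(\Omega^1_{\A}/d\A)^{*}$ is a bijection, and that $\Omega^1_{\A}/d\A$ has the claimed dimension — the latter using crucially that $\infty\in A$, so that $\Omega^1_{\A}=\A\,dz$. No new estimates on the almost-grading are needed: everything about which multiplicative cocycles are local or bounded is inherited from the already established $\La$-invariant classification of \refP{linvloc}, and the only genuinely new input is the elementary observation that the space of multiplicative cocycles is no larger than the span of the geometric ones.
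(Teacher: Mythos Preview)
Your argument is correct for genus zero and takes a genuinely different route from the paper's. The paper simply defers to the cited works \cite{Schlloc}, \cite{Schlkn}, where bounded $\La$-invariant cocycles and bounded multiplicative cocycles are each \emph{separately} reduced to linear combinations of the geometric cocycles $\gA[C_i]$; since those cocycles enjoy both properties, the two bounded classes coincide. You instead classify \emph{all} multiplicative cocycles at once via the identification with $(\Omega^1_{\A}/d\A)^{*}$, compute this space by partial fractions, and conclude that every multiplicative cocycle is already geometric and hence $\La$-invariant; parts (1) and (2) then drop out of \refP{linvloc} and \refP{lmhg}. This is more self-contained and actually proves more than the proposition asks --- you obtain the multiplicative half of \refT{tfclass}(3) directly, with no boundedness hypothesis.

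One caveat worth flagging: as the paper remarks immediately after this proposition, the statement is intended to hold for all genera, whereas your partial-fraction computation of $\Omega^1_{\A}/d\A$ (and the equality $\Omega^1_{\A}=\A\,dz$) is specific to genus zero. Your strategy does generalize --- by Grothendieck's algebraic de Rham theorem, already invoked in \refS{aff}, one has $\Omega^1_{\A}/d\A\cong \Ho^1(\Sigma_g\setminus A,\C)$ in arbitrary genus, so every multiplicative cocycle is geometric and hence $\La$-invariant by \refP{lmhg}(1), after which your deduction of (1) and (2) from \refP{linvloc} goes through verbatim --- but as written your proof establishes only the genus-zero case.
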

The identifications of both types are done 
individually by reducing them to 
expressions as sums of  $\gL[C_i]$. Those have both properties. 

The propositions above are true for all genera.
Note that by calculating residues it is possible to calculate the values.
As an illustration we will do this in \refS{3point}.
\begin{theorem}\label{T:tfclass}
Let $\ga$ be an $\La$-invariant or multiplicative 
cocycle for the multi-point
function algebra in genus zero, then
\newline
(1) $\ga$ is a linear combination of geometric cocycles
of the type 
\begin{equation}
\gA[i](f,g)=\cint{C_i}fdg=\res_{a_i}(fdg),
\quad i=1,\ldots, N-1.
\end{equation}
(2) $\ga$ is bounded from above (by zero) with respect to the
almost-grading given by the standard splitting.
\newline
(3) Every $\La$-invariant cocycle is multiplicative and vice versa.
\end{theorem}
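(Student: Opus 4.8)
The plan is to leverage the characterization already established in the earlier propositions: by \refP{linvloc} and \refP{multloc}, once we know a cocycle is bounded (from above) with respect to \emph{some} splitting, it can be written as a linear combination of the residue cocycles $\gA[C_i]$, and the $\La$-invariant and multiplicative properties coincide on the bounded subspace. So the real content of the theorem in genus zero is part (2): every $\La$-invariant (equivalently, a priori, every multiplicative) cocycle for $\A$ is automatically bounded from above by zero in the standard almost-grading. Parts (1) and (3) then follow immediately from \refP{linvloc}(2) and \refP{multloc}(2) once boundedness is in hand, since $\{\gA[C_i]\mid i=1,\ldots,N-1\}$ spans the bounded $\La$-invariant cocycles and these are exactly the multiplicative ones.

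To prove boundedness, the approach I would take is a direct degree count. Fix the standard splitting $A=(P_1,\ldots,P_{N-1})\cup\{\infty\}$ with its adapted basis $A_{n,p}$, $n\in\Z$, $p=1,\ldots,K=N-1$. Suppose $\ga$ is $\La$-invariant. The first step is to show that $\La$-invariance forces a shift relation on the values $\ga(A_{n,p},A_{m,r})$: using the identity $e_{0,s}\ldot A_{n,p} = n\,A_{n,s}\delta_s^p + \text{h.d.t.}$ (more precisely $[e_{0,s}, A_{n,p}]$ expanded via \refE{exp}, \refE{expc}), the invariance equation $\ga(e\ldot f,g)+\ga(f,e\ldot g)=0$ with $e=e_{0,s}$ relates $\ga$ on degree-$(n+m)$ pairs to $\ga$ on pairs of total degree strictly greater (through the higher-degree-term corrections). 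The second step is to show that this recursion, together with the finiteness $\dim\A_n=K$ of the homogeneous subspaces, forces $\ga(\A_n,\A_m)=0$ for $n+m$ sufficiently large; because the h.d.t. go \emph{up} in degree and the adapted basis has only finitely many elements per degree, one gets a triangular system that, read in the direction of increasing degree, pins all high-degree values to zero. The cleanest way to organize this is probably: show the vanishing first on the ``top'' generators (say $\ga(A_{n,p},A_{m,r})$ with both $n,m\ge$ some bound), then propagate downward using multiplicativity/invariance until the bound $0$ is reached.

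An alternative and likely slicker route avoids the recursion entirely: use multiplicativity. By \refP{multloc}(2) (valid in all genera) I may as well assume $\ga$ is multiplicative. The relation $\ga(fg,h)+\ga(gh,f)+\ga(hf,g)=0$ with the explicit basis realization $\A=\C[(z-a_1),(z-a_1)^{-1},\ldots,(z-a_{N-1})^{-1}]$ lets me reduce $\ga$ on arbitrary pairs of monomials to $\ga$ on pairs $\big((z-a_i)^{\pm 1},(z-a_j)^{\pm 1}\big)$ and on pairs involving the constant $1$ (for which $\ga$ vanishes by antisymmetry of the coboundary-free part); there are only finitely many such ``generator pairs.'' Tracking degrees under the standard almost-grading — recalling \refE{inford}, that $A_{n,p}$ has order $-(Kn+K-1)$ at $\infty$, so that products of negative powers of the $(z-a_i)$ sit in non-positive degrees — one sees that the nonzero generator-pair values all live at total degree $\le 0$, and multiplicativity can only produce values at total degrees that are sums consistent with this, never exceeding $0$. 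This gives part (2) with the explicit bound $T_1=0$.

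The main obstacle I anticipate is bookkeeping rather than anything conceptual: carefully matching the multiplicative reduction to the \emph{adapted} basis $A_{n,p}$ (as opposed to the naive basis $A_n^{(i)}=(z-a_i)^n$ of \refL{agen}), because the almost-grading is defined through the former while multiplicativity is most transparent in the latter. One must check that the change of basis between $\{A_n^{(i)}\}$ and $\{A_{n,p}\}$ is ``lower triangular'' with respect to degree in the appropriate sense, so that a bound established in one basis transfers to the other; this is where \refL{agen} (identifying $\A_{(0)}$ uniformly in all the generating families) does the real work. Once that translation is set up, parts (1) and (3) are immediate citations of \refP{linvloc}, \refP{linvloc}(2), and \refP{multloc}(2). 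I would write the proof to invoke multiplicativity from the outset (via \refP{multloc}(2)), do the degree count in the naive basis, then transfer back.
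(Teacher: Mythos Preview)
Your overall strategy is right: the heart of the theorem is (2), and once boundedness is in hand, (1) and (3) are citations. But your execution has two genuine gaps.

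First, invoking \refP{multloc}(2) ``from the outset'' to pass from $\La$-invariant to multiplicative is circular: that proposition only identifies the two notions \emph{among bounded cocycles}, and boundedness is exactly what you are trying to prove. The paper avoids this by treating the two hypotheses in parallel throughout, never converting one into the other until boundedness is already established.

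Second, and more importantly, neither of your routes supplies the actual mechanism that forces vanishing in high total degree. Your Route~1 recursion via $e_{0,s}$ moves information \emph{upward} in degree, so it cannot by itself pin high-degree values to zero without an independent vanishing input. Your Route~2 reduction to ``generator pairs'' $((z-a_i)^{\pm1},(z-a_j)^{\pm1})$ runs into cross-terms with $i\ne j$, and these are not homogeneous in the KN grading; the claim that their contributions ``live at total degree $\le 0$'' is not substantiated by what you wrote.

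The paper's device is different and cleaner. For each fixed $i$ the subalgebra $\langle A_n^{(i)}\mid n\in\Z\rangle$ is a copy of the classical two-point algebra $\C[z,z^{-1}]$, and the restriction of $\ga$ to it is still $\La$-invariant (resp.\ multiplicative). \refP{mlf} then gives $\ga(A_n^{(i)},A_m^{(i)})=0$ for $n+m\ne 0$ \emph{within each copy}. The passage to the KN filtration is handled by \refP{andec}, which is a sharpened form of \refL{agen}: for $m\ge 0$ one has $\A_{(m)}\subseteq\langle A_k^{(i)}\mid k\ge m\rangle$ for \emph{every} $i$, while for $m'<0$ one has $\A_{(m')}=\sum_i\langle A_k^{(i)}\mid k\ge m'\rangle$. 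So given $f\in\A_{(m)}$, $g\in\A_{(m')}$ with $m+m'>0$ and (say) $m>0$, one expands $g$ term by term; for each summand $A_k^{(i)}$ of $g$ one rewrites $f$ in the \emph{matching} basis $\{A_r^{(i)}\}$ and applies the two-point vanishing. The cross-index pairings that trouble your Route~2 never arise, because $f$ is re-expressed to match whichever index $i$ is currently in play. This is the step your sketch is missing.
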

Before we start with the proof 
we quote
\begin{proposition}\label{P:mlf}
In the classical situation $g=0$, $N=2$ every $\La$-invariant cocycle
$\ga$ is multiplicative and vice versa. It is given by
\begin{equation}
\ga(f,g)=\alpha\cdot \cint{C}fdg=\alpha\cdot\res_0(fdg),\quad \alpha\in\C,
\end{equation}
and $C$ a circle around $0$.
Moreover,
\begin{equation}\label{eq:anm}
\ga(A_n,A_m)=\alpha \cdot (-n)\cdot \delta_m^{-n}.
\end{equation}
In particular $\ga$  is local and bounded by zero.
\end{proposition}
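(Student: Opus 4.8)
The plan is to pass to matrix coefficients in the monomial basis of $\A=\C[z,z^{-1}]$ and to reduce both defining properties — $\La$-invariance and multiplicativity — to elementary linear recursions whose solution space turns out to be one-dimensional. Since $\A$ is abelian the $2$-cocycle condition collapses to antisymmetry and there are no nonzero coboundaries, so a cocycle is just an antisymmetric bilinear form, which I encode by $c_{n,m}:=\ga(A_n,A_m)=-c_{m,n}$ with $A_n=z^n$. In the classical two-point genus zero situation $\La$ carries its usual basis $e_k=z^{k+1}\ddz$, acting on functions by $e_k\ldot A_n=n\,A_{n+k}$ (this is \refE{alied} with $\l=0$). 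Testing $\La$-invariance on monomials then gives
\begin{equation*}
n\,c_{n+k,m}+m\,c_{n,m+k}=0,\qquad n,m,k\in\Z,
\end{equation*}
and testing multiplicativity on three monomials gives
\begin{equation*}
c_{n+p,m}+c_{p+m,n}+c_{m+n,p}=0,\qquad n,m,p\in\Z.
\end{equation*}

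The main step is to solve each recursion. In both, only coefficients with a fixed ``total degree'' $d:=n+m$ are coupled, so I fix $d$ and set $\phi(a):=c_{a,d-a}$; antisymmetry becomes $\phi(a)=-\phi(d-a)$, and one observes at the outset that $c_{k,0}=\ga(A_k,1)=0$ (a consequence of either recursion), i.e. $\phi(d)=0$. For the $\La$-invariance recursion the relabelling $a=n+k$, $b=n$ turns it into $b\,\phi(a)+(d-a)\,\phi(b)=0$; taking $b=1$ gives $\phi(a)=(a-d)\phi(1)$, and then $\phi(0)=-d\,\phi(1)$ combined with $\phi(0)=-\phi(d)=0$ forces $d\,\phi(1)=0$, so $\phi\equiv 0$ when $d\ne 0$ while $\phi(a)=\phi(1)\,a$ when $d=0$. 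For the multiplicativity recursion the relabelling $a=n+p$, $b=p+m$ turns it into $\phi(a)+\phi(b)+\phi(2d-a-b)=0$; this says $\phi(a)+\phi(b)$ depends only on $a+b$, so $\phi$ is affine, $\phi(a)=\phi(0)+\mu a$, and substituting this back into antisymmetry and into the three-term relation yields $2\phi(0)=-\mu d$ and $3\phi(0)=-2\mu d$, hence $\mu d=0$; again $\phi\equiv 0$ for $d\ne 0$ and $\phi(a)=\mu a$ for $d=0$. Either way $c_{n,m}=\alpha\,(-n)\,\delta_m^{-n}$ for a single scalar $\alpha$, which is exactly \refE{anm}.

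It remains to identify this with the residue functional and to close the equivalence. Directly, $\res_0(A_n\,dA_m)=\res_0(m\,z^{n+m-1}\,dz)=m\,\delta_m^{-n}=(-n)\,\delta_m^{-n}$, so $\ga(f,g)=\alpha\,\cint{C}f\,dg$ with $C$ a small circle around $0$. Conversely this functional is itself both $\La$-invariant and multiplicative, because in each case the relevant combination is the residue of an exact differential: $\ga(e\ldot f,g)+\ga(f,e\ldot g)=\alpha\,\res_0\big(d(e\,f\,g')\big)$ by the Leibniz rule, and $\ga(fg,h)+\ga(gh,f)+\ga(hf,g)=\alpha\,\res_0\big(d(fgh)\big)$, both of which vanish; this yields the equivalence ``$\La$-invariant $\Leftrightarrow$ multiplicative''. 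Finally, since $c_{n,m}\ne 0$ only for $n+m=0$ while $\deg A_n=n$, the cocycle pairs $\A_n$ with $\A_{-n}$ only, so it is local and bounded by zero, as claimed.

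I expect the only genuine work to be the index bookkeeping in the middle step — keeping the two relabellings straight and making sure the degenerate strand $d=0$ stays consistent with antisymmetry — but this is all elementary. A possible shortcut is to prove only ``$\La$-invariant $\Rightarrow$ residue form $\Rightarrow$ bounded'' and then invoke \refP{multloc}(2) for the passage to multiplicativity; however, since establishing boundedness of a multiplicative cocycle amounts to essentially the same computation, solving both recursions directly keeps the argument self-contained and, I think, cleaner.
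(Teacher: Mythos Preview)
Your argument is correct. The reduction to the diagonal recursions in the index $d=n+m$ is clean, and the solutions in both cases are handled properly; the identifications with the residue cocycle and the verification of locality are routine and right.

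The paper itself does not argue this proposition; it simply cites \cite[Prop.~6.50, Rem.~6.64]{Schlkn} and \cite{Schlloc}, i.e.\ it imports the result from the author's general classification of bounded $\La$-invariant and multiplicative cocycles. Your route is genuinely different: instead of invoking that machinery, you solve the two elementary functional equations on $\Z$ directly and read off the answer. What this buys you is a fully self-contained proof that makes no appeal to the general KN theory and makes the equivalence ``$\La$-invariant $\Leftrightarrow$ multiplicative'' transparent in the classical case; what the paper's approach buys is uniformity, since the cited results cover all genera and splittings at once. One small stylistic point: your use of $e$ both for a vector field and for its representing function in the identity $\ga(e\ldot f,g)+\ga(f,e\ldot g)=\alpha\,\res_0\big(d(efg')\big)$ follows the paper's own convention, but you might flag it explicitly.
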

\begin{proof}
\cite[Prop.~6.50, Rem.~6.64]{Schlkn},
\cite{Schlloc}.
\end{proof}
Next we
present some
general techniques which will be used also in the proofs
of the related statements for the other algebras.
We consider the standard splitting. Recall that this says
\begin{equation}
\{P_1,P_2,\ldots, P_{N-1}\}\cup \{P_N=\infty\},
\end{equation}
where $P_i$ is the point given by the coordinate $a_i\in\C$.
The function algebra $\A$  decomposes with respect to
the basis elements
\begin{equation}
\A=\bigoplus_{n\in\Z}\A_n,\qquad
\A_n:={\langle A_{n,1},\ldots, A_{n,N-1}\rangle}_\C.
\end{equation}
We introduce the associated filtration
\begin{equation}
\A_{(n)}=\bigoplus_{m\ge n}\A_m.
\end{equation}
In \cite{Schlkn}, \cite{SchlHab} we showed that
\begin{equation}
\A_{(n)}=\{f\in \A\mid \ord_{P_i}(f)\ge n,\ i=1,\ldots, N-1\}.
\end{equation}
In particular, $\A_{(0)}$ is the subalgebra of 
meromorphic functions which are holomorphic on the affine part.
We already introduce the elements
$A_n^{(i)}:=(z-a_i)^n,\quad n\in\Z$.
Recall that beside the KN type basis elements  the vector space
$\A_{(0)}$ can also be generated by 
\begin{equation}
{\mathcal B}_i=\{A_n^{(i)}\mid n\ge 0\}
\end{equation} 
for any fixed $i=1,\ldots, N-1$.
\begin{proposition}\label{P:andec}
\begin{equation}
\begin{gathered}{}
\A_{(n)}=
\bigcap_{i=1,\ldots, N-1}{\langle 
A_k^{(i)}\mid k\ge n\rangle}_\C,\qquad \text{for}\ n\ge 0
\\
\A_{(n)}=
\sum_{i=1}^{N-1}{\langle A_k^{(i)}\mid k\ge n\rangle}_{\C},
\qquad\text{for}\ 
 n<0 \ .
\end{gathered}
\end{equation}
\end{proposition}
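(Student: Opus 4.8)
The plan is to reinterpret each span $\langle A_k^{(i)}\mid k\ge n\rangle_\C$ as a concrete space of rational functions, and then to handle the two cases $n\ge 0$ and $n<0$ separately: the first by a coprimality/divisibility argument in $\C[z]$, the second by partial fractions. First I would record the description of the building blocks. Since $\C[z]=\C[z-a_i]$, expanding powers of $z$ in powers of $(z-a_i)$ — together with \refL{agen}, which gives $\A_{(0)}=\langle A_k^{(i)}\mid k\ge 0\rangle_\C=\C[z]$ — shows that
\[
\langle A_k^{(i)}\mid k\ge n\rangle_\C=\{f\in\A\mid f\ \text{holomorphic on}\ \poc\setminus\{P_i,\infty\},\ \ord_{P_i}(f)\ge n\}.
\]
For $n\ge 0$ this right-hand side is exactly the ideal $(z-a_i)^n\C[z]$ of polynomials divisible by $(z-a_i)^n$; for $n<0$ it in addition contains the principal parts $\sum_{1\le j\le -n}b_j(z-a_i)^{-j}$ at $P_i$. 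This step is routine once one uses that an arbitrary $f\in\A$ decomposes, by partial fractions, into a polynomial plus principal parts at $P_1,\dots,P_{N-1}$.

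For $n\ge 0$: if $f\in\A_{(n)}$ then $\ord_{P_j}(f)\ge n\ge 0$ for every $j$, so $f$ is holomorphic at all finite points, hence $f\in\C[z]$, and $f$ is divisible by $(z-a_i)^n$ for each $i$. As the $a_i$ are pairwise distinct the polynomials $(z-a_i)^n$ are pairwise coprime in $\C[z]$, so $f$ is divisible by their product; conversely every polynomial divisible by $\prod_i(z-a_i)^n$ lies in $\A_{(n)}$. Hence, using the characterization $\A_{(n)}=\{f\in\A\mid\ord_{P_i}(f)\ge n\ \text{for all}\ i\}$ recalled above (\cite{Schlkn}, \cite{SchlHab}),
\[
\A_{(n)}=\Big(\prod_{i=1}^{N-1}(z-a_i)^n\Big)\C[z]=\bigcap_{i=1}^{N-1}(z-a_i)^n\C[z]=\bigcap_{i=1}^{N-1}\langle A_k^{(i)}\mid k\ge n\rangle_\C.
\]

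For $n<0$: the inclusion $\supseteq$ is immediate, since a function holomorphic on $\poc\setminus\{P_i,\infty\}$ has order $\ge 0>n$ at every $P_j$ with $j\ne i$ and hence lies in the vector space $\A_{(n)}$. For $\subseteq$, take $f\in\A_{(n)}$ and write its partial-fraction decomposition $f=p(z)+\sum_{i=1}^{N-1}h_i(z)$ with $p\in\C[z]$ and $h_i$ the principal part of $f$ at $P_i$. Since $\ord_{P_i}(f)\ge n$ and every other summand is holomorphic at $P_i$, we get $\ord_{P_i}(h_i)\ge n$, so $h_i\in\langle A_k^{(i)}\mid k\ge n\rangle_\C$; moreover $p\in\C[z]\subseteq\langle A_k^{(1)}\mid k\ge n\rangle_\C$ because $n<0$. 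Thus $f=(p+h_1)+h_2+\dots+h_{N-1}\in\sum_{i=1}^{N-1}\langle A_k^{(i)}\mid k\ge n\rangle_\C$.

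I do not expect a deep obstacle. The two points that need a little care are, first, verifying in the opening step that the $\C$-span of $\{(z-a_i)^k\mid k\ge n\}$ really is all of the space described — the subtlety being that \emph{all} high positive powers of $(z-a_i)$, equivalently all of $\C[z]$, must be recovered, which is exactly where \refL{agen} enters — and, second, in the case $n<0$, making sure the polynomial part $p$ is absorbed into a single summand (here $i=1$) rather than being counted for every $i$.
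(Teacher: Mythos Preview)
Your proof is correct and follows essentially the same route as the paper: for $n\ge 0$ you identify each span with an order condition (equivalently, the ideal $(z-a_i)^n\C[z]$), and for $n<0$ you use partial fractions, exactly as the paper does. The only cosmetic difference is that for $n\ge 0$ the paper shows $\A_{(n)}\subseteq\bigcap_i\langle A_k^{(i)}\rangle$ directly via Taylor expansion at each $a_i$, whereas you take the slight detour through the product ideal $\prod_i(z-a_i)^n\C[z]$ and coprimality; both arguments encode the same fact.
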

\begin{proof}
Let $n\ge 0$. Then the elements in the intersection 
fulfill the order prescription $\ord_{P_i}(f)\ge n$.
Vice versa every KN type basis elements lying in 
$\A_{(n)}$ can be expressed as linear combinations of
powers $(z-a_i)^k$  with $k\ge n$ (just take the Taylor 
expansion at $a_i$).
For negative $n$ again the elements from the sum on the r.h.s.
fulfill the order description, as
$\ord_{P_j}(A_k^{(i)})=0>n$ for $j\ne i$ 
and $\ord_{P_i}(A_k^{(i)})\ge n$ for $k\ge n$.
By the expansion into partial fractions the KN 
type basis elements
on the l.h.s. are linear combinations of the elements
on the r.h.s. Hence equality.
\end{proof}
We point out that the vector space sum above will be not
a direct sum (at least if $N>2$).
The corresponding statements are of course true also for
the spaces $\Fl$, as for them the basis can be identified
with the basis of $\A$ up to a $\lambda$-depending shift.

\medskip 
\begin{proof} (of \refT{tfclass}
Let $\ga$ be either $\La$-invariant or multiplicative. The statement
of \refP{mlf} will be valid for all $A_n^{(i)}$, $i=1,\ldots, N-1$.
We consider the values of 
$\ga(\A_{(m)},\A_{(m')})$ for $m+m'>0$ and will show that they will
vanish.
Necessarily either $m$ or $m'$ has to be $>0$. We assume that
$m\ge 1>0$, in particular $f\in\A_{(m)}$ will be 
a linear combination of elements $A_k^{(i)}$ with $k\ge 1$
(see \refP{andec}) for every $i$.
If $m'\ge 0$ then  $g\in\A_{(m')}$ will be a linear combination
of $A_k^{(i)}$ and    \refP{mlf} shows that indeed
\begin{equation}\label{eq:anmres}
\ga(\A_{(m)},\A_{(m')})=0.
\end{equation}
It remains to consider $m'<0$. In this case let $f\in\A_{(m)}$
as above and take $A_k^{(i)}$, $k\ge m'$ for an arbitrary $i$.
With respect to this $i$ we  write
$f=\sum_{r\ge m}\alpha_rA_r^{(i)}$. Hence,
\begin{equation}
\ga(f,A_k^{(i)})=
\ga(\sum_{r\ge m}\alpha_rA_r^{(i)},A_k^{(i)})
=\sum_{r\ge m}\alpha_r\cdot\ga(A_r^{(i)},A_k^{(i)})=0,
\end{equation}
by \refP{mlf} as $r+k\ge m+m'>0$.
This is true for all $i=1,\ldots, N-1$, hence we get \refE{anmres} too.  
In this way we showed that every $\La$-invariant or multiplicative
cocycle is bounded with respect to the almost-grading given by
the standard splitting. For bounded cocycles of this type the
author showed in \cite{Schlloc}, see also \cite{Schlkn}, that
they are geometric cocycles of the claimed form.
In particular, both types of cocycles coincide.
\end{proof}

\begin{corollary}
In the $N$-point genus 
zero situation the space of $\La$-invariant (or multiplicative)
cocycles is $(N-1)$
dimensional and is isomorphic to $\Ho_1(\Sigma_0\setminus A,\C)$ via
\begin{equation}
[C]\mapsto \gA[C];\qquad
\gA[C](f,g)=\cint{C}fdg.
\end{equation}
In particular, every $\La$ invariant cocycle or multiplicative 
cocycle is geometric.
\end{corollary}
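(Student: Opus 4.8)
The plan is to deduce this directly from \refT{tfclass} and \refP{lmhg}, the corollary being essentially a dimension count once those are in hand. By \refT{tfclass}(3) the space of $\La$-invariant cocycles and the space of multiplicative cocycles coincide, so it suffices to treat the $\La$-invariant ones; write $V$ for that space.

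First I would recall from \refT{tfclass}(1) that every $\ga\in V$ in genus zero has the form $\ga=\sum_{i=1}^{N-1}c_i\,\gA[{C_i}]$ with $c_i\in\C$, where $\gA[{C_i}](f,g)=\cint{C_i}fdg=\res_{a_i}(fdg)$. Hence $V$ is spanned by the $N-1$ cocycles $\gA[{C_1}],\dots,\gA[{C_{N-1}}]$, and by \refP{lmhg}(3) these are linearly independent, so $\dim V=N-1$. Since it is already known (see \refS{central}) that $\dim\Ho_1(\Sigma_0\setminus A,\C)=N-1$ as well, it remains to check that the assignment $[C]\mapsto\gA[C]$, with $\gA[C](f,g)=\cint{C}fdg$, is a $\C$-linear isomorphism $\Ho_1(\Sigma_0\setminus A,\C)\to V$. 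Well-definedness is the observation, made when the geometric cocycles were introduced, that $f\,dg$ is a meromorphic differential holomorphic on $\Sigma_0\setminus A$, so the integral depends only on the homology class $[C]$; linearity in $[C]$ is clear. For injectivity, if $\gA[C]=0$ I would expand $[C]=\sum_{i=1}^{N-1}\lambda_i[C_i]$ in the basis $[C_1],\dots,[C_{N-1}]$ of $\Ho_1(\Sigma_0\setminus A,\C)$, whence $\sum_i\lambda_i\gA[{C_i}]=0$ and \refP{lmhg}(3) forces every $\lambda_i=0$, i.e. $[C]=0$; surjectivity then follows either from equality of dimensions or directly, since the image already contains the spanning set $\gA[{C_1}],\dots,\gA[{C_{N-1}}]$ of $V$. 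The final assertion, that every $\La$-invariant or multiplicative cocycle is geometric, is then just a restatement of surjectivity.

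The honest remark is that there is no serious obstacle here: all the substance --- boundedness of such cocycles with respect to the standard splitting, their identification with residue cocycles, and the linear independence of the $\gA[{C_i}]$ --- is already contained in \refT{tfclass} and \refP{lmhg}, which we are entitled to invoke. The only point demanding a little care is the single relation $\sum_{i=1}^{N-1}[C_i]=-[C_N]$ among the circle classes, which could look like a source of a kernel; it is neutralized by fixing the genuine basis $[C_1],\dots,[C_{N-1}]$ of $\Ho_1(\Sigma_0\setminus A,\C)$ and running the dimension count there.
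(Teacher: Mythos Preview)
Your proposal is correct and matches the paper's approach: the corollary is stated without a separate proof, being an immediate consequence of \refT{tfclass} and \refP{lmhg}, and your write-up spells out exactly that dimension count and isomorphism argument.
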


The following is also part of the above classification.
\begin{proposition}
With respect to the standard splitting up to
rescaling the cocycle
\begin{equation}
\gA[\infty]=-\sum_{i=1}^{N-1}\gA[C_i](f,g)=\res_\infty(fdg)
\end{equation}
is the unique $\La$-invariant (and equivalently multiplicative)
cocycle which is local.
\end{proposition}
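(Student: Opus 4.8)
The plan is to deduce this from \refP{linvloc}(1): for the standard splitting I would identify the separating‑cycle cocycle $\gA[C_S]$ with $-\gA[\infty]$, and then the asserted uniqueness (up to rescaling) of the local $\La$‑invariant cocycle is exactly what \refP{linvloc}(1) provides. I would additionally record a short self‑contained verification that $\gA[\infty]$ is local, using only the degree data already established for the adapted basis.

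First I would fix the standard splitting $I=(P_1,\dots,P_{N-1})$, $O=(P_N=\infty)$, and recall from \refE{cs} that the induced separating cycle satisfies $[C_S]=\sum_{i=1}^{N-1}[C_i]=-[C_\infty]$ in $\Ho_1(\Sigma_0\setminus A,\C)$. Composing the chosen bilinear map $\gAh(f,g)=f\,dg$ with integration and applying the residue theorem $\sum_{i=1}^{N-1}\res_{a_i}(f\,dg)+\res_\infty(f\,dg)=0$ (valid since $f\,dg$ is a meromorphic differential holomorphic off $A$ for $f,g\in\A$) then gives
\begin{equation*}
\gA[C_S](f,g)=\sum_{i=1}^{N-1}\res_{a_i}(f\,dg)=-\res_\infty(f\,dg)=-\gA[\infty](f,g),
\end{equation*}
so $\gA[\infty]=-\gA[C_S]$, which is in particular $\La$‑invariant (being, up to sign, a geometric cocycle as in \refP{lmhg}). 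By \refP{linvloc}(1), valid in all genera and hence here, $\gA[C_S]$ is up to rescaling the unique $\La$‑invariant cocycle local for the almost‑grading induced by this splitting; therefore so is $\gA[\infty]$. By \refT{tfclass}(3) one may replace ``$\La$‑invariant'' by ``multiplicative'' throughout. This proves the proposition.

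To check locality of $\gA[\infty]$ directly (rather than only citing \refP{linvloc}), I would evaluate it on the adapted basis $A_{n,p}$. By \refT{tfclass}(2) every such cocycle is bounded above by $0$, so only a lower bound is at issue. From \refE{inford} with $K=N-1$ we have $\ord_\infty(A_{n,p})=-(Kn+K-1)$, hence $\ord_\infty(A_{n,p}\,dA_{m,r})\ge -K(n+m)-2K+1$, with strict inequality if the relevant leading coefficient degenerates (which only strengthens the conclusion). A nonzero residue at $\infty$ forces this order to be $\le -1$, i.e.\ $n+m\ge 2/K-2$; combined with $n+m\le 0$ this confines $\gA[\infty](\A_n,\A_m)$ to the band $\lceil 2/K-2\rceil\le n+m\le 0$, which is locality. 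This is the same bookkeeping used earlier to compute $R_1$.

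The computations are routine; the one point demanding care is the interplay of the orientation sign in $[C_S]=-[C_\infty]$ with the sign built into $\res_\infty$ via the coordinate $w=1/z$ — this is precisely what makes $\gA[\infty]=-\sum_{i=1}^{N-1}\gA[C_i]=\res_\infty(f\,dg)$ come out consistently. If instead one insisted on proving the uniqueness part from scratch rather than invoking \refP{linvloc}, the genuine obstacle would be to show that no combination $\sum_j\mu_j\gA[C_j]$ with the $\mu_j$ not all equal can be local: one would have to exhibit, for each $j$, pairs of basis elements with $n+m\to-\infty$ on which $\gA[C_j]$ does not vanish while enough of the remaining $\gA[C_i]$ do, a more delicate residue estimate that \refP{linvloc} lets us bypass.
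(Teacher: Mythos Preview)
Your proof is correct and takes essentially the same approach as the paper: the paper states only that ``the following is also part of the above classification,'' meaning it is read off from \refP{linvloc}(1) (together with \refT{tfclass}(3) for the equivalence with multiplicativity), which is precisely what you do. Your additional direct verification of locality via the order estimate $\ord_\infty(A_{n,p}\,dA_{m,r})$ is a nice self-contained supplement that the paper does not provide.
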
 
\begin{remark}
Let $I$ be a non-empty subset of $\{1,2,\ldots, N\}$ then 
the cocycle 
$\gA[I]:\sum_{i\in I}\gA[C_i]$ can be made to a local
cocycle by taking the splitting
$I$ and  $\{1,2,\ldots, N\}\setminus I$
and choosing the induced almost-grading.
\end{remark}
\begin{remark}
\emph{(Heisenberg algebra.)}
Of course, there does not exists a universal central extension
of $\A$.
But we could consider the algebra with central terms coming from
the $\La$-invariant or equivalently
multiplicative ones.
The corresponding central extension of $\A$ will have a
$(N-1)$-dimensional center and will be given as 
\begin{equation}
[\widehat{f}, \widehat{g}]=
\sum_{i=1}^{N-1}\alpha_i\cdot\gA[C_i](f,g)\cdot t_i,
\quad\alpha_i\in\C,
\quad [t_i,\Ah]=0\, .
\end{equation}
The local cocycle $\gA[C_S]$ will yield a one-dimensional 
central extension which is almost-graded. 
One might either call the $(N-1)$-dimensional central extension
or the almost-graded one-dimensional central
extension \emph{(multi-point) Heisenberg algebra}.

\end{remark}

\begin{conjecture}
For the higher genus and multi-point situation
and $\ga$ a cocycle for $\A$ 
the following statements are equivalent:
\begin{enumerate}
\item
 $\ga$ is $\La$-invariant
\item
  $\ga$ is multiplicative
\item
$\ga$ is a geometric cocycle.
\end{enumerate}
\end{conjecture}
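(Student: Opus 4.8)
The three conditions are far from symmetric, and the plan is to reduce everything to one hard implication. First one records the two directions that hold in \emph{all} genera. If $\ga=\gA[C]$ is geometric, write $\gah(f,g)=f\,dg$; then $\gah(fg,h)+\gah(gh,f)+\gah(hf,g)=d(fgh)$ and $\gah(e\ldot f,g)+\gah(f,e\ldot g)=L_e(f\,dg)=d(\iota_e(f\,dg))$ by Cartan's formula (every $1$-form on a curve is closed), and an exact form integrates to $0$ over a cycle; this gives $(3)\Rightarrow(1)$ and $(3)\Rightarrow(2)$. Conversely, if $\ga$ is multiplicative then — the multiplicativity identity being, together with antisymmetry, exactly the compatibility with the Leibniz relation $f\,d(gh)=fg\,dh+fh\,dg$ — the rule $f\,dg\mapsto\ga(f,g)$ is a well-defined linear functional on $\Omega^1_{\A/\C}=\Fl[1]$, which kills $d\A$ because $\ga(1,\cdot)=0$; hence it descends to $\Omega^1_{\A/\C}/d\A=\H^1_{\mathrm{dR}}(\Sigma_g\setminus A)$, a space of dimension $2g+(N-1)$ whose period pairing with $\Ho_1(\Sigma_g\setminus A,\C)$ is perfect, so the functional is $\omega\mapsto\cint{C}\omega$ for some class $[C]$, i.e.\ $\ga=\gA[C]$. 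Thus $(2)\Leftrightarrow(3)$ and $(3)\Rightarrow(1)$ hold unconditionally, and the entire content of the conjecture is the single implication $(1)\Rightarrow(3)$: every $\La$-invariant cocycle on $\A$ is geometric.

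For $(1)\Rightarrow(3)$ I would pass to a homological reformulation. Since $\A$ is abelian, a cocycle is just an antisymmetric bilinear form, and it is $\La$-invariant exactly when it annihilates the subspace $W\subseteq\bigwedge^2\A$ spanned by all $(e\ldot f)\wedge g+f\wedge(e\ldot g)$ with $e\in\La$, $f,g\in\A$; so the $\La$-invariant cocycles are $(\bigwedge^2\A/W)^*$. The antisymmetrised universal derivation $\tilde\pi\colon\bigwedge^2\A\to\Omega^1_{\A/\C}$, $f\wedge g\mapsto\tfrac12(f\,dg-g\,df)$, is surjective and $\La$-equivariant, and because $L_e$ acts trivially on $\H^1_{\mathrm{dR}}$ of a curve ($L_e\omega=d(\iota_e\omega)$) it induces a surjection $p\colon\bigwedge^2\A/W\twoheadrightarrow\H^1_{\mathrm{dR}}(\Sigma_g\setminus A)$; pulling linear functionals back along $p$ yields exactly the geometric cocycles $\gA[C]$. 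Hence $(1)\Leftrightarrow(3)$ is equivalent to: $p$ is an isomorphism, i.e.\ $\dim(\bigwedge^2\A/W)=2g+(N-1)$, i.e.\ $W\supseteq\tilde\pi^{-1}(d\A)$. This inclusion splits into: (a) every exact $1$-form is a sum of Lie derivatives $L_e\omega=d(\iota_e\omega)$ of $1$-forms — a routine point, since $\La\ldot\A=\A$ (the module of meromorphic vector fields holomorphic off $A$ is large: choosing $\psi_1,\psi_2\in\A$ whose differentials have disjoint zero divisors on $\Sigma_g\setminus A$, any $\phi\in\A$ can be written $\phi=f_1+f_2$ with $f_i\in\A$ vanishing along the zeros of $d\psi_i$ in $\Sigma_g\setminus A$, whence $\phi=e_1\ldot\psi_1+e_2\ldot\psi_2$ with $e_i=f_i\,(d\psi_i)^{-1}\in\La$); and (b) $\ker\tilde\pi\subseteq W$, i.e.\ the kernel of the antisymmetrised Kähler differential is generated by the Lie-derivative action.

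An alternative, closer to the genus zero argument of \refT{tfclass}, is to establish the statement first for $N=2$ and arbitrary genus — every $\La$-invariant (equivalently multiplicative) cocycle on the two-point function algebra is geometric — and then bootstrap to $N\ge 3$ by the filtration argument of \refP{andec}, reducing a value $\ga(\A_{(m)},\A_{(m')})$ with $m+m'>0$ to values on the two-point subalgebras attached to the single points of $I$, exactly as in the proof of \refT{tfclass}. In genus zero this works because the coordinate $z$ diagonalises enough of the action and the base case is the classical $N=2$ computation \refP{mlf}; in higher genus the base case is itself the crux.

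The main obstacle is therefore $(1)\Rightarrow(3)$, concretely step (b) above (equivalently, the two-point higher-genus case). The difficulty is structural: there is no global coordinate, and of the $2g+(N-1)$ independent geometric cocycles the $2g$ arising from the handle cycles are detected by periods of differentials of the first and second kind rather than by residues, so the residue computations that make the genus zero case elementary (\refT{tfclass}) do not register them at all. What has to be proved is that $\La$-invariance by itself already rigidifies a cocycle down to these finitely many period functionals — i.e.\ that the Lie algebra homology $\mathrm H_0(\La,\bigwedge^2\A)=\bigwedge^2\A/W$ is no bigger than $\H^1_{\mathrm{dR}}(\Sigma_g\setminus A)$ — and this is precisely what keeps the full statement a conjecture.
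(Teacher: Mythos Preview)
The statement is a \emph{conjecture} in the paper, not a theorem; the paper offers no proof and explicitly remarks after it that the genus-zero method (boundedness with respect to the standard splitting, as in \refT{tfclass}) breaks down because in higher genus the $2g$ geometric cocycles coming from the handle cycles are not bounded. Your proposal is therefore not competing with a proof but with this remark, and you are transparent about this in your final paragraph.

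That said, your analysis goes strictly further than the paper's discussion of the conjecture. The paper records only $(3)\Rightarrow(1),(2)$ (via \refP{lmhg}) and leaves all converses open. You supply a complete and correct argument for $(2)\Leftrightarrow(3)$ in arbitrary genus: multiplicativity together with antisymmetry is exactly what is needed for $f\,dg\mapsto\ga(f,g)$ to be a well-defined linear functional on the K\"ahler differentials $\Omega^1_{\A/\C}\cong\Fl[1]$ of the smooth affine curve $\Sigma_g\setminus A$; this functional kills $d\A$ since $\ga(1,\cdot)=0$; and by Grothendieck's algebraic de Rham theorem $(\Omega^1_{\A/\C}/d\A)^*\cong\Ho_1(\Sigma_g\setminus A,\C)$, so $\ga=\gA[C]$ for some cycle class $[C]$. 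This is the Kassel cyclic-cohomology picture the paper itself invokes in \refS{aff} for the current-algebra universal central extension, but the paper does not feed it back into the conjecture. Your observation therefore reduces the full conjecture to the single implication $(1)\Rightarrow(2)$, i.e.\ $\La$-invariant $\Rightarrow$ multiplicative.

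Your homological reformulation of the remaining implication --- as the statement that the $\La$-coinvariants $\bigwedge^2\A/W$ have dimension exactly $2g+N-1$, equivalently that $\ker\tilde\pi\subseteq W$ --- is correct and clarifying, and the sketch of step (a) (that $d\A$ lies in the image of the Lie-derivative action, via $\La\ldot\A=\A$ and $d(e\ldot\psi)=L_e(d\psi)$) is fine. You rightly flag step (b) and the two-point higher-genus base case as the genuine obstruction. So there is no error to point to: what you have is not a proof of the conjecture, nor does it pretend to be, but a sharpening that isolates its actual content more precisely than the paper does.
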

\refP{lmhg} shows that Condition 3 implies the other two. But
for the opposite  the  proofs presented above  in genus zero will not work as 
there are geometric cocycles which are not bounded.

\subsection{Current and affine algebras}\label{sec:aff}
First note that if $\g$ is a perfect Lie algebra,
i.e. $[\g,\g]=\g$,  then $\gb$ is perfect too. 
Hence, if $\g$ is semi-simple 
the current algebra $\gb$ admits a universal
central extension.
For current algebras introduced in \refS{currint}
associated to a finite-dimensional Lie algebra $\g$ geometric
2-cocycles can be given in the following way.
First we fix $\beta$ a symmetric, invariant, bilinear form for $\g$.
In particular $\beta([x,y],z)=\beta(x,[y,z])$.
Then
\begin{equation}
\ggb[\beta,C](x\otimes f, y\otimes g)
=\beta(x,y)\cdot \gA[C](f,g)=
\beta(x,y)\cdot \cint{C}fdg
\end{equation}
defines a geometric cocycle for $\gb$.
Here the multiplicativity of $\int_C fdg$ is crucial.

If $\g$ is simple then $\beta$ is necessarily  a multiple
of the Cartan-Killing form. In this case Kassel 
\cite{Kas82}, \cite{Kas84} proved
that the algebra $\gb=\g\otimes \A$ for any commutative algebra
$\A$ admits a universal central extensions which is given
by
\begin{equation}
\gh^{univ}=\gb\oplus (\Omega_\A^1/d\A)
\end{equation}
with Lie structure
\begin{equation}
[x\otimes f,y\otimes g]=
[x,y]\otimes fg+\beta(x,y)\overline{fdg}.
\end{equation}
Here $\Omega_\A^1/d\A$ denotes the vector space of 
K\"ahler differentials of the algebra $\A$, and $\overline{fdg}$ means
$fdg$ modulo exact differentials.
As in our situation $\Sigma_g\setminus A$ is an affine curve using 
Grothendieck's algebraic deRham theorem 
(see also Bremner \cite{Brem2} \cite{Brem3} and 
the author \cite{Schlaff}) we can dualize the space
by integrating  $\overline{fdg}$ over closed curves $C$. In this way
we identify the center with 
$\Ho_1(\Sigma_g\setminus A,\C)$.
More precisely, let $[C^{(k)}]$, $k=1,\dots, 2g+N-1$ a basis of the
this homology space represented by cycles, then
the universal central extension is given via  geometric cocycles
as 
\begin{equation}\label{eq:currunivg}
[x\otimes f,y\otimes g]=
[x,y]\otimes fg+\beta(x,y)\cdot
\sum_{i=1}^{2g+N-1}\alpha_i\cdot\cint{C^{(i)}}fdg\cdot t_i,
\quad
\alpha_i\in\C
\end{equation}
with $t_i$ central.

Now we return to the genus zero case where
 all geometric cocycles can be written with the help of residues.
\footnote{Note that this is not the case for higher genus.}
\begin{proposition}
The universal central extension has
a $(N-1)$-dimensional center and is given as
\begin{equation}\label{eq:curruniv}
[x\otimes f,y\otimes g]=
[x,y]\otimes fg+\beta(x,y)\cdot
\sum_{i=1}^{N-1}\alpha_i\cdot\res_{a_i}(fdg)\cdot t_i,
\quad
\alpha_i\in\C
\end{equation}
with $t_i$ central.
\end{proposition}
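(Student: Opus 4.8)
The statement to prove is the genus-zero specialization of Kassel's universal central extension \refE{currunivg}: namely that for $\Sigma_0=\poc$ with $A=\{P_1,\ldots,P_{N-1},\infty\}$ the center is $(N-1)$-dimensional and the cocycles are realized as residues $\res_{a_i}(fdg)$. The natural approach is to start from the already-quoted general formula \refE{currunivg}, which says the center of the universal central extension of $\gb=\g\otimes\A$ is identified with $\Ho_1(\Sigma_g\setminus A,\C)$ via integration of $\overline{fdg}$ over cycles, and then simply plug in $g=0$. First I would invoke the dimension formula for $\Ho_1(\Sigma_0\setminus A,\C)$ stated earlier in the excerpt, which for genus zero and $N\ge 1$ gives $\dim\Ho_1(\Sigma_0\setminus A,\C)=N-1$. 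This immediately gives the claimed dimension of the center.

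Next, I would choose a convenient basis of $\Ho_1(\Sigma_0\setminus A,\C)$. As discussed in the section on geometric cocycles, a basis is given by small circles $C_i$ around the points $P_i=a_i$ for $i=1,\ldots,N-1$ (omitting the circle around $\infty$, which is minus the sum of the others). For such a cycle, integration of a meromorphic differential $\gah(x\otimes f,y\otimes g)=\beta(x,y)\,fdg$ with poles only in $A$ reduces to a residue computation, precisely $\cint{C_i}fdg=\res_{a_i}(fdg)$, since $f,g$ are rational functions in $z$ holomorphic away from $A$. Substituting this basis of cycles into \refE{currunivg} with the genus-zero value $g=0$ turns the sum $\sum_{i=1}^{2g+N-1}$ into $\sum_{i=1}^{N-1}$ and replaces each $\cint{C^{(i)}}fdg$ by $\res_{a_i}(fdg)$, yielding exactly \refE{curruniv}.

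**Main obstacle.** The proof is essentially a bookkeeping exercise once Kassel's theorem and the identification of K\"ahler differentials modulo exact forms with $\Ho_1(\Sigma_0\setminus A,\C)$ are in hand — and both of these are quoted in the excerpt (Kassel \cite{Kas82}, \cite{Kas84}; Grothendieck's algebraic deRham theorem, with references to Bremner and the author \cite{Schlaff}). The one point that deserves a sentence of care is the claim that the dual pairing of $\Omega_\A^1/d\A$ against cycles is \emph{perfect} in genus zero, i.e. that the residue functionals $f dg\mapsto \res_{a_i}(f dg)$, $i=1,\ldots,N-1$, are linearly independent on $\Omega_\A^1/d\A$ and span its dual. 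This is exactly the content of the algebraic deRham isomorphism for the affine curve $\poc\setminus A$ together with $H^1_{dR}(\poc\setminus A)\cong \C^{N-1}$; it also follows directly from the explicit residue calculation, since for the standard basis one checks $\res_{a_j}\big((z-a_i)^{-1}dz\big)=\delta_i^j$. So the only genuinely nontrivial input is already cited, and what remains is to assemble it.

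**Writing it up.** Concretely the proof would read: By the result quoted above the universal central extension of $\gb$ is given by \refE{currunivg} with center $\Ho_1(\Sigma_g\setminus A,\C)$. For $\Sigma_0=\poc$ and $N\ge 1$ we have $\dim\Ho_1(\Sigma_0\setminus A,\C)=N-1$, and a basis is given by the classes $[C_i]$ of small positively oriented circles around $a_i$, $i=1,\ldots,N-1$. Since the differentials $\gah(x\otimes f,y\otimes g)=\beta(x,y)\,fdg$ have poles only at points of $A$, and the functions are rational in $z$, integration over $C_i$ equals the residue at $a_i$; hence $\cint{C_i}fdg=\res_{a_i}(fdg)$. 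Substituting $g=0$ and this basis into \refE{currunivg} gives \refE{curruniv}. Linear independence of the resulting $N-1$ cocycles follows from $\gA[C_i](A_n^{(j)},A_m^{(j)})$ being nonzero precisely when $i=j$, $n+m=0$, $n\ne 0$ (compare \refE{anm}), so they indeed span an $(N-1)$-dimensional space, completing the proof.
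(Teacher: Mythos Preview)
Your proposal is correct and follows essentially the same route as the paper: the proposition is stated as the genus-zero specialization of the general formula \refE{currunivg}, and the paper presents it without a separate proof, relying on the preceding discussion (Kassel's theorem, the identification of $\Omega_\A^1/d\A$ with $\Ho_1(\Sigma_g\setminus A,\C)$ via the algebraic deRham theorem, and the dimension formula $\dim\Ho_1(\Sigma_0\setminus A,\C)=N-1$). Your write-up simply makes explicit what the paper leaves implicit, including the reduction of cycle integrals to residues and the linear independence check; the latter is handled in the paper separately via \refP{lmhg}.
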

The one-dimensional central extensions 
are given (up to equivalence) as 
\begin{equation}
[x\otimes f,y\otimes g]=
[x,y]\otimes fg+\beta(x,y)\cdot
\left(\sum_{i=1}^{N-1}\alpha_i\res_{a_i}(fdg)\right)\cdot t, \quad \alpha_i\in\C.
\end{equation}
It is shown in \cite{Schlloc}, \cite[Thm.9.2]{Schlkn} that the
cocycle class $\ga$ is local if and only if it can be represented 
by
\begin{equation}\label{eq:curloc}
\alpha \cdot \beta(x,y)\cdot\sum_{P\in I}\res_P(fdg).
\end{equation}
By requiring $\La$-invariance for $\ga$
in the sense of \cite[Def.9.11]{Schlkn}:
\begin{equation}
\forall x,y\in\g,\quad
e\in\La,\ g,h\in\A:\quad 
\ga(x\otimes(e.g),y\otimes h)+
\ga(x\otimes g,y\otimes (e.h))=0
\end{equation}
we obtain even equality in \refE{curloc}
(not only up to equivalence).

Based on \refE{curruniv} it is easy to calculate central 
extensions as everything is defined in terms of the central
extensions for $\A$ (which is done by
calculating residues of rational function in the case of genus zero)
and the values of the Cartan-Killing form.

In \refA{sl2} 
we give the results in detail for the three-point situation and the 
Lie algebra $\sln(2,\C)$
on the basis of our results in \refS{3point}. See also Bremner \cite{Brem3} for 
explicite calculations in the 
4-point situation using Gegenbauer polynomials. 
\begin{remark}
The corresponding classification results are true for
$\g$ semisimple. In this case $\beta$ has to be replaced by
sums of the Cartan--Killing
forms of the components. 
As $\gb$ is also perfect it admits a universal central 
extension with a center of dimension $L\cdot(N-1)$ where $L$
is the number of simple factors.
See the above references for detailed results.
In the reductive, but not semi-simple case, $\gb$ does not
have a universal central extension, nevertheless their exists
classification results for cocycles which are $\La$-invariant
if restricted to the abelian part.
If $k$ equals the dimension of the abelian part, then the
dimension of the center is 
\begin{equation}
\big(L+\frac {k(k+1)}{2}\big)(N-1).
\end{equation}

\end{remark}

\subsection{Vector field algebra}
For the vector field algebra $\L$ we choose the following
bilinear map
\begin{equation}
\gLh[R](e,f)
=(\frac 12(ef'''-ef''')-R(ef'-e'f)dz.
\end{equation}
Here again we identified the vector fields with their local
representing functions and the element $R$ is a projective connection
(holomorphic outside of $A$), see
\refA{connect} for its definition.
Only by the term coming with $R$ it will be a well-defined
differential.
The associated geometric cocycles 
are given by
\begin{equation}
\gL[{C,R}]=\cint{C}\gLh,\qquad [C]\in\Ho_1(\Sigma_g\setminus A,\C).
\end{equation}
\begin{proposition}\cite{SchlDiss},\cite{Schlkn}
(1) The bilinear form $\gL[{C,R}]$ is a 2-cocycle for $\La$. 
\newline
(2) A different choice of the projective connection
will yield a cohomologous cocycle.
\end{proposition}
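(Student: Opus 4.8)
The plan is to prove (1) in two parts --- that $\gLh[R]$ is well defined as a map into differentials, and that the resulting functional satisfies the $2$-cocycle condition --- and then to deduce (2) by writing the difference of two such cocycles as a coboundary.

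First I would check that for $e,f\in\La$ the local expression $\bigl(\tfrac12(e'''f-ef''')-R(e'f-ef')\bigr)dz$ defines a global meromorphic differential holomorphic on $\Sigma_g\setminus A$, i.e.\ an element of $\Fl[1]$. Expressing $e,f$ through their representing functions and computing under a holomorphic change of coordinate, the combination $e'''f-ef'''$ acquires an anomaly proportional to the Schwarzian derivative of the transition function; by the defining transformation law of a projective connection (cf.\ \refA{connect}) the term $-R(e'f-ef')$ exactly cancels this anomaly, so the whole expression transforms as a section of $\K$, and it is holomorphic outside $A$ because $e,f,R$ are. Hence $\gLh[R]\colon\La\times\La\to\Fl[1]$ is well defined, antisymmetry is immediate from the formula, and by the proposition on geometric cocycles above $\gL[{C,R}]=\cint{C}\gLh[R]$ depends only on $[C]\in\Ho_1(\Sigma_g\setminus A,\C)$. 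In the genus zero case one may simply take $R\equiv 0$ with respect to the coordinate $z$: every transition function on $\poc$ is a M\"obius transformation, hence has vanishing Schwarzian, so $e'''f-ef'''$ is already tensorial and $R=0$ is a global holomorphic projective connection --- no correction term is needed.

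For the cocycle identity it suffices, since $C$ is a closed cycle and $\gLh[R]$ has no poles off $A$, to show that the cyclic sum
\[
\delta(e,f,g):=\gLh[R]([e,f],g)+\gLh[R]([f,g],e)+\gLh[R]([g,e],f)
\]
is an \emph{exact} differential $dF$ with $F$ holomorphic on $\Sigma_g\setminus A$; then $d_2\gL[{C,R}](e,f,g)=\cint{C}\delta(e,f,g)=0$. Using $[e,f]=(ef'-e'f)\ddz$ (cf.\ \refE{aLbrack}) and working in the coordinate $z$, $\delta(e,f,g)$ becomes a polynomial in $e,f,g,R$ and their derivatives, and one verifies by direct computation that it is a total $z$-derivative: the part not involving $R$ is the classical Gelfand--Fuks identity underlying the Virasoro cocycle, and the part involving $R$ and $R'$ collapses to a derivative as well. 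In the genus zero situation of this paper this last point is avoided entirely by reducing to $R\equiv0$, which is legitimate because, by the coboundary computation below, the value of $d_2\gL[{C,R}]$ does not depend on the chosen $R$.

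Finally, for (2): if $R_1,R_2$ are projective connections holomorphic outside $A$ then $q:=R_1-R_2$ transforms tensorially, i.e.\ is a meromorphic quadratic differential (a section of $\K^{2}$) holomorphic on $\Sigma_g\setminus A$; contracting it with a vector field gives a differential $q(e)\in\K^{2}\otimes\K^{-1}=\K$, again holomorphic outside $A$. From the formula $\gLh[R_1](e,f)-\gLh[R_2](e,f)=q([e,f])$, whence
\[
\gL[{C,R_1}](e,f)-\gL[{C,R_2}](e,f)=\cint{C}q([e,f])=\varphi([e,f]),\qquad \varphi(e):=\cint{C}q(e),
\]
and $\varphi\colon\La\to\C$ is a well-defined linear form (depending only on $[C]$). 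By \refE{2cob} the difference of the two cocycles is the coboundary of $\varphi$, so they are cohomologous. The one genuinely computational step in the whole argument is the total-derivative check for $\delta(e,f,g)$; everything else is bookkeeping with transformation laws, and in the genus zero case even that step reduces to the classical Virasoro computation with $R\equiv0$.
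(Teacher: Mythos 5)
Your argument is correct and is essentially the proof the paper delegates to \cite{SchlDiss} and \cite{Schlkn}: well-definedness of $\gLh[{R}]$ via cancellation of the Schwarzian anomaly by the transformation law of the projective connection, the cocycle identity by showing the cyclic sum of the differential-valued integrands is exact so its integral over the closed cycle $C$ vanishes, and part (2) by recognizing $R_1-R_2$ as a quadratic differential holomorphic outside $A$, so that the difference of the cocycles is the coboundary of $\varphi(e)=\cint{C}(R_1-R_2)(e)$ as in \refE{2cob}. One small streamlining: since $\gLh[{R_1}]-\gLh[{R_2}]=q([\cdot,\cdot])$ with $q=R_1-R_2$, the $R$-dependent part of the cyclic sum cancels identically by the Jacobi identity (no $R'$ terms occur at all), so the total-derivative verification is only needed for the Gelfand--Fuks part, which also cleanly justifies your reduction to $R\equiv 0$.
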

Let us quote the general result.
\begin{theorem}
If $N\ge 1$ then $\La$ admits a universal central extension
with a $(2g+N-1)$ dimensional center. More precisely,
\begin{equation}
\Ho^2(\La,\C)\cong \Ho_1(\Sigma_g\setminus A,\C)
\end{equation}
where the isomorphism is given by
$[C]\mapsto\gL[C]$.
\end{theorem}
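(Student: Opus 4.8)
The plan is to separate the statement into two claims: that $\La$ is perfect (so that a universal central extension exists and its center is identified with $\Ho^2(\La,\C)$, as recalled in \refS{central}), and that $\Ho^2(\La,\C)\cong\Ho_1(\Sigma_g\setminus A,\C)$ via $[C]\mapsto\gL[C]$. The perfectness is where I would supply the elementary argument announced in the introduction; the arbitrary-genus existence is in any case \cite{Skry}. I would work with the generating set $\{e_n^{(i)}=(z-a_i)^{n+1}\ddz\mid n\in\Z,\ i=1,\ldots,N-1\}$ of $\La$ from \refS{unhandy} and compute directly $[e_n^{(i)},e_m^{(i)}]=(m-n)\,e_{n+m}^{(i)}$, so that for each fixed $i$ the span of the $e_n^{(i)}$ is a subalgebra isomorphic to the Witt algebra $\W$. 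Since $\W$ is perfect ($e_n=\tfrac{1}{n}[e_0,e_n]$ for $n\neq 0$, $e_0=\tfrac{1}{2}[e_{-1},e_1]$), every $e_n^{(i)}$ lies in $[\La,\La]$, and as these span $\La$ we conclude $\La=[\La,\La]$.

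For the cohomology computation I would reduce to bounded cocycles. In genus zero this reduction is \refT{vbound}, which I would prove in parallel with \refT{tfclass}: restricting a $2$-cocycle $\ga$ to each Witt subalgebra $\langle e_n^{(i)}\rangle$ forces it — using that $\Ho^2(\W,\C)$ is one-dimensional — to agree up to a scalar with the Virasoro cocycle there, equivalently with a multiple of $\res_{a_i}$ of the appropriate differential; then a filtration argument using the vector-field analogue of \refP{andec} propagates this to $\ga(\La_{(m)},\La_{(m')})=0$ for $m+m'$ large, i.e.\ $\ga$ is bounded with respect to the standard splitting. Granting boundedness, the author's classification of bounded cocycles for $\La$ in \cite{Schlloc}, \cite{Schlkn} identifies $[\ga]$ with a linear combination of the geometric classes $\gL[C_i]$, $i=1,\ldots,N-1$, the only relation being the homology relation $\sum_{i=1}^{N-1}[C_i]=-[C_N]$; linear independence modulo that relation is checked by pairing $\gL[C_i]$ against suitable $e_n^{(j)},e_{-n}^{(j)}$. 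Hence $\Ho^2(\La,\C)\cong\Ho_1(\Sigma_0\setminus A,\C)$, of dimension $N-1$, which is $2g+N-1$ for $g=0$.

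For arbitrary genus the same skeleton applies, with the reduction to bounded cocycles carried out via a proper-time (level-line) filtration; the classification then yields, besides the $N-1$ residue-type classes, the $2g$ further classes obtained by integrating $\gLh[R]$ over the standard symplectic homology cycles, together precisely a basis of $\Ho_1(\Sigma_g\setminus A,\C)$, with $[C]\mapsto\gL[C]$ the asserted isomorphism (\cite{SchlDiss}, \cite{Schlloc}, \cite{Schlkn}).

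The main obstacle is the boundedness step. For the abelian algebra $\A$ boundedness followed almost immediately from \refP{mlf}, but for $\La$ the bracket genuinely mixes degrees, so one must extract rigidity from the near-uniqueness of $\ga$ on each Witt subalgebra and then run a careful induction on the filtration degree, the delicate point being the cross terms $\ga(e_n^{(i)},e_m^{(j)})$ with $i\neq j$. Everything else — the shape of the universal extension from $\La=[\La,\La]$, and the identification of the bounded classes with homology — reduces to residue bookkeeping of rational functions.
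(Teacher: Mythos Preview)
Your genus-zero outline matches the paper's own elementary treatment almost exactly: the paper proves perfectness via the Witt subalgebras $\langle e_n^{(i)}\rangle$ just as you do, and the boundedness step is its \refT{vbound}, carried out by defining a single linear form $\psi$ on the basis $\{e_n^{(1)}:n\ge -1\}\cup\{e_m^{(i)}:m\le -2\}$ (rather than invoking $\Ho^2(\W,\C)$ separately on each copy, which would leave you with the compatibility problem of patching $N-1$ Witt coboundaries into one $\La$-coboundary). The cross terms $\ga(e_n^{(i)},e_m^{(j)})$ are handled not by a direct computation but by the trick you anticipate: for $f\in\La_{(n)}$ with $n>0$ one rewrites $f$ in the $e_l^{(i)}$-basis for whichever $i$ is needed, using the $\la=-1$ version of \refP{andec}.

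The gap is your last paragraph. The paper's proof of \emph{this} theorem in arbitrary genus is nothing more than the citation to Skryabin \cite{Skry}; the elementary argument is presented only for $g=0$ in the theorems that follow. Your claim that ``for arbitrary genus the same skeleton applies, with the reduction to bounded cocycles carried out via a proper-time filtration'' is not supported and in fact cannot work as stated: for $g\ge 1$ the geometric cocycles obtained by integrating $\gLh[R]$ over the $2g$ symplectic cycles are \emph{not} bounded with respect to any splitting-induced almost-grading, so the reduction-to-bounded step fails. The paper itself flags this obstruction (see the remark following the conjecture for the function algebra: ``the proofs presented above in genus zero will not work as there are geometric cocycles which are not bounded''). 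Skryabin's argument is of a different nature (degree-one cohomology of derivation algebras), not a filtration argument.
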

For the proof we refer to Skryabin 
\cite{Skry}.
It is related to the Novikov conjecture. For more details see
also \cite[\S 6.10.2]{Schlkn}.

\medskip
Here we present in genus zero a direct and elementary proof of
the above theorem which has the advantage to provide
additional information.
First 
with respect to local coordinates which are
related via projective linear transformations we can choose
$R\equiv 0$ on all such coordinate patches.
This condition is fulfilled with respect to our standard covering
of $\poc$. Hence, we might ignore the additional term.

Furthermore, the calculation efforts can be reduced by
\begin{lemma}\label{L:resum}
 For $a\in\Sigma_0$
\begin{equation}
1/2\res_{a}((ef'''-e'''f')dz)=\res_{a}(ef'''dz)
=-\res_{a}(e'''fdz)\, .
\end{equation}
\end{lemma}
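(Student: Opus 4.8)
The plan is to reduce everything to the elementary fact that the residue of an exact meromorphic differential vanishes: for any function $g$ meromorphic near $a$ one has $\res_a(g'\,dz)=\res_a(dg)=0$. Since $e,f\in\La$ are rational functions of $z$, each of $ef''$, $e'f'$, $e''f$ is meromorphic near $a$, so this applies at every point $a\in\Sigma_0$ individually (at $\infty$ one passes to the local coordinate $w=1/z$, or simply invokes the coordinate‑free residue formalism).

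First I would establish the antisymmetry $\res_a(ef'''\,dz)=-\res_a(e'''f\,dz)$ by integrating by parts three times. From $(ef'')'=e'f''+ef'''$ one gets $\res_a(ef'''\,dz)=-\res_a(e'f''\,dz)$; from $(e'f')'=e''f'+e'f''$ one gets $\res_a(e'f''\,dz)=-\res_a(e''f'\,dz)$; and from $(e''f)'=e'''f+e''f'$ one gets $\res_a(e''f'\,dz)=-\res_a(e'''f\,dz)$. Chaining these three identities yields $\res_a(ef'''\,dz)=-\res_a(e'f''\,dz)=\res_a(e''f'\,dz)=-\res_a(e'''f\,dz)$, which is the desired relation.

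Given the antisymmetry, the first equality of the lemma is then immediate: $\tfrac12\,\res_a\big((ef'''-e'''f)\,dz\big)=\tfrac12\big(\res_a(ef'''\,dz)-\res_a(e'''f\,dz)\big)=\tfrac12\big(\res_a(ef'''\,dz)+\res_a(ef'''\,dz)\big)=\res_a(ef'''\,dz)$, and by the same token it equals $-\res_a(e'''f\,dz)$.

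There is essentially no obstacle here: the three integration‑by‑parts steps are purely local Laurent‑coefficient computations and require nothing about $e,f$ beyond meromorphy at the single point $a$, which is precisely why the statement is asserted pointwise and can afterwards be summed over the finite set $A$. I would only remark in passing that this lemma is the place where the conventional factor $\tfrac12$ in the cocycle $\gLh[R]$ pays off, collapsing the symmetric bracket term to a single residue and thereby simplifying all subsequent structure‑constant and central‑extension computations for $\La$.
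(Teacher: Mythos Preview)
Your proof is correct and uses the same underlying idea as the paper: the vanishing of residues of exact differentials combined with the product rule. The only cosmetic difference is that the paper packages the computation into a single Leibniz expansion $(ef)'''=e'''f+3e''f'+3e'f''+ef'''=e'''f+ef'''+3(e'f')'$, whereas you perform three successive integrations by parts; the content is identical.
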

\begin{proof}
Using the rules for differentiation of products we
see
\begin{equation}
(ef)'''=e'''f+3e''f'+3e'f''+ef'''
=e'''f+ef'''+3(e'f)'.
\end{equation}
As derivatives of functions do not have residues 
$\res_{a}(e'''f)=-\res_{a}(ef''')$. Hence the claim.
\end{proof}
\begin{theorem}\label{T:vbound}
Every cocycle $\ga$ for the algebra $\L$ in the genus
zero,  multi-point case is cohomologous to a 
bounded one with respect to the almost-grading given by the
standard splitting.
\end{theorem}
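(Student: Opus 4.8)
The plan is to run the same three-move strategy as in the proof of \refT{tfclass}: locate inside $\La$ the $N-1$ copies of the classical Witt algebra coming from the ``unhandy'' basis, normalise $\ga$ by a single coboundary so that its restriction to each of them is degree-bounded, and then feed the $\Fl[-1]$-version of \refP{andec} together with the order filtration to push $\ga$ to zero in high total degree. Throughout I use the subalgebra $\La_{(n)}:=\bigoplus_{m\ge n}\La_m$ and the elements $e^{(i)}_n=(z-a_i)^{n+1}\ddz$; from \refE{aLbrack} one computes $[e^{(i)}_n,e^{(i)}_m]=(m-n)e^{(i)}_{n+m}$, so $\W^{(i)}:=\langle e^{(i)}_n\mid n\in\Z\rangle\cong\W$, and the $\Fl[-1]$-form of \refP{andec} reads: $\La_{(n)}=\bigcap_i\langle e^{(i)}_k\mid k\ge n\rangle$ for $n\ge -1$, and $\La_{(n)}=\sum_i\langle e^{(i)}_k\mid k\ge n\rangle$ for $n\le -2$ (the shift by $1$ from the function-algebra statement is exactly what will make the threshold land at $n+m=0$).

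First I would treat $\La_{(-1)}$. By the above it equals $\bigcap_i\langle e^{(i)}_k\mid k\ge -1\rangle=\C[z]\ddz$, the Lie algebra of polynomial vector fields, which is perfect and satisfies $\Ho^2(\C[z]\ddz,\C)=0$ (a one-line grading argument: $e^{(i)}_0$ is a grading element, so every cocycle is cohomologous to a homogeneous one, and the only homogeneous pair of indices $\ge -1$ with sum $0$ gives $n^3-n=0$). Hence $\ga|_{\La_{(-1)}}$ is a coboundary; extending the associated linear functional from $\La_{(-1)}$ to all of $\La$ arbitrarily and subtracting it, I may from now on assume $\ga$ vanishes on $\La_{(-1)}\times\La_{(-1)}$. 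This single normalisation is what avoids the compatibility headache of normalising the $\W^{(i)}$ separately on their common element $e^{(i)}_{-1}=\ddz$.

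Next, for each $i=1,\dots,N-1$, use the classical fact $\Ho^2(\W,\C)=\C$ (mentioned after \refE{Vstruct}) to write $\ga|_{\W^{(i)}}(e,f)=c_i\,\ga^{(i)}_{\mathrm{cl}}(e,f)+\psi_i([e,f])$ with $\ga^{(i)}_{\mathrm{cl}}(e^{(i)}_n,e^{(i)}_m)=\frac1{12}(n^3-n)\delta_m^{-n}$. Since $\ga$ vanishes on $\W^{(i)}\cap\La_{(-1)}=\langle e^{(i)}_k\mid k\ge -1\rangle$, and $\ga^{(i)}_{\mathrm{cl}}$ vanishes there too (again because $n^3-n=0$ for $n=-1,0,1$), the coboundary $\psi_i([\,\cdot\,,\,\cdot\,])$ vanishes on that subalgebra, which being perfect forces $\psi_i$ itself to vanish on it. Consequently $\ga(e^{(i)}_r,e^{(i)}_k)=c_i\frac1{12}(r^3-r)\delta_k^{-r}+(k-r)\psi_i(e^{(i)}_{r+k})=0$ whenever $r+k>0$, i.e. each $\ga|_{\W^{(i)}}$ is bounded (by zero). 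Now run the filtration argument: take $e\in\La_n$, $f\in\La_m$ with $n+m>0$, and (WLOG) $n\ge 1$. If $m\ge -1$, then $e,f\in\W^{(1)}$ with all occurring indices $\ge n$ respectively $\ge m$, so $\ga(e,f)=0$. If $m\le -2$, write $f=\sum_i f_i$ with $f_i\in\langle e^{(i)}_k\mid k\ge m\rangle$ and, for each $i$, expand $e$ in the $e^{(i)}_r$ with $r\ge n$ (legitimate since $e\in\La_{(n)}\subseteq\langle e^{(i)}_r\mid r\ge n\rangle$); then $\ga(e,f_i)$ is a finite sum of terms $\ga(e^{(i)}_r,e^{(i)}_k)$ with $r+k\ge n+m>0$, each zero. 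Hence $\ga(\La_n,\La_m)=0$ for $n+m>0$, so the resulting cocycle, which is cohomologous to the original $\ga$, is bounded. (The explicit form of the bounded representative is then obtained by computing residues of rational functions, using \refL{resum}, but that is not needed for the statement.)

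The step I expect to be the main obstacle is the middle one: converting the abstract equality $\Ho^2(\W,\C)=\C$ into a genuinely \emph{uniform} degree-bounded description of $\ga$ on \emph{all} $N-1$ overlapping subalgebras $\W^{(i)}$ simultaneously, since the coboundary ambiguity in each $\W^{(i)}$ is only harmless once it has been tied down globally; this is precisely what the $\Ho^2(\C[z]\ddz,\C)=0$ reduction in the first paragraph is there to supply. A secondary but genuine point of care is the $\Fl[-1]$-shift in \refP{andec}, which governs whether the cross-term estimate closes at $n+m>0$ rather than some larger constant.
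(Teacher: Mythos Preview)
Your proof is correct and follows essentially the same three-step strategy as the paper's: normalise by a coboundary so that $\ga$ vanishes on $\La_{(-1)}\times\La_{(-1)}$, show $\ga(e^{(i)}_r,e^{(i)}_k)=0$ for $r+k>0$ on each Witt copy, and then run the filtration argument via \refP{andec}. The only difference is packaging: where the paper writes down the normalising linear form $\psi$ explicitly on a basis and derives the key vanishing from the cocycle identity \refE{lmother} by hand, you invoke the cohomological facts $\Ho^2(\C[z]\ddz,\C)=0$ and $\Ho^2(\W,\C)=\C$ as black boxes together with perfectness of $\C[z]\ddz$ to reach the same conclusions.
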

\begin{proof}
We recall the structure of the proof of \refP{andec}. We  have to take
into account that the degree is shifted. In particular 
$\L_{(-1)}$ corresponds to those vector fields which are holomorphic
on the affine part. Note that we have the generator set
(which is not a basis)
\begin{equation}
e_{n}^{(i)}=(z-a_i)^{n+1}\ddz,\quad n\in\Z,\ i=1,\ldots, N-1.
\end{equation}
For each $i$ separately, this is 
the usual Witt algebra and we have
\begin{equation}
[e_{n}^{(i)},e_{m}^{(i)}]=(m-n)e_{n+m}^{(i)},\qquad
[e_{0}^{(i)},e_{m}^{(i)}]=m\cdot e_{m}^{(i)}\, .
\end{equation}
We start with an arbitrary cocycle $\ga$ and make cohomological
changes.
To this end we define a linear map
$\psi:\La\to\C$ and modify $\ga$ by the corresponding
coboundary $d_1\psi$.
We prescribe $\psi$ on a set of basis elements.
For this we
single out $i=1$ and take as basis of $\La$ 
\begin{equation}
e_n^{(1)},\ n\ge -1,\qquad
e_m^{(i)},\ m\le -2,\ i=1,\ldots, N-1\, .
\end{equation}
We set
\begin{equation}\label{eq:psider}
\begin{aligned}{}
\psi(e_n^{(1)})&:=\frac 1n\ga(e_0^{(1)},e_{n}^{(1)}), \quad n=-1,1,2,...
\\ 
\psi(e_0^{(1)})&:=\frac 12\ga(e_{-1}^{(1)},e_{1}^{(1)}), 
\\ 
\psi(e_n^{(i)})&:=\frac 1n\ga(e_0^{(i)},e_{n}^{(i)}), \quad n\le -2\, .
\end{aligned}
\end{equation}
The cohomologous cocycle is now
\begin{equation}
\ga'(e,f)=\ga(e,f)-\psi([e,f]).
\end{equation}
By construction we obtain
\begin{equation}\label{eq:afterlcob}
\ga'(e_0^{(1)},e_n^{(1)})=0,\quad \forall n,\qquad
\ga'(e_0^{(i)},e_n^{(i)})=0,\quad \forall n\le -2,
\ \forall i,\qquad
\ga'(e_{-1}^{(1)},e_1^{(1)})=0 \, .
\end{equation}
To avoid cumbersome notation we use $\ga$ again for the cohomologous
cocycle $\ga'$.
We write down the cocycle condition for each $i$:
\begin{equation}
\ga([e_l^{(i)},e_k^{(i)}],e_0^{(i)}])
+
\ga([e_k^{(i)},e_0^{(i)}],e_l^{(i)}])
+
\ga([e_0^{(i)},e_l^{(i)}],e_k^{(i)}])=0\,.
\end{equation}
This evaluates to
\begin{equation}\label{eq:lmother}
(k-l)\ga(e_{k+l}^{(i)},e_0^{(i)})
-
(k+l)\ga(e_{k}^{(i)},e_l^{(i)})
= 0.
\end{equation}
First we consider pairs of elements which are holomorphic in the
affine part, i.e. the space $\La_{(-1)}\times \La_{(-1)}$
and show 
\begin{equation}\label{eq:claimv}
\ga(f,g)=0,\qquad f,g\in \La_{(-1)}.
\end{equation}
By \refE{afterlcob} $\ga(e_n^{(1)},e_0^{(1)})=0$ for all $n$ and 
\refE{lmother} implies $\ga(e_k^{(1)},e_l^{(1)})=0$ for $l\ne -k$.
This shows \refE{claimv} with the exception of
 $\ga(e_1^{(1)},e_{-1}^{(1)})=0$, which is true by our cohomological
changes \refE{afterlcob}.
In the next step we show
\begin{equation}\label{eq:vfstate}
\ga(e_k^{(i)},e_l^{(i)})=0,\quad \text{for} \quad k+l>0.
\end{equation} 
As long as $k,l\ge -1$ both vector fields are holomorphic and the
claim is true by \refE{claimv}.
Now assume $l\le -2$. For $k+l>0$ to be true $k\ge 3$  is necessary. In
particular both $(k-l)$ and $(k+l)\ne 0$ and $e_{k+l}^{(i)}$ is
holomorphic.
{}From \refE{lmother} the Equation \refE{vfstate} follows.
\footnote{
Indeed, using the second relation in \refE{afterlcob} we obtain
\refE{vfstate} to be true for all $k+l\ne 0$,
This statement is not needed in the proof, but see \refR{copies}.}
For the claim of the theorem we have to show that 
if $f\in\La_{(n)}$ and   $g\in\La_{(m)}$ with $n+m>0$ then
$\ga(f,g)=0$. As $n+m>0$ at least one of them has to be $>0$.
Assume $n>0$. If $m\ge -1$ then both $f$ and $g$ are holomorphic hence
the
claim with \refE{claimv}.
We assume $m\le -2$. By \refP{andec} the elements
$e_k^{(i)}$, $k\ge m$, $i=1,\ldots, N-1$ generate
$\La_{(m)}$. Hence $g$ is a linear combination of those.
Fix one $i$ and one $e_k^{(i)}$ in this range.
By \refP{andec} the corresponding $f\in\L_{(n)}$ can be written as  
a linear combination 
$\sum_l\alpha_le_l^{(i)}$
of $e_l^{(i)}$ with $l\ge n\ge -m$.
Now 
\begin{equation}
\ga(f,e_k^{(i)})=
\ga(\sum_l\alpha_le_l^{(i)},e_k^{(i)})
=\sum_l\alpha_l\ga(e_l^{(i)},e_k^{(i)})
=0,
\end{equation}
by \refE{vfstate} as $l+k\ge n+m>0$.
As $g$ is a linear combination of such $e_k^{(i)}$ the claim of the
theorem follows.
\end{proof}
\begin{remark}\label{R:copies}
In the proof above the prescription of \refE{psider} involving
$n\le -2$ was not necessary. We could have taken also the value
0 there. Nevertheless, the prescription given has the side-effect that
the cohomological cocycle $\ga'$   
restricted to the $N-1$ individual subalgebras $\langle e_n^{(i)}\mid
n\in\Z\rangle$
which are copies of the Witt algebra, is there a multiple of the
standard Virasoro cocycle (which is centered in degree zero).
Here a warning is at order. We  showed that the cocycle $\ga'$ is
bounded from above by zero. It will not be true in general, that the
$\ga'$ is also bounded from below by zero (meaning local). 
In general $\ga'(e_n^{(i)},e_m^{(j)})\ne 0$ for $n,m\le -2$.
Moreover, by the results of \cite{Schlloc} it will be bounded from
below
too if and only if there exists an $\alpha\in\C$ such
that $\ga'=\alpha\sum_{i=1}^{N-1}\gL[i]$.
\end{remark}
\begin{theorem}
The cohomology space $\Ho^2(\La,\C)$ for the $N-$point genus 
zero vector field algebra $\La$ is  $(N-1)$-dimensional.
A basis is given by the cohomology classes
\begin{equation}
[\gL[i]]=[\gL[C_i]], \quad i=1,\ldots,N-1, 
\end{equation}
or equivalently 
\begin{equation}
[\gL[i]], \quad i=1,\ldots,N-2,
\qquad
 [\gL[\infty]]\;.
\end{equation}
In particular all cohomology classes are geometric cocycles.
The cocycle $\gL[\infty]$ will be local with respect to the
standard splitting.
\end{theorem}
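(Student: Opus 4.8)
The plan is to combine \refT{vbound} with the author's earlier classification of \emph{bounded} cocycles, and then to pin down the dimension and an explicit basis by restricting to Witt subalgebras. First, \refT{vbound} shows that the natural map $\Ho^2_{b}(\La,\C)\to\Ho^2(\La,\C)$ from bounded to all cohomology classes is surjective, so it suffices to treat bounded classes. By the results of \cite{Schlloc} (see also \cite{Schlkn}) every bounded cocycle for $\La$ is cohomologous to a geometric one, hence to a linear combination of the residue cocycles $\gL[C_i]$, $i=1,\ldots,N$. Recalling from the proof of \refT{vbound} that in genus zero one may take $R\equiv 0$, the form $\gLh(e,f)$ is for fixed $e,f\in\La$ a global meromorphic differential on $\Sigma_0$ holomorphic outside $A$, so the residue theorem gives $\sum_{i=1}^{N}\res_{P_i}(\gLh(e,f))=0$, i.e. $\sum_{i=1}^{N}\gL[C_i]=0$. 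Hence $\Ho^2(\La,\C)$ is already spanned by the $N-1$ classes $[\gL[C_i]]$, $i=1,\ldots,N-1$, so $\dim\Ho^2(\La,\C)\le N-1$ and every cohomology class is geometric.

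For the reverse inequality and for linear independence I would use the subalgebras $\W^{(i)}:=\langle e_n^{(i)}\mid n\in\Z\rangle_\C$ of $\La$, each of which is a copy of the Witt algebra and has one-dimensional second cohomology, spanned by the Virasoro class. Restriction of a cocycle to $\W^{(i)}$ sends coboundaries to coboundaries, so composing with the projection onto the Virasoro coefficient yields linear functionals $\ell_i\colon\Ho^2(\La,\C)\to\C$. For $j\ne i$ the differential $\gLh(e_n^{(j)},e_m^{(j)})$ has poles only at $a_j$ and $\infty$, hence no residue at $a_i$, so $\gL[C_i]$ restricts to the zero cocycle on $\W^{(j)}$; on the other hand a short computation using \refL{resum} gives $\gL[C_i](e_{-1}^{(i)},e_1^{(i)})=0$ but $\gL[C_i](e_{-2}^{(i)},e_2^{(i)})\ne 0$, which already rules out a coboundary, so $\gL[C_i]$ restricts to a nonzero class on $\W^{(i)}$. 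Thus $(\ell_1,\ldots,\ell_{N-1})$ carries the family $\{[\gL[C_i]]\}_{i=1}^{N-1}$ to a nonzero multiple of the standard basis of $\C^{N-1}$; in particular these $N-1$ classes are linearly independent, so $\dim\Ho^2(\La,\C)\ge N-1$. Together with the bound from the first paragraph, $\dim\Ho^2(\La,\C)=N-1$ and the $[\gL[C_i]]$, $i=1,\ldots,N-1$, form a basis.

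The remaining assertions follow at once. From $\sum_{i=1}^{N}\gL[C_i]=0$ we get $[\gL[\infty]]=[\gL[C_N]]=-\sum_{i=1}^{N-1}[\gL[C_i]]$, so replacing $[\gL[C_{N-1}]]$ by $[\gL[\infty]]$ is an invertible change of basis and $\{[\gL[i]]\}_{i=1}^{N-2}\cup\{[\gL[\infty]]\}$ is a basis as well; moreover any linear combination of the $\gL[C_i]$ is again a geometric cocycle, so all cohomology classes are geometric. Finally, for the standard splitting $I=\{P_1,\ldots,P_{N-1}\}$, $O=\{\infty\}$ the separating cycle satisfies $[C_S]=-[C_\infty]$, whence $\gL[\infty]=-\gL[C_S]$; by the general locality statement of \cite{Schlloc}, \cite{Schlaff} recalled above, $\gL[C_S]$ is local for the induced almost-grading, and hence so is $\gL[\infty]$.

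The main obstacle in this plan is the upper bound $\dim\Ho^2(\La,\C)\le N-1$: it rests on the classification of bounded cocycles of $\La$ from \cite{Schlloc}, namely that modulo coboundaries a bounded cocycle is a combination of the residue cocycles $\gL[C_i]$. If one insists on an argument entirely self-contained in genus zero, this step has to be redone by hand: starting from a cocycle already normalized as in the proof of \refT{vbound} (so that it vanishes on $\La_{(-1)}\times\La_{(-1)}$ and restricts to a multiple of the Virasoro cocycle on each $\W^{(i)}$, cf. \refR{copies}), one must show that the cross terms $\ga(e_k^{(i)},e_m^{(j)})$ with $i\ne j$ are forced by the cocycle identity together with boundedness from above and the fact that $\La_{(n)}$ is generated by the powers of a single $(z-a_i)$. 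This is the technically heaviest point; everything else in the argument reduces to residue computations for rational functions via \refE{expc}.
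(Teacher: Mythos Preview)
Your argument is correct and follows the paper's own line: reduce to bounded cocycles via \refT{vbound}, then invoke the classification of bounded cocycle classes from \cite{Schlloc} (and \cite{Schlkn}) for the spanning set and for the locality of $\gL[\infty]$. Your explicit linear-independence check via restriction to the Witt subalgebras $\W^{(i)}$ is a small addition --- the paper simply absorbs this step into the same citation --- but otherwise the two proofs coincide.
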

\begin{proof}
By \refT{vbound} all cocycles are cohomologous to bounded
cocycles with respect to the standard splitting.
Hence the bounded cohomology classes constitute a basis of
$\Ho^2(\La,\C)$. With respect to any splitting the bounded cocycle
classes have been classified by the author in \cite{Schlloc}, see also
\cite{Schlkn}.  In particular it is shown there that the 
noted cohomology classes are a basis of the bounded cocycles
with respect to the standard splitting. In addition it is shown that
the class $[\gL[\infty]]$ is up to a scalar multiple the only cocycle
which is local with respect to the standard splitting.
\end{proof}

\begin{theorem}
For $g=0$ the algebra $\La$ is perfect, hence it admits a universal
central extension generated by the above geometric cocycles.
\end{theorem}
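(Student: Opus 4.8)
The plan is to derive this as a short corollary of what is already in place: the only thing genuinely requiring proof is that $\La$ is perfect, after which the assertion follows from the general discussion of universal central extensions recalled at the start of \refS{central} together with the computation of $\Ho^2(\La,\C)$ above.

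First I would prove $[\La,\La]=\La$. Recall from \refS{unhandy} and \refP{andec} that the vector fields $e_n^{(i)}=(z-a_i)^{n+1}\ddz$, with $n\in\Z$ and $i=1,\ldots,N-1$, span $\La$ as a $\C$-vector space. For each fixed $i$ the subspace $\W^{(i)}:={\langle e_n^{(i)}\mid n\in\Z\rangle}_\C$ is a copy of the Witt algebra, with bracket $[e_n^{(i)},e_m^{(i)}]=(m-n)e_{n+m}^{(i)}$. The Witt algebra is perfect: for $n\ne 0$ one has $e_n^{(i)}=\tfrac1n[e_0^{(i)},e_n^{(i)}]$, and $e_0^{(i)}=\tfrac12[e_{-1}^{(i)},e_1^{(i)}]$. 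Hence every spanning element $e_n^{(i)}$ lies in $[\W^{(i)},\W^{(i)}]\subseteq[\La,\La]$, and therefore $[\La,\La]=\La$.

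Since $\La$ is perfect, the general statement recalled in \refS{central} shows it admits a universal central extension whose center has dimension $\dim\Ho^2(\La,\C)$; fixing a basis $[\gL[i]]$, $i=1,\ldots,N-1$, of this space (this basis and the equality $\dim\Ho^2(\La,\C)=N-1$ being exactly the content of \refT{vbound} and the theorem following it), the extension is the one with Lie product
\begin{equation*}
[\widehat e,\widehat f]=\widehat{[e,f]}+\sum_{i=1}^{N-1}\alpha_i\,\gL[i](e,f)\cdot t_i,\qquad \alpha_i\in\C,\quad [t_i,\widehat\La]=0 .
\end{equation*}
Thus the universal central extension has an $(N-1)$-dimensional center and is generated by the geometric cocycles $\gL[i]$, which is the claim.

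There is no real obstacle here: the argument is a two-line computation (perfectness) plus the already-established results on $\Ho^2(\La,\C)$. If one wanted to avoid invoking the general machinery, the only point needing attention would be the passage from the cocycle classification $\Ho^2(\La,\C)$ to the actual center of the universal extension, i.e. the identification $\Ho_2(\La,\C)^*\cong\Ho^2(\La,\C)$, which is routine over a field once $\Ho_1(\La,\C)=\La/[\La,\La]=0$ is known --- that is, once perfectness is established.
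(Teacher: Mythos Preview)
Your proposal is correct and follows essentially the same approach as the paper: both prove perfectness by writing each spanning element $e_n^{(i)}$ as a commutator inside the Witt-type subalgebra (using $e_n^{(i)}=\tfrac1n[e_0^{(i)},e_n^{(i)}]$ for $n\ne 0$ and $e_0^{(i)}=\tfrac12[e_{-1}^{(i)},e_1^{(i)}]$), and then invoke the general fact that a perfect Lie algebra has a universal central extension parameterized by $\Ho^2(\La,\C)$, which was computed just before.
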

\begin{proof}
We know that $\La$ is generated by the $e_n^{(i)}$. As 
$[e_o^{(i)},e_n^{(i)}]=n\cdot e_n^{(i)}$ and 
$[e_{-1}^{(i)},e_1^{(i)}]=2\cdot e_0^{(i)}$ all generators
can be written as commutators, i.e. $[\L,\L]=\L$.
As a perfect Lie algebras $\La$ admits a universal central extensions
parameterized
by $\Ho^2(\La,\C)$. 
The
universal central extension is freely generated by the cohomology classes
given by the cocycles described above.
\end{proof}
We recall that these cocycles can be explicitely calculated
via residues, e.g.
\begin{equation}
\begin{aligned}{}
\gL[i](e,f)&=\res_{a_i}(ef'''dz),\quad i=1,2,\ldots, N-2,
\\
\gL[\infty](e,f)&=
-\sum_{i=1}^{N-1}\res_{a_i}(ef'''dz)
=\res_{\infty}(ef'''dz).
\end{aligned}
\end{equation}
As above with respect to the standard splitting and almost-grading 
$\gL[\infty]$ will be the unique local cocycle (up to equivalence and
rescaling). The others $\gL[i]$ are not local, but bounded from 
above.
In \refS{3point} we will do some explicit calculations.
\begin{remark}
A description of the universal central extension was also given
by Cox, Guo, Lu, and Zhao \cite{CGLZ}. Their approach is again
different. By intelligent guesses they prescribe for pairs of 
the basis elements 
from \refS{unhandy}  $(N-1)$ bilinear maps and verify directly
that these define 2-cocycles which are not coboundaries
and are linearly independent and all cocycles can be written by them.
For this quite involved combinatorial calculations are needed.

On the basis of \cite{CGLZ} Jurisich and Martens treated the
3-point case \cite{JM} in detail by calculating the cocycles 
explicitely. Quite involved binomial identities are needed. 
We show in \refS{3point} how by consequently using the multi-point
KN type algebra the cocycles can be calculated in a much simpler way.
\end{remark}

\subsection{Differential operator algebra}
For the differential operator algebra $\Do$ the cocycles 
of type \refE{exta} for $\A$ can be
extended to $\Do$ by zero on the subspace $\La$.
For this we need that they are $\La$-invariant. 
The cocycles for $\La$ can be pulled back.
In addition there is a third type of cocycles mixing $\A$ and $\La$:
\begin{equation}\label{eq:mix}
\gm[{C,T}](e,g):=
\cint{C}(e g''+T eg')dz,
\qquad  e\in\La,g\in\A,
\end{equation} 
with an affine connection $T$, with at most a pole of order one
at a fixed point in $O$ (see \refA{connect} for the definition
of an affine connection)
\begin{proposition}\cite{Schlloc}, \cite{Schlkn}
$\gm[{C,T}]$ defines a 2-cocycle for 
$\Do$. Another choice of a connection $T$ will not change its
cohomology class.
\end{proposition}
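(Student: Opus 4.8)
The plan is first to fix how $\gm[{C,T}]$ is to be read as a bilinear form on all of $\Do=\A\oplus\La$: it is declared antisymmetric, identically zero on $\A\times\A$ and on $\La\times\La$, and on $\La\times\A$ it is given by the displayed formula $\gm[{C,T}](e,g)=\cint{C}(eg''+Teg')\,dz$. Before the cocycle identity I would record (referring to \refA{connect}) that $(eg''+Teg')\,dz$ really is a meromorphic differential holomorphic on $\Sigma_g\setminus A$: the inhomogeneous part of the transformation law of the affine connection $T$ is exactly what cancels the anomalous, non-tensorial term produced by $g\mapsto g''$, so the integrand transforms as a weight-one form and the integral is well defined and depends only on $[C]\in\Ho_1(\Sigma_g\setminus A,\C)$.

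For the cocycle property the plan is to verify $d_2\gm[{C,T}](X,Y,Z)=0$ by a short case analysis according to how many of $X,Y,Z$ lie in $\La$, using that $\A$ is an abelian ideal of $\Do$ with $[\La,\A]\subseteq\A$ and $[\La,\La]\subseteq\La$. If all three arguments are vector fields, all three brackets in $d_2$ lie in $\La$ and are paired with a vector field, so each term vanishes by definition. If at most one argument is a vector field, any bracket of two function-arguments vanishes and any surviving bracket lies in $\A$ and is paired with a function, so again each term vanishes. The only case with content is two vector fields and one function, say $X=e$, $Y=f\in\La$ and $Z=g\in\A$; using $[e,f]=ef'-e'f$ and $e\ldot g=eg'$, the identity reduces to
\begin{equation*}
\gm[{C,T}]([e,f],g)-\gm[{C,T}](e,f\ldot g)+\gm[{C,T}](f,e\ldot g)=0 .
\end{equation*}
I would then expand the second and third derivatives of the products $fg'$ and $eg'$. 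The two observations I expect to carry the proof are that all terms containing the connection $T$ cancel identically, and that the third-order terms in $g$ cancel as well, so that the whole integrand collapses to $\frac{d}{dz}\big[(e'f-ef')g'\big]$, i.e.\ to the exact differential $d\big[(e'f-ef')g'\big]$. Since $(e'f-ef')g'$ is a meromorphic function holomorphic on $\Sigma_g\setminus A$, every residue of its differential vanishes, hence $\cint{C}d\big[(e'f-ef')g'\big]=0$ and $d_2\gm[{C,T}]=0$. This expansion is routine; the only things to watch are the bracket conventions in $\Do$ and the signs produced by antisymmetrising $\gm$ — that is the sole place where care is needed.

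For the independence of $T$, let $T$ and $T'$ be two affine connections of the prescribed type. Their difference transforms tensorially, so $(T-T')\,dz$ is a meromorphic differential holomorphic on $\Sigma_g\setminus A$. Writing a general element of $\Do$ as $(g,e)$ with $g\in\A$, $e\in\La$, one computes
\begin{equation*}
\big(\gm[{C,T}]-\gm[{C,T'}]\big)\big((g,e),(h,f)\big)=\cint{C}(T-T')\,(eh'-fg')\,dz .
\end{equation*}
Now define the linear functional $\varphi:\Do\to\C$ by $\varphi(g,e):=\cint{C}(T-T')\,g\,dz$, which only sees the function component. Since the $\A$-component of $[(g,e),(h,f)]$ equals $e\ldot h-f\ldot g=eh'-fg'$, we get $\varphi\big([(g,e),(h,f)]\big)=\cint{C}(T-T')(eh'-fg')\,dz$, so $\gm[{C,T}]-\gm[{C,T'}]$ is the coboundary of $\varphi$ in the sense of \refE{2cob}. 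Hence the two cocycles are cohomologous and the class of $\gm[{C,T}]$ does not depend on the choice of $T$. (Full details, in particular the verification that $\gm[{C,T}]$ is well defined in arbitrary genus, are in \cite{Schlloc} and \cite{Schlkn}; the genus-zero argument above is the one we will use.)
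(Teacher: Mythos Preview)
Your proof is correct and complete. The paper itself does not supply a proof of this proposition but only cites \cite{Schlloc} and \cite{Schlkn}; your direct verification is exactly the standard argument one finds there. The case analysis is handled correctly, the key computation that the non-$T$ part of the integrand in the $(e,f,g)$ case collapses to the exact form $d\big[(e'f-ef')g'\big]$ is right (as is the observation that the $T$-terms cancel algebraically before integration), and your coboundary $\varphi(g,e)=\cint{C}(T-T')\,g\,dz$ for the independence statement is the expected one, well defined because $(T-T')\,dz$ is a genuine meromorphic differential with poles only in $A$.
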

In the above references we showed that every cocycle of $\Do$ can be 
uniquely decomposed into a function algebra cocycle, a vector field
cocycle and cocycle of mixing type.
Also a complete description of bounded and local mixing cocycles is
given.
It follows exactly the same pattern as in the vector field case.
\begin{remark}
It is possible to choose $T=0$ if all coordinate transformations
are affine. But this is only true in genus 1.
For different genera our $T$ will be meromorphic. But it is
possible to do with a pole of order one, which in our
situation we put at the point $\infty$.
In fact here we can do with 
$(T(z)=0, T(w)=-2/w)$.
Hence, in the affine part we do not see the appearance of $T$.
For calculations in our genus zero with 
our system of coordinates we can completely ignore
it (this is the same with a projective connection).
After having verified that $\gmh(e,f)=(eg''+Teg')dz$ is a well-defined differential
it is uniquely given by the rational function representing it
on the affine part. There the connection is equal to 0.
The behaviour of the rational function at $\infty$ is given by the
transformation
law of 1-differentials, and 
we can calculate e.g. the residue there
in the normal way.
\end{remark}
For the genus zero situation again
$[\gm[i]]=[ \gm[{C_i}]]$ for $i=1,\ldots N-1$ will be a basis 
of the bounded mixing cocycles with respect to the
standard splitting and $\gm[\infty]=\gm[{C_S}]$ the essentially
unique local cocycle (up to rescaling and equivalence).
In more detail
\begin{equation}
\begin{aligned}{}
\gm[i](e,g)&=\res_{a_i}(eg''dz),\quad i=1,2,\ldots, N-2,
\\
\gm[\infty](e,g)&=
-\sum_{i=1}^{N-1}\res_{a_i}(eg''dz)
=\res_{\infty}(eg''dz).
\end{aligned}
\end{equation}
\begin{theorem}\label{T:mbound}
Every mixed cocycle for the differential operator algebra in the
genus zero and multi-point situation is cohomologous to
a bounded cocycle with respect to
the standard splitting.
Furthermore, this bounded cocycle is a linear combination of the 
geometric cocycles $\gm[i]$, $i=1,\ldots, N-1$.
\end{theorem}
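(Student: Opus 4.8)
The plan is to follow the pattern of \refT{tfclass} and \refT{vbound}. Write a cocycle of mixing type as $\ga((g,e),(h,f))=\ga_m(e,h)-\ga_m(f,g)$ with a bilinear form $\ga_m\colon\La\times\A\to\C$; evaluating the $\Do$-cocycle condition on triples $((0,e_1),(0,e_2),(g,0))$ one sees that $\ga_m$ obeys the single identity $\ga_m([e_1,e_2],g)=\ga_m(e_1,e_2\ldot g)-\ga_m(e_2,e_1\ldot g)$. Since $e\ldot 1=0$ and $\La$ is perfect (already shown above), putting $g=1$ gives $\ga_m(e,1)=0$ for all $e$, so the constant function never contributes. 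As in the vector field case I would then single out $P_1$ and use the generating systems $e_n^{(i)}=(z-a_i)^{n+1}\ddz$ and $A_n^{(i)}=(z-a_i)^n$; for each fixed $i$ these realize a copy of the Witt algebra together with its standard module, and the identity above specializes (take $e_1=e_0^{(i)}$, $e_2=e_k^{(i)}$, $g=A_l^{(i)}$) to $(k+l)\,\ga_m(e_k^{(i)},A_l^{(i)})=l\,\ga_m(e_0^{(i)},A_{k+l}^{(i)})$.

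Next I would carry out the cohomological normalization, which is the heart of the matter. The mixing part of a coboundary $d_1\psi$ is governed by a single linear form $\psi$ on $\A$ (the $\La$-component of $\psi$ affects only the vector field part), acting by $\ga_m(e,g)\mapsto\ga_m(e,g)-\psi(e\ldot g)$. On the basis $\{A_n^{(1)}\mid n\in\Z\}\cup\{A_{-n}^{(i)}\mid n\ge 1,\ i\ge 2\}$ of $\A$ from \refE{anotherbas} I would set $\psi(A_n^{(1)}):=\tfrac1n\,\ga_m(e_0^{(1)},A_n^{(1)})$ for $n\ne 0$, set $\psi(A_0^{(1)})=\psi(1):=\ga_m(e_{-1}^{(1)},A_1^{(1)})$ (using $e_{-1}^{(1)}\ldot A_1^{(1)}=1$), and let $\psi$ be zero on the remaining basis vectors. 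After this change (keeping the symbol $\ga_m$) one has $\ga_m(e_0^{(1)},A_n^{(1)})=0$ for all $n$ and $\ga_m(e_{-1}^{(1)},A_1^{(1)})=0$; the specialized identity then forces $\ga_m(e_k^{(1)},A_l^{(1)})=0$ whenever $k+l\ne 0$, and in particular $\ga_m(e_{-1}^{(1)},A_k^{(1)})=0$ for every $k\ge 0$.

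To push this to the other points I would use the elementary relation $e_0^{(i)}=e_0^{(1)}-(a_i-a_1)e_{-1}^{(1)}$ together with \refL{agen}: for $n>0$ the function $A_n^{(i)}$ lies in $\A_{(0)}$, hence is a finite $\C$-linear combination of the $A_k^{(1)}$ with $k\ge 0$, so both $\ga_m(e_0^{(1)},A_n^{(i)})$ and $\ga_m(e_{-1}^{(1)},A_n^{(i)})$ vanish, whence $\ga_m(e_0^{(i)},A_n^{(i)})=0$ for all $i=1,\dots,N-1$ and all $n>0$. Feeding this back into the specialized identity yields $\ga_m(e_k^{(i)},A_l^{(i)})=0$ for all $i$ and all $k+l>0$. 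The conclusion is then reached exactly as at the end of the proof of \refT{vbound}: given homogeneous $f\in\La_j$ and $g\in\A_{j'}$ with $j+j'>0$, at least one index is positive, so by \refP{andec} (and its analogue for $\La$) one writes $g$ in terms of the $A_k^{(i)}$ with $k\ge j'$ and, for a matching $i$, $f$ in terms of the $e_l^{(i)}$ with $l\ge j$, making $\ga_m(f,g)$ a combination of terms $\ga_m(e_l^{(i)},A_k^{(i)})$ with $l+k\ge j+j'>0$, all zero. Hence the normalized cocycle is bounded from above by zero, and by the author's classification of bounded mixing cocycles in \cite{Schlloc}, \cite[\S 6.10.2]{Schlkn} it is then a linear combination of the geometric cocycles $\gm[i]$, $i=1,\dots,N-1$.

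I expect the only genuine obstacle to be the bookkeeping in the propagation step: the generating families attached to distinct points $P_i$ are different partial-fraction bases of the same vector spaces, so one must take care to pick a \emph{single} coboundary $\psi$ that at once normalizes the data at $P_1$ and absorbs the leftover value $\ga_m(e_{-1}^{(1)},A_1^{(1)})$, and then chase this through \refL{agen}. Everything else is residue-free linear algebra in one variable, entirely parallel to the function and vector field arguments; as a short-cut one could alternatively observe that on each copy $\langle e_n^{(i)}\rangle\times\langle A_m^{(i)}\rangle$ the restriction of $\ga_m$ is a classical two-point mixing cocycle and reduce to that known case.
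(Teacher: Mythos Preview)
Your proposal is correct and follows essentially the same strategy as the paper's proof: normalize by a coboundary $\psi$ on $\A$, use the cocycle relation to derive the recursion $(k+l)\,\ga_m(e_k^{(i)},A_l^{(i)})=l\,\ga_m(e_0^{(i)},A_{k+l}^{(i)})$, establish vanishing on holomorphic pairs via the $i=1$ copy, then propagate using \refP{andec} exactly as in \refT{vbound}, and finally invoke the classification of bounded mixing cocycles from \cite{Schlloc}.

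There are two small differences worth noting. First, the paper additionally sets $\psi(A_n^{(i)})=\tfrac1n\ga(e_0^{(i)},A_n^{(i)})$ for $n\le -1$ and all $i$, which forces $\ga'(e_0^{(i)},A_n^{(i)})=0$ for negative $n$ as well; this is the analogue of the extra normalization in the vector field proof, and as in \refR{copies} it is not needed for boundedness. You correctly omit it. Second, your propagation step via the explicit identity $e_0^{(i)}=e_0^{(1)}-(a_i-a_1)e_{-1}^{(1)}$ is a concrete special case of the paper's implicit argument (namely: once $\ga_m$ vanishes on holomorphic pairs, $\ga_m(e_0^{(i)},A_n^{(i)})=0$ for $n>0$ is automatic because both entries are holomorphic). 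Your version has the virtue of being entirely self-contained. Your preliminary observation $\ga_m(e,1)=0$ (from perfectness of $\La$) is also a nice touch that the paper does not state explicitly, and it cleanly disposes of the $(k,l)=(0,0)$ case.
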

\begin{proof}
We will use the same strategy as above in the vector field
algebra case.
For the elements 
\begin{equation}
e_n^{(i)}=(z-a_i)^{n+1}\ddz,\qquad
A_n^{(i)}=(z-a_i)^n
\end{equation}
we have the relations
\begin{equation}
[e_n^{(i)},A_m^{(i)}]=e_n^{(i)}\ldot A_m^{(i)}=m\, A_{n+m}^{(i)}.
\end{equation}
In particular $[e_0^{(i)},A_m^{(i)}]=m\, A_{m}^{(i)}$.
We start with an arbitrary $\ga$ and modify it by a coboundary
coming
from a linear map $\psi:A\to\C$. We set
\begin{equation}
\begin{aligned}{}
\psi(A_n^{(1)})&=\frac 1n\ga(e_0^{(1)},A_n^{(1)}),\quad n\ge 0,
\\ 
\psi(A_0^{(1)})&=\ga(e_{-1}^{(1)},A_1^{(1)}),
\\
\psi(A_n^{(i)})&=\frac 1n\ga(e_0^{(i)},A_n^{(i)}),\quad n\le -1, 
i=1,\ldots, N-1.
\end{aligned}
\end{equation}
By construction we have for the cohomologous cocycle 
$\ga'=\ga-d_1\psi$
\begin{equation}
\begin{gathered}
\ga'(e_0^{(1)},A_n^{(1)})=0, \ \forall n\ne 0, 
\quad
\ga'(e_0^{(i)},A_n^{(i)})=0, \ \forall n\le -1,\ i=2,\dots, N-1,
\quad
\\
 \ga'(e_{-1}^{(1)},A_1^{(1)})=0\;.
\end{gathered}
\end{equation}
We use  again $\ga$ instead of $\ga'$.
Using the cocycle relation
\begin{equation}
\ga([e_0^{(i)},e_m^{(i)}],A_n^{(i)})+
\ga([e_m^{(i)},A_n^{(i)}],e_0^{(i)})+
\ga([A_n^{(i)},e_0^{(i)}],e_m^{(i)})=0
\end{equation}
we obtain
\begin{equation}\label{eq:mmother}
(m+n)\cdot \ga(e_m^{(i)},A_n^{(i)})=
n\cdot\ga(e_0^{(i)},A_{n+m}^{(i)}).
\end{equation}
First we show that $\ga$ will vanish on holomorphic pairs, e.g. on
$\La_{(-1)}\times A_{(0)}$. 
{}From \refE{mmother} it follows that if $k+l\ne 0$ then
$\ga(e_k^{(i)},A_l^{(i)})=0$. By the cohomological changes 
also  
$\ga(e_{-1}^{(1)},A_1^{(1)})=0$.
{}From these properties we can conclude with nearly the same words
as in the proof of \refT{vbound} that
\begin{equation}
\ga(f,g)=0,\quad f\in\La_{(k)},\ g\in\A_{(l)}, \ k+l>0.
\end{equation}
We do not reproduce these calculations here.

To conclude the proof of the theorem we use the
classification results of the author on bounded cocycles
from \cite{Schlloc} with respect to the standard splitting
and obtain the claimed representation.
\end{proof}

\begin{proposition}
The differential operator algebra $\Do$ in the genus zero
and multi-point case is perfect.
\end{proposition}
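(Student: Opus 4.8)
The plan is to prove $[\Do,\Do]=\Do$ by showing that both summands of the vector space decomposition $\Do=\A\oplus\La$ lie in the derived algebra. I would work with the generating systems $e_n^{(i)}=(z-a_i)^{n+1}\ddz$ and $A_n^{(i)}=(z-a_i)^n$ introduced in \refS{unhandy}, together with the relations $[e_n^{(i)},A_m^{(i)}]=e_n^{(i)}\ldot A_m^{(i)}=m\,A_{n+m}^{(i)}$ already recalled in the proof of \refT{mbound}.

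First I would treat the function part. Taking $n=0$ in those relations gives $[e_0^{(i)},A_m^{(i)}]=m\,A_m^{(i)}$, so that $A_m^{(i)}=\frac 1m[e_0^{(i)},A_m^{(i)}]\in[\Do,\Do]$ for every $m\ne 0$ and every $i=1,\ldots,N-1$. The constant function is produced separately from $[e_{-1}^{(i)},A_1^{(i)}]=e_{-1}^{(i)}\ldot A_1^{(i)}=A_0^{(i)}=1$, which is again a commutator. Since the family $\{A_n^{(i)}\mid n\in\Z,\ i=1,\ldots,N-1\}$ already contains a $\C$-basis of $\A$, namely the one exhibited in \refE{anotherbas}, this shows $\A\subseteq[\Do,\Do]$.

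Next I would treat the vector field part. Inside $\Do$ the bracket of two elements of $\La$ has vanishing function component and agrees with the bracket computed in $\La$; hence $[\La,\La]$, evaluated in $\Do$, equals the derived algebra of $\La$. Since $\La$ is perfect in the genus zero case (proved above), we obtain $\La=[\La,\La]\subseteq[\Do,\Do]$. Combining the two parts yields $\Do=\A\oplus\La\subseteq[\Do,\Do]\subseteq\Do$, so $\Do$ is perfect.

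I do not expect a genuine obstacle here. The two points deserving a little care are: that the $A_n^{(i)}$ span $\A$ as a $\C$-vector space, not merely generate it as an algebra, which is guaranteed by \refE{anotherbas}; and that the constant function $1=A_0^{(i)}$ is not of the form $\frac 1m[e_0^{(i)},A_m^{(i)}]$ and so must be exhibited as the separate commutator $[e_{-1}^{(i)},A_1^{(i)}]$.
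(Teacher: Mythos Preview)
Your proof is correct and follows essentially the same approach as the paper: both show $\La\subseteq[\Do,\Do]$ by invoking perfectness of $\La$, and $\A\subseteq[\Do,\Do]$ via the identities $A_m^{(i)}=\frac 1m[e_0^{(i)},A_m^{(i)}]$ for $m\ne 0$ and $A_0^{(i)}=[e_{-1}^{(i)},A_1^{(i)}]$. Your version is slightly more explicit about why the $A_n^{(i)}$ span $\A$, but the argument is the same.
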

\begin{proof}
As in the vector field algebra case all generators are themselves
commutators. This we showed above for the $e_n^{(i)}$. For the
others we obtain
\begin{equation}
A_m^{(i)}=\frac 1m[e_0^{(i)}, A_m^{(i)}],\ m\ne 0,\quad
A_0^{(i)}=[e_{-1}^{(i)}, A_1^{(i)}]\; .
\end{equation}
\end{proof}

\begin{theorem}
The differential operator algebra $\Do$ in the genus zero and
multi-point
situation admits a universal central extension. It has a $3(N-1)$
dimensional
center. The defining cocycles are given by the linearly independent
cocycle classes  of
\begin{equation}
\gA[i],\quad,\gL[i],\quad \gm[i],\quad i=1,\ldots, N-1,
\end{equation}
or equivalently
\begin{equation}
\gA[i],\quad,\gL[i],\quad \gm[i],\quad i=1,\ldots, N-2,\infty.
\end{equation}
In the latter presentation the 
three cocycles 
$\gA[\infty],\gL[\infty],\gm[\infty]$
are the unique basis cocycles generating the 
3-dimensional cohomology space yielding local classes
with respect to the standard splitting.
\end{theorem}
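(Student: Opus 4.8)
The plan is to reduce the statement to the computation of $\Ho^2(\Do,\C)$. Since $\Do$ is perfect (the preceding Proposition), it admits a universal central extension whose center has dimension $\dim\Ho^2(\Do,\C)$; once this dimension is known to be finite, the extension has the explicit shape recorded in \refS{central}, with one central generator per basis cohomology class. Thus everything comes down to showing $\dim\Ho^2(\Do,\C)=3(N-1)$ and writing down a basis of the cohomology.

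First I would invoke the canonical decomposition of a $2$-cocycle $\ga$ of $\Do$ (proved in \cite{Schlloc}, \cite{Schlkn} for all genera): $\ga$ decomposes uniquely as $\ga=\ga_{\A}+\ga_{\La}+\ga_{\mathrm{mix}}$, a function-algebra cocycle plus a (pulled-back) vector-field cocycle plus a mixing cocycle of type \refE{mix}. The key observation is that $\ga_{\A}$, being the restriction of a $\Do$-cocycle to $\A\times\A$, is automatically $\La$-invariant: one evaluates the cocycle identity on a triple $(e,g,h)$ with $e\in\La$ and $g,h\in\A$. Hence by \refT{tfclass} (and the corollary following it) the $\A$-part ranges over an $(N-1)$-dimensional space of classes spanned by the $[\gA[i]]$. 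The vector-field part is governed by \refT{vbound} together with the ensuing identification of $\Ho^2(\La,\C)$, contributing another $(N-1)$ classes $[\gL[i]]$; and the mixing part is governed by \refT{mbound}, contributing $(N-1)$ classes $[\gm[i]]$. Uniqueness of the decomposition forces these three subspaces of $\Ho^2(\Do,\C)$ to intersect only in $0$, so $\dim\Ho^2(\Do,\C)=3(N-1)$ and the listed $3(N-1)$ classes form a basis; perfectness then yields the universal central extension with $3(N-1)$-dimensional center and the stated generating cocycles.

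Next I would treat the ``equivalently'' clause and the locality clause, both of which are bookkeeping on top of earlier results. In $\Ho_1(\Sigma_0\setminus A,\C)$ one has $[C_\infty]=-\sum_{i=1}^{N-1}[C_i]$, so $[C_1],\dots,[C_{N-2}],[C_\infty]$ is again a basis of the homology; since $\gA[\infty]=-\sum_{i=1}^{N-1}\gA[C_i]$, and likewise for $\gL[\infty]$ and $\gm[\infty]$, replacing the $i=N-1$ member of each family by the corresponding $\infty$-cocycle again gives a basis. For locality I would quote: $\gA[\infty]=\gA[C_S]$ is, up to scaling, the unique local $\La$-invariant (equivalently multiplicative) cocycle by \refP{linvloc} and \refP{multloc}; $\gL[\infty]=\gL[C_S]$ is the unique local vector-field cocycle up to equivalence and rescaling; and $\gm[\infty]=\gm[C_S]$ is the unique local mixing cocycle, the mixing case following the same pattern as the vector-field case. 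These three classes therefore span exactly the $3$-dimensional space of local cohomology classes of $\Do$ with respect to the standard splitting.

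The step I expect to be the main obstacle is the dimension count via the decomposition, specifically making rigorous two points: that the function-algebra part of a $\Do$-cocycle really lands inside the $\La$-invariant cocycles — so that \refT{tfclass} applies and the contribution is the finite $N-1$ rather than the infinite-dimensional $\bigwedge^2\A$ — and that a linear combination of the three types can be a coboundary only when each summand is separately a coboundary. Both follow from the uniqueness of the canonical decomposition in \cite{Schlloc}, \cite{Schlkn}, but they are exactly what turns the informal picture of ``three independent copies of $\Ho_1(\Sigma_0\setminus A,\C)$'' into the sharp value $3(N-1)$; the remaining steps are either residue computations or direct appeals to the quoted classification theorems.
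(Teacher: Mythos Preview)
Your proposal is correct and follows essentially the same route as the paper: the theorem is presented there without a separate proof block because it is meant to be read as the assembly of the preceding results (perfectness of $\Do$, the unique decomposition of any $\Do$-cocycle into function, vector-field, and mixing parts, and the three classification theorems \refT{tfclass}, \refT{vbound}, \refT{mbound}), which is exactly what you do. Your explicit verification that the restriction of a $\Do$-cocycle to $\A\times\A$ is automatically $\La$-invariant, and your observation that coboundaries of $\Do$ decompose compatibly with the three-type splitting, are the right hinges and match what the paper attributes to \cite{Schlloc}, \cite{Schlkn}.
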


\begin{remark}
Also for higher genus we have 
for every element from $\Ho_1(\Sigma_g\setminus A,\C)$ 
three different type of cocycles for the differential operator algebra.
With the help of them we can construct a central extension 
which has a $3\cdot \dim \Ho_1(\Sigma_g\setminus A,\C)$ dimensional 
center.
In accordance to the vector field case in the higher genus
and the results in genus zero which we have just shown, 
it is reasonable to  conjecture
that this central extension is a universal central extension of $\Do$.
\end{remark}

\subsection{Central extensions of $\g$-differential operator algebras}

Assume that $\g$ is a simple Lie algebra. Then every cocycle
$\ga$ of $\gb$ 
can be made  $\La$-invariant by cohomological changes.
Such cocycle can be extended by zero on the complementary spaces 
involving elements form $\La$. Every cocycle of $\La$ defines
a cocycle of $\Dg$ by pulling it back.
In fact, every cocycle of $\Dg$ is cohomologous to a sum
of these two types of cocycles. 
As both $g$ and $\La$ are perfect, $\Dg$ is perfect too. 
Hence, it admits a universal central extension where the
center is given by the two types of cocycles.
{}From our results, presented here, respectively
\cite{Schlaff} it follows that this universal
central extension has a center of dimension $2(N-1)$.
The explicite cocycles were given above.

From the analysis in \cite{Schlaff}, \cite{Schlkn} it follows that
in the case that $\g$ is semi-simple with $L$ simple factors
the universal central extension has a center of
dimension $(L+1)(N-1)$.
In the reductive but not semi-simple case the situation is 
a little bit more involved, as the general classification
shows that mixing cocycles show up which come with a linear 
form on $\g$ vanishing on $[\g,\g]$.
Now  we have to put $\La$-invariance in the conditions 
to obtain a complete classification. 
If $k$ is the dimension of the abelian factor, then
the center will have dimension
\begin{equation}
(L+\frac {k(k+1)}{2}+1)(N-1).
\end{equation}
Of course now it is not
a universal extension anymore.
See \refR{gln} for the example of $\gl(n)$.

\section{The three-point and genus zero case}
\label{sec:3point}
%
%
\subsection{Symmetries}
The case of only three points where poles are allowed is
to a certain extend special as we have additional symmetries.
These symmetries can be used to simplify the calculations 
of structure constants even
further.

Additionally, the three-point cases play a role in quite a number
of applications. See e.g. the tetrahedron algebra appearing in
statistical mechanics, in particular the work of Terwilliger
and collaborators \cite{HaTe}, \cite{BeTer}.

By a complex automorphism of the Riemann sphere,
i.e.
by a fractional linear transformation, respectively by an
$\PGL(2)$ action the three points can be brought to the points
$0,1$ and $\infty$. The corresponding automorphism will yield
an isomorphism of the involved algebras. 
Even after this is done there are still automorphisms of $\poc$ 
permuting the 3 points $\{0,1,\infty\}$. Hence, we still
have the action of the symmetric group $S_3$ of 3 elements.
The corresponding algebraic maps are now automorphisms of the
algebras.

Recall that in the previous section we constructed for every splitting
of
the set $A$, here $\{0,1,\infty\}$, into two disjoint non-empty subset
$I$ and $O$ a distinguished basis which yields an almost-graded
structure for the algebras. 
Essentially different splittings will yield essentially different 
basis elements respectively essentially different almost-gradings.

Here the only type of splitting is into a subset 
consisting of two points and
a subset consisting of one point. 
After having fixed $A=\{0,1,\infty\}$ we can by
applying an automorphism from the remaining group $S_3$ such that
\begin{equation}
A=I\cup O,\qquad 
I:=\{0,1\},\quad\text{and}\quad
O:=\{\infty\}.
\end{equation}
This is exactly the situation which we will consider here.
\begin{remark}
I like to stress the fact, that this does not say, that there is
only one possible choice of an almost-grading. In fact, given the 
set $A$ and hence a unique algebra, we have 3 essentially different
splitting, hence also 3 essentially different set of basis elements
and
consequently 3 almost-gradings. 
But in the three-point situation there will be always an automorphism
of
our algebra mapping the different almost-gradings to each other.
\end{remark}
\begin{remark} In \cite{SchlDeg} the situation
$A=\{\alpha,-\alpha,\infty\}$ was considered for $\alpha\in\C$,
$\alpha\ne 0$. This changes nothing. The corresponding algebras are
isomorphic to the algebras considered here. Even our basis elements
can be identified (up to some rescaling and re-indexing) and the structure
equations remain the same (again up to scaling factors).
The reason for the choice there was that we wanted to introduce
a free parameter $\alpha$ which can be used to study degeneration
process, respectively deformation families. For $\alpha\to 0$ we
``degenerate'' to the two-point situation.
In this respect see also our joint work with 
Fialowski 
\cite{FiaSchl1}, \cite{FiaSchlaff}, \cite{FiaSchlaffo}.
Clearly, everything that will be done in this section could be
formulated also for $\{\alpha,-\alpha,\infty\}$, $\alpha\ne 0$.
\end{remark}
\subsection{The associative algebra}
The basis elements of degree $n$ of the algebra $\A$ with 
respect to our normalized splitting are (see \refS{npoint}, \cite{Schlce})
\begin{equation}
A_{n,1}(z)=z^n(z-1)^{n+1},
\qquad
A_{n,2}(z)=z^{n+1}(z-1)^{n},\quad n\in\Z,
\end{equation}
where we ignore the scaling factors.
We ``symmetrize'' and ``anti-symmetrize'' them for
each degree separately by taking
\begin{equation}\label{eq:baschang}
\begin{aligned}
A_n(z)&=A_{n,2}(z)-A_{n,1}(z)
&&=z^n(z-1)^n,
\\
B_n(z)&=A_{n,2}(z)+A_{n,1}(z)
&&=z^n(z-1)^n(2z-1),
\end{aligned}
\end{equation}
\begin{proposition}\label{P:aprod}
The associative algebra $\A$ of meromorphic functions on
the Riemann sphere $\poc$ holomorphic outside of 
$0$, $1$, and $\infty$ has as vector space basis
\begin{equation}
\{A_n,B_n\mid n\in\Z\},
\end{equation} 
with the structure equations
\begin{equation}\label{eq:aprod}
\begin{aligned}
{}
A_n\cdot A_m&=A_{n+m},
\\
A_n\cdot B_m&=B_{n+m},
\\
B_n\cdot B_m&=A_{n+m}+4A_{n+m+1}.
\end{aligned}
\end{equation}
\end{proposition}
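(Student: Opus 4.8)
The plan is to reduce everything to elementary manipulations of rational functions in the single variable $z$, using the description of $\A$ as the algebra of rational functions holomorphic outside $\{0,1,\infty\}$, together with the Krichever--Novikov basis $\{A_{n,1},A_{n,2}\mid n\in\Z\}$ recalled just above. Two things have to be checked: (a) that $\{A_n,B_n\mid n\in\Z\}$ is again a vector space basis of $\A$, and (b) the three multiplication formulas \refE{aprod}.

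For (a) I would observe that \refE{baschang} expresses $A_n$ and $B_n$ inside the degree-$n$ homogeneous subspace $\A_n={\langle A_{n,1},A_{n,2}\rangle}_\C$, and that for each fixed $n$ the corresponding change of basis is given by the matrix $\begin{pmatrix}-1&1\\1&1\end{pmatrix}$, which has determinant $-2\neq 0$. Since $\A=\bigoplus_{n\in\Z}\A_n$ and $\{A_{n,1},A_{n,2}\}$ is a basis of each $\A_n$, it follows that $\{A_n,B_n\mid n\in\Z\}$ is a basis of $\A$. (Alternatively one can argue directly: a rational function holomorphic off $\{0,1,\infty\}$ lies in $\C[z,z^{-1},(z-1)^{-1}]$, and a partial-fraction plus Taylor-expansion argument shows every such function is a finite $\C$-combination of the $A_n$ and $B_n$, while linear independence is immediate by comparing orders at $0$ and $1$.)

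For (b), everything follows from writing the basis elements explicitly as $A_n(z)=z^n(z-1)^n$ and $B_n(z)=z^n(z-1)^n(2z-1)$ and multiplying out. One gets at once $A_n\cdot A_m=z^{n+m}(z-1)^{n+m}=A_{n+m}$ and $A_n\cdot B_m=z^{n+m}(z-1)^{n+m}(2z-1)=B_{n+m}$. The only point needing a (tiny) identity is the last product: here $B_n\cdot B_m=z^{n+m}(z-1)^{n+m}(2z-1)^2$, and using $(2z-1)^2=4z(z-1)+1$ this becomes $4\,z^{n+m+1}(z-1)^{n+m+1}+z^{n+m}(z-1)^{n+m}=4A_{n+m+1}+A_{n+m}$, which is exactly \refE{aprod}.

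There is essentially no hard step; the proposition is a bookkeeping exercise once the symmetric/antisymmetric combinations \refE{baschang} have been chosen. If anything deserves to be spelled out, it is the basis statement (a) and the role of the identity $(2z-1)^2=4z(z-1)+1$, which is what produces the single shifted term $4A_{n+m+1}$ and explains why the almost-grading bound $R_1$ equals $1$ in the three-point case, consistent with the general value quoted earlier for $N>2$.
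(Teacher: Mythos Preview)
Your proof is correct and follows essentially the same route as the paper: the basis claim is reduced to the invertibility of the change of coordinates \refE{baschang} on each homogeneous subspace $\A_n$ (the paper just says ``obviously a base change'', whereas you spell out the $2\times 2$ matrix), and the structure equations are obtained by direct multiplication, the only nontrivial point being the identity $(2z-1)^2=1+4z(z-1)$, which is exactly the content of \refL{prod} in \refA{calc}.
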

\begin{proof}
That the elements $A_{n,1},A_{n,2}$ with $n\in\Z$ are a basis
of $\A$ is a general fact by its very construction as
Krichever--Novikov
multi-point basis in \cite{Schleg} corresponding to 
our splitting of $A$. The transformation 
\refE{baschang} is obviously a base change which
even respects the homogeneous subspaces.
By direct calculations which we moved to \refA{calc} 
we take from \refL{prod} the structure equation.
\end{proof}
Our splitting introduces a 
(strong) almost-grading
for the algebra $\A$
\begin{equation}
\A=\bigoplus_{n\in\Z}\A_n,\qquad
\A_n={\langle A_n,B_n\rangle}_{\C},\quad\dim\A_n=2.
\end{equation}
This is clear from the general construction.
But it can be easily illustrated by \refE{aprod} as
\begin{equation}
\A_n\cdot \A_m\quad\subseteq\quad \A_{n+m}\oplus\A_{n+m+1}.
\end{equation}

\bigskip
Next we want to study central extensions of $\A$  
(considered as Lie algebras)
which  
are given by geometric cocycles as introduced in
\refS{central}. We showed that 
\begin{equation}
\gA[0](f,g)=\res_0(fdg),
\qquad
\gA[\infty](f,g)=\res_\infty(fdg),
\end{equation}
constitute a basis of the geometric cocycles.
Recall that the set of geometric cocycles coincide with the $\La$-invariant
respectively multiplicative cocycles.
Note also 
that 
\begin{equation}
\gA[1]=\res_1(fdg)=-\gA[0](f,g)-\gA[\infty](f,g)
\end{equation}
and that
$\res_{a}(fdg)=-
\res_{a}(gdf)
$, $a\in\Sigma_0$.
To calculate the cocycles it is enough to do the calculation for
all type of pairs of basis elements $A_n$ and $B_m$.
The method is very simple. 
We use \refL{a1prim} to express the integrand (e.g. $A_nA_m'dz$)
in terms of $A_k$ and $B_k$ and then \refL{resinf}, respectively
\refL{amres} and \refL{bmres} to calculate its value.

We start with
\begin{proposition}\label{P:acocinf}
\begin{equation*}
\begin{aligned}
\gA[\infty](A_n,A_m)&=2n\,\delta_m^{-n},
\\
\gA[\infty](A_n,B_m)&=0,
\\
\gA[\infty](B_n,B_m)&=
2n\delta_m^{-n}+
4(2n+1)\,\delta_m^{-n-1}\;.
\end{aligned}
\end{equation*}
\end{proposition}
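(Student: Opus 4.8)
The plan is to reduce everything to residue computations in the coordinate $z$, since $\gA[\infty](f,g)=\res_\infty(f\,dg)=\res_\infty(fg'\,dz)$. Three ingredients are needed: formulas for the derivatives $A_n'$ and $B_m'$; the structure equations \refE{aprod}, which re-expand the products $A_n\cdot A_k$, $A_n\cdot B_k$, $B_n\cdot B_k$ back into the basis $\{A_k,B_k\mid k\in\Z\}$; and the residues at $\infty$ of the one-forms $A_k\,dz$ and $B_k\,dz$.

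First I would record the derivatives. Writing $A_n=(z^2-z)^n$ gives at once $A_n'=n(z^2-z)^{n-1}(2z-1)=nB_{n-1}$, and writing $B_m=(z^2-z)^m(2z-1)$ and using $(2z-1)^2=4(z^2-z)+1$ gives $B_m'=m(z^2-z)^{m-1}(2z-1)^2+2(z^2-z)^m=(4m+2)A_m+mA_{m-1}$; these are the identities collected in \refL{a1prim}. Next I would establish the two residue formulas. In the coordinate $w=1/z$ one has $dz=-w^{-2}\,dw$, hence $\res_\infty(z^j\,dz)=-\delta_j^{-1}$; expanding $A_k=(z^2-z)^k$ at $\infty$ one sees that only powers of $z$ that are $\ge k$ (if $k\ge 0$) or $\le 2k\le -2$ (if $k<0$) occur, so the coefficient of $z^{-1}$ always vanishes and $\res_\infty(A_k\,dz)=0$ for every $k$. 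For $B_k$ the same argument handles the polynomial case $k\ge 0$ and the case $k\le -2$ (where all powers are $\le 2k+1\le -3$); the one exceptional case is $B_{-1}=(2z-1)/(z^2-z)=2z^{-1}+O(z^{-2})$. Hence $\res_\infty(B_k\,dz)=-2\,\delta_k^{-1}$. This is the content of \refL{resinf}, \refL{amres} and \refL{bmres}.

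Then the three formulas follow by substitution and reduction via \refE{aprod}. For $\gA[\infty](A_n,A_m)$ the integrand is $A_nA_m'\,dz=m\,A_nA_{m-1}\,dz=m\,B_{n+m-1}\,dz$, with residue $-2m\,\delta_m^{-n}=2n\,\delta_m^{-n}$ (the $\delta$ forces $m=-n$ in the prefactor). For $\gA[\infty](A_n,B_m)$ the integrand is $A_nB_m'\,dz=\big((4m+2)A_{n+m}+mA_{n+m-1}\big)\,dz$, and each summand has zero residue at $\infty$. For $\gA[\infty](B_n,B_m)$ the integrand is $B_nB_m'\,dz=\big((4m+2)B_{n+m}+mB_{n+m-1}\big)\,dz$, with residue $-2(4m+2)\,\delta_m^{-n-1}-2m\,\delta_m^{-n}$, which after evaluating each prefactor on the support of its $\delta$ (i.e.\ at $m=-n-1$, resp.\ $m=-n$) becomes $4(2n+1)\,\delta_m^{-n-1}+2n\,\delta_m^{-n}$, as claimed.

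I do not expect a genuine obstacle; the only place demanding care is the bookkeeping in the last step --- keeping straight that $B_{n+m-1}\,dz$ contributes a residue exactly when $n+m=0$ while $B_{n+m}\,dz$ contributes exactly when $n+m=-1$, together with the sign in the definition of $\res_\infty$. As an independent cross-check one can recompute each value from $\res_\infty(\omega)=-\res_0(\omega)-\res_1(\omega)$, valid for a one-form $\omega$ holomorphic outside $\{0,1,\infty\}$, using the same three ingredients.
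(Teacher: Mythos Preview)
Your proof is correct and follows essentially the same route as the paper's: express $fg'$ via the derivative formulas (\refL{deri}/\refL{a1prim}) and the structure equations (\refL{prod}), then read off residues at $\infty$ using \refL{resinf}. One small slip: in the first computation you wrote $A_nA_m'\,dz=m\,A_nA_{m-1}\,dz$, but by your own formula $A_m'=mB_{m-1}$, so the middle term should be $m\,A_nB_{m-1}\,dz$; your final expression $m\,B_{n+m-1}\,dz$ is nevertheless correct.
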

\begin{proof}
First we remark that in $A_nB_m'$ by 
\refL{a1prim} only multiples of $A_k$ show up and by \refL{resinf}
their residues at $\infty$ will vanish. Hence the second relation.
For the first relation we use
$A_nA_m'dz=mB_{n+m-1}dz$. By \refL{resinf}  there is only a 
non-vanishing value for the residue if $n+m-1=-1$ 
(equivalently $m=-n$)
 and it will be 
$-2m$. Hence the first relation.
For  $B_nB_m'dz$ we have 2 terms $2(2m+1)B_{m+n}$ and
$mB_{n+m-1}$. The first term will  have a residue if and 
only if $m=-n-1$ and the second if and only if $m=-n$.
This yields exactly the result of the 3. relation.
\end{proof}  
\begin{proposition}\label{P:aco0}
\begin{equation*}
\begin{aligned}
\gA[0](A_n,A_m)&=-n\,\delta_m^{-n},
\\
\gA[0](A_n,B_m)&=
n\,\delta_m^{-n}
+2n\,\delta_m^{-n-1}
+\sum_{k=2}^\infty n\,(-1)^{k-1}
2^k\frac{(2k-3)!!}{k!}\delta_{m}^{-n-k}\,
,
\\
\gA[0](B_n,B_m)&=
-n\delta_m^{-n}
-2(2n+1)\,\delta_m^{-n-1}\;.
\end{aligned}
\end{equation*}
\end{proposition}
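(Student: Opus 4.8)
The plan is to run exactly the recipe already used in the proof of \refP{acocinf}, only now taking residues at the point $0$ instead of at $\infty$. The three ingredients needed are: the derivative formulas in the adapted basis, the reduction of each integrand to that basis (\refL{a1prim} together with \refP{aprod}), and the residue values $\res_0(A_k\,dz)$, $\res_0(B_k\,dz)$ (\refL{amres}, \refL{bmres}). First I would record the derivatives: from $A_n=z^n(z-1)^n$ one gets $A_n'=n\,z^{n-1}(z-1)^{n-1}(2z-1)=nB_{n-1}$, and from $B_n=A_n(2z-1)=A_nB_0$ one gets $B_n'=nB_{n-1}B_0+2A_n$, which by $B_{n-1}B_0=A_{n-1}+4A_n$ (\refP{aprod}) equals $B_n'=nA_{n-1}+(4n+2)A_n$. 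Plugging these into the products of \refP{aprod} and reducing yields
\begin{equation*}
A_nA_m'\,dz=m\,B_{n+m-1}\,dz,\qquad
A_nB_m'\,dz=m\,A_{n+m-1}\,dz+(4m+2)A_{n+m}\,dz,
\end{equation*}
\begin{equation*}
B_nB_m'\,dz=m\,B_{n+m-1}\,dz+(4m+2)B_{n+m}\,dz .
\end{equation*}

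Next I would compute the residues at $0$ by the substitution $u=z-\tfrac12$, so that $z(z-1)=u^2-\tfrac14$, $2z-1=2u$, and $z=0$ corresponds to $u=-\tfrac12$; hence $A_k\,dz=(u^2-\tfrac14)^k\,du$ and $B_k\,dz=2u(u^2-\tfrac14)^k\,du$. Expanding around $u=-\tfrac12$ (equivalently around $z=0$) gives $\res_0(A_k\,dz)=0$ for $k\ge 0$ and $\res_0(A_{-\ell}\,dz)=(-1)^\ell\binom{2\ell-2}{\ell-1}$ for $\ell\ge 1$, whereas for $B_k$ a Pascal-rule cancellation, $\binom{2\ell-2}{\ell-1}=2\binom{2\ell-3}{\ell-2}$, kills every contribution except $\ell=1$, leaving the clean value $\res_0(B_k\,dz)=\delta_k^{-1}$.

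Assembling the pieces is then short. Since $\gA[0](A_n,A_m)=m\,\res_0(B_{n+m-1}\,dz)$ and $\gA[0](B_n,B_m)=m\,\res_0(B_{n+m-1}\,dz)+(4m+2)\res_0(B_{n+m}\,dz)$ only involve the single Kronecker term $\delta_k^{-1}$, they collapse at once to $-n\,\delta_m^{-n}$ and to $-n\,\delta_m^{-n}-2(2n+1)\delta_m^{-n-1}$, i.e. the first and third lines. The only genuine computation is $\gA[0](A_n,B_m)=m\,\res_0(A_{n+m-1}\,dz)+(4m+2)\res_0(A_{n+m}\,dz)$: setting $m=-n-k$ with $k\ge 2$ the two contributions are $(-n-k)(-1)^{k-1}\binom{2k}{k}$ from $A_{n+m-1}=A_{-(k+1)}$ and $(4n+4k-2)(-1)^{k-1}\binom{2k-2}{k-1}$ from $A_{n+m}=A_{-k}$; using $\binom{2k}{k}=\tfrac{2(2k-1)}{k}\binom{2k-2}{k-1}$ and clearing the common factor the bracket reduces to $\tfrac{2n}{k}$, so the value is $n(-1)^{k-1}\tfrac{2}{k}\binom{2k-2}{k-1}$, and $\tfrac{2}{k}\binom{2k-2}{k-1}=2^k(2k-3)!!/k!$ puts it in the stated form. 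The degenerate cases $k=0$ and $k=1$ are handled separately (only one of the two residues is nonzero there) and give the coefficients $n$ of $\delta_m^{-n}$ and $2n$ of $\delta_m^{-n-1}$.

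There is no conceptual difficulty here; the one place that needs care is the $\gA[0](A_n,B_m)$ case, where I must track which of the two $\res_0(A_k)$ contributions is active for each of $k=0$, $k=1$, and $k\ge 2$, and then carry out the binomial simplification that fuses the two contributions into the single closed form $n(-1)^{k-1}2^k(2k-3)!!/k!$. Everything else is a direct application of \refL{a1prim}, \refL{amres}, \refL{bmres} exactly as in \refP{acocinf}.
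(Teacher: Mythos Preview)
Your proof is correct and follows essentially the same route as the paper: write the integrands in the $A_k,B_k$ basis via \refL{a1prim} and evaluate with the residue lemmas \refL{amres}, \refL{bmres}. The paper streamlines things slightly in two places: for the first and third lines it simply invokes $\res_\infty(B_k\,dz)=-2\res_0(B_k\,dz)$ (\refL{resinf}) to read off the values as $-\tfrac12$ of those in \refP{acocinf}, and for the middle line it uses antisymmetry $\res_0(A_n\,dB_m)=-\res_0(B_m\,dA_n)$ together with $B_mA_n'=4nA_{n+m}+nA_{n+m-1}$, which factors out the $n$ at the outset and spares your binomial reduction---but these are tactical shortcuts, not a different argument (and your $u=z-\tfrac12$ substitution, while harmless, is not actually used).
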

\begin{proof}
Relation 1 and 3 are automatic as the expressions
$fdg$ are the same 
as in the proof of \refP{acocinf} and the residues 
of $B_k$ at $0$ and $\infty$ are
related via \refL{resinf}.
Hence only Relation  2 needs a calculation.
The calculation can be made a little bit simpler by using the
fact that $\res_{a}(A_ndB_m)= -\res_{a}(B_mdA_n)$. 
Using \refL{a1prim} we get
\begin{equation}
\res_{0}(A_nB_m'dz)
=-n (4\res_{0}(A_{n+m}dz)
+\res_{0}(A_{n+m-1}dz)).
\end{equation}
We set $k=-(n+m)$. From \refL{amres} we conclude that 
there is no residue for $k<0$. Next we consider 
$k\ge 2$ then 
\begin{multline}\label{eq:aress}
4\res_{0}(A_{n+m}dz)
+\res_{0}(A_{n+m-1}dz)
=
(-1)^k\left(4\frac{(2k-3)!!}{(k-1)!}2^{k-1}
-\frac{(2k-1)!!}{k!}2^{k}\right)
\\
=
(-1)^k2^k\frac{(2k-3)!!}{k!}.\qquad\qquad
\qquad\qquad
\end{multline}
This gives the result in the generic situation.
For 
the exceptional values for $k$ 
we calculate the residues in \refE{aress} separately and use
the values for the residue
\footnote{For further reference we give few more values.}
\begin{equation}
\label{eq:ressmall}
\res_0(A_0)=0,\quad
\res_0(A_{-1})=-1,
\quad
\res_0(A_{-2})=2,
\quad
\res_0(A_{-3})=-6,
\end{equation}
to obtain for 
$k=0$  the value $-1$
and for 
$k=1$ the value $-2$. 
If we multiply these values with $-n$ we get the claimed values.
\end{proof}
Note that 
the second relation in the above proposition is
expressed as a formal infinite sum. But 
for given $n$ and $m$ 
maximally one term will be non-zero.
Hence, it has a well-defined value.

In accordance with the general results 
\cite{Schlloc} 
about local cocycles only the cocycle $\gA[\infty]$ is
local. Here it will vanish for 
$n, m$  outside of $-1\le n+m\le 0$.
Consequently, only the central extension defined via 
$\gA[\infty]$ will admit an extension of the
almost-grading to the central extension.
Consequently, we obtain only in this case a triangular
decomposition which is of importance for the 
representations appearing in field theory.
This is not possible for the central extension defined
by $\gA[0]$.
\begin{remark}
If we change the almost-grading to the almost-grading introduced
by the splitting  $\{1,\infty\}\cup \{0\}$ the cocycle 
 $\gA[0]$ will become local. But we have to keep in mind that
the distinguished basis will change automatically too.
\end{remark}
\subsection{Current and affine algebra}
Recall that 
for  a finite-dimensional Lie algebra $\g$ the current
algebra of KN-type is defined by
$\gb=\g\otimes\A$. In particular every choice of a basis in 
$\g$ and a basis in $\A$ will yield a basis of $\gb$. 
For $\A$ we choose the basis $A_n,B_n$, $n\in\Z$ introduced above.
Automatically  we get an almost-graded structure of $\gb$
induced by the splitting $\{0,1\}\cup\{\infty\}$. For its
algebraic structure we obtain (via \refP{aprod})
\begin{proposition}\label{P:currprod}
\begin{equation*}
\begin{aligned}{}
[x\otimes A_n,y\otimes  A_m]&
=[x,y]\otimes A_{n+m},
\\
[x\otimes A_n,y\otimes  B_m&=[x,y]\otimes B_{n+m},
\\
[x\otimes B_n\cdot ,y\otimes B_m&=[x,y]\otimes (A_{n+m}+4A_{n+m+1}).
\end{aligned}
\end{equation*}
\end{proposition}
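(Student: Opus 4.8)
The plan is to obtain the structure equations by a direct substitution, using nothing more than the definition of the current algebra bracket in \refE{currint} together with the multiplication table for $\A$ established in \refP{aprod}. First I would recall that on $\gb=\g\otimes\A$ the bracket is given by $[x\otimes f,y\otimes g]=[x,y]\otimes f\cdot g$ for $x,y\in\g$ and $f,g\in\A$, and that by bilinearity it suffices to evaluate it on pairs built from the basis elements $A_n,B_n$ of $\A$. Then I would simply insert the three products $A_n\cdot A_m=A_{n+m}$, $A_n\cdot B_m=B_{n+m}$, and $B_n\cdot B_m=A_{n+m}+4A_{n+m+1}$ from \refP{aprod} into this formula; each of the three substitutions produces exactly one of the asserted identities, and the proof is complete.

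There is essentially no obstacle here: all the genuine content sits in \refP{aprod}, whose verification is the residue and polynomial bookkeeping deferred to the appendix, whereas the passage from the function algebra to the current algebra is purely formal. The only point worth flagging is that $\gb$ is a Lie algebra in the first place, so that these expressions really do describe a Lie bracket; but this is the general fact already recalled in \refS{currint}, valid for $\gb=\g\otimes\A$ with $\A$ any commutative associative algebra, and it needs no repetition.

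Finally, as an immediate consequence of the three displayed equations, I would note that the decomposition $\gb=\bigoplus_{n\in\Z}\gb_n$ with $\gb_n=\g\otimes\A_n$ is a strong almost-grading with the same bound as for $\A$, since $[\gb_n,\gb_m]\subseteq\gb_{n+m}\oplus\gb_{n+m+1}$; this is precisely the almost-grading of $\gb$ induced by the splitting $\{0,1\}\cup\{\infty\}$ referred to in the statement, so it comes for free from \refP{currprod} itself.
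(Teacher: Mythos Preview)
Your proposal is correct and matches the paper's own treatment: the paper does not even write out a proof, merely introducing the proposition with the phrase ``(via \refP{aprod})'', which is exactly the direct substitution of the $\A$-multiplication table into the defining bracket \refE{currint} that you describe. Your additional remark on the induced almost-grading also mirrors the sentence immediately preceding the proposition in the paper.
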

In \refS{npoint} 
we discussed its central extensions 
given by geometric cocycles
\begin{equation}
\ggb[\beta](x\otimes f, y\otimes g)=\beta(x,y)\cdot 
\gA(f,g),
\end{equation}
with $\beta(.,.)$ a symmetric, invariant bilinear form for
$\g$ and $\gA$ a multiplicative 2-cocycle for the algebra $\A$. 
Recall that if $\g$ is
a simple Lie there exists a universal central extension
$\gh$. 
In our case it has a  two-dimensional center and  
will be
given
by
\begin{equation}\label{eq:gext}
\begin{aligned}{}
[x\otimes f,y\otimes g]
=[x,y]\otimes (f\cdot g)
&+\alpha_0\cdot\beta(x,y) \cdot\gA[0](f,g)\cdot t_0
\\
&+\alpha_\infty\cdot\beta(x,y)\cdot\gA[\infty](f,g)\cdot t_\infty
\end{aligned}
\end{equation}
with $\alpha_0,\alpha_\infty\in\C$, $t_0,t_\infty$ central
in $\gh$, and $\beta$  the Cartan-Killing form.
The values of the cocycles for the introduced basis
elements have been calculated above and will not be repeated here.
In accordance with the general results 
\cite{Schlaff} the centrally extended
current algebra will be an almost-graded extension of the current
algebra with respect to this basis if and only if $\alpha_0=0$.
It is an easy task to write everything explicitely for 
special cases of the Lie algebra $\g$.
We will give the result for $\sln(2,\C)$ in \refA{sl2}.

As explained in \refS{central} this classification results can be
extended 
to arbitrary semi-simple Lie algebras (finite-dimensional).
If we require ``$\La$-invariance'' for the cocycle even the
reductive case is possible. 
\subsection{Vector field algebra}
Recall that in the genus $g=0$ case and $P_N=\infty$ the elements $g$
for $\Fl$ for $\l\in \frac12\Z$  are given by
\begin{equation}
g(z)=a(z)dz^{\l},\quad \text{with}\quad a(z)\in\A.
\end{equation}
In particular, a basis of $\A$ induces a basis of $\Fl$. As explained
in
\refS{npoint} a $\l$-depending shift is convenient.
We take as basis elements the elements
\begin{equation}
g_n^\l:=A_{n-\l}dz^\l,\qquad 
h_n^\l:=B_{n-\l}dz^\l,\quad 
n\in\Jl.
\end{equation}
The corresponding almost-grading reads as
\begin{equation}
\Fl=\bigoplus_{n\in\Jl}\Fln,\qquad
\Fln={\langle g_n^\l,h_n^\l\rangle}_\C.
\end{equation}
For the vector field (i.e. forms of weight $-1$)
we use
\begin{equation}
e_n:=A_{n+1}\ddz,\qquad 
f_n:=B_{n+1}\ddz,\quad 
n\in\Z.
\end{equation}
The vector spaces $\Fl$ are modules over $\La$. 
If $e=a\ddz$ and $g=b dz^\l$ then the module structure reads as 
\begin{equation}\label{eq:lie3}
e\ldot g=(a\cdot b'+\l\, b\cdot a')dz^\l.
\end{equation}
With respect to our basis elements the structure equations
are given by
\begin{proposition}\label{P:lmod}
\begin{equation*}
\begin{aligned}{}
e_n\ldot g_m^\l &=(m+\l n)\,h_{m+n}^\l,
\\
e_n\ldot h_m^\l &=(m+\l n)\,g_{m+n}^\l
+(4(m+\l n)+2)\,g_{n+m+1}^\l,
\\
f_n\ldot g_m^\l &=(m+\l n)\,g_{m+n}^\l
+(4(m+\l n)+2\l)\,g_{n+m+1}^\l,
\\
f_n\ldot h_m^\l &=(m+\l n)\,h_{m+n}^\l
+(4(m+\l n)+2+2\l)\,h_{n+m+1}^\l.
\end{aligned}
\end{equation*}
\end{proposition}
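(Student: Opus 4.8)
The plan is to reduce the four identities to the defining formula \refE{lie3} for the Lie-derivative action, together with the multiplication table \refE{aprod} of $\A$ and the first-derivative formulas for the basis functions. First I would record the elementary derivative formulas (this is the content of \refL{a1prim}): for $A_n(z)=z^n(z-1)^n$ and $B_n(z)=z^n(z-1)^n(2z-1)$,
\begin{equation*}
A_n'=n\,B_{n-1},\qquad B_n'=(4n+2)\,A_n+n\,A_{n-1}.
\end{equation*}
The first is immediate from $A_n'=n z^{n-1}(z-1)^{n-1}\big((z-1)+z\big)=nB_{n-1}$; the second follows from $B_n=(2z-1)A_n$, the product rule, and the identity $(2z-1)B_{n-1}=z^{n-1}(z-1)^{n-1}(2z-1)^2=4A_n+A_{n-1}$, which comes from $(2z-1)^2=4z(z-1)+1$.

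Next I would simply substitute and collect. Writing $e_n=A_{n+1}\ddz$, $f_n=B_{n+1}\ddz$, $g_m^\l=A_{m-\l}\dzl$, $h_m^\l=B_{m-\l}\dzl$, formula \refE{lie3} turns each of the four products into $(ab'+\l\,ba')\dzl$ for the appropriate $a\in\{A_{n+1},B_{n+1}\}$ and $b\in\{A_{m-\l},B_{m-\l}\}$. Inserting the derivative formulas above expresses $ab'+\l\,ba'$ as a linear combination of products of two of the functions $A_\bullet,B_\bullet$, which I then collapse via \refE{aprod}: an $A\cdot A$ or a $B\cdot B$ product produces terms in degrees $n+m-\l$ and $n+m-\l+1$, while an $A\cdot B$ product produces a single term in degree $n+m-\l$. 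Re-expressing $A_{n+m-\l}\dzl=g^\l_{n+m}$, $A_{n+m-\l+1}\dzl=g^\l_{n+m+1}$, $B_{n+m-\l}\dzl=h^\l_{n+m}$, $B_{n+m-\l+1}\dzl=h^\l_{n+m+1}$ and gathering coefficients yields the four stated identities; for instance, in all cases the degree-$(n+m)$ coefficient collapses to $(m-\l)+\l(n+1)=m+\l n$, and the degree-$(n+m+1)$ coefficients reduce to $4(m+\l n)+2$, $4(m+\l n)+2\l$, and $4(m+\l n)+2+2\l$ in the three cases where such a term appears.

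This is a direct and essentially routine computation, and there is no genuine obstacle beyond bookkeeping; well-definedness of the action (that $e\ldot g$ again lies in $\Fl$) is already covered by the quoted fact that the $\Fl$ are Lie modules over $\La$. The only points requiring care are the $\l$-dependent index shift built into the definition of $g^\l_m,h^\l_m$ (so that, e.g., $e_n\ldot g^\l_m$ carries total degree $n+m$ rather than $n+m-\l$), and keeping track of which summand of a $B\cdot B$ product feeds degree $n+m$ and which feeds degree $n+m+1$. I would display the case $e_n\ldot h_m^\l$ in full as a model and state that the remaining three follow by the identical substitution, optionally moving the detailed arithmetic to \refA{calc} alongside the residue computations.
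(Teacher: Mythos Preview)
Your approach is exactly the paper's: apply the Lie-derivative formula \refE{lie3} to each pair, feed in the first-derivative identities for $A_n$ and $B_n$, and simplify via the multiplication table \refE{aprod}. The only slip is bibliographic---the formulas $A_n'=nB_{n-1}$ and $B_n'=(4n+2)A_n+nA_{n-1}$ are recorded in \refL{deri}, not \refL{a1prim} (the latter lists the products $A_nA_m'$, $A_nB_m'$, etc., which you could equally well have cited directly and skipped a step).
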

\begin{proof}
We calculate the expression \refE{lie3} for the
pairs of basis elements.
Then we use the
expressions for the derivatives from \refL{deri} and 
for the products from \refL{prod}
and obtain in a straight forward manner the results.
\end{proof}
For $\l=-1$ we get the vector field algebra structure.
\begin{proposition}\label{P:vprod}
\begin{equation*}
\begin{aligned}{}
[e_n, e_m] &=(m-n)\,f_{m+n},
\\
[e_n, f_m] &=(m- n)\,e_{m+n}
+(4(m-n)+2)\,e_{n+m+1},
\\
[f_n, f_m] &=(m- n)\,f_{m+n}
+4(m-n)\,f_{n+m+1}.
\end{aligned}
\end{equation*}
\end{proposition}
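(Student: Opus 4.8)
The plan is to obtain Proposition \ref{P:vprod} directly as the specialization of Proposition \ref{P:lmod} to weight $\lambda=-1$. The key observation is that, by \refE{alied}, for $e,f\in\La$ the Lie‑derivative action $e\ldot f$ equals $(ef'-fe')\ddz$, which is exactly the Lie bracket $[e,f]$ of \refE{aLbrack}; thus $\La=\Fl[-1]$ is a Lie subalgebra whose bracket is the $\ldot$‑action restricted to forms of weight $-1$. Under the identifications $g_m^{-1}=A_{m+1}\ddz=e_m$ and $h_m^{-1}=B_{m+1}\ddz=f_m$, the four identities of \refP{lmod} evaluated at $\lambda=-1$ become
\begin{align*}
[e_n,e_m]&=(m-n)\,f_{m+n},\\
[e_n,f_m]&=(m-n)\,e_{m+n}+(4(m-n)+2)\,e_{n+m+1},\\
[f_n,e_m]&=(m-n)\,e_{m+n}+(4(m-n)-2)\,e_{n+m+1},\\
[f_n,f_m]&=(m-n)\,f_{m+n}+4(m-n)\,f_{n+m+1}.
\end{align*}
The first, second and fourth lines are precisely the three asserted structure equations. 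The third line is then nothing but the antisymmetry relation $[f_n,e_m]=-[e_m,f_n]$: substituting $(n,m)\mapsto(m,n)$ in the second line and negating turns $(4(n-m)+2)$ into $4(m-n)-2$, so the third line is automatically consistent with the second; likewise the right‑hand sides of the first and fourth lines are visibly antisymmetric in $(n,m)$, as they must be.

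For a fully self‑contained argument one may instead compute the brackets from \refE{aLbrack} using only the product rules of \refP{aprod} (equivalently \refL{prod}) and the derivative formulas of \refL{deri}, namely $A_k'=k\,B_{k-1}$ (and $B_k'=2A_k+(2z-1)kB_{k-1}$, re‑expanded via $B_nB_m=A_{n+m}+4A_{n+m+1}$). For example,
\begin{equation*}
[e_n,e_m]=\bigl(A_{n+1}A_{m+1}'-A_{m+1}A_{n+1}'\bigr)\ddz
=\bigl((m+1)A_{n+1}B_m-(n+1)A_{m+1}B_n\bigr)\ddz
=(m-n)\,B_{n+m+1}\ddz=(m-n)\,f_{n+m},
\end{equation*}
and the $[e_n,f_m]$ and $[f_n,f_m]$ cases follow the same pattern, the only extra input being the formula for $B_k'$ above.

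There is essentially no genuine obstacle: all the analytic content already resides in the product and derivative lemmas of \refA{calc}, and what remains is index bookkeeping. The one point that deserves a moment's care is tracking the built‑in index shifts $e_n=A_{n+1}\ddz$, $f_n=B_{n+1}\ddz$ and $g_n^\lambda=A_{n-\lambda}dz^\lambda$, since it is exactly this shift that makes the bracket land in degree $n+m$ with a bounded correction term rather than in degree $n+m+1$; carrying the $\lambda=-1$ substitution through consistently is what makes the output coincide with the stated formulas.
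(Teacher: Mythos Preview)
Your proposal is correct and follows exactly the paper's approach: the paper obtains \refP{vprod} by the one-line remark ``For $\l=-1$ we get the vector field algebra structure'', i.e.\ by specializing \refP{lmod} at $\lambda=-1$, which is precisely what you do (with the added bonus of a consistency check via antisymmetry and an optional direct computation from \refL{prod} and \refL{deri}).
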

These expressions clearly exhibit the almost-graded structure.
Observe that the algebra of global holomorphic vector fields
is the subalgebra
\begin{equation}
{\langle e_{-1},f_{-1},e_0\rangle}_\C,
\end{equation}
which is isomorphic to $\sln(2,\C)$.

\bigskip

Next we calculate the universal central extension. We know
that it has a two-dimensional center and can be given as 
\begin{equation}
[\widehat{e},\widehat{f}]=
\widehat{[e,f]}+
\alpha_0\cdot \gL[0](e,f)\cdot t_o+
\alpha_\infty\cdot \gL[\infty](e,f)\cdot t_\infty
\end{equation}
with
\begin{equation}\label{eq:resv3}
\begin{aligned}{}
\gL[0](e,f)&=1/2\res_0(e\cdot f'''-f\cdot e''')dz
=\res_0(e\cdot f''')dz
=-\res_0(f\cdot e''')dz
\\
\gL[\infty](e,f)&=1/2\res_\infty(e\cdot f'''-f\cdot e''')
=\res_\infty(e\cdot f''')dz
=-\res_\infty(f\cdot e''')dz\, .
\end{aligned}
\end{equation}
Here we used \refL{resum}

First we consider the point $\infty$ and obtain in this way the
local cocycle.
\begin{proposition}\label{P:lcocinf}
\begin{equation*}
\begin{aligned}{}
\gL[\infty](e_n,e_m)&=2(n^3-n)\,\delta_m^{-n}+
4n(n+1)(2n+1)\delta_m^{-n-1}
\\
\gL[\infty](e_n,f_m)&=0,
\\
\gL[\infty](f_n,f_m)&=
2(n^3-n)\,\delta_m^{-n}+8n(n+1)(2n+1)\delta_m^{-n-1}
+8(n+1)(2n+1)(2n+3)\delta_m^{-n-2}
\end{aligned}
\end{equation*}
\end{proposition}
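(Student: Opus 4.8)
The plan is to reduce each value to the residue at $\infty$ of an explicit rational $1$-form, exactly as in the proofs of \refP{acocinf} and \refP{aco0}. By \refE{resv3} (which already incorporates \refL{resum}) one has $\gL[\infty](e,f)=\res_\infty(e\cdot f'''\,dz)=-\res_\infty(f\cdot e'''\,dz)$, where on the right $e,f$ denote the rational functions representing the vector fields. So it suffices to: (i) express $A_{m+1}'''$, respectively $B_{m+1}'''$, as a linear combination of the basis functions $A_k,B_k$; (ii) multiply by $A_{n+1}$, respectively $B_{n+1}$, using the product table of \refP{aprod} (equivalently \refL{prod}); and (iii) extract the residue at $\infty$ from \refL{resinf}, recalling that $\res_\infty(A_k\,dz)=0$ for all $k$ while $\res_\infty(B_k\,dz)=-2\,\delta_k^{-1}$.

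For step (i) I would iterate the first-order formulas $A_k'=k\,B_{k-1}$ and $B_k'=(4k+2)A_k+k\,A_{k-1}$ from \refL{deri}. Two further differentiations --- a purely mechanical bookkeeping step --- give $A_k'''=k(k-1)\bigl((4k-2)B_{k-2}+(k-2)B_{k-3}\bigr)$ and $B_k'''=4k(2k-1)(2k+1)A_{k-1}+4k(k-1)(2k-1)A_{k-2}+k(k-1)(k-2)A_{k-3}$; in particular $A_{m+1}'''$ is a combination of $B$'s only and $B_{m+1}'''$ of $A$'s only. The mixed pairing $\gL[\infty](e_n,f_m)=\res_\infty(A_{n+1}\cdot B_{m+1}'''\,dz)$ then vanishes immediately: $B_{m+1}'''$ is a combination of $A_j$'s, $A_{n+1}A_j=A_{n+1+j}$, so the integrand is a combination of forms $A_k\,dz$, each with zero residue at $\infty$. (Alternatively, the involution $z\mapsto 1-z$ fixes $\infty$ and leaves $A_k$ invariant and $B_k$ anti-invariant, hence makes $A_{n+1}B_{m+1}'''\,dz$ an anti-invariant $1$-form, whose residue at the fixed point must be zero.)

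For $\gL[\infty](e_n,e_m)=\res_\infty(A_{n+1}\cdot A_{m+1}'''\,dz)$ I would plug in the expansion of $A_{m+1}'''$ and use $A_{n+1}B_k=B_{n+1+k}$ to rewrite the integrand as $(m+1)m\bigl((4m+2)B_{n+m}+(m-1)B_{n+m-1}\bigr)\,dz$; the residue at $\infty$ keeps only the index $-1$ terms, giving $-2m(m+1)(m-1)\,\delta_m^{-n}-4m(m+1)(2m+1)\,\delta_m^{-n-1}$, and evaluating the coefficient polynomials at the values $m=-n$ and $m=-n-1$ selected by the Kronecker deltas yields the claimed $2(n^3-n)\,\delta_m^{-n}+4n(n+1)(2n+1)\,\delta_m^{-n-1}$. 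The case $\gL[\infty](f_n,f_m)=\res_\infty(B_{n+1}\cdot B_{m+1}'''\,dz)$ is identical, now using $B_{n+1}A_k=B_{n+1+k}$; the integrand becomes a combination of $B_{n+m+1},B_{n+m},B_{n+m-1}$, so three Kronecker terms survive, at $m=-n,\,-n-1,\,-n-2$, and substituting these into the coefficient polynomials reproduces the three-term formula in the statement.

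The whole argument is elementary; the only points needing care are the triple-differentiation bookkeeping and keeping the signs straight via $\res_\infty(e f'''\,dz)=-\res_\infty(f e'''\,dz)$. A convenient final check is that the resulting expressions are antisymmetric under exchange of the two arguments, as any $2$-cocycle must be; they are.
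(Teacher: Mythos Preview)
Your proof is correct and follows essentially the same route as the paper: express the integrand via the third-derivative formulas of \refL{deri} (the paper packages these together with the products as \refL{a3prim}), reduce to a combination of $B_k\,dz$ or $A_k\,dz$, and read off the residue at $\infty$ from \refL{resinf}. The only cosmetic difference is that the paper expands the derivative of the first argument via $\gL[\infty](e_n,e_m)=-\res_\infty(A_{m+1}A_{n+1}'''\,dz)$, whereas you expand the second; your additional symmetry observation for the vanishing of the mixed term is a nice side remark not present in the paper.
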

\begin{proof}
The method is the same as in the proof of 
\refP{acocinf}.
We have the expressions of $e_n$, respectively $f_n$ via the 
basis functions $A_n$ and $B_n$, use \refL{a3prim} and 
build the integrand in one of the forms given in 
\refE{resv3}. Finally 
\refL{resinf} will give the residue.
In the expression of $A_nB_m'''$ and $B_mA_n'''$ only 
linear combinations of $A_k$ show up. But they do not
have a residue at $\infty$. Hence, the second relation. 

The others we have to calculate.
First using \refL{a3prim}
\begin{equation}\begin{aligned}{}
\gL[\infty](e_n,e_m)&=-\res_\infty(A_{m+1}A_{n+1}''')
\\
&=-(n+1)n(n-1)\res_\infty B_{n+m-1}
-2(n+1)n(2n+1)\res_\infty B_{n+m}.
\end{aligned}
\end{equation}
Taking the residues at $\infty$ via \refL{resinf}
we get exactly the first relation.
For $\gL[\infty](f_n,f_m)=-\res_\infty(B_{m+1}B_{n+1}''')$
we obtain in completely in the same manner the last relation
(now with three terms).
\end{proof}

\begin{proposition}\label{P:lcoc0}
\begin{equation*}
\begin{aligned}{}
\gL[0](e_n,e_m)&=-(n^3-n)\,\delta_n^{-m}
-2n(n+1)(2n+1)\delta_m^{-n-1}
\\
\gL[0](e_n,f_m)&=
(n^3-n)\,\delta_m^{-n}
+6n^2(n+1)\delta_m^{-n-1}
+6n(n+1)^2\delta_m^{-n-2}
\\
&+\sum_{k\ge 3}
n(n+1)(n+k-1)(-1)^{k}2^k\cdot 3\cdot \frac {(2k-5)!!}{k!}
\delta_m^{-n-k}
\\
\gL[0](f_n,f_m)&=
-(n^3-n)\,\delta_m^{-n}-4n(n+1)(2n+1)\delta_m^{-n-1}
-4(n+1)(2n+1)(2n+3)\delta_m^{-n-2}\,.
\end{aligned}
\end{equation*}
\end{proposition}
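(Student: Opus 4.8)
The plan is to follow the proof of \refP{lcocinf} verbatim, only with the point $0$ in place of $\infty$. Recall from \refE{resv3} and \refL{resum} that $\gL[0](e,f)=\res_0(e\,f'''\,dz)=-\res_0(e'''\,f\,dz)$, where $e,f$ now stand for the functions representing the vector fields; for our basis these functions are $A_{n+1}$ (for $e_n$) and $B_{n+1}$ (for $f_n$). First I would compute the third derivatives $A_{n+1}'''$ and $B_{n+1}'''$ via \refL{a3prim}, re-express them in the basis $\{A_k,B_k\}$, multiply by the appropriate $A_{m+1}$ or $B_{m+1}$ using the product rules \refE{aprod}, and then read off the residue at $0$ of the resulting finite linear combination of $A_k\,dz$ and $B_k\,dz$.

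For the first and third relations the integrands $A_{m+1}A_{n+1}'''\,dz$ and $B_{m+1}B_{n+1}'''\,dz$ are literally the rational differentials already produced in the proof of \refP{lcocinf}; after expansion by \refL{a3prim} and \refE{aprod} their only summand with a nonzero residue at $0$ is a multiple of $B_{-1}\,dz$, exactly as at $\infty$. Since $\res_0(B_{-1}\,dz)=-\tfrac12\,\res_\infty(B_{-1}\,dz)$ — this is the mechanism already used for relations $1$ and $3$ in the proof of \refP{aco0}, and it is read off by comparing \refP{acocinf} with \refP{aco0}, using \refL{resinf} — every coefficient found at $\infty$ in \refP{lcocinf} is simply multiplied by $-\tfrac12$, and this reproduces the claimed relations $1$ and $3$ without any new computation.

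The real content is the second relation $\gL[0](e_n,f_m)=\res_0(A_{n+1}B_{m+1}'''\,dz)$. Expanding $B_{m+1}'''$ by \refL{a3prim} and using \refE{aprod}, one is left with a finite linear combination of differentials $A_{-\ell}\,dz$, and the residues $\res_0(A_{-\ell}\,dz)$ are supplied by \refL{amres}: beyond a small bound they are given by a closed form involving a double factorial, which is precisely what produces the displayed infinite sum $\sum_{k\ge 3}n(n+1)(n+k-1)(-1)^k2^k\cdot 3\cdot\frac{(2k-5)!!}{k!}\,\delta_m^{-n-k}$. The finitely many low orders — here those feeding $\delta_m^{-n}$, $\delta_m^{-n-1}$, $\delta_m^{-n-2}$ — fall outside the generic formula and are evaluated directly from the explicit residue values in \refE{ressmall}, extended by one or two further orders. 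I expect the main obstacle to be exactly this combinatorial bookkeeping: propagating the coefficients coming from \refL{a3prim} through the products \refE{aprod}, collapsing the outcome onto the normalized shape $3\cdot\frac{(2k-5)!!}{k!}\,2^k(-1)^k$, and deciding which small values of $k$ must be handled by hand. A useful consistency check is the residue theorem on $\poc$, which gives $\gL[0]+\gL[1]+\gL[\infty]=0$, together with the $S_3$-action on $\{0,1,\infty\}$ discussed above relating $\gL[0]$ to $\gL[1]$; either one suffices to confirm the final coefficients.
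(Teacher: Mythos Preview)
Your proposal is correct and follows essentially the same route as the paper's proof: relations 1 and 3 are obtained from \refP{lcocinf} by the factor $-\tfrac12$ coming from \refL{resinf}, and relation 2 is computed directly by expanding via \refL{a3prim} and evaluating residues of $A_k\,dz$ with \refL{amres} and \refE{ressmall}.

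One small practical difference is worth noting. For relation 2 you propose to compute $\gL[0](e_n,f_m)=\res_0(A_{n+1}\,B_{m+1}'''\,dz)$, i.e.\ to differentiate $B_{m+1}$. The paper instead uses the equivalent form $\gL[0](e_n,f_m)=-\res_0(B_{m+1}\,A_{n+1}'''\,dz)$, differentiating $A_{n+1}$. Your choice produces, via the entry $A_nB_m'''$ of \refL{a3prim}, coefficients that are polynomials in $m$; to reach the stated formula (whose coefficients are polynomials in $n$) you would then have to substitute $m=-n-k$ and simplify. The paper's choice yields the factor $-(n+1)n$ up front and coefficients directly in $n$, so the combinatorial collapse to $3\cdot\dfrac{(2k-5)!!}{k!}\,2^k(-1)^k$ is a single-line identity in $k$ alone. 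Either way works, but if you follow your plan literally, be prepared for that extra substitution step.
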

\begin{proof}
In all cases the integrand is the same as the
one considered in \refP{lcocinf}.
Hence by, \refL{resinf} we obtain $-1/2$ of the values
there for the 1. and 3. relation.
It remains to calculate the 2nd relation.
We have
\begin{equation}
\gL[0](e_n,f_m)=-\res_0(B_{m+1}A_{n+1}''').
\end{equation}
{}From \refL{a3prim} we obtain for it
\begin{equation}\label{eq:vsumm}
-(n+1)n\,\bigg(8(2n+1)\res_0A_{n+m+1}
+2(4n-1)\res_0A_{n+m}
+(n-1)\res_0A_{n+m-1}\bigg)
\end{equation}
Next we have to consult \refL{amres} for calculating the
residues.
We set $k=-(n+m)$. If $k>0$ there will be no residue.
For $k\ge 3$ we are in the generic situation and obtain
for the last factor in \refE{vsumm}
\begin{multline}
(-1)^{-k-1}\frac{(2k-5)!!}{k!}2^k
\bigg(2(2n+1)k(k-1)
-(4n-1)(2k-3)k+(n-1)(2k-1)(2k-3)\bigg)
\\
= (-1)^{-k-1}\frac{(2k-5)!!}{k!}2^k\cdot 3\cdot (n+k-1).
\qquad\qquad
\end{multline}
This yields exactly the generic expression.
For $k=0,1$ and $2$ we have to take the 
residues in \refE{vsumm} individually and use the
the values \refE{ressmall}
and get exactly the claimed expressions.
\end{proof} 
See \cite{JM} for similar results.

Note also that we can express the term
\begin{equation}
\frac{(2k-5)!!}{k!}2^k\cdot 3
=\frac {12}{k(k-1)}\binom {2(k-2)}{k-2}
\end{equation}
if this is more convenient.

Again only the cocycle $\gL[\infty]$ will be local with respect to
the almost-grading introduced by our splitting, respectively 
by our basis.
Hence, only for its corresponding central extension we have
a triangular decomposition.
Everything what was said for the function algebra case, remains
true here.

\subsection{Differential operator algebra $\Do$}
Recall that $\Do$ is 
the (Lie algebra) semi-direct sum of $\A$ with $\La$ where $\La$ operates on $\A$ by
taking
the derivative.
The homogeneous subspace of degree $n$ is now
\begin{equation}
{\langle A_n,B_n,e_n,f_n\rangle}_\C.
\end{equation}
The subalgebra $\A$ is abelian and \refP{vprod} 
gives the structure equations for the
vector fields.  \refP{lmod} specialized for $\l=0$ yields the  
the equations for the mixed terms.
\begin{proposition}
\begin{equation*}
\begin{aligned}{}
[e_n, A_m]&=
-[A_m, e_n]
= m\,B_{m+n},
\\
[e_n, B_m]&=
-[B_m, e_n]
 = m\,A_{m+n}
+(4m+2)\,A_{n+m+1},
\\
[f_n, A_m]&=
-[A_m, f_n]
 =m\,A_{m+n}
+4m\,A_{n+m+1},
\\
[f_n, B_m]&=
-[B_m, f_n]
 =m\,B_{m+n}
+(4m+2)\,B_{m+n+1},
\end{aligned}
\end{equation*}
\end{proposition}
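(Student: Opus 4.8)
The plan is to recognize that this Proposition is simply \refP{lmod} read off at $\l=0$. Recall that in the semidirect sum $\Do=\A\oplus\La$ the bracket of a vector field $e=a\ddz$ with a function $g=b$ is by definition the action of $\La$ on $\A$ by derivation, which by \refE{lie3} at $\l=0$ collapses to $e\ldot g=a\cdot b'$, while the degree-$n$ basis vectors $g_n^{0}$ and $h_n^{0}$ of $\Fl[0]$ are exactly $A_n$ and $B_n$. Hence each of the four identities of \refP{lmod} specialized to $\l=0$ reads off verbatim as one of the four claimed formulas; for instance $e_n\ldot h_m^{0}=(m+0\cdot n)\,g_{m+n}^{0}+(4(m+0\cdot n)+2)\,g_{n+m+1}^{0}=m\,A_{m+n}+(4m+2)\,A_{n+m+1}$, which is the line for $[e_n,B_m]$. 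The antisymmetric halves $[A_m,e_n]=-[e_n,A_m]$, and likewise for the others, are just the antisymmetry of the Lie bracket on $\Do$ and need no argument.

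If a self-contained derivation is preferred — which is in effect how \refP{lmod} itself is established — I would compute directly. From \refL{deri} one has $A_m'=m\,B_{m-1}$ and $B_m'=(4m+2)\,A_m+m\,A_{m-1}$, and from \refP{aprod} (equivalently \refL{prod}) the product rules $A_k\cdot B_l=B_{k+l}$ and $B_k\cdot B_l=A_{k+l}+4\,A_{k+l+1}$. Recalling $e_n=A_{n+1}\ddz$ and $f_n=B_{n+1}\ddz$, each bracket is then a one-line computation: $[e_n,A_m]=A_{n+1}A_m'=m\,A_{n+1}B_{m-1}=m\,B_{n+m}$; $[e_n,B_m]=A_{n+1}B_m'=(4m+2)\,A_{n+m+1}+m\,A_{n+m}$; $[f_n,A_m]=B_{n+1}A_m'=m\,B_{n+1}B_{m-1}=m\,A_{n+m}+4m\,A_{n+m+1}$; and $[f_n,B_m]=B_{n+1}B_m'=(4m+2)\,B_{n+m+1}+m\,B_{n+m}$.

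There is no genuine obstacle here; the only thing to watch is the bookkeeping — keeping the index shift in $e_n=A_{n+1}\ddz$ and $f_n=B_{n+1}\ddz$ correct and quoting the right derivative and product formulas from the appendix. Once \refL{deri} and \refL{prod} (hence \refP{aprod} and \refP{lmod}) are in hand the statement follows immediately, and the antisymmetric half of each line comes for free from the Lie algebra structure of $\Do$.
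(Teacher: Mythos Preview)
Your proposal is correct and matches the paper's own treatment exactly: the paper simply states, immediately before the proposition, that ``\refP{lmod} specialized for $\l=0$ yields the equations for the mixed terms,'' which is precisely your first paragraph. Your optional direct computation via \refL{deri} and \refL{prod} is also correct and is in fact how \refP{lmod} itself was proved, so nothing new is needed.
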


The geometric cocycles yield a 6-dimensional central extension. 
In
addition
to the 4 basis elements given by the pure cocycles  given by 
the Propositions \ref{P:acocinf},  
\ref{P:aco0}, \ref{P:lcocinf}, \ref{P:lcoc0} we have two additional
basis cocycles
($e\in\L,g\in\A$)
\begin{equation}
\gm[0](e,g)=\res_0(eg''dz),
\qquad
\gm[\infty](e,g)=\res_\infty(eg''dz).
\end{equation}
Evaluated for the basis elements we obtain
\begin{proposition}
\label{P:mcocinf}
\begin{equation*}
\begin{aligned}{}
\gm[\infty](e_n,A_m)&=0,
\\
\gm[\infty](e_n,B_m)&
=-2n(n+1)\delta_{m}^{-n}
-4(n+1)(2n+1)\delta_{m}^{-n-1},
\\
\gm[\infty](f_n,A_m)&=
-2n(n+1)\delta_{m}^{-n}
-4(n+1)(2n+3)\delta_{m}^{-n-1},
\\
\gm[\infty](f_n,B_m)&=0\, .
\end{aligned}
\end{equation*}
\end{proposition}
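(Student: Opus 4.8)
The plan is to evaluate $\gm[\infty](\cdot,\cdot)=\res_\infty\big((\cdot)(\cdot)''\,dz\big)$ on the four types of pairs of basis elements by exactly the same template as in the proofs of \refP{acocinf} and \refP{lcocinf}. As explained in the remark preceding \refP{mcocinf}, the affine connection $T$ may be dropped throughout, since with the standard coordinate $z$ it vanishes on the affine part, and the residue at $\infty$ is then read off from the rational representative via $w=1/z$ in the usual way (that $\gmh(e,g)=(eg''+Teg')dz$ is a well-defined differential has already been recorded).

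First I would write down the second derivatives of the basis functions. Differentiating the formulas $A_m'=mB_{m-1}$ and $B_m'=(4m+2)A_m+mA_{m-1}$ of \refL{a1prim} once more gives
\begin{equation*}
A_m''=m(4m-2)A_{m-1}+m(m-1)A_{m-2},\qquad
B_m''=m(4m+2)B_{m-1}+m(m-1)B_{m-2}.
\end{equation*}
Multiplying these by $A_{n+1}$ (for $e_n$) or $B_{n+1}$ (for $f_n$) and reducing with the structure equations of \refP{aprod} puts each integrand into normal form. The crucial observation — parallel to the vanishing of $\gL[\infty](e_n,f_m)$ in \refP{lcocinf} — is a parity statement: $A_{n+1}A_m''$ and $B_{n+1}B_m''$ are $\C$-linear combinations of the $A_k$ alone, whereas $A_{n+1}B_m''$ and $B_{n+1}A_m''$ are $\C$-linear combinations of the $B_k$ alone. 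Since \refL{resinf} gives $\res_\infty(A_k\,dz)=0$ for every $k\in\Z$, the relations $\gm[\infty](e_n,A_m)=0$ and $\gm[\infty](f_n,B_m)=0$ follow at once.

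For the two remaining relations I would substitute the normal forms
\begin{equation*}
A_{n+1}B_m''=m(4m+2)B_{n+m}+m(m-1)B_{n+m-1},\qquad
B_{n+1}A_m''=m(4m-2)B_{n+m}+m(m-1)B_{n+m-1},
\end{equation*}
and apply \refL{resinf} once more, according to which $\res_\infty(B_k\,dz)$ equals $-2$ for $k=-1$ and $0$ otherwise. Hence the first summand contributes only when $n+m=-1$ and the second only when $n+m=0$; setting $m=-n-1$, resp.\ $m=-n$, and simplifying the resulting polynomials in $n$ produces precisely the claimed Kronecker-delta expressions. In contrast to \refP{lcoc0}, no infinite tail appears, because $\res_\infty$ annihilates every $A_k$ and every $B_k$ with $k\neq-1$.

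There is no real obstacle here: the argument is the same routine as for $\gA[\infty]$ and $\gL[\infty]$. The only points that need care are the correct sign in $\res_\infty(B_{-1}\,dz)=-2$, which one should cross-check against the residue theorem on $\poc$ (e.g.\ via the partial-fraction form $B_{-1}=z^{-1}+(z-1)^{-1}$), and the bookkeeping of the two index shifts in the Kronecker deltas.
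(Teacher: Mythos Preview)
Your proof is correct. You compute the second derivatives $A_m''$ and $B_m''$ (these are already recorded in \refL{deri}, by the way, not \refL{a1prim}), multiply through, observe the parity pattern --- products landing in the $A_k$-span have vanishing residue at $\infty$ --- and read off the two nontrivial relations from $\res_\infty(B_{-1}\,dz)=-2$. All of this checks out.

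The paper takes a slightly different and more economical route. Rather than recomputing from scratch, it exploits the identity $A_m'=mB_{m-1}$ to write
\[
C\cdot A_m''=m\,C\cdot B_{m-1}',\qquad C\cdot B_m''=\tfrac{1}{m+1}\,C\cdot A_{m+1}'''\quad(m\ne -1),
\]
which immediately recycles the mixing cocycle into already-computed quantities: $\gm[a](e_n,A_m)=m\,\gA[a](A_{n+1},B_{m-1})$, $\gm[a](e_n,B_m)=\tfrac{1}{m+1}\gL[a](e_n,e_m)$, and analogously for $f_n$. One then reads the answers straight off \refP{acocinf} and \refP{lcocinf}. The payoff of the paper's reduction is that it handles \refP{mcocinf} and \refP{mcoc0} in one stroke, with the infinite tails in the $a=0$ case inherited directly from \refP{aco0} and \refP{lcoc0} rather than rederived. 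Your direct approach is perhaps more transparent for the $\infty$ case alone but would require redoing the Laurent expansions at $0$ if extended to \refP{mcoc0}.
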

\begin{proposition}
\label{P:mcoc0}
\begin{equation*}
\begin{aligned}{}
\gm[0](e_n,A_m)&=
-n(n+1)\delta_{m}^{-n}
-2(n+1)^2\delta_{m}^{-n-1}
\\
&+
\sum_{k\ge 2} (n+1)(n+k)(-1)^{k}2^k\cdot
\frac{(2k-3)!!}{k!}\delta_{m}^{-n-k}\,.
\\
\gm[0](e_n,B_m)&=n(n+1)\delta_{m}^{-n}
+2(n+1)(2n+1)\delta_{m}^{-n-1},
\\
\gm[0](f_n,A_m)&=
n(n+1)\delta_{m}^{-n}
+2(n+1)(2n+3)\delta_{m}^{-n-1},
\\
\gm[0](f_n,B_m)&=
-n(n+1)\delta_{m}^{-n}
-6(n+1)^2\delta_{m}^{-n-1}
-6(n+1)(n+2)\delta_{m}^{-n-2}
\\
&+
\sum_{k\ge 3} (n+1)(n+k)(-1)^{k-1}2^k\cdot 3\cdot
\frac{(2k-5)!!}{k!}\delta_{m}^{-n-k}\,.
\end{aligned}
\end{equation*}
\end{proposition}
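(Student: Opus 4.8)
The plan is to compute \refP{mcoc0} exactly as in the proofs of \refP{mcocinf} and \refP{lcoc0}, reducing everything to residues of rational functions at the point $0$. Since the mixing cocycle is $\gm[0](e,g)=\res_0(e\,g''\,dz)$, with $e$ and $g$ the rational functions representing the vector field and the function, it suffices to write the integrand $e\,g''\,dz$ as an explicit finite linear combination of the basis functions $A_k,B_k$ and then invoke the residue lemmas \refL{amres} and \refL{bmres}. I would first record the second derivatives, obtained by iterating the first-order rules of \refL{a1prim} (namely $A_m'=m\,B_{m-1}$ and $B_m'=(4m+2)A_m+m\,A_{m-1}$):
\begin{equation}
A_m''=2m(2m-1)\,A_{m-1}+m(m-1)\,A_{m-2},\qquad
B_m''=2m(2m+1)\,B_{m-1}+m(m-1)\,B_{m-2}.
\end{equation}

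Next, using the product rules of \refL{prod} ($A_aA_b=A_{a+b}$, $A_aB_b=B_{a+b}$, $B_aB_b=A_{a+b}+4A_{a+b+1}$) together with $e_n=A_{n+1}\ddz$ and $f_n=B_{n+1}\ddz$, the four integrands become: for $\gm[0](e_n,A_m)=\res_0(A_{n+1}A_m''\,dz)$ a combination of $A_{n+m}$ and $A_{n+m-1}$; for $\gm[0](e_n,B_m)$ and for $\gm[0](f_n,A_m)$ a combination of $B_{n+m}$ and $B_{n+m-1}$; and for $\gm[0](f_n,B_m)=\res_0(B_{n+1}B_m''\,dz)$ a combination of $A_{n+m-1}$, $A_{n+m}$, $A_{n+m+1}$. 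By \refL{bmres} the function $B_k$ has a non-zero residue at $0$ only for $k=-1$, so the values $\gm[0](e_n,B_m)$ and $\gm[0](f_n,A_m)$ are \emph{finite} sums, non-vanishing only for $n+m\in\{-1,0\}$, and a one-line substitution yields the two stated finite closed forms. For $\gm[0](e_n,A_m)$ and $\gm[0](f_n,B_m)$, on the other hand, $A_k$ contributes a residue at $0$ for every $k\le-1$, namely $\res_0(A_{-j}\,dz)=(-1)^j\binom{2j-2}{j-1}$ (the small values being those listed in \refE{ressmall}), and this produces the formal infinite sums.

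The only real work, and the step I expect to be the main obstacle, is the combinatorial collapse in these last two cases, entirely analogous to the collapse in the proof of \refP{lcoc0}. Writing $k=-(n+m)$ and $m=-n-k$ and inserting the double-factorial residue values, one gets for $\gm[0](e_n,A_m)$ a two-term and for $\gm[0](f_n,B_m)$ a three-term linear combination whose coefficients are polynomials in $k$. Factoring out a common $(-1)^k$, the appropriate power of $2$, and $(2k-3)!!/k!$ (respectively $(2k-5)!!/k!$), and using the elementary identities
\begin{equation}
\frac{(2k-1)!!}{k!}=\frac{2k-1}{k}\,\frac{(2k-3)!!}{(k-1)!},\qquad
\frac{(2k-3)!!}{(k-1)!}=\frac{k(2k-3)(2k-5)!!}{k!},
\end{equation}
one expands the remaining polynomial-in-$k$ bracket and checks that all $k$-dependent terms cancel, leaving a constant multiple of $n+1$. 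This produces the factor $(n+1)(n+k)$ and exactly the stated generic coefficients $(n+1)(n+k)(-1)^k2^k\frac{(2k-3)!!}{k!}$ and $(n+1)(n+k)(-1)^{k-1}2^k\cdot 3\cdot\frac{(2k-5)!!}{k!}$. The low indices $k=0,1$ (and, in the $\gm[0](f_n,B_m)$ case, also $k=2$) are treated separately by computing their residues directly via \refE{ressmall}; in fact the generic formula already reproduces them under the standard conventions $(-1)!!=1$, $(-3)!!=-1$, $(-5)!!=1/3$, so they are simply the low-order terms displayed in the statement. Finally, whenever $n+m\ge1$ all relevant residues vanish, so $\gm[0]$ is supported in degrees $n+m\le0$, in accordance with \refT{mbound}.
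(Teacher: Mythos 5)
Your proposal is correct: every step checks out (the integrands are exactly those of \refL{a2prim} shifted by $n+1$, the finite cases follow from \refL{bmres}, and for the two infinite families the bracket of $k$-polynomials does collapse — to $m(m+k-1)$ for $\gm[0](e_n,A_m)$ and to $3m(m+k-1)$ for $\gm[0](f_n,B_m)$, which with $m=-n-k$ gives the stated factors $(n+1)(n+k)$). However, it is not the route the paper takes. The paper avoids redoing the double-factorial collapse altogether: using $A_m'=mB_{m-1}$ it writes $C\cdot A_m''=m\,C\cdot B_{m-1}'$ and $C\cdot B_m''=\frac{1}{m+1}C\cdot A_{m+1}'''$ (for $m\ne -1$), so that
\begin{equation*}
\gm[0](e_n,A_m)=m\,\gA[0](A_{n+1},B_{m-1}),\qquad
\gm[0](e_n,B_m)=\tfrac{1}{m+1}\,\gL[0](e_n,e_m),
\end{equation*}
and similarly for $f_n$, and then simply substitutes the already established values from \refP{aco0} and \refP{lcoc0}; the only price is that the case $m=-1$ in the second family must be verified directly. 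Your direct residue computation is the approach the paper explicitly says is possible (``completely analogous way as for the vector field algebra'') but declines to carry out: it is self-contained and has no exceptional index, at the cost of repeating the combinatorial cancellation already done in the proof of \refP{lcoc0}, which is precisely the ``main obstacle'' you identified and which your two double-factorial identities correctly handle.
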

\begin{proof} {\it (Common)}
The proofs of \refP{mcocinf} and \refP{mcoc0} can be done in a completely
analogous way as for the vector field algebra. But things can be simplified
by using the results for the function and vector field algebra and the
simple fact $A_n'=nB_{n-1}$.
Hence for $C$ any of the elements we get
\begin{equation}
C\cdot A_n''=n\, C\cdot B_{n-1}',
\quad
C\cdot B_{n}''=\frac 1{n+1}C\cdot A_{n+1}'''\, ,n\ne -1.
\end{equation}
Consequently,  ($a=0,\infty$)
\begin{equation}
\begin{gathered}
\gm[a](e_n,A_m)=m\cdot \gA[a](A_{n+1},B_{m-1}),\qquad
\gm[a](f_n,A_m)=m\cdot \gA[a](B_{n+1},B_{m-1}),
\\
\gm[a](e_n,B_m)=(1/(m+1))\cdot \gL[a](e_{n},e_{m}),\qquad
\gm[a](f_n,B_m)=(1/(m+1))\cdot \gL[a](f_{n},e_{m}).
\end{gathered}
\end{equation}
For the second line we exclude $m=-1$.
By simple substitution we obtain exactly the claimed  expressions.
In fact even for $m=-1$ the results obtained by substitution are correct, but
of course need to be verified directly.
\end{proof}

As shown in \refS{central} the differential operator 
algebra admits a universal central extension and the introduced
six geometric cocycles, each associated to a different central 
element will yield the universal central extension.

\begin{proposition}
A cocycle class $[\ga]$ for $\Do$ will be local
(with respect to the standard splitting)
if and only if $\ga$ is cohomologous to a linear combination
\begin{equation}\label{eq:summ}
\ga\quad\sim\quad
\alpha_1\cdot \gA[\infty]
+
\alpha_2\cdot \gL[\infty]
+
\alpha_3\cdot \gm[\infty],\qquad \alpha_i\in\C.
\end{equation}
In particular, the space of local cohomology classes is 
3-dimensional. 
\end{proposition}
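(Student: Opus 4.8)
The plan is to combine the general structural description of $\Ho^2(\Do,\C)$ with the local-cocycle classifications of the three constituent types, and then make everything explicit using the tables already computed in this section. Recall (from the discussion preceding this proposition together with \refS{central} and \cite{Schlloc}, \cite{Schlkn}) that for $A=\{0,1\}\cup\{\infty\}$ the space $\Ho^2(\Do,\C)$ is six-dimensional and splits, compatibly with the three ``bidegree slots'' $\A\times\A$, $\La\times\La$ and $\La\times\A$ (and the antisymmetric partner $\A\times\La$), into a function-algebra part, a vector-field part and a mixing part, each two-dimensional and spanned respectively by $\{[\gA[0]],[\gA[\infty]]\}$, $\{[\gL[0]],[\gL[\infty]]\}$ and $\{[\gm[0]],[\gm[\infty]]\}$. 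The point of this decomposition is that restriction is compatible with it: a cocycle restricted to $\A_n\times\A_m$ sees only its function part, restricted to $\La_n\times\La_m$ only its vector-field part, restricted to $\La_n\times\A_m$ only its mixing part, and each such restriction of a cocycle with degree bounds still has those bounds.

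First I would settle the ``if'' direction by reading off locality of $\gA[\infty]$, $\gL[\infty]$, $\gm[\infty]$ directly from Propositions~\ref{P:acocinf}, \ref{P:lcocinf} and \ref{P:mcocinf}: with the conventions $\deg A_n=\deg e_n=n$ and $\deg t=0$, those tables show that $\gA[\infty]$ and $\gm[\infty]$ vanish unless $-1\le n+m\le 0$ and that $\gL[\infty]$ vanishes unless $-2\le n+m\le 0$. Hence every combination $\alpha_1\gA[\infty]+\alpha_2\gL[\infty]+\alpha_3\gm[\infty]$ is a local cocycle, so in particular its class is local.

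For the ``only if'' direction I would argue slot by slot. Let $[\ga]$ be local and choose a local representative. Its restriction to $\A\times\A$ is a local $\La$-invariant cocycle for the function algebra, which by \refT{tfclass} and \refP{linvloc} is, up to scaling, $\gA[\infty]$; hence the $\gA[0]$-component of $[\ga]$ vanishes. Since the restriction map $\Ho^2(\Do,\C)\to\Ho^2(\La,\C)$ annihilates the function and mixing parts, the class of the restriction to $\La\times\La$ equals the vector-field part of $[\ga]$, and it is local; so by \refT{vbound} together with the local classification of \cite{Schlloc}, \cite{Schlkn} this class is a multiple of $[\gL[\infty]]$, killing the $\gL[0]$-component; similarly the restriction to $\La\times\A$ has a local class, so by \refT{mbound} and \cite{Schlloc} it is a multiple of $[\gm[\infty]]$, killing the $\gm[0]$-component. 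Therefore $[\ga]$ lies in the span of $[\gA[\infty]]$, $[\gL[\infty]]$, $[\gm[\infty]]$; these three are linearly independent because they lie in the three distinct direct summands above and each is nonzero there, so $\dim\Ho^2_{loc}(\Do,\C)=3$.

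The hard part will be the bookkeeping in the ``only if'' direction: one must check that the three types of cocycles and their coboundaries really behave as claimed under restriction to the slots — in particular, that a coboundary of $\Do$ restricts to a coboundary of the function algebra, of the vector field algebra, and to a mixing coboundary, and that ``local'' for the restricted objects matches exactly the hypotheses of \refT{tfclass}, \refT{vbound}, \refT{mbound}. This is precisely the genus-zero specialization of the structural results of \cite{Schlloc}, \cite{Schlkn}, and for the three-point case everything is made completely transparent by the explicit computations above for the six geometric cocycles: the ``$0$''-type cocycles carry infinite tails $\sum_{k}(\cdots)\,\delta_m^{-n-k}$ and are genuinely non-local, whereas the ``$\infty$''-type cocycles are supported in a bounded band of degrees.
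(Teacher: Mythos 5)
Your proposal is correct and takes essentially the same route as the paper: the paper disposes of this proposition by invoking the general structural results of \cite{Schlloc}, \cite{Schlkn} (unique decomposition of $\Do$-cocycles into function, vector-field and mixing parts, with the local class of each type unique up to scaling), noting that the three-point case is illustrated by the explicit tables. Your slotwise restriction argument is just an explicit assembly of those same ingredients --- \refT{tfclass}, \refT{vbound}, \refT{mbound} and the bounded/local classifications --- together with reading off locality of $\gA[\infty]$, $\gL[\infty]$, $\gm[\infty]$ from Propositions~\ref{P:acocinf}, \ref{P:lcocinf}, \ref{P:mcocinf}, so it matches the intended proof.
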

This is a general result of \cite{Schlloc}, \cite{Schlkn} which
is for the 3-point illustrated by the above calculations.
In the very general case (meaning arbitrary genus, arbitrary
number of marked points, arbitrary splitting)
the three cocycles in \refE{summ} are obtained
by integrating over a separating cycle.
By allowing suitable meromorphic affine and projective connections 
holomorphic outside of $A$ we can even obtain equality 
in \refE{summ}.

\section{Lie superalgebras}
\label{sec:super}


Lie superalgebras of Krichever--Novikov type were discussed
in  \cite{Schlsa}, \cite{Schlkn}, see also Kreusch \cite{Kreusch}
and Leidwanger and Morier-Genoud \cite{LeiMor}.
Here we 
present the $N$-point genus zero situation. In particular
we will also deal with the $N=3$ case.

A Lie superalgebra $\Sa$ is 
a vector space which is decomposed into  subspaces of 
even and odd elements
$\Sa=\Sa_{\bar 0}\oplus\Sa_{\bar 1}$,
i.e. $\Sa$ is a $\Z/2\Z$-graded vector space.
Furthermore, let $[.,.]$ be a 
  $\Z/2\Z$-graded bilinear map  $\Sa\times \Sa\to \Sa$ such
that for elements $x,y$ of pure parity
\begin{equation}\label{eq:ssup}
[x,y]=-(-1)^{\bar x \bar y}[y,x].
\end{equation}
Here $\bar x$ is the parity of $x$, etc.
These conditions say that
\begin{equation}\label{eq:szg}
[\Sa_{\bar 0},\Sa_{\bar 0}]\subseteq \Sa_{\bar 0},\qquad
[\Sa_{\bar 0},\Sa_{\bar 1}]\subseteq \Sa_{\bar 1},\qquad
[\Sa_{\bar 1},\Sa_{\bar 1}]\subseteq \Sa_{\bar 0},
\end{equation}
and that $[x,y]$ is symmetric for $x$ and $y$ odd, otherwise
anti-symmetric.
Now $\Sa$ is a \emph{Lie superalgebra}
\index{Lie superalgebra}
 if in addition
the 
\emph{super-Jacobi identity}
\index{super-Jacobi identity}
  (for $x,y,z$ of pure parity)
\begin{equation}\label{eq:jacsup}
(-1)^{\bar x \bar z}[x,[y,z]]+
(-1)^{\bar y \bar x}[y,[z,x]]+
(-1)^{\bar z \bar y}[z,[x,y]]
=0
\end{equation}
is valid.

The Lie superalgebra, we 
discuss here, is 
based on  half-forms.
In fact, it will be a superalgebra of Neveu-Schwarz type.
Recall the associative product
\refE{aprod}
\begin{equation}
\sbul\ \Fl[-1/2]\times \Fl[-1/2]\to \Fl[-1]=\La\;.
\end{equation}
We introduce the vector space $\Sa$ with  the product
\begin{equation}\label{eq:saprod}
\Sa:=\La\oplus\Fl[-1/2],\quad
[(e,\varphi),(f,\psi)]:=([e,f]+\varphi\;\sbul\;\psi,
e\ldot \varphi-f\ldot \psi).
\end{equation}
The elements of $\La$ are denoted by $e,f, \dots$, and the elements of
$\Sp$ by $\varphi,\psi,\ldots$.

The definition \refE{saprod} can be reformulated as an extension
of $[.,.]$ on  $\La$ to a super-bracket (denoted by the same
symbol) on $\Sa$ by setting
\begin{equation}\label{eq:sadef}
[e,\varphi]:=-[\varphi,e]:=e\ldot \varphi
=(e\frac {d\varphi}{dz}-\frac 12\varphi\frac {de}{dz})(dz)^{-1/2}
\end{equation}
and
\begin{equation}
[\varphi,\psi]:=\varphi\;\sbul\; \psi\; .
\end{equation}
The elements of $\La$ are the elements of even parity,
and the elements of $\Sp$ the elements of odd parity.
\begin{proposition}\label{P:knsuper}
\cite[Prop.~2.15]{Schlkn}
The space $\Sa$ with the above introduced parity and product
is a Lie superalgebra.
\end{proposition}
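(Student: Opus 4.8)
The plan is to verify the defining axioms of a Lie superalgebra for the pair $(\Sa,[.,.])$ with $\Sa=\La\oplus\Fl[-1/2]$ and the bracket given by \refE{saprod}. The $\Z/2\Z$-grading is already built in: even elements are the vector fields in $\La$, odd elements are the half-forms in $\Fl[-1/2]$, and the three inclusions in \refE{szg} are immediate from the fact that $[\La,\La]\subseteq\La$ (Lie bracket of vector fields), that $\La$ acts on $\Fl[-1/2]$ by the Lie derivative $e\ldot\varphi\in\Fl[-1/2]$, and that the associative product $\sbul\colon\Fl[-1/2]\times\Fl[-1/2]\to\Fl[-1]=\La$ from \refE{afl} lands in $\La$. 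So the only real content is (i) the graded (anti-)symmetry \refE{ssup} and (ii) the super-Jacobi identity \refE{jacsup}.

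First I would check the symmetry \refE{ssup} case by case on pure-parity elements. For $e,f\in\La$ the bracket is the ordinary Lie bracket, which is antisymmetric, matching $(-1)^{\bar e\bar f}=(-1)^{0}=1$ with a minus sign. For $e\in\La$, $\varphi\in\Fl[-1/2]$ one has $[e,\varphi]=e\ldot\varphi=-[\varphi,e]$ by the defining convention \refE{sadef}, again matching $-(-1)^{0}$. For $\varphi,\psi\in\Fl[-1/2]$ one needs $[\varphi,\psi]=+[\psi,\varphi]$, i.e. symmetry, which holds because $\varphi\sbul\psi=\psi\sbul\varphi$ by commutativity of the associative product (Proposition~\ref{P:aass}); this matches $-(-1)^{\bar\varphi\bar\psi}=-(-1)^{1}=+1$.

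Next comes the super-Jacobi identity \refE{jacsup}, which I would split according to how many of $x,y,z$ are odd. The all-even case ($x,y,z\in\La$) is just the ordinary Jacobi identity for the vector field Lie algebra. The case with exactly one odd element, say $z=\varphi$ and $x=e$, $y=f$ even, reduces after unwinding the signs to the statement that $\Fl[-1/2]$ is a Lie module over $\La$, i.e. $e\ldot(f\ldot\varphi)-f\ldot(e\ldot\varphi)=[e,f]\ldot\varphi$ — this is exactly the content of the module statement quoted just before \refE{aLbrack}. The case with exactly two odd elements, say $x=e$ even, $y=\varphi$, $z=\psi$ odd, expands to a relation of the form $e\ldot(\varphi\sbul\psi)=(e\ldot\varphi)\sbul\psi+\varphi\sbul(e\ldot\psi)$, which is the Leibniz/derivation property of the Lie derivative with respect to the associative product $\sbul$ — this is part of the Poisson algebra structure on $\mathcal{F}$ (Theorem~\cite[Thm.~2.10]{Schlkn}), specialized to weights $-1/2$ and $-1/2$. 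The all-odd case ($x=\varphi$, $y=\psi$, $z=\chi$ in $\Fl[-1/2]$) is the genuinely new identity: all three signs $(-1)^{\bar x\bar z}$ etc.\ equal $-1$, so after cancelling the overall sign one must show
\begin{equation}
\varphi\ldot(\psi\sbul\chi)+\psi\ldot(\chi\sbul\varphi)+\chi\ldot(\varphi\sbul\psi)=0,
\end{equation}
where $\varphi\ldot(\psi\sbul\chi)$ means the Lie derivative of the weight $-1/2$ form $\varphi$ along the vector field $\psi\sbul\chi$, i.e.\ $[\psi\sbul\chi,\varphi]$ in the $\La$-module structure on $\Fl[-1/2]$.

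The main obstacle is precisely this all-odd identity. I expect to handle it by a direct local computation in the quasi-global coordinate $z$: write $\varphi=a\,dz^{-1/2}$, $\psi=b\,dz^{-1/2}$, $\chi=c\,dz^{-1/2}$ with $a,b,c\in\A$, so that $\psi\sbul\chi=bc\,(dz)^{-1}$ and, using \refE{alied} with $\lambda=-1/2$, $[\psi\sbul\chi,\varphi]=\big(bc\,a'-\tfrac12 a\,(bc)'\big)(dz)^{-1/2}$. Summing cyclically, the terms $bca'$, $cab'$, $abc'$ appear each with coefficient $1$ from the first piece and with coefficient $-\tfrac12\cdot 2=-1$ total from the expansions of $-\tfrac12 a(bc)'$, $-\tfrac12 b(ca)'$, $-\tfrac12 c(ab)'$, so everything cancels identically. (Equivalently, this is the associativity-plus-commutativity of $\sbul$ feeding into the already-established Leibniz rule, so it also follows abstractly from the Poisson structure, but the one-line coordinate check is cleanest.) Having verified \refE{ssup} and all four cases of \refE{jacsup}, the proposition follows; I would close by remarking that this is the Neveu–Schwarz type superalgebra because the odd part uses the half-integer graded space $\Fl[-1/2]$ with index set $\Z+1/2$.
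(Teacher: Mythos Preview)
The paper does not give its own proof of this proposition; it simply cites \cite[Prop.~2.15]{Schlkn}. Your proposal supplies a complete and correct direct verification: you check the graded (anti-)symmetry \refE{ssup} and then the super-Jacobi identity \refE{jacsup} by the standard case split on the number of odd arguments, reducing the one-odd case to the $\La$-module property of $\Fl[-1/2]$, the two-odd case to the Leibniz rule coming from the Poisson structure on $\mathcal{F}$, and dispatching the all-odd case by the short coordinate computation (which does cancel, as you indicate). This is exactly the expected argument and presumably mirrors the one in the cited reference.

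One cosmetic remark: in the displayed all-odd identity you write $\varphi\ldot(\psi\sbul\chi)$ for what is, in the paper's convention, $(\psi\sbul\chi)\ldot\varphi$ (the vector field acts from the left on the form). You clarify the intended meaning immediately and use the correct local expression $bc\,a'-\tfrac12\,a\,(bc)'$, so nothing is wrong, but aligning the notation with \refE{alied} would avoid momentary confusion.
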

Clearly, the vector field algebra $\La$ is a Lie subalgebra.
\begin{remark}
Recall that 
for genus $g\ge 1$ the choice of a square root of the
canonical bundle, also called 
a theta characteristics, will be needed for
fixing the super algebra. Such a choice 
 corresponds to choosing
a 
\emph{spin structure}
 on $\Sigma_g$.
\end{remark}

Central extensions of $\Sa$ are also of relevance and 
will be given in the following. The Lie algebra 
2-cocycle conditions for $\ga$ have to be replaced by its super
version,
saying 
\begin{equation}\label{eq:csymm}
\ga(x,y)=-(-1)^{\bar x \bar y}\ga(x,y).
\end{equation}
and
\begin{equation}\label{eq:scocyc}
(-1)^{\bar x \bar z}\ga(x,[y,z])+
(-1)^{\bar y \bar x}\ga(y,[z,x])+
(-1)^{\bar z \bar y}\ga(z,[x,y])\ = \ 0.
\end{equation}
As usual our elements $x,y,z$ should be of pure parity.

\begin{proposition}\cite{Schlsa}
Let $C$ be any closed (differentiable) curve on $\Sigma_g$ not meeting
the
points in $A$, and let $R$ be any (holomorphic) projective connection
then the bilinear extension of 
\begin{equation}
\label{eq:sacoc}
\begin{aligned}
\gS[{C,R}](e,f)&:=\cint{C} \left(\frac 12(ef'''-e'''f)
-R\cdot(ef'-e'f)\right)dz
\\
\gS[{C,R}](\varphi,\psi)
&:=
\cint{C} \left(\varphi''\cdot \psi+\varphi\cdot \psi''
-R\cdot\varphi\cdot \psi\right)dz
\\
\gS[{C,R}](e,\varphi)&:=0
\end{aligned}
\end{equation}
gives a geometric Lie superalgebra cocycle for $\Sa$, hence defines a central
extension of $\Sa$.
A different projective connection will yield a cohomologous 
cocycle.
\end{proposition}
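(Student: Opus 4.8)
The plan is to proceed in three stages: first to check that each of the three prescribed densities is a genuine meromorphic one-form holomorphic on $\Sigma_g\setminus A$, so that the integral over $C$ depends only on $[C]\in\Ho_1(\Sigma_g\setminus A,\C)$ and $\gS[{C,R}]$ deserves the name \emph{geometric}; then to verify the super-antisymmetry \refE{csymm} and the super-cocycle identity \refE{scocyc}; and finally to show that changing $R$ alters $\gS[{C,R}]$ only by a coboundary. For the first stage the even--even density $\frac12(ef'''-e'''f)-R(ef'-e'f)$ is exactly the vector field cocycle density $\gLh[R]$ already used for $\La$, so nothing new is needed there. For the odd--odd density $(\varphi''\psi+\varphi\psi''-R\varphi\psi)\,dz$ one invokes the transformation behaviour of a projective connection: under a coordinate change the extra terms produced when the two $(-1/2)$-forms $\varphi,\psi$ are differentiated twice are cancelled precisely by the Schwarzian term in the transformation law of $R$, so the combination transforms as a one-form. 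In the genus zero situation, as already noted for $\La$, one may take $R\equiv 0$ on the standard covering of the Riemann sphere, which turns all subsequent computations into manipulations of rational functions.

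For the second stage, super-antisymmetry is immediate: the even--even density is antisymmetric in $e,f$, the odd--odd density is symmetric in $\varphi,\psi$, and the mixed component is zero, which is exactly what \refE{csymm} demands. For \refE{scocyc} I would split a triple of pure-parity elements according to its parity type. A triple of even elements reproduces the ordinary Lie algebra cocycle identity for the vector field cocycle, which is already established. A triple with exactly one odd entry, or with three odd entries, produces in every summand an evaluation of $\gS[{C,R}]$ on a pair of opposite parity, hence contributes nothing. The single remaining case is a triple $(e,\varphi,\psi)$ with $e\in\La$ and $\varphi,\psi\in\Fl[-1/2]$, for which \refE{scocyc} collapses to the identity
\begin{equation}
\gS[{C,R}](e,\varphi\sbul\psi)=\gS[{C,R}](\varphi,e\ldot\psi)+\gS[{C,R}](\psi,e\ldot\varphi).
\end{equation}

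Establishing this identity is the heart of the matter and the step I expect to cost the most work. I would reduce to $R\equiv 0$, write $e=e(z)\ddz$, $\varphi=\varphi(z)dz^{-1/2}$, $\psi=\psi(z)dz^{-1/2}$, use \refL{resum} (whose underlying identity $(ef)'''=e'''f+ef'''+3(e'f)'$ is valid for integration over any closed curve) to put the left-hand side into the form $\cint{C}e\,(\varphi\psi)'''\,dz$, and record that $e\ldot\psi$ and $e\ldot\varphi$ are represented by $e\psi'-\frac12\psi e'$ and $e\varphi'-\frac12\varphi e'$. Expanding every derivative by the Leibniz rule, one then verifies that
\begin{equation}
e(\varphi\psi)'''-\Big(\varphi''\big(e\psi'-\tfrac12\psi e'\big)+\varphi\big(e\psi'-\tfrac12\psi e'\big)''+\big(\varphi\leftrightarrow\psi\big)\Big)
\end{equation}
is the total $z$-derivative of a rational function, hence has vanishing residue at every point of the Riemann sphere, so its integral over the closed curve $C$ is zero. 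This is the only place where an explicit (though elementary) computation is unavoidable; everything else in the argument is formal. Alternatively, since neither the construction nor this identity is special to genus zero, one may simply quote the arbitrary genus version proved in \cite{Schlsa}.

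For the last stage, let $R_1,R_2$ be two projective connections holomorphic outside $A$. Their difference $\theta:=R_1-R_2$ is a global meromorphic section of $\K^2$ holomorphic on $\Sigma_g\setminus A$, i.e. $\theta\in\Fl[2]$. The difference $\gS[{C,R_1}]-\gS[{C,R_2}]$ has even--even density $-\theta\cdot(ef'-e'f)\,dz$ and odd--odd density $-\theta\cdot\varphi\psi\,dz$, and this is precisely the super-coboundary of the linear functional $\phi$ given by $\phi(e):=-\cint{C}\theta\cdot e$ for $e\in\La$ and $\phi(\varphi):=0$ for $\varphi\in\Fl[-1/2]$, because $[e,f]$ and $\varphi\sbul\psi$ are represented by $ef'-e'f$ and $\varphi\psi$ respectively, while $[e,\varphi]$ is odd, where $\phi$ vanishes. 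Hence the cohomology class of $\gS[{C,R}]$ is independent of $R$, and the central extension it defines is well-defined up to equivalence. The main obstacle, as indicated, is the single mixed cocycle identity in the second stage; the rest is bookkeeping already carried out elsewhere for the vector field algebra.
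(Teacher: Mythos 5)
Your proposal is correct, but be aware that the paper itself contains no proof of this proposition: it is quoted from \cite{Schlsa}, and the surrounding text only adds remarks (Bryant's and Kreusch's earlier versions, and the observation that the even and odd parts of \refE{sacoc} cannot be prescribed independently). So what you give is a genuinely more detailed route — a direct verification — rather than a variant of an argument in the paper. Your parity bookkeeping is right: the only nontrivial instance of \refE{scocyc} is the mixed triple $(e,\varphi,\psi)$, which reduces (using $[\psi,e]=-e\ldot\psi$) to $\gS[{C,R}](e,\varphi\sbul\psi)=\gS[{C,R}](\varphi,e\ldot\psi)+\gS[{C,R}](\psi,e\ldot\varphi)$, and the computation you postpone does work out: the $R$-dependent terms on the two sides agree identically, not merely up to exact terms (this is exactly the coupling of the two parts of \refE{sacoc} that the paper comments on), and for the $R$-independent part, writing $\chi=e\psi'-\tfrac12 e'\psi$ and $\rho=e\varphi'-\tfrac12 e'\varphi$ and replacing $\tfrac12(eg'''-e'''g)$ by $eg'''$ as in \refL{resum}, one finds
\begin{equation*}
e(\varphi\psi)'''-\bigl(\varphi''\chi+\varphi\chi''+\psi''\rho+\psi\rho''\bigr)
=\frac{d}{dz}\bigl(2e\varphi'\psi'-e'(\varphi\psi)'+e''\varphi\psi\bigr),
\end{equation*}
so the difference of the two densities is exact and all residues vanish. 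Your coboundary argument for a change of connection ($\theta=R_1-R_2\in\Fl[2]$, $\phi(e)=-\cint{C}\theta\cdot e$, $\phi$ zero on $\Fl[-1/2]$) is the standard one and is correct, since $[e,f]$, $\varphi\sbul\psi$ and $[e,\varphi]$ are represented exactly as you say. The one caveat: your closing step ("vanishing residue at every point of the Riemann sphere") is a genus-zero argument, whereas the proposition is stated on $\Sigma_g$; for higher genus you would either have to check that the primitive displayed above is a globally defined function (so that the exactness argument applies to integrals over arbitrary cycles, not just to residues), or quote \cite{Schlsa} as you offer to do — which is precisely, and only, what the paper does.
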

A similar formula was given  by
Bryant in \cite{Bryant}. By adding the projective connection in
the second part of 
\refE{sacoc} he corrected some formula appearing in 
\cite{BMRR}. He only considered the two-point case and only the
integration over a separating cycle.
See also \cite{Kreusch} for the multi-point case, where  still only
the integration over a separating cycle is considered.

In contrast to the differential operator algebra case the two parts 
cannot be prescribed independently. Only with the same integration path
(more precisely, homology class) and the given factors in front
of the integral it will work.

\bigskip

Now we consider the genus zero case. 
As shown above, all cohomology classes for the vector field algebra
$\La$ are bounded classes and are obtained via integrating over $C_i$,
hence by calculating residues. From the general theorem
\cite{Schlsa}, \cite{Schlkn} it follows that the cohomology
classes for the superalgebra will be uniquely be given by
taking the same point for calculating the residue.

For explicite calculations of the structure constants see
\refP{lmod} and \refP{aprod}.
We will only give the results for the 3-point case using the notation 
of \refS{3point}. 
In addition to the basis elements $e_n$ and  $f_n$ of $\L$ we take
\begin{equation}
\varphi_n=A_{n+1/2}(dz)^{-1/2},\quad
\psi_n=B_{n+1/2}(dz)^{-1/2},\qquad
n\in\Z+1/2.
\end{equation}
Additionally to the structure constants of \refP{vprod}
we have 
\begin{proposition}\label{P:superprod}
\begin{equation*}
\begin{aligned}{}
[\varphi_n, \varphi_m]&=e_{n+m}
\\
[\varphi_n, \psi_m]&=f_{n+m}
\\
[\psi_n, \psi_m]&=e_{n+m}+4\,e_{n+m+1}
\\
[e_n, \varphi_m]&=(m-n/2)\,\psi_{n+m}
\\
[e_n, \psi_m]&=(m- n/2)\,\varphi_{n+m}+(4m-2n+2)\,\varphi_{n+m+1}
\\
[f_n, \varphi_m]&=(m-n/2)\,\varphi_{n+m}+(4m-2n-1)\,\varphi_{n+m+1}
\\
[f_n, \psi_m]&=(m-n/2)\,\psi_{n+m}+(4m-2n+1)\,\psi_{n+m+1}\, .
\end{aligned}
\end{equation*}
\end{proposition}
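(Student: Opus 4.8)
The plan is to reduce everything to structural facts already established: the associative product \refE{aprod} from \refP{aprod} for the odd--odd brackets, and the module action formula \refP{lmod} specialized to weight $\l=-1/2$ for the even--odd brackets. The first observation is purely notational: the odd basis vectors are nothing but the weight $-1/2$ basis elements introduced for the general $\Fl$, namely $\varphi_n=g_n^{-1/2}=A_{n+1/2}(dz)^{-1/2}$ and $\psi_n=h_n^{-1/2}=B_{n+1/2}(dz)^{-1/2}$, while $e_k=A_{k+1}\ddz$ and $f_k=B_{k+1}\ddz$, so that $A_{k}(dz)^{-1}=e_{k-1}$ and $B_{k}(dz)^{-1}=f_{k-1}$. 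Since \refP{knsuper} already guarantees that $\Sa$ is a Lie superalgebra, there is nothing to check about the super-Jacobi identity or well-definedness; only the explicit structure constants must be verified, and this is a substitution.

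For the odd--odd relations I would use $[\varphi,\psi]=\varphi\sbul\psi\in\La$, which is the associative multiplication $\Fl[-1/2]\times\Fl[-1/2]\to\Fl[-1]$. Writing it out in local representatives, $[\varphi_n,\varphi_m]=(A_{n+1/2}\cdot A_{m+1/2})(dz)^{-1}$, and \refE{aprod} gives $A_{n+1/2}A_{m+1/2}=A_{n+m+1}$, hence $[\varphi_n,\varphi_m]=A_{n+m+1}(dz)^{-1}=e_{n+m}$. Likewise $A_{n+1/2}B_{m+1/2}=B_{n+m+1}$ yields $[\varphi_n,\psi_m]=f_{n+m}$, and $B_{n+1/2}B_{m+1/2}=A_{n+m+1}+4A_{n+m+2}$ yields $[\psi_n,\psi_m]=e_{n+m}+4e_{n+m+1}$. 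Note here that $n,m\in\Z+1/2$, so $n+1/2,m+1/2\in\Z$ and the reindexing is legitimate.

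For the even--odd relations I would invoke \refE{sadef}, which says $[e,\varphi]=e\ldot\varphi$ with $e\ldot\varphi=(e\varphi'-\tfrac12\varphi e')(dz)^{-1/2}$; this is exactly the module action \refE{lie3} at $\l=-1/2$. Hence all four cases $[e_n,\varphi_m]$, $[e_n,\psi_m]$, $[f_n,\varphi_m]$, $[f_n,\psi_m]$ are obtained by setting $\l=-1/2$ in the four identities of \refP{lmod} after translating $g_m^{-1/2}=\varphi_m$, $h_m^{-1/2}=\psi_m$. For instance $e_n\ldot h_m^\l=(m+\l n)g_{m+n}^\l+(4(m+\l n)+2)g_{n+m+1}^\l$ becomes $[e_n,\psi_m]=(m-n/2)\varphi_{n+m}+(4m-2n+2)\varphi_{n+m+1}$, and the other three are analogous; the antisymmetry $[e,\varphi]=-[\varphi,e]$ fixes the remaining brackets.

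There is no real obstacle here beyond careful bookkeeping: the one place demanding attention is the half-integer index shift built into the definitions $g_n^\l=A_{n-\l}dz^\l$, $h_n^\l=B_{n-\l}dz^\l$, so that the ``$+1/2$'' shift in $\varphi_n,\psi_n$ and the ``$+1$'' shifts in $e_k,f_k$ line up correctly with the multiplication rules; once this is tracked consistently every coefficient falls out by direct substitution, and I would relegate the arithmetic to a short computation referencing \refL{prod} and \refL{deri} exactly as in the proofs of \refP{lmod} and \refP{vprod}.
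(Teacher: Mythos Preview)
Your proposal is correct and follows exactly the route the paper indicates: the paper does not spell out a formal proof but simply refers the reader to \refP{lmod} and \refP{aprod} for the structure constants, which is precisely what you do by specializing the module action to $\lambda=-1/2$ for the even--odd brackets and using the associative product for the odd--odd brackets. Your bookkeeping of the half-integer shifts is accurate and the resulting coefficients match.
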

Similar expressions are given by Leidwanger and Morier-Genoud
\cite[Prop.~3.8]{LeiMor} with respect to a 
slightly different scaled system of basis elements also used in
\cite{SchlDeg}, \cite{FiaSchl1}.

Next we consider 2-cocycles.
Again with respect to the standard system of coordinates
$(z,w=1/z)$ the connection $R=0$ can be chosen.
We have a two-dimensional space of geometric
cocycles generated by 
$\gS[{0}]$ and 
$\gS[{\infty}]$ obtained by taking the residue from
\refE{sacoc} at $0$ and $\infty$.
For pairs of pure vector field type arguments we have the result
of \refP{lcocinf} and  \refP{lcoc0}.
For mixing of pure types it is zero. It remains to consider 
pairs of $-1/2$-forms.
As
\begin{equation*}
\res_a(\varphi''\psi+\varphi\psi'')
=
\res_a(\varphi''\psi+\varphi\psi''-(\varphi\psi')'+(\varphi'\psi)')
=2\res_a(\varphi''\psi)=2\res_a(\psi''\varphi)
\end{equation*}
we have
\begin{equation}
\begin{aligned}{}
\gS[\infty](\varphi,\psi)&=
\res_\infty((\varphi''\psi+\varphi\psi'')\,dz)
=2\res_\infty(\varphi''\psi\, dz)
\\
\gS[0](\varphi,\psi)&=
\res_0((\varphi''\psi+\varphi\psi'')\,dz)
=2\res_0(\varphi''\psi\, dz).
\end{aligned}
\end{equation}
Note that these values are already calculated
in the context for the mixed cocycle for the
differential operator algebra. In fact we have
with $\hat n=n+1/2$ and $\hat m=m+1/2$
\begin{equation}
\begin{gathered}
\gS[a](\varphi_n,\varphi_m)
=2\gm[a](e_{\hat n-1},A_{\hat m}),
\qquad
\gS[a](\varphi_n,\psi_m)
=2\gm[a](e_{\hat n-1},B_{\hat m}),
\\
\gS[a](\psi_n,\varphi_m)
=2\gm[a](f_{\hat n-1},A_{\hat m}),
\qquad
\gS[a](\psi_n,\psi_m)
=2\gm[a](f_{\hat n-1},B_{\hat m})\ .
\end{gathered}
\end{equation}
With these identities we recall from the results there
\begin{proposition}
\label{P:scocinf}
\begin{equation*}
\begin{aligned}{}
\gS[\infty](\varphi_n,\varphi_m)&=0
\\
\gS[\infty](\varphi_n,\psi_m)&=
-4(n-1/2)(n+1/2)\delta_m^{-n}
-8n(2n+1)\delta_{m}^{-n-1}.
\\
\gS[\infty](\psi_n,\psi_m)&=0
\end{aligned}
\end{equation*}
\end{proposition}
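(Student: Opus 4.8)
The plan is to deduce \refP{scocinf} directly from the identities recorded just above, which express $\gS[a]$ on half-forms through the mixing cocycle $\gm[a]$ of the differential operator algebra, together with the already computed values of $\gm[\infty]$ in \refP{mcocinf}. First I would recall why those identities hold: for $-1/2$-forms $\varphi,\psi$ written through their representing rational functions one has $\res_a(\varphi''\psi+\varphi\psi'')=2\res_a(\varphi''\psi)$, the omitted term being the residue of the exact differential $(\varphi\psi')'dz$, and moreover $\res_a(\varphi''\psi\,dz)=\res_a(\varphi\psi''\,dz)$ since $\varphi''\psi-\varphi\psi''=(\varphi'\psi-\varphi\psi')'$ is again a total derivative. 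Because the representing function of $e_{\hat n-1}=A_{\hat n}\ddz$ coincides with that of $\varphi_n=A_{\hat n}(dz)^{-1/2}$ (and $f_{\hat n-1}$ with $\psi_n$), writing $\hat n=n+1/2$, $\hat m=m+1/2$ this gives
\begin{gather*}
\gS[\infty](\varphi_n,\varphi_m)=2\gm[\infty](e_{\hat n-1},A_{\hat m}),\qquad
\gS[\infty](\varphi_n,\psi_m)=2\gm[\infty](e_{\hat n-1},B_{\hat m}),\\
\gS[\infty](\psi_n,\psi_m)=2\gm[\infty](f_{\hat n-1},B_{\hat m}).
\end{gather*}

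With these in hand, the first and third formulas of \refP{scocinf} are immediate, since by \refP{mcocinf} one has $\gm[\infty](e_k,A_l)=0$ and $\gm[\infty](f_k,B_l)=0$ for all indices, hence $\gS[\infty](\varphi_n,\varphi_m)=\gS[\infty](\psi_n,\psi_m)=0$. For the middle formula I would substitute $k:=\hat n-1=n-1/2$ and $l:=\hat m=m+1/2$ into
\[
\gm[\infty](e_k,B_l)=-2k(k+1)\,\delta_l^{-k}-4(k+1)(2k+1)\,\delta_l^{-k-1}
\]
from \refP{mcocinf}, noting that $k+1=n+1/2$, $2k+1=2n$, and that the conditions $l=-k$, $l=-k-1$ translate to $m=-n$, $m=-n-1$, so that $\delta_l^{-k}=\delta_m^{-n}$ and $\delta_l^{-k-1}=\delta_m^{-n-1}$. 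Multiplying by $2$ and simplifying with $16n(n+1/2)=8n(2n+1)$ yields
\[
\gS[\infty](\varphi_n,\psi_m)=-4(n-1/2)(n+1/2)\,\delta_m^{-n}-8n(2n+1)\,\delta_m^{-n-1},
\]
which is exactly the claimed value.

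There is essentially no obstacle here: the whole argument is a substitution, and the only care needed is in the two integration-by-parts steps — both justified by the vanishing of residues of exact differentials — and in tracking the half-integer index shift through the Kronecker deltas. As a consistency check one could additionally verify the super-symmetry relation $\gS[\infty](\psi_m,\varphi_n)=\gS[\infty](\varphi_n,\psi_m)$ using the fourth identity $\gS[\infty](\psi_n,\varphi_m)=2\gm[\infty](f_{\hat n-1},A_{\hat m})$ and the corresponding value in \refP{mcocinf}; this is a brief extra computation of the same flavor and is not strictly needed for the proof.
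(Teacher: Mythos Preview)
Your proof is correct and follows exactly the approach of the paper: the paper establishes the same identities relating $\gS[a]$ on half-forms to $\gm[a]$ (with the same index shifts $\hat n=n+1/2$, $\hat m=m+1/2$) and then simply reads off the values from \refP{mcocinf}. Your write-up merely spells out the substitution that the paper leaves implicit; one minor slip is that in the first integration-by-parts step the exact differential in play is $(\varphi\psi'-\varphi'\psi)'dz$ rather than $(\varphi\psi')'dz$, but the conclusion and the rest of the argument are unaffected.
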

\begin{proposition}
\label{P:sco0}
\begin{equation*}
\begin{aligned}{}
\gS[0](\varphi_n,\varphi_m)&=
-2(n+1/2)(n-1/2)\delta_m^{-n}
+4(n+1/2)^2\delta_m^{-n-1}
\\
&\qquad
+2\sum_{k\ge 2}(n+1/2)(n-1/2+k)(-1)^k2^k
\frac {(2k-3)!!}{k!}\delta_m^{-n-k}
\\
& 
\\
\gS[0](\varphi_n,\psi_m)&=
2(n-1/2)(n+1/2)\delta_m^{-n}
+4n(2n+1)\delta_{m}^{-n-1}.
\\
&
\\
\gS[0](\psi_n,\psi_m)&=
-2(n+1/2)(n-1/2)\delta_m^{-n}
-12(n+1/2)^2\delta_m^{-n-1}
-12(n+1/2)(n+3/2)\delta_m^{-n-2}
\\
&\qquad
+2\sum_{k\ge 3}(n+1/2)(n-1/2+k)(-1)^{k-1}2^k\cdot 3\cdot
\frac {(2k-5)!!}{k!}\delta_m^{-n-k}\;.
\end{aligned}
\end{equation*}
\end{proposition}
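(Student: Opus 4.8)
The plan is to reduce \refP{sco0} entirely to the mixing cocycle of the differential operator algebra, whose values are already recorded in \refP{mcoc0}. First I observe that $\gS[0]$ is completely determined by the three displayed lines: on a mixed pure-type pair $(e,\varphi)$ it vanishes by definition, on a pair of vector fields it coincides with $\gL[0]$ (computed in \refP{lcoc0}), and on a pair of $(-1/2)$-forms the integration-by-parts identity established just above \refP{scocinf} gives $\gS[0](\varphi,\psi)=2\res_0(\varphi''\psi\,dz)$. Together with the identities $\gS[0](\varphi_n,\varphi_m)=2\gm[0](e_{\hat n-1},A_{\hat m})$, $\gS[0](\varphi_n,\psi_m)=2\gm[0](e_{\hat n-1},B_{\hat m})$, $\gS[0](\psi_n,\psi_m)=2\gm[0](f_{\hat n-1},B_{\hat m})$ (with $\hat n=n+1/2$, $\hat m=m+1/2\in\Z$), already noted in \refS{super}, this turns the proof into a bookkeeping exercise on \refP{mcoc0}.

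Next I carry out the substitution $N=\hat n-1=n-1/2$, $M=\hat m=m+1/2$ in the formulas of \refP{mcoc0}. The Kronecker deltas transform uniformly: $\delta_M^{-N-j}=\delta_{m+1/2}^{-(n-1/2)-j}$ equals $1$ exactly when $m=-n-j$, hence equals $\delta_m^{-n-j}$ for every $j\ge 0$. The polynomial prefactors become $N(N+1)=(n-1/2)(n+1/2)$, $(N+1)^2=(n+1/2)^2$, $(N+1)(N+k)=(n+1/2)(n-1/2+k)$, while the combinatorial coefficients $(-1)^k2^k\frac{(2k-3)!!}{k!}$ and $(-1)^{k-1}2^k\cdot 3\cdot\frac{(2k-5)!!}{k!}$ are inherited verbatim. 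Multiplying by $2$ and collecting terms reproduces the three claimed expressions for $\gS[0]$; the same substitution applied to \refP{mcocinf} yields \refP{scocinf}. No new binomial or double-factorial identity is required — the whole point of routing through \refP{mcoc0} is that those identities were already paid for.

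The one genuine subtlety is the exceptional index. The passage from the $B$-columns of \refP{mcoc0} to $\gL[0]$ used $C\cdot B_N''=\tfrac1{N+1}C\cdot A_{N+1}'''$, which is invalid when $N=\hat m=-1$, i.e.\ when $m=-3/2$. For this single value of $m$ I recompute $\gS[0](\varphi_n,\psi_{-3/2})$ and $\gS[0](\psi_n,\psi_{-3/2})$ directly from $2\res_0(\varphi''\psi\,dz)$, expanding the integrand into the $A_k$ via \refL{a1prim} and evaluating the residues with \refL{amres} and the small-index values \refE{ressmall}, and then verify that the closed formulas above still give the correct answer there. The $\varphi_n,\varphi_m$ line needs no such check since its $B$-free identity $A_N'=NB_{N-1}$ is valid for all $N$. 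The main obstacle is thus entirely in this last step: making sure the half-integer shift $\hat n=n+1/2$ never pushes one outside the hypotheses of the appendix lemmas, and confirming that $m=-3/2$ is genuinely reproduced by the generic formula rather than only ``generically'' — beyond that, the proof is a mechanical substitution into \refP{mcocinf} and \refP{mcoc0}.
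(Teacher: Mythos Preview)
Your proposal is correct and follows exactly the paper's own route: the text immediately preceding \refP{scocinf} and \refP{sco0} records the identities $\gS[a]=2\gm[a]$ with the half-integer shift $\hat n=n+1/2$, $\hat m=m+1/2$, and then simply says ``with these identities we recall from the results there'' --- i.e.\ substitute into \refP{mcoc0}. Your extra verification at $m=-3/2$ is harmless but unnecessary: the identities $\gS[a](\cdot,\cdot)=2\gm[a](\cdot,\cdot)$ are derived directly from the definitions and hold for all indices, while the exceptional value $\hat m=-1$ was already absorbed into the statement of \refP{mcoc0} (the paper checked it there), so no separate case remains once you invoke \refP{mcoc0} as a black box.
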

We illustrated again that only the cocycle $\gS[\infty]$ is local with
respect to the standard splitting. The $\gS[\infty]$ is the one
which was considered by Kreusch \cite{Kreusch} (up to a different
indexing of the basis elements).
\begin{remark}
Here  we considered the central element to be
even. We could have even dropped this assumption.
In \cite{Schlsa} we show that 
the corresponding cocycles are cohomologically
trivial.
\end{remark}
\begin{remark}
Leidwanger and Morier-Genoux introduced in \cite{LeiMor}  a
\emph{Jordan superalgebra}
 in the  geometric  setting described here. They put 
\begin{equation}
\mathcal{J}:=\A\oplus\Fl[-1/2]=
\mathcal{J}_{\bar 0}\oplus
\mathcal{J}_{\bar 1}.
\end{equation}
and  define the (Jordan) product $\circ$   via the algebra structures 
 for the spaces $\Fl$ by
\begin{equation}
\begin{aligned}[]
f\circ g&:=f\;\sbul\; g\quad \in\A,
\\
f\circ\varphi&:=f\;\sbul\; \varphi \quad \in\Fl[-1/2],
\\
\varphi\circ\psi&:=[\varphi,\psi]\quad \in\Fl[0].
\end{aligned}
\end{equation}
It is a non-associative extension of 
the associative 
algebra $A$ of 
meromorphic functions.
By rescaling the second definition with the factor 1/2 one obtains
a 
\emph{Lie anti-algebra}
 as introduced by
Ovsienko \cite{Ovs1}. 

Of  course it is easy  again to 
express everything in terms of our introduced 
Krichever--Novikov
type basis 
and in particular to calculate the structure equations in
the genus zero case completely in the same manner as above.
We calculate
\begin{equation}
\begin{gathered}
A_n\circ A_m=A_{n+m},\quad
A_n\circ B_m=B_{n+m},\quad
B_n\circ B_m=A_{n+m}+4A_{n+m+1},
\\
A_n\circ\varphi_m=\varphi_{n+m},\quad
A_n\circ\psi_m=\psi_{n+m},\quad
B_n\circ\varphi_m=\psi_{n+m},\quad
B_n\circ\psi_m=\varphi_{n+m}+4\varphi_{n+m},
\end{gathered}
\end{equation}
and  using \refE{aliea}
\begin{equation}
\begin{aligned}{}
\varphi_n\circ\varphi_m&= 1/2(m-n)B_{n+m},
\\
\varphi_n\circ\psi_m&= 1/2(m-n)A_{n+m}+(2(m-n)+1)A_{n+m+1}
=-\psi_m\circ\varphi_n,
\\
\psi_n\circ\psi_m&= 1/2(m-n)B_{n+m}+2(m-n)B_{n+m+1}\,.
\end{aligned}
\end{equation}
See also \cite{LeiMor}.
The almost-graded structure is obvious.

\end{remark}

\section{Remarks on representations}
\label{sec:further}

Having recognized the genus zero algebras as 
examples of multi-point algebras of KN type,
complete
collection of their representations as 
presented e.g. in \cite{Schlkn} can be studied.
In this article we will not carry this out, but just name them
with a few hints.

We start from the natural action of our vector fields, functions, 
(and  more general current Lie algebras) on forms of  weight $\la$.
These representations are not of the type one is looking for, e.g.
in physics. They do not have a ground state (a vacuum), no creation
operators,
no annihilation operators. But after choosing a splitting with
an induced almost-grading and adapted basis one can construct
semi-infinite wedge forms of weight $\la$.
They supply candidates for such desired representations. To extend the
natural representation to the wedge-forms we have to regularize
the action. The resulting action will only be a projective
Lie action. The cocycle defining it defines a central extensions of
the algebras under consideration. It turns out that the cocycle is
local and as a consequence the central extension is almost-graded.
Of course the reference is always the almost-grading induced by the 
splitting from which we started.
We could e.g. start with the standard splitting 
and obtain exactly the one-dimensional central extensions identified 
in this article  as 
allowing the extension of the almost-grading, i.e. the ones obtained
by taking the residue at $\infty$.

For $N>2$ we have more than one splitting and hence more than one
almost-grading. For each splitting we will obtain another representation and
extension of the original algebra.  

Via this process we get  semi-infinite
wedge representations, or equivalently fermionic Fock space 
representations of a centrally extended 
vector field algebra, differential operator algebra, respectively
affine Lie algebra
\cite[Sect.~7]{Schlkn}.
Furthermore we have $b-c$-systems,
and fields in CFT \cite[Sect.~8]{Schlkn}.

As far as the affine Lie algebras are concerned, Verma modules and
highest weight representations are given 
\cite[Sect.~9.9]{Schlkn}.
Again without fixing an almost-grading we cannot 
even talk about highest weight
representations.
Of special relevance is the Sugawara representation
(energy-momentum representation). 
Let $\gh$ be the affine Lie algebra associated to 
a simple Lie algebra $\g$ or the Heisenberg algebra.
In both cases the defining cocycle for the central extension should be local.
Given an ``admissible representation'', e.g. a highest weight representation,
the arbitrary  genus, multi-point 
Sugawara construction 
done by the author together with Sheinman 
\cite{SSS}
can applied
(see \cite[Sect.~10]{Schlkn} for some
simplifications). 
The level $c$ of the representation is the scalar by which
the central element of the affine algebra acts.
Let $\kappa$ be the dual Coxeter number, respectively
$\kappa=0$ for the Heisenberg algebra. If the level is
non-critical, e.g. if $c+\kappa\ne 0$ then
the Sugawara operators can be rescaled and the rescaled
operators yield a  representation of the centrally
extended vector field algebra given by 
a cocycle which is local with respect to the 
almost-grading we started with.
In fact, via the Sugawara operator we obtain a
projective representation of $\Dg$ which is the semi-direct 
product of $\g$ with $\La$
discussed in \refS{gdiff}.
By passing to a central
extension we get a honest Lie representation of 
$\Dhg$, see \cite[Prop.~10.15]{Schlkn}.
The Sugawara representations appear in the
context of WZNW models, see  e.g. 
\cite{SSpt}, \cite{SSpt2}, \cite[Sect.~11]{Schlkn},
\cite{SheBo} for  arbitrary genus. They are also
of relevance in genus zero.
Some steps in directions of vertex operator algebras 
are done by Linde \cite{Lind1}, \cite{Lind2}.

The choice of an almost-grading is an overarching and necessary
concept in the theory of representations of our algebras.
Note that in genus zero and two points it is always
implicitly given by the standard grading (and it is 
a grading) which allows the representation theory to
get of the grounds.
For the general situation the choice of an
almost-grading does the job. 
Such an almost-grading always exists.
But  we have
to take into account that we will have finitely many choices as we
will have a finite number of different splitting and hence almost-gradings
for the same algebra.
{}From the point of the non-extended algebra they 
correspond to 
different projective representations.

\medskip

Another point which can be addressed is the
symmetry aspect.
In the genus zero situation for $N=3$ we have 
additional geometric symmetries which induces automorphism of 
the algebras (even after fixing the three points).
For the generic $N=4$ case there are no such 
additional symmetries.
But for special choices of these points there might be
some inducing also 
automorphism of the algebras.
Klein's list of possible finite subgroups of
 $\PGL(2,\C)$
given by 
$C_N,D_N,A_4, A_5, S_4$
show up.
See \cite{Lom1}, \cite{Lom2}, \cite{Lom2}, 
\cite{CGLZ}.
For an approach via KN objects, see \cite{Chopp}.

\appendix
\section{Some useful formulas for the three-point case}
\label{sec:calc}
\subsection{Definitions}
Our points where poles are allowed are normalized 
to be $z=0$, $z=1$ and $z=\infty$. 
We define the following basic functions
admitting only poles there.
\begin{equation}\label{eq:abd}
\begin{aligned}
A_n(z)&:=z^n(z-1)^n,
\\
B_n(z)&:=z^n(z-1)^n(2z-1)=A_n(z)\cdot (2z-1).
\end{aligned}
\end{equation}
Note that $A_0=1$ and $B_0=2z-1$, and  that $A_n$ and $B_n$ are holomorphic
outside of $\infty$ if and only if $n\ge 0$. More precisely,
\begin{lemma}\label{L:abdiv}
The divisors corresponding to $A_n$ and $B_n$ are
\begin{equation*}
\begin{aligned}
(A_n)&=n\cdot[0]+n\cdot[1]-2n\cdot[\infty],
\\
(B_n)&=n\cdot[0]+n\cdot[1]+1\cdot [1/2]-(2n+1)\cdot[\infty].
\end{aligned}
\end{equation*}
\end{lemma}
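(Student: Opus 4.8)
The statement to prove is Lemma~\ref{L:abdiv}, computing the divisors of $A_n(z)=z^n(z-1)^n$ and $B_n(z)=z^n(z-1)^n(2z-1)$ on $\poc$.

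\smallskip

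The plan is to compute the order of each function at every point of $\poc$ directly. First I would treat $A_n$. At the point $z=0$, the factor $z^n$ contributes order $n$ and the factor $(z-1)^n$ is a nonvanishing holomorphic unit, so $\ord_0(A_n)=n$; symmetrically $\ord_1(A_n)=n$ since near $z=1$ the local coordinate is $z-1$ and $z^n$ is a unit. At any other finite point $a\ne 0,1$ the function is a nonzero holomorphic unit, so the order is $0$. For the point $\infty$, use the local coordinate $w=1/z$, so $z=1/w$ and $A_n(z)=w^{-n}(1/w-1)^n=w^{-n}\cdot w^{-n}(1-w)^n=w^{-2n}(1-w)^n$, giving $\ord_\infty(A_n)=-2n$. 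Collecting these, $(A_n)=n\cdot[0]+n\cdot[1]-2n\cdot[\infty]$, and as a check the total degree is $n+n-2n=0$, consistent with Proposition~\ref{P:kldeg} (a function has total order $0$).

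\smallskip

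For $B_n=A_n\cdot(2z-1)$ I would use additivity of divisors: $(B_n)=(A_n)+(2z-1)$. The linear function $2z-1$ has a simple zero at $z=1/2$ and no other zeros in $\C$, and at $\infty$ one has $2z-1=(2-w)/w$, so $\ord_\infty(2z-1)=-1$; thus $(2z-1)=1\cdot[1/2]-1\cdot[\infty]$. Adding, $(B_n)=n\cdot[0]+n\cdot[1]+1\cdot[1/2]-(2n+1)\cdot[\infty]$, again of total degree $0$. One small point to note is that $1/2\ne 0,1$ so the $[1/2]$ contribution does not collide with the $[0]$ or $[1]$ terms, and for $n=0$ the formula correctly reduces to $(B_0)=[1/2]-[\infty]$, matching $B_0=2z-1$.

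\smallskip

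There is no real obstacle here: the only thing requiring any care is the behavior at $\infty$, where one must remember to pass to the coordinate $w=1/z$ and count the order of each factor correctly (this is exactly the content of \refE{ordin}, the shift by $-2\lambda$ for weight-$\lambda$ forms, specialized to $\lambda=0$ where no shift occurs since we are dealing with functions). Everything else is immediate from the definitions \refE{abd} and the additivity of the divisor map.
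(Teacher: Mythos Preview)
Your proof is correct and follows essentially the same approach as the paper: the finite orders are read off directly from the factorization, and the order at $\infty$ is computed via the coordinate change $w=1/z$, yielding $A_n(z(w))=w^{-2n}(1-w)^n$. Your use of additivity $(B_n)=(A_n)+(2z-1)$ is a minor stylistic variant of the paper's ``accordingly we get the result for $B_n$,'' and your degree-zero consistency checks are a nice touch.
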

\begin{proof}
For finite $z$ values this is obvious. For the order at the
point $\infty$ we have to replace $z$ by $1/w$ with $w$ the local
variable at $\infty$. For example we obtain
\begin{equation}
A_n(z(w))=w^{-n}(1/w-1)^{n}=w^{-2n}(1-w)^n.
\end{equation}
Hence, the statement. Accordingly we get the result for $B_n$.
\end{proof}
The following is obvious.
\begin{lemma}\label{L:absym} We have the symmetry
\begin{equation*}
A_n(1-z)=A_n(z),\qquad
B_n(1-z)=-B_n(z)\ .
\end{equation*}
\end{lemma}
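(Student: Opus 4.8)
The plan is a direct substitution $z\mapsto 1-z$ into the defining formulas \refE{abd}, with no auxiliary machinery needed. First I would treat $A_n$: since $(1-z)-1=-z$, one gets $A_n(1-z)=(1-z)^n(-z)^n=(-1)^n\,z^n(1-z)^n$. On the other hand, rewriting the definition of $A_n(z)$ using $(z-1)^n=(-1)^n(1-z)^n$ gives $A_n(z)=z^n(z-1)^n=(-1)^n\,z^n(1-z)^n$. The two expressions coincide, so $A_n(1-z)=A_n(z)$.

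For $B_n$ I would invoke $B_n(z)=A_n(z)\cdot(2z-1)$ from \refE{abd} together with the identity for $A_n$ just established: $B_n(1-z)=A_n(1-z)\cdot\bigl(2(1-z)-1\bigr)=A_n(z)\cdot(1-2z)=-A_n(z)\cdot(2z-1)=-B_n(z)$. This completes both claims.

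There is essentially no obstacle here; the computation is one line in each case. The only point that deserves a moment's care is the bookkeeping of the sign $(-1)^n$ when passing between $(z-1)^n$ and $(1-z)^n$: in the $A_n$ computation it appears on both sides and cancels, while in the $B_n$ computation the single overall sign flip comes entirely from the odd linear factor $2z-1$, which is sent to $1-2z=-(2z-1)$ under $z\mapsto 1-z$. (One may also note this matches the divisor computation in \refL{abdiv}: the involution $z\mapsto 1-z$ of $\poc$ fixes $\infty$ and the point $1/2$, and swaps $0$ with $1$, which is consistent with $A_n$ being invariant and $B_n$ changing sign.)
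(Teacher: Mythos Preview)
Your proof is correct and is precisely the direct substitution the paper has in mind; in fact the paper does not even spell out a proof, declaring the lemma ``obvious'' immediately before stating it. Your added remark about the involution $z\mapsto 1-z$ on $\poc$ and its compatibility with the divisors in \refL{abdiv} is a nice sanity check but of course not needed.
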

\subsection{Products}
\begin{lemma}\label{L:prod}
\begin{equation*}
\begin{aligned}
A_n\cdot A_m&=A_{n+m},
\\
A_n\cdot B_m&=B_{n+m},
\\
B_n\cdot B_m&=A_{n+m}+4A_{n+m+1}.
\end{aligned}
\end{equation*}
\end{lemma}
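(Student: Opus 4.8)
The plan is to reduce all three identities to the definitions $A_n(z)=z^n(z-1)^n$ and $B_n(z)=A_n(z)\cdot(2z-1)$ from \refE{abd}, together with the single elementary identity $(2z-1)^2=1+4z(z-1)=A_0+4A_1$, and then read off the result. Throughout, the product $\sbul$ on $\A$ is just pointwise multiplication of the representing rational functions (this is how the associative structure of \refP{aass} specializes in genus zero), so everything is an equality in the field of rational functions in $z$; in particular $n$ and $m$ range over all of $\Z$ and no case distinction on their signs is required, since we are manipulating Laurent monomials in $z$ and in $z-1$.

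First I would handle $A_n\cdot A_m$: since $z^n(z-1)^n\cdot z^m(z-1)^m=z^{n+m}(z-1)^{n+m}$, this equals $A_{n+m}$ directly from the definition. Next, for $A_n\cdot B_m$ I would write $B_m=A_m\cdot(2z-1)$, so that $A_n\cdot B_m=(A_n\cdot A_m)(2z-1)=A_{n+m}(2z-1)=B_{n+m}$, using the first identity. Finally, for $B_n\cdot B_m$ I would write $B_n\cdot B_m=(A_n\cdot A_m)(2z-1)^2=A_{n+m}(2z-1)^2$ and then substitute $(2z-1)^2=4z^2-4z+1=1+4z(z-1)=A_0+4A_1$; multiplying through and applying the first identity twice (with $A_{n+m}\cdot A_0=A_{n+m}$ and $A_{n+m}\cdot A_1=A_{n+m+1}$) yields $A_{n+m}+4A_{n+m+1}$.

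Since every step is an equality of rational functions checked by elementary polynomial algebra, I do not expect any real obstacle here; the only points worth making explicit are that the computation takes place in $\C(z)$ so that negative exponents cause no trouble, and that the base change $A_n=A_{n,2}-A_{n,1}$, $B_n=A_{n,2}+A_{n,1}$ of \refE{baschang} lets one transport these product formulas back to the Krichever--Novikov basis $\{A_{n,1},A_{n,2}\}$ if desired. In fact it is worth recording the intermediate observation $(2z-1)^2=A_0+4A_1$ on its own, as the same identity reappears when computing derivatives and residues in the later lemmas of this appendix.
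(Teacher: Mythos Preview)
Your proof is correct and follows exactly the same route as the paper: the first two identities are read off from the definitions \refE{abd}, and the third is reduced to the elementary identity $(2z-1)^2=1+4z(z-1)$, which the paper records as \refE{qwf}. The extra remarks you make about working in $\C(z)$ and the link to the base change \refE{baschang} are fine but not needed for the lemma itself.
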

\begin{proof}
The first two relations are by the 
definitions of $A_n$ and $B_n$.
The last follows from 
\begin{equation}\label{eq:qwf}
(2z-1)^2=1+4z(z-1).
\end{equation}
\end{proof}
\subsection{Derivatives}
For the Lie product and for the Lie algebra cocycles we will need
the derivatives of our basis functions.
They will be linear combinations of the $A_n$ and $B_n$. 
We will need their explicit expressions.
\begin{lemma}\label{L:deri}
For the derivatives of our basic functions we have
\begin{equation*}
\begin{aligned}
A_n'&=n\,B_{n-1},
\\
B_n'&=2(2n+1)\,A_n+n\,A_{n-1},
\\
A_n''&=2n(2n-1)\,A_{n-1}+n(n-1)\,A_{n-2},\\
B_n''&=2n(2n+1)\,B_{n-1}+n(n-1)\,B_{n-2},
\\
A_n'''&=2n(2n-1)(n-1)\,B_{n-2}+n(n-1)(n-2)\,B_{n-3},\\
B_n'''&=4n(2n+1)(2n-1)\,A_{n-1}\\
\qquad &\qquad
+4n(n-1)(2n-1)\,A_{n-2}
+n(n-1)(n-2)\,A_{n-3}\, .
\end{aligned}
\end{equation*}
\end{lemma}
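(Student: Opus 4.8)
The plan is to establish the two first-order formulas directly from the definitions and then obtain the four higher-order formulas by repeated differentiation, since each right-hand side in the list is a $\C$-linear combination of the basic functions $A_k$, $B_k$ with coefficients polynomial in $n$.

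First I would compute $A_n'$ by the product rule: from $A_n=z^n(z-1)^n$, factoring out $z^{n-1}(z-1)^{n-1}$ gives
\begin{equation*}
A_n'=n z^{n-1}(z-1)^{n-1}\big((z-1)+z\big)=n z^{n-1}(z-1)^{n-1}(2z-1)=n B_{n-1},
\end{equation*}
which is the first identity. For $B_n'$ I would write $B_n=A_n\cdot(2z-1)$, so $B_n'=A_n'(2z-1)+2A_n=nB_{n-1}(2z-1)+2A_n$, and then use the quadratic identity $(2z-1)^2=1+4z(z-1)$ from \refE{qwf} together with $z(z-1)A_{n-1}=A_n$ (which is \refL{prod} applied to $A_1\cdot A_{n-1}$) to get $B_{n-1}(2z-1)=A_{n-1}(2z-1)^2=A_{n-1}+4A_n$. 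Substituting yields $B_n'=nA_{n-1}+4nA_n+2A_n=2(2n+1)A_n+nA_{n-1}$, the second identity.

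The remaining four formulas then follow by induction on the order of the derivative: one differentiates the already-proved lower-order expression term by term, replacing $A_k'$ by $kB_{k-1}$ and $B_k'$ by $2(2k+1)A_k+kA_{k-1}$, and collects coefficients. For $A_n''$, $B_n''$ and $A_n'''$ each $A$- or $B$-term contributes to a single term of the answer, so the computation is immediate. The only place requiring a moment's attention is $B_n'''$: differentiating $2n(2n+1)B_{n-1}+n(n-1)B_{n-2}$ produces two contributions to the coefficient of $A_{n-2}$, namely $2n(2n+1)(n-1)$ and $2n(n-1)(2n-3)$, whose sum is $2n(n-1)\big((2n+1)+(2n-3)\big)=4n(n-1)(2n-1)$, matching the stated formula; the $A_{n-1}$ and $A_{n-3}$ coefficients come out directly. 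The main obstacle is essentially nonexistent here: everything reduces to the product rule, the single algebraic identity \refE{qwf}, and the bookkeeping of polynomial coefficients in $n$, with the only mild care needed being the combination of the two $A_{n-2}$-terms in $B_n'''$.
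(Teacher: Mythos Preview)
Your proof is correct and follows essentially the same approach as the paper: establish $A_n'$ and $B_n'$ directly from the definitions together with the identity \refE{qwf} (equivalently \refL{prod}), and then obtain the higher derivatives by substituting these formulas recursively. Your write-up is in fact slightly more detailed than the paper's, which merely indicates the recursive substitution without carrying out the coefficient bookkeeping for $B_n'''$.
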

\begin{proof}
Starting from $A_n(z)=z^n(z-1)^n$ we obtain
\begin{equation*}
A_n'(z)=n\,z^{n-1}(z-1)^{n-1}(2z-1)=n\,B_{n-1}(z).
\end{equation*}
Furthermore,
\begin{multline*}
B_n'=(A_n\cdot(2z-1))'=
nB_{n-1}\cdot(2z-1)+A_n\cdot 2
\\
=n\,B_{n-1}\cdot B_{0}+2A_n
=2(2n+1)A_n+nA_{n-1}.
\end{multline*}
For this  we used \refL{prod}.
These are the basic results. 
Now we use them to calculate the higher
order derivative.
For example $A_n''=nB_{n-1}'$ and by substituting 
$B_{n-1}'$ we get the expression in the lemma.
By the same way all other results can be calculated.
\end{proof}
Using \refL{deri} and \refL{prod} we 
immediately verify the following relations which will be needed
in the main text.
\begin{lemma}\label{L:a1prim}
\begin{equation*}
\begin{aligned}
A_nA_m'&=m\,B_{n+m-1},
\\
A_nB_m'&=2(2m+1)\,A_{n+m}+m\,A_{n+m-1},
\\
B_nA_m'&=4m\,A_{n+m}+m\,A_{n+m-1},
\\
B_nB_m'&=2(2m+1)\,B_{n+m}+m\,B_{n+m-1}.
\end{aligned}
\end{equation*}
\end{lemma}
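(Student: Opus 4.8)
The plan is to derive all four identities by substituting the first-derivative formulas from \refL{deri} into the products on the left-hand sides and then collapsing the resulting products of basic functions by means of \refL{prod}. No genuinely new idea is required; for each line the argument is a two-step bookkeeping exercise, so I expect no real obstacle beyond keeping the index shifts straight.

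First I would dispose of the two identities in which the derivative is of the form $A_m'=m\,B_{m-1}$. For $A_nA_m'$, this substitution gives $A_nA_m'=m\,A_n\cdot B_{m-1}$, and the relation $A_n\cdot B_k=B_{n+k}$ of \refL{prod} immediately yields $m\,B_{n+m-1}$. For $B_nA_m'$, the same substitution produces $B_nA_m'=m\,B_n\cdot B_{m-1}$, and now the quadratic relation $B_n\cdot B_k=A_{n+k}+4A_{n+k+1}$ gives $m\,A_{n+m-1}+4m\,A_{n+m}$, which is the asserted value.

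Next I would treat the two identities involving $B_m'$. Here \refL{deri} gives $B_m'=2(2m+1)\,A_m+m\,A_{m-1}$, hence $A_nB_m'=2(2m+1)\,A_nA_m+m\,A_nA_{m-1}$, and $A_nA_k=A_{n+k}$ turns this into $2(2m+1)\,A_{n+m}+m\,A_{n+m-1}$. Likewise $B_nB_m'=2(2m+1)\,B_nA_m+m\,B_nA_{m-1}$, and using $B_nA_k=A_nB_k=B_{n+k}$ this becomes $2(2m+1)\,B_{n+m}+m\,B_{n+m-1}$.

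The only point requiring a little care is the consistent tracking of index shifts (each derivative lowers an index by one, and the $B\cdot B$ product is responsible for the extra $A_{n+m}$ term appearing in the third line), but this is purely mechanical. As a cross-check one could also verify the identities directly from the explicit polynomials $A_n=z^n(z-1)^n$ and $B_n=A_n\,(2z-1)$, using $A_n'=nB_{n-1}$ together with $(2z-1)^2=1+4z(z-1)$, which is precisely the route already used to establish \refL{prod} and \refL{deri}.
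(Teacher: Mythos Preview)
Your proof is correct and follows exactly the approach indicated by the paper, which simply notes that the identities are verified using \refL{deri} and \refL{prod} without spelling out the computations. Your write-up is in fact more detailed than the paper's own, which offers no line-by-line verification.
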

\begin{lemma}\label{L:a2prim}
\begin{equation*}
\begin{aligned}
A_nA_m''&=2m(2m-1)\,A_{n+m-1}+m(m-1)\,A_{n+m-2},
\\
A_nB_m''&=2m(2m+1)\,B_{n+m-1}+m(m-1)\,B_{n+m-2},
\\
B_nA_m''&=2m(2m-1)\,B_{n+m-1}+m(m-1)\,B_{n+m-2},
\\
B_nB_m''&=8m(2m+1)\,A_{n+m}+2m(4m-1)\,A_{n+m-1}+m(m-1)\,A_{n+m-2}.
\end{aligned}
\end{equation*}
\end{lemma}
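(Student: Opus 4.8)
The plan is to derive all four identities by directly substituting the second-derivative formulas of \refL{deri} into the product formulas of \refL{prod}; no new idea is required beyond these two already-established lemmas.

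First I would record from \refL{deri} that
\begin{equation*}
A_m''=2m(2m-1)\,A_{m-1}+m(m-1)\,A_{m-2},\qquad
B_m''=2m(2m+1)\,B_{m-1}+m(m-1)\,B_{m-2}.
\end{equation*}
For the first three identities the computation is then immediate. Multiplying $A_m''$ by $A_n$ and using $A_n\cdot A_k=A_{n+k}$ yields the stated expression for $A_nA_m''$; multiplying $B_m''$ by $A_n$ and using $A_n\cdot B_k=B_{n+k}$ yields $A_nB_m''$; and multiplying $A_m''$ by $B_n$ and using $B_n\cdot A_k=A_k\cdot B_n=B_{n+k}$ (commutativity of the product) yields $B_nA_m''$. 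In each case the coefficients $2m(2m-1)$, $m(m-1)$ (respectively $2m(2m+1)$, $m(m-1)$) are simply carried along and every index is shifted by $n$.

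The only case requiring a small simplification is $B_nB_m''$. Here $B_n\cdot B_{m-1}=A_{n+m-1}+4A_{n+m}$ and $B_n\cdot B_{m-2}=A_{n+m-2}+4A_{n+m-1}$ by \refL{prod}, so
\begin{equation*}
B_nB_m''=2m(2m+1)\bigl(A_{n+m-1}+4A_{n+m}\bigr)+m(m-1)\bigl(A_{n+m-2}+4A_{n+m-1}\bigr).
\end{equation*}
Collecting terms, the coefficient of $A_{n+m}$ is $8m(2m+1)$, the coefficient of $A_{n+m-1}$ is $2m(2m+1)+4m(m-1)=8m^2-2m=2m(4m-1)$, and the coefficient of $A_{n+m-2}$ is $m(m-1)$, which is exactly the asserted formula. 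There is essentially no obstacle here; the ``hard part'' is merely the bookkeeping of the coefficient $2m(2m+1)+4m(m-1)$, a one-line computation, so the whole proof is routine.
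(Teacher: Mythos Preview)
Your proof is correct and follows exactly the approach indicated in the paper, which simply states that the relations are ``immediately verif[ied]'' using \refL{deri} and \refL{prod}. Your explicit bookkeeping for the $B_nB_m''$ case, in particular the check $2m(2m+1)+4m(m-1)=2m(4m-1)$, is precisely the computation the paper leaves to the reader.
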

\begin{lemma}\label{L:a3prim}
\begin{equation*}
\begin{aligned}
A_nA_m'''&=2m(2m-1)(m-1)\,B_{n+m-2}+m(m-1)(m-2)\,B_{n+m-3},
\\[2pt]
A_nB_m'''&=4m(2m+1)(2m-1)\,A_{n+m-1}+
4m(m-1)(2m-1)\,A_{n+m-2}
\\
&\qquad
+
m(m-1)(m-2)\,A_{n+m-3},
\\[2pt]
B_nA_m'''&=8m(2m-1)(m-1)\,A_{n+m-1}+
2m(m-1)(4m-5)\,A_{n+m-2}
\\
&\qquad
+
m(m-1)(m-2)\,A_{n+m-3},
\\[2pt]
B_nB_m'''&=4m(2m+1)(2m-1)\,B_{n+m-1}+
4m(2m-1)(m-1)\,B_{n+m-2}
\\
&\qquad
+
m(m-1)(m-2)\,B_{n+m-3}.
\end{aligned}
\end{equation*}
\end{lemma}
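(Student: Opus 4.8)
The plan is to obtain all four identities purely mechanically: substitute the third-derivative expansions of \refL{deri} into the left-hand sides and then collapse the resulting products using the multiplication table \refL{prod}. No structural input beyond these two lemmas is needed, which is why the original text announces these formulas as "immediately verified".

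First I would treat the cases whose right-hand sides involve only the $B_k$. For $A_nA_m'''$, \refL{deri} gives $A_m'''=2m(2m-1)(m-1)\,B_{m-2}+m(m-1)(m-2)\,B_{m-3}$; multiplying by $A_n$ and using $A_n\cdot B_k=B_{n+k}$ yields the stated formula at once. For $B_nB_m'''$ I would start from $B_m'''=4m(2m+1)(2m-1)\,A_{m-1}+4m(m-1)(2m-1)\,A_{m-2}+m(m-1)(m-2)\,A_{m-3}$, multiply by $B_n$, and use $B_n\cdot A_k=A_k\cdot B_n=B_{n+k}$ from \refL{prod}; each monomial contributes a single $B$, so the claimed expression drops out with no simplification. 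The mixed case $A_nB_m'''$ is handled the same way, multiplying the expansion of $B_m'''$ by $A_n$ and using $A_n\cdot A_k=A_{n+k}$, so again each term maps to a single $A_k$.

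The only case requiring any algebra is $B_nA_m'''$: substituting the expansion of $A_m'''$, multiplying by $B_n$, and invoking $B_n\cdot B_k=A_{n+k}+4A_{n+k+1}$ makes each of the two terms split into two $A_k$'s. Collecting the coefficient of $A_{n+m-2}$ gives $2m(2m-1)(m-1)+4m(m-1)(m-2)=2m(m-1)\bigl[(2m-1)+2(m-2)\bigr]=2m(m-1)(4m-5)$, matching the middle coefficient of the statement, while the coefficients of $A_{n+m-1}$ and $A_{n+m-3}$ are $8m(2m-1)(m-1)$ and $m(m-1)(m-2)$ and pass through unchanged. There is no genuine obstacle; the worst pitfall is a factor or sign slip in this last recombination, which I would guard against by checking the $B_nA_m'''$ identity numerically for small values such as $m=2$ and $m=3$.
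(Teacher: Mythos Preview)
Your proposal is correct and follows exactly the route the paper indicates: the text introduces \refL{a1prim}--\refL{a3prim} with the sentence ``Using \refL{deri} and \refL{prod} we immediately verify the following relations'', and your mechanical substitution of the third-derivative formulas from \refL{deri} followed by the product rules of \refL{prod} is precisely that verification. The one nontrivial recombination you single out, the coefficient $2m(m-1)(4m-5)$ of $A_{n+m-2}$ in $B_nA_m'''$, is computed correctly.
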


\subsection{Residues}
Next we calculate the residues of $A_ndz$ and $B_ndz$
at the points where poles might be.
Recall that these are the points $0,1$ and $\infty$.

For this aim we need the Laurent series expansion of $(z-1)^m$
around zero.
We collect the following well-known facts about binomial
series. 

The expansion
\begin{equation}\label{eq:bf1}
(z-1)^m=\sum_{k=0}^\infty
\binom {m}{k}\,z^k(-1)^{m-k},\qquad z\in\C,\ |z|<1
\end{equation}
is valid for all $m\in\Z$.

For negative exponents an equivalent
expression is 
\begin{equation}\label{eq:bf2}
(z-1)^{-n}=(-1)^n
\sum_{k=0}^\infty 
\binom {n+k-1}{n-1}\,z^k, \qquad 
z\in\C,\ 
|z|<1,
\end{equation}
where $n\in\N$.
Later we will need the easy relation
\begin{equation}\label{eq:Bh}
\binom {2k}{k}=\frac {(2k-1)!!}{k!}\;2^k,\qquad k\in\N,
\end{equation}
where $(2k-1)!!=1\cdot 3\cdots (2k-1)$ is the double factorial.
\begin{lemma}\label{L:amres}
For the residues at the point $0$ we have
\begin{equation*}
\res_{0}(A_{-n}dz)=
\begin{cases} 0,&n\le 0,
\\
-1, &n=1,
\\
(-1)^n\dfrac {(2n-3)!!}{(n-1)!}\,2^{n-1},
&n\ge -2.
\end{cases}
\end{equation*}
\end{lemma}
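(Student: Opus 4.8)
The plan is to read off $\res_0(A_{-n}\,dz)$ as a single Laurent coefficient. Since $A_{-n}(z)=z^{-n}(z-1)^{-n}$ by \eqref{eq:abd}, its residue at $0$ is the coefficient of $z^{-1}$ in the Laurent expansion of $A_{-n}$ about $z=0$, equivalently the coefficient of $z^{\,n-1}$ in the power series of $(z-1)^{-n}$. The first step I would take is to dispose of the trivial range $n\le 0$: there $-n\ge 0$, so $A_{-n}=z^{-n}(z-1)^{-n}$ is a polynomial and hence holomorphic at $0$, giving residue $0$ (this includes $n=0$).

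For $n\ge 1$ I would substitute the binomial expansion \eqref{eq:bf2}, namely $(z-1)^{-n}=(-1)^n\sum_{k\ge 0}\binom{n+k-1}{n-1}z^k$ for $|z|<1$. Multiplying by $z^{-n}$, the coefficient of $z^{-1}$ comes exactly from the term $k=n-1$, so
\begin{equation*}
\res_0(A_{-n}\,dz)=(-1)^n\binom{2n-2}{n-1}.
\end{equation*}
For $n=1$ the right-hand side is $(-1)\binom{0}{0}=-1$, which is the middle case of the lemma. For $n\ge 2$ I would then rewrite the central binomial coefficient via \eqref{eq:Bh} with $k=n-1$, i.e.\ $\binom{2(n-1)}{n-1}=\tfrac{(2n-3)!!}{(n-1)!}\,2^{n-1}$, and substitute; this yields precisely the asserted closed form (and shows the range label in the statement should read $n\ge 2$).

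I do not expect a genuine obstacle here; the proof is essentially index bookkeeping. The two points to handle with care are: matching the shift by $n$ coming from the factor $z^{-n}$ against the summation variable in \eqref{eq:bf2}, so that the correct term $k=n-1$ is singled out; and separating out $n=1$, where $(2n-3)!!$ would be a degenerate double factorial, rather than forcing it into the general formula as the lemma statement (correctly) does. As a consistency check one can compare with the values $\res_0(A_{-2})=2$ and $\res_0(A_{-3})=-6$ already recorded in \eqref{eq:ressmall}, which agree with $(-1)^2\tfrac{1!!}{1!}2^1$ and $(-1)^3\tfrac{3!!}{2!}2^2$.
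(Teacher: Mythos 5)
Your proof is correct and follows essentially the same route as the paper: expand $(z-1)^{-n}$ via \refE{bf2}, extract the coefficient at $k=n-1$ to get $(-1)^n\binom{2(n-1)}{n-1}$, and convert with \refE{Bh}; your observation that the case label should read $n\ge 2$ (rather than $n\ge -2$) correctly identifies a typo in the statement.
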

\begin{proof}
If $n<0$ then there is no pole at $z=0$. Hence let $n>0$. 
 We use \refE{bf2} and calculate
\begin{equation}
A_{-n}(z)=(-1)^n\sum_{k=0}^\infty 
\binom {n+k-1}{n-1}z^{k-n}.
\end{equation}
The residue at $z=0$ is given by the coefficient paired with
$z^{-1}$. Hence it is given by the coefficient  for $k=n-1$
\begin{equation}
\res_{0}(A_{-n}dz)=(-1)^n\binom {2(n-1)}{n-1}.
\end{equation}
For $n=1$ we obtain the value $-1$. For $n>1$ we use 
\refE{Bh} and obtain
\begin{equation} 
\res_{0}(A_{-n}dz)
=(-1)^n\frac {(2n-3)!!}{(n-1)!}\,2^{n-1}.
\end{equation}
This was the claim.
\end{proof}
\begin{lemma}\label{L:bmres}
\begin{equation*}
\res_{0}(B_mdz)=
\begin{cases} 
1, &m=-1,
\\
0,
&\text{otherwise}.
\end{cases}
\end{equation*}
\end{lemma}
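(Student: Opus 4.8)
The plan is to split into the two cases $m\ne -1$ and $m=-1$, and to exploit the elementary fact that the residue of an exact differential vanishes at every point.

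First I would treat $m\ne -1$. By the derivative formula $A_n'=n\,B_{n-1}$ of \refL{deri}, we have $B_m=\frac{1}{m+1}A_{m+1}'$ whenever $m+1\ne 0$. Since $A_{m+1}$ is a rational function on $\poc$ (with poles only at $0,1,\infty$), the differential $A_{m+1}'\,dz=d(A_{m+1})$ is exact, so all of its residues vanish; in particular $\res_0(B_m\,dz)=\frac{1}{m+1}\res_0(A_{m+1}'\,dz)=0$. (For $m\ge 0$ this is anyway immediate, as $B_m$ is a polynomial.) This disposes of every case except $m=-1$.

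For $m=-1$ I would compute directly: $B_{-1}(z)=z^{-1}(z-1)^{-1}(2z-1)=\dfrac{2z-1}{z(z-1)}$, and since $2z-1=(z-1)+z$ one gets the partial fraction decomposition $B_{-1}(z)=\dfrac{1}{z}+\dfrac{1}{z-1}$. The coefficient of $z^{-1}$ in the Laurent expansion at $0$ is therefore $1$, i.e. $\res_0(B_{-1}\,dz)=1$, which is the asserted value.

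I do not expect any real obstacle here; the only subtlety is that the identity $B_m=\frac{1}{m+1}A_{m+1}'$ degenerates precisely at $m=-1$, which is exactly why that case has to be handled separately and is the one producing the nonzero residue. As a fallback that parallels the proof of \refL{amres}, one could instead, for $m=-n$ with $n\ge 1$, expand $(z-1)^{-n}$ via \refE{bf2} and multiply by $(2z-1)$; reading off the coefficient of $z^{-1}$ yields $(-1)^n\bigl(2\binom{2n-3}{n-1}-\binom{2n-2}{n-1}\bigr)$ for $n\ge 2$, which vanishes because $\binom{2n-2}{n-1}=2\binom{2n-3}{n-1}$ (Pascal's rule together with $\binom{2n-3}{n-1}=\binom{2n-3}{n-2}$), while $n=1$ gives $1$ as above — but the derivative argument is shorter and I would use it as the main proof.
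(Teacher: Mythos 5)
Your proof is correct and is essentially the paper's own argument: the paper also observes (via \refL{deri}) that $B_m\,dz=\tfrac{1}{m+1}\,dA_{m+1}$ is exact for $m\ne -1$, hence residue-free, and handles $m=-1$ by direct inspection of $B_{-1}=z^{-1}(z-1)^{-1}(2z-1)$. Your explicit partial-fraction computation and the binomial fallback are just slightly more detailed versions of the same steps.
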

\begin{proof}
{}From \refL{deri} we conclude
$B_m=(1/(m+1))A_{m+1}$ if $m\ne -1$. Hence, for $m\ne -1$ 
the differential $B_mdz$ is an exact differential and hence does not
have any residue.
It remains 
$B_{-1}(z)=z^{-1}(z-1)^{-1}(2z-1)$ which obviously has
as residue $+1$ at $z=0$.
\end{proof}
%
\begin{lemma}\label{L:res1}
\begin{equation*}
\begin{aligned}
\res_{1}(A_mdz)&=-\res_{0}(A_mdz)
\\
\res_{1}(B_mdz)&=\res_{0}(B_mdz)\;.
\end{aligned}
\end{equation*}
\end{lemma}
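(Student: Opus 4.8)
The plan is to exploit the reflection symmetry recorded in \refL{absym} together with the change of variable $z\mapsto 1-z$, which is a global automorphism of $\poc$ interchanging the points $z=0$ and $z=1$ and fixing $z=\infty$. The key point is that the residue of a meromorphic differential at a point is invariant under a biholomorphic change of coordinates, and under the substitution $z=1-w$ the differential transforms as $dz=-dw$.

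First I would observe that, for an arbitrary meromorphic function $f$ on $\poc$, the substitution $z=1-w$ carries a punctured neighbourhood of $z=1$ to a punctured neighbourhood of $w=0$, so that $\res_{1}(f(z)\,dz)=\res_{0}\big(f(1-w)\cdot(-dw)\big)=-\res_{0}\big(f(1-z)\,dz\big)$. Applying this with $f=A_m$ and inserting $A_m(1-z)=A_m(z)$ from \refL{absym} gives $\res_{1}(A_m\,dz)=-\res_{0}(A_m\,dz)$. Applying it with $f=B_m$ and inserting $B_m(1-z)=-B_m(z)$ gives $\res_{1}(B_m\,dz)=-\res_{0}(-B_m\,dz)=\res_{0}(B_m\,dz)$. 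This proves both identities, for all $m\in\Z$.

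If one prefers a bare-hands argument not invoking the coordinate invariance of residues, the same computation can be carried out explicitly: write $z=1+t$, expand $A_m(1+t)=(1+t)^m t^m$ and $B_m(1+t)=(1+t)^m t^m(2t+1)$ into Laurent series in $t$ via the binomial series \refE{bf1}, and read off the coefficient of $t^{-1}$; comparison with the Laurent expansions around $z=0$ already used in \refL{amres} and \refL{bmres} then yields the two formulas. There is essentially no obstacle here; the only point requiring care is the sign produced by $dz\mapsto -dz$ under $z\mapsto 1-z$, which is exactly what creates the minus sign in the first identity and, combined with $B_m$ being odd under the reflection, makes it disappear in the second.
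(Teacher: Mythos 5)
Your proof is correct and follows essentially the same route as the paper: the substitution $v=1-z$ with $dv=-dz$, the coordinate invariance of the residue, and the symmetry relations $A_m(1-z)=A_m(z)$, $B_m(1-z)=-B_m(z)$ from \refL{absym}. The sign bookkeeping (the minus from $dz\mapsto -dz$ surviving for $A_m$ and cancelling against the oddness of $B_m$) matches the paper's argument exactly.
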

\begin{proof}
We make a change of local coordinates $v=1-z$. In particular we have
$dv=-dz$. 
Hence, for the local representing function $\tilde A_m(v)$ we have 
\begin{equation*}
\tilde A_m(v)=-A_m(z(v))=-A_m(1-v)=-A(v),
\end{equation*}
and $\tilde B_m(v)=B_m(v)$ by using 
\refL{absym}.
Moreover, the point $z=1$ corresponds to the point $v=0$. As the 
residue is independent of the choice of local coordinates we obtain
exactly the statement of the lemma.
\end{proof} 
\begin{lemma}\label{L:resinf}
\begin{equation*}
\begin{aligned}
\res_{\infty}(A_mdz)&=0,
\\
\res_{\infty}(B_mdz)&=-2\res_{0}(B_mdz),
\\
&=\quad\begin{cases}
-2,&m=-1,
\\
0,&\text{otherwise}.
\end{cases}
\end{aligned}
\end{equation*}
\end{lemma}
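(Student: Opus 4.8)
The plan is to derive both identities from the residue theorem on the compact Riemann surface $\poc$, namely that the sum of the residues of a meromorphic differential over all of its poles vanishes. By \refL{abdiv} the differentials $A_m\,dz$ and $B_m\,dz$ have poles only among $\{0,1,\infty\}$, so
\begin{equation*}
\res_0(A_m\,dz)+\res_1(A_m\,dz)+\res_\infty(A_m\,dz)=0,
\end{equation*}
and the same with $B_m$ in place of $A_m$. The point is that \refL{res1} already computes the residues at $1$ in terms of those at $0$, and \refL{bmres} computes the residues at $0$, so the residue at $\infty$ is forced.

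Carrying this out: for $A_m$, \refL{res1} gives $\res_1(A_m\,dz)=-\res_0(A_m\,dz)$, hence the first two terms cancel and $\res_\infty(A_m\,dz)=0$, which is the first assertion. For $B_m$, \refL{res1} gives $\res_1(B_m\,dz)=\res_0(B_m\,dz)$, so $\res_\infty(B_m\,dz)=-2\,\res_0(B_m\,dz)$, which is the displayed relation. Finally \refL{bmres} states $\res_0(B_m\,dz)=1$ for $m=-1$ and $0$ otherwise, and substituting this yields the stated case distinction $-2$ for $m=-1$ and $0$ otherwise.

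A self-contained alternative, independent of \refL{res1} and \refL{bmres}, is the direct substitution $z=1/w$, under which $\res_\infty(f\,dz)=\res_{w=0}\bigl(-f(1/w)\,w^{-2}\,dw\bigr)$. One finds $A_m(1/w)=w^{-2m}(1-w)^m$ and $B_m(1/w)=w^{-2m-1}(1-w)^m(2-w)$, so $\res_\infty(A_m\,dz)$ equals minus the coefficient of $w^{2m+1}$ in $(1-w)^m$, and $\res_\infty(B_m\,dz)$ equals minus the coefficient of $w^{2m+2}$ in $(1-w)^m(2-w)$; a one-line inspection of these binomial coefficients — a polynomial of too-low degree when $m\ge 0$, and the expansion \refE{bf2} when $m<0$ — gives the values, the only one surviving being $w^0$ in $(1-w)^{-1}(2-w)=2+\sum_{k\ge 1}w^k$ for $m=-1$. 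There is no genuine obstacle in either route; the only care needed is bookkeeping of the sign and the $w^{-2}$ factor coming from $dz=-w^{-2}\,dw$ (and, in the second route, of exactly which coefficient of the binomial series is extracted).
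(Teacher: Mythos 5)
Your proof is correct and follows essentially the same route as the paper: the residue theorem on $\poc$ combined with \refL{res1} and \refL{bmres} (the paper's proof cites exactly these, plus \refL{amres}). Your alternative via the substitution $z=1/w$ is also sound and coincides with the alternative proof the paper itself gives in the remark following the lemma, where it is noted that this second route generalizes to arbitrary $N$.
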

\begin{proof}
By the residue theorem \cite{SchlRS} for a compact
Riemann surface the sum over all residues of a meromorphic
differential is zero. As our differentials have only poles
at $0,1$, $\infty$ the claim follows from
Lemmas \ref{L:amres},  \ref{L:bmres}, and  \ref{L:res1}.
\end{proof}
\begin{remark}
The proofs of Lemmas \ref{L:amres} and
  \ref{L:bmres} do not really make reference to the
$N=3$ situation. But for the proofs of the
Lemmas \ref{L:res1} and \ref{L:resinf} the symmetry 
for $N=3$ is
used.
Nevertheless, the principal statement in \refL{resinf},
telling us that only for finitely many $m$ there might
be a non-vanishing residue, remains true. To indicate this I 
present an alternative proof  of \refL{resinf} which generalizes
to arbitrary $N$.
\begin{proof}
We make the  change of coordinates
\begin{equation}
z=\frac 1w,\qquad dz=-\frac 1{w^2}dw
\end{equation}
and express the elements $A_m(z)dz$ and $B_m(z)dz$ in the
new coordinates. In the first case we obtain
\begin{equation}
-w^{-2m-2}(1-w)^mdw,
\end{equation}
in the second case
\begin{equation}
-w^{-2m-3}(1-w)^m(2-w)dw.
\end{equation}
For the existence of 
a pole at $\infty$ it is necessary that 
$-2m-2<0$ respectively 
$-2m-3<0$. 
Hence, $m>-1$, respectively $m\ge-1$.
If $m\ge 0$ there are no poles at the other points, hence
by the residue theorem there is no residue at $\infty$.
This says that in the first case there is no residue possible 
at all and
in the second case only for $m=-1$. It calculates as
\begin{equation}
-\res_{\infty}(w^{-1}(1-w)^{-1}(2-w)dw)=-2.
\end{equation}
\end{proof}
\end{remark}


\section{Three-point $\sln(2,\C)$-current algebra for genus 0}
\label{sec:sl2}
In this appendix we will give the example 
of the universal central extension of the
3-point $\sln(2,\C)$-current algebra. The general theory has been
developed in \refS{aff}. The 3-point  $\slnb(2,\C)$ algebra is of relevance
in quite a number of applications, we only name statistical mechanics
\cite{HaTe}, \cite{ItTe}.
Hence, the explicite knowledge of the structure equations
with respect to some basis
might be of some interest. We take 
$\sln(2,C)$ with the standard matrix generators
\begin{equation}
H:=\begin{pmatrix} 1&0\\ 0&-1 
\end{pmatrix}, \quad
X:=\begin{pmatrix} 0&1\\ 0&0 
\end{pmatrix}, \quad
Y:=\begin{pmatrix} 0&0\\ 1&0 
\end{pmatrix},
\end{equation}
and induced relations
\begin{equation}
[H,X]=2X,\quad
[H,Y]=-2Y,\quad
[X,Y]=2X\, .
\end{equation}
As basis elements for the current algebra $\slnb(2,\C)$ with 
respect to the almost-grading introduced in \refS{3point} we take
the elements
\begin{equation}
Z^{(s)}=Z\otimes A_n,\quad
Z^{(a)}=Z\otimes B_n,\qquad
Z\in\{H,X,Y\}.
\end{equation}
The structure of the current algebra comes via \refE{currint}
from $\A$ (and of course from $\g$). 
We need the Cartan-Killing form. Up to a normalization 
it is given by
\begin{equation}
\beta(A,B)=\tr(A\cdot B).
\end{equation}
Hence,
\begin{equation}
\beta(X,Y)=\beta(Y,X)=1,\quad
\beta(H,H)=2,\quad
\beta(H,X)=\beta(H,Y)=\beta(X,H)=\beta(Y,H)=0.
\end{equation}
The universal central extension will have a two-dimensional
center and will be given by
\begin{equation}
[Z\otimes f,W\otimes g]=
[Z,W]\otimes f\cdot g
+\alpha_\infty\cdot\beta(Z,W)\cdot\gA[\infty](f,g)\cdot t_\infty
+\alpha_0\cdot\beta(Z,W)\cdot\gA[0](f,g)\cdot t_0,
\end{equation}
with $t_\infty, t_0$ central elements, $\alpha_\infty,\alpha_0\in\C$.
Recall that $\gA[a](f,g)$ can be calculated as $\res_a(fdg)$.

{}From the general theory developed in \refS{aff} we know that
the central extension will be almost-graded 
with respect to the standard splitting if and only if
$\alpha_0=0$.

All the data needed has been calculated already before. If we
collect them we obtain the following results.
\begin{equation*}
\begin{aligned}{}
[H_n^{(s)},H_m^{(s)}]&=\alpha_\infty\cdot 4n\cdot \delta_m^{-n}\cdot
t_\infty
-\alpha_0\cdot 2n\cdot \delta_m^{-n}\cdot
t_0,
\\
[H_n^{(s)},H_m^{(a)}]&=
2\alpha_0\bigg(
n\,\delta_m^{-n}
+2n\,\delta_m^{-n-1}
+\sum_{k=2}^\infty n\,(-1)^{k-1}
2^k\frac{(2k-3)!!}{k!}\delta_{m}^{-n-k}\,
\bigg)\cdot t_0.
\\
[H_n^{(a)},H_m^{(a)}]&=
\alpha_\infty(4n\delta_m^{-n}+8(2n+1)\delta_m^{-n-1})\cdot t_\infty
-\alpha_0(2n\delta_m^{-n}+4(2n+1)\delta_m^{-n-1})\cdot t_0
\end{aligned}
\end{equation*}
\begin{equation*}
\begin{gathered}{}
[H_n^{(s)},X_m^{(s)}]=2X_{n+m}^{(s)},\quad
[H_n^{(s)},X_m^{(a)}]=2X_{n+m}^{(a)},\quad
[H_n^{(a)},X_m^{(a)}]=2X_{n+m}^{(s)}+8X_{n+m}^{(s)},
\\
[H_n^{(s)},Y_m^{(s)}]=-2Y_{n+m}^{(s)},\quad
[H_n^{(s)},Y_m^{(a)}]=-2Y_{n+m}^{(a)},\quad
[H_n^{(a)},Y_m^{(a)}]=-2Y_{n+m}^{(s)}-8Y_{n+m}^{(s)},
\end{gathered}
\end{equation*}
\begin{equation*}
\begin{aligned}{}
[X_n^{(s)},Y_m^{(s)}]&=H_{n+m}^{(s)}+
\alpha_\infty\cdot 2n\cdot \delta_m^{-n}\cdot
t_\infty
-\alpha_0\cdot n\cdot \delta_m^{-n}\cdot
t_0,
\\
[X_n^{(s)},Y_m^{(a)}]&=H_{n+m}^{(a)}+
\alpha_0\bigg(
n\,\delta_m^{-n}
+2n\,\delta_m^{-n-1}
+\sum_{k=2}^\infty n\,(-1)^{k-1}
2^k\frac{(2k-3)!!}{k!}\delta_{m}^{-n-k}\,
\bigg)\cdot t_0.
\\
[X_n^{(a)},Y_m^{(a)}]&=H_{n+m}^{(s)}+4H_{n+m+1}^{(s)}+
\alpha_\infty(2n\delta_m^{-n}+4(2n+1)\delta_m^{-n-1})\cdot t_\infty
\\&\quad
-\alpha_0(n\delta_m^{-n}+2(2n+1)\delta_m^{-n-1})\cdot t_0\, .
\end{aligned}
\end{equation*}
Of course, the elements $t_\infty$ and $t_0$ are central and
we have anti-symmetry.
The local cocycle, i.e. the cocycle coming with $t_\infty$ was given
in
\cite{Schlmunich} and reproduced in \cite[Equ. 12.75]{Schlkn}.
Unfortunately, there the central terms related to
$[H_n^{(.)},H_m^{(.)}]$ were forgotten. 
\begin{remark}
By Cox and Jurisich \cite[Thm.2.4]{CoJu} a different
form of a universal central extension for the $\sln(2,\C)$ current
algebra was proposed. This form was taken up in \cite{CJM}.
An inspection of the structure equation shows that in the proposed
form
two independent cocycles coming with the central elements
$\omega_0$ and $\omega_1$ show up. Both would be local. But this
contradicts
the uniqueness of the local cocycle class (up to rescaling)
as obtained in \cite{Schlaff}, which was also  recalled in the
current
article.
A closer examination shows that if ``$\omega_1\ne 0$'' the 
proposed structure constants do not define a Lie algebra. 
The reader might check himself, that e.g. the 
Jacobi identity of the triple
$(f_{-(n+m+2)}^1,h_m^1,e_n^1)$ (in the notation of the quoted 
articles) will be a non-zero multiple of $\omega_1$.
\end{remark}

The principal structure, as far as the central extension is concerned,
in particular also that we have one unique local cocycle class
(up to rescaling) does not depend in an essential manner on
the simple Lie algebra. See also
Bremner \cite{Brem3} for the example of the  
4-point case. Here the universal central extension is 3-dimensional,
One of the classes will be local with respect to the
standard splitting, the other two not. The latter two are ``coupled''
with ultrasperical (Gegenbauer) polynomials.
I like also to mention that in \cite{BeTer} also the 3-point case was
considered
in another basis exhibiting another symmetry useful in the context of
statistical mechanics.

\begin{remark}\label{R:gln}
As an additional example we like to give the case of
the current algebra of $\gl(n,\C)$ for the $N$-point case.
Of course, as  $\gl(n,\C)$ is not perfect it does not admit
a universal central extension. But by the classification results
we can give the maximal central extension for which the cocycles
are multiplicative (or $\La$-invariant)
\begin{equation}
\begin{aligned}{}
[x\otimes f,y\otimes g]&=
[x,y]\otimes f\cdot g+
\sum_{i=1}^{N-1}\alpha_i\cdot \tr(x\cdot y)\res_{a_i}(fdg)\cdot t_i
\\ &\qquad
+
\sum_{i=1}^{N-1}\beta_i\cdot \tr(x)\tr(y)\res_{a_i}(fdg)\cdot s_i\,.
\end{aligned}
\end{equation}
Here $\alpha_i,\beta_i\in\C$ and $t_i$ and $s_i$ are central.
In this context see \cite[Sect.~9.8]{Schlkn}.
\end{remark}

\section{Projective and affine connections}
\label{sec:connect}

Let $\ (U_\al,z_\al)_{\al\in J}\ $ be a covering of the Riemann surface
$\Sigma_g$ 
by holomorphic coordinates with transition functions
$z_\be=f_{\be\al}(z_\al)$.

\begin{definition}
(a) A system of local (holomorphic, meromorphic) functions 
$\ R=(R_\al(z_\al))\ $ 
is called a (holomorphic, meromorphic) {\it projective 
connection} if it transforms as
\begin{equation}\label{eq:pc}
R_\be(z_\be)\cdot (f_{\beta,\alpha}')^2=R_\al(z_\al)+S(f_{\beta,\alpha}),
\qquad\text{with}\quad
S(h)=\frac {h'''}{h'}-\frac 32\left(\frac {h''}{h'}\right)^2,
\end{equation}
the Schwartzian derivative.
Here ${}'$ denotes differentiation with respect to
the coordinate $z_\al$.

(b) A system of local (holomorphic, meromorphic) functions 
$\ T=(T_\al(z_\al))\ $ 
is called a (holomorphic, meromorphic) {\it affine 
connection} if it transforms as
\begin{equation}\label{eq:zc}
T_\be(z_\be)\cdot (f_{\beta,\alpha}')=T_\al(z_\al)+
\frac{f''_{\beta,\alpha}}{f'_{\beta,\alpha}}.
\end{equation}
\end{definition}

Every Riemann surface admits a holomorphic projective 
connection  \cite{HawSchiff},\cite{Gun}.
Given a point $P$ then there exists always a meromorphic
 affine connection holomorphic outside of $P$ and having maximally a
 pole
of order one there \cite{SchlDiss}.

 From their very definition 
  it follows that the difference of two affine (projective) connections
 will be a (quadratic) differential.
 Hence, after fixing one affine (projective) connection all others are obtained
 by adding (quadratic) differentials.

\bibliographystyle{amsplain}

\end{document}